\documentclass[a4paper,11pt]{amsart}
\usepackage{amsmath,amsthm,amssymb,mathtools}
\usepackage[mathscr]{eucal}
\usepackage{cite}
\usepackage{upgreek}

\usepackage[bookmarks,bookmarksnumbered]{hyperref}

\usepackage{enumerate}
\usepackage{mathrsfs}
\usepackage{blindtext}
\usepackage{scrextend}
\usepackage{enumitem}
\usepackage{bbm}
\addtokomafont{labelinglabel}{\sffamily}
\usepackage{color}
\usepackage[at]{easylist}

\setlength{\oddsidemargin}{0pt}
\setlength{\evensidemargin}{0pt}
\setlength{\topmargin}{-5pt}
\setlength{\textheight}{650pt}
\setlength{\textwidth}{470pt}
\setlength{\headsep}{30pt}
\setlength{\parindent}{0pt}
\setlength{\parskip}{1ex plus 0.5ex minus 0.2ex}

\numberwithin{equation}{section}

\theoremstyle{plain}
\newtheorem{main}{Theorem}
\newtheorem{mcor}[main]{Corollary}
\newtheorem{mprop}[main]{Proposition}

\newtheorem{theorem}{Theorem}[section]
\newtheorem{claim}[theorem]{Claim}
\newtheorem{lemma}[theorem]{Lemma}
\newtheorem{proposition}[theorem]{Proposition}
\newtheorem{corollary}[theorem]{Corollary}
\theoremstyle{definition}
\newtheorem{definition}[theorem]{Definition}
\newtheorem*{definition*}{Definition}

\newtheorem{remark}[theorem]{Remark}

\newtheorem{question}[theorem]{Question}

\newcommand{\cA}{\mathcal{A}}

\newcommand{\C}{\mathbb{C}}\newcommand{\cC}{\mathcal{C}}

\newcommand{\bF}{\mathbb{F}}

\newcommand{\bM}{\mathbb{M}}\newcommand{\cM}{\mathcal{M}}
\newcommand{\N}{\mathbb{N}}

\newcommand{\Z}{\mathbb{Z}}\newcommand{\cZ}{\mathcal{Z}}

\newcommand{\ot}{\otimes}

\newcommand{\Prob}{\operatorname{Prob}}

\newcommand{\bary}{\operatorname{bar}}

\newcommand{\dint}{\int^\oplus}

\newcommand{\eps}{\varepsilon}
\newcommand{\vphi}{\varphi}

\newcommand{\norm}[1]{\left\|#1\right\|}

\begin{document}

\title[Trace spaces of full free product $C^*$-algebras]
{Trace spaces of full free product $C^*$-algebras}

\author[A. Ioana]{Adrian Ioana}
\address{Department of Mathematics, University of California San Diego, 9500 Gilman Drive, La Jolla, CA 92093, USA}
\email{aioana@ucsd.edu}

\author[P. Spaas]{Pieter Spaas}
\address{Department of Mathematical Sciences, University of Copenhagen, Universitetsparken 5, DK-2100 Copenhagen \O, Denmark}
\email{pisp@math.ku.dk}

\author[I. Vigdorovich]{Itamar Vigdorovich}
\address{Department of Mathematics, University of California San Diego, 9500 Gilman Drive, La Jolla, CA 92093, USA}
\email{itamar.vigi@gmail.com}

\thanks{A.I. was supported by NSF grants FRG-DMS-1854074 and DMS-2153805.
P.S. was supported by MSCA Fellowship No. 101111079 from the European Union.
I.V. was supported by NSF postdoctoral fellowship 
DMS-2402368}

\begin{abstract}

We study the space of traces  associated with
arbitrary full free products of  unital, separable
$C^*$-algebras. We show that, unless certain basic obstructions
(which we fully characterize) occur, the space of traces always results
in the same object: the Poulsen simplex, that is, the unique infinite-dimensional metrizable Choquet
simplex whose extreme points are dense. Moreover, we show that whenever such a trace space is the Poulsen simplex, the extreme points are dense in the Wasserstein topology. 
Concretely for the case of groups,
we find that, unless 
the trivial character is isolated in the space of characters, 
the space of traces of any free product of non-trivial countable
groups is the Poulsen simplex.
Our main technical
contribution is a new perturbation result for pairs of von Neumann
subalgebras $(M_{1},M_{2})$ of a tracial von Neumann algebra $M$,
providing necessary conditions under which $M_{1}$ and a small unitary
perturbation of $M_{2}$ generate a II$_{1}$ factor.
\end{abstract}

\maketitle


\section{Introduction}
The main object of study in this paper is the space of tracial states,  or {\it traces}, of a $C^*$-algebra. The space of traces $\mathrm{T}(A)$ of a unital and separable $C^*$-algebra $A$ is a metrizable Choquet simplex \cite[Theorem~3.1.18]{Sa71} (see also \cite[Theorem 1.1]{BR24}), i.e., a compact convex set where every point is the barycenter of a unique Borel probabilty measure supported on its extreme points. 
Conversely, any metrizable Choquet simplex can be realized as the trace simplex of a unital, separable (and moreover simple and AF) $C^*$-algebra, see \cite[Theorem~5.1]{Go77} and  \cite[Theorem~3.10]{Bl80}.

The trace simplex has long been recognized as a useful invariant, playing a prominent role in the Elliott classification program for nuclear simple $C^*$-algebras, see, e.g., \cite{Ro02,Wi18,Wh23,CGSTW23}. 
When $A=C^*G$ is the full group $C^*$-algebra of a countable discrete group $G$, traces on $A$ are in one-to-one correspondence with traces on $G$, i.e., positive definite, conjugation-invariant functions $\varphi:G\rightarrow\mathbb C$ with $\varphi(e)=1$. Recently, traces on groups have found 
exciting applications to dynamics, representation theory, operator algebras, and group stability, see, e.g., \cite{Be07, BKKO14, Pe14, HS18, BH21, BBHP22, LV23}.

\subsection*{The trace simplex of free products}
It is well known that the category of unital $C^*$-algebras admits coproducts, implemented by the unital full free product construction, see, e.g., \cite{Av82,VDN92}.  For the rest of this paper, the free product $A=A_1*A_2$ of two unital $C^*$-algebras $A_1$ and $A_2$ will refer to their {\it unital full} free product.
The main objective of this paper is to understand $\mathrm{T}(A)$ in terms of $\text{T}(A_1)$ and $\text{T}(A_2)$.  
We note that another important invariant, the $K$-theory, has been much studied for free product $C^*$-algebras $A=A_1*A_2$ and shown to be completely determined by the $K$-theories of $A_1$ and $A_2$ (see, e.g., \cite{Cu82,Ge97,Th03,FG20}). 
In contrast with the situation of general $C^*$-algebras where any prescribed Choquet simplex can arise as the trace simplex, we find that, under mild assumptions on $\text{T}(A_1)$ and $\text{T}(A_2)$, the space of traces of $A$ is always the same object: the Poulsen simplex. Strikingly, this implies that, unlike $K$-theory, the trace space of a free product $A=A_1*A_2$ does not typically depend on the trace spaces of $A_1$ and $A_2$.

In our first main result we characterize exactly when the trace space of a 
free product $C^*$-algebra is the Poulsen simplex. 
In \cite{Po61}, Poulsen was the first to construct a metrizable Choquet simplex 
whose extreme points are dense. It was shown in \cite{LOS78} that  there exists a unique non-trivial (i.e., neither empty nor a singleton) Poulsen simplex up to affine homeomorphisms, now called \textit{the Poulsen simplex}. Moreover, the Poulsen simplex is universal in the sense that it admits every metrizable Choquet simplex as a closed face.   Hereafter, any non-trivial metrizable Poulsen simplex will be referred to as the Poulsen simplex. 

Before stating our first main result, we also recall that the GNS construction associates, to any trace $\varphi$ on a unital $C^*$-algebra $A$, a von Neumann algebra $M$ with a faithful normal trace $\tau$, together with a $*$-homomorphism $\pi:A\rightarrow M$ such that $\varphi=\tau\circ\pi$ and $\pi(A)''=M$. 
It is well-known that  $\vphi$ is an extreme point of $\mathrm{T}(A)$ 
if and only if $M$ is a factor. 
We say that $\vphi$ is $1$-dimensional if $M$ is $1$-dimensional, and similarly for finite or infinite dimensional.

\begin{main}\label{Poulsen} For $i=1,2$, let $A_i$ be a unital, separable $C^*$-algebra such that $\emph{T}(A_i)$ is non-empty and does not consist of a single $1$-dimensional trace. Let $A=A_1*A_2$ be their 
free product. Then the following are equivalent:
\begin{enumerate}
    \item $\emph{T}(A)$ is a Poulsen simplex.
    \item If $A_1$ (resp. $A_2$) has an isolated extreme 1-dimensional trace, then $A_2$ (resp. $A_1$) does not have an isolated extreme finite dimensional trace.
    \item $A$ does not have an isolated extreme finite dimensional trace.
\end{enumerate}
Moreover, if these equivalent conditions hold, then
the set of extreme, infinite dimensional traces is dense in $\emph{T}(A)$.
\end{main}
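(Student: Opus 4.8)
The plan is to organize the argument around the equivalence $(1)\Leftrightarrow(2)\Leftrightarrow(3)$ together with the density statement, treating the density of extreme infinite-dimensional traces as the engine that drives the whole theorem. I would first establish the easy implications $(1)\Rightarrow(3)$ and $(3)\Rightarrow(2)$: the former holds because an isolated extreme point of any Choquet simplex prevents the extreme points from being dense (an isolated extreme point cannot be a limit of other extreme points, and in a Poulsen simplex every point, extreme or not, is a limit of extreme points other than itself), and the latter is a direct combinatorial consequence of the description of finite-dimensional traces of a free product—a finite-dimensional factor trace of $A_1*A_2$ arises from finite-dimensional representations of $A_1$ and $A_2$, and one checks that such a trace is isolated in $\mathrm T(A)$ exactly when it comes from an isolated finite-dimensional trace on one side and an isolated trace on the other, with the problematic case being precisely a $1$-dimensional isolated trace paired with a finite-dimensional isolated trace. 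The real content is the implication $(2)\Rightarrow(1)$, or more precisely $(2)\Rightarrow$ ``the extreme infinite-dimensional traces are dense,'' since density of extreme points forces the simplex to be Poulsen.

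For the main implication, fix an arbitrary trace $\varphi\in\mathrm T(A)$ with GNS data $(M,\tau,\pi)$, and set $M_i=\pi(A_i)''\subseteq M$, so that $M$ is generated by $M_1$ and $M_2$. I want to approximate $\varphi$, in the weak$^*$-topology, by traces $\psi$ whose GNS von Neumann algebra is a II$_1$ factor. The strategy is: (i) embed $M$ (tracially) into a sufficiently large II$_1$ factor ambient algebra—or pass to an amalgamated/ultrapower construction—so that $M_2$ admits many unitary conjugates; (ii) invoke the paper's main technical perturbation result (the one advertised in the abstract) to find a unitary $u$, close to $1$ in $\|\cdot\|_2$, such that $M_1$ and $uM_2u^*$ generate a II$_1$ factor; (iii) define $\psi$ as the trace of $A$ obtained from the $*$-homomorphism $a_1*a_2\mapsto \pi(a_1)\cdot u\pi(a_2)u^*$ (using the universal property of the full free product), and observe that $\psi$ is weak$^*$-close to $\varphi$ because $u$ is $\|\cdot\|_2$-close to $1$ and traces are continuous in this topology on bounded sets; (iv) argue that the GNS algebra of $\psi$ is the factor generated in step (ii), hence $\psi$ is an extreme infinite-dimensional trace. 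Hypothesis $(2)$ is exactly what is needed to rule out the obstruction in the perturbation theorem—namely, the degenerate situation where one of $M_1,M_2$ is forced to be a single matrix block pinned down by an isolated $1$-dimensional character while the other is a fixed finite-dimensional summand, in which case no small perturbation can escape finite dimensionality.

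The main obstacle I anticipate is step (ii): applying the perturbation result requires verifying its hypotheses for the pair $(M_1,M_2)$, which presumably demand that neither algebra be ``too small'' or ``too rigid'' relative to the other (e.g., a condition phrased in terms of diffuseness, absence of atoms of a certain size, or a relative bicommutant/normalizer condition). Translating hypothesis $(2)$—stated in the language of isolated finite-dimensional traces of $A_1$ and $A_2$—into the operator-algebraic hypotheses of the perturbation theorem for the GNS data of an \emph{arbitrary} $\varphi$ is where the technical care lies: one must handle the direct integral decomposition of $\varphi$ over its extreme boundary, reduce to factorial $\varphi_1,\varphi_2$ on the two sides, and in the finite-dimensional corners deploy the explicit free-probabilistic computation (à la Voiculescu) showing that the free product of two finite-dimensional algebras, as long as it is not the pathological $\mathbb C\oplus(\text{one block})$ versus $(\text{one block})$ case excluded by $(2)$, already contains, or is a weak$^*$-limit of traces supported on, a II$_1$ factor. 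I would also need a small separate argument that the infinite-dimensional extreme traces (not merely extreme traces) are the ones appearing in the limit; this is automatic once the perturbed algebra is shown to be a II$_1$ factor rather than a finite-dimensional one, which the perturbation theorem delivers.
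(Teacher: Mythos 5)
Your high-level architecture is right, and the cyclic chain $(1)\Rightarrow(3)\Rightarrow(2)\Rightarrow(1)$ is a reasonable (and arguably slightly more economical) way to organize the equivalence: it lets you get away with only the ``easy'' direction of the characterization of isolated extreme finite-dimensional traces of $A$ (namely that an isolated $1$-dimensional trace on one side plus an isolated finite-dimensional trace on the other produces an isolated finite-dimensional trace on $A$), whereas the paper's Lemma~\ref{lem:ifd} also proves the converse, which itself uses the perturbation machinery. Be careful, though: your phrasing presents $(3)\Rightarrow(2)$ as following from an ``exactly when'' characterization; if you really prove that full characterization directly, you are reimporting the hard half that the cyclic structure was supposed to bypass, and even the ``easy'' half is not merely combinatorial --- it needs a quantitative obstruction statement (Proposition~\ref{findim}-type estimate that a $1$-dimensional central summand in $M_1$ meeting a $\mathbb{M}_k$-summand in $M_2$ of large enough trace forces a $\mathbb{M}_k$-summand in $M_1\vee M_2$) together with a ``strong isolation'' lemma (Corollary~\ref{strongly_isolated}) about barycenter measures of sequences converging to an isolated extreme point.

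The real gap is in $(2)\Rightarrow(1)$, precisely at the point you flag as the main obstacle. Your suggested route --- disintegrate $\varphi$ over $\partial_{\mathrm{e}}\mathrm{T}(A_i)$ and ``reduce to factorial $\varphi_1,\varphi_2$'' --- does not work as stated, because reducing to a $1$-dimensional factorial restriction gives $M_i=\mathbb{C}1$, where the hypotheses of the perturbation theorem ($M_i\neq\mathbb{C}1$, $\dim(M_1)+\dim(M_2)\geq 5$, $\mathrm{e}(M_1)+\mathrm{e}(M_2)\leq 1$) can never hold. The actual argument must go in the opposite direction: one has to first perturb the restrictions $\pi|_{A_i}$ so that the GNS algebras become \emph{larger and less atomic}, by mixing $\varphi_{|A_i}$ with nearby extreme traces of $A_i$. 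Condition $(2)$ is exactly what guarantees that enough nearby extreme traces exist to do this mixing (a finite-dimensional summand of $\pi(A_i)''$ comes from a finite-dimensional extreme trace of $A_i$, which by $(2)$ cannot be isolated, so it admits approximating extreme traces to smear it into an averaged block with many small minimal projections). This lowers $\mathrm{e}$ and raises $\dim$, after which the perturbation theorem applies to the perturbed pair, inside a suitable amalgamated free product ambient algebra. This is Lemma~\ref{2} in the paper and it is not a routine verification: it requires a case analysis depending on which of $A_1,A_2$ has isolated finite-dimensional extreme traces, a convergence lemma for von Neumann amenable traces (Lemma~\ref{convergence}), and some bookkeeping with central projections and $\mathrm{e}(\cdot)$. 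Your proposal identifies that a lemma of this kind is needed but does not supply it, and the specific mechanism you sketch would not produce it.
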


The assumption in Theorem~\ref{Poulsen} that $\mathrm{T}(A_i)$ does not consist of a single $1$-dimensional trace is necessary. If, for example, $A_1$ has a unique trace which is $1$-dimensional, then  the restriction map $\mathrm{T}(A)\ni \varphi\mapsto\varphi_{|A_2}\in\mathrm{T}(A_2)$ is an affine homeomorphism. But, as explained above, $\mathrm{T}(A_2)$ can be any metrizable Choquet simplex.
This assumption also implies that $\text{T}(A)$ is not a singleton, and thus if $\text{T}(A)$ is a Poulsen simplex, then it is the (non-trivial) Poulsen simplex.  Theorem \ref{Poulsen} therefore establishes the following lack of rigidity:  a large family of $C^*$-algebras, consisting of most free product $C^*$-algebras, all have the same trace simplex. 

In \cite{MR19}, Musat and R{\o}rdam studied the trace simplex of the 
free product of matrix algebras $\mathbb M_n(\mathbb C)*\mathbb M_n(\mathbb C)$. They showed that $\text{T}(\mathbb M_n(\mathbb C)*\mathbb M_n(\mathbb C))$  parametrizes the so-called  factorizable quantum channels $\mathbb M_n(\mathbb C)\rightarrow\mathbb M_n(\mathbb C)$. Additionally, they proved that the Poulsen simplex is a face of $\text{T}(\mathbb M_n(\mathbb C)*\mathbb M_n(\mathbb C))$ and asked whether $\text{T}(\mathbb M_n(\mathbb C)*\mathbb M_n(\mathbb C))$ is the Poulsen simplex, for $n\geq 3$.

Motivated by this question, Orovitz, Slutsky, and the third author recently showed in \cite{OSV23} that $\mathrm{T}(\bM_n(\C)\ast \bM_n(\C))$, for $n\geq 4$,  and  $\mathrm{T}(C^*\bF_m)$, for $2\leq m\leq\infty$, are the Poulsen simplex. However, their method does not extend to general free products, as it relies heavily on the structure of the matrix algebras (for which moreover the condition $n\geq 4$ is needed) and the free groups, respectively.

Theorem~\ref{Poulsen} significantly generalizes these results.
Indeed, condition {\it (2)} from Theorem~\ref{Poulsen} is 
satisfied if one of the algebras admits no 
finite dimensional trace. 

This condition is also
satisfied if both algebras are finite dimensional, but have no $1$-dimensional direct summands, allowing us to classify the 
 free products of finite dimensional $C^*$-algebras whose trace simplex is the Poulsen simplex:

\begin{mcor}\label{fin_dim_algebras}
    Let $A=A_1*A_2$ be the 
    free product of finite dimensional $C^*$-algebras $A_1$ and $A_2$. Then $\mathrm{T}(A)$ is the Poulsen simplex if and only if $A_1$ and $A_2$ have no 1-dimensional direct summands. 
   
\end{mcor}
 In particular, Corollary \ref{fin_dim_algebras} shows that $\text{T}(\mathbb M_n(\mathbb C)*\mathbb M_n(\mathbb C))$ is the Poulsen simplex, for every $n\geq 2$,
 thereby completely answering the question from \cite{MR19} discussed above.

Theorem~\ref{Poulsen} also yields a complete characterization of when the trace simplex of 
a free product of countable  groups is  the Poulsen simplex.

Given a countable discrete group $G$, recall that the space of traces on $G$, which we denote by $\text{Tr}(G)$, is naturally identified with $\text{T}(C^*G)$.

In this context, extreme traces are often called ``characters". Following \cite[Definition 7.1]{OSV23}, we say that the group $G$ has \textit{property (ET)} if the trivial character $1_G$ is isolated in the space of all characters of $G$.

\begin{mcor}\label{group C*-algebras}
Let $G=G_1*G_2$ be the free product of non-trivial countable discrete groups  $G_1$ and $G_2$. Then the following are equivalent:
\begin{enumerate}
    \item The space of traces on $G$ is the Poulsen simplex.
    \item $G_i$ does not have property \emph{(ET)}, for some $i\in\{1,2\}.$
    \item $G$ does not have property \emph{(ET)}.
\end{enumerate}
\end{mcor}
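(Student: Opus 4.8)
The plan is to deduce Corollary~\ref{group C*-algebras} from Theorem~\ref{Poulsen} by translating everything into the language of $C^*$-algebras via the identifications $\mathrm{Tr}(G_i)\cong\mathrm{T}(C^*G_i)$ and $C^*G\cong C^*G_1*C^*G_2$ (the last isomorphism being the standard universal property of full free products). First I would observe that the hypotheses of Theorem~\ref{Poulsen} are automatically met here: since $G_i$ is a nontrivial group, $C^*G_i$ has at least two traces (e.g.\ the trivial character and, if $G_i$ is nonabelian, the regular trace, or otherwise any nontrivial character of an abelian quotient), so $\mathrm{T}(C^*G_i)$ is nonempty and not a single $1$-dimensional trace. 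Hence Theorem~\ref{Poulsen} applies verbatim, and (1)$\Leftrightarrow$(2)$\Leftrightarrow$(3) reduces to matching the three conditions there with the three here.

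The core of the argument is the following dictionary. For a group $G$, the trace corresponding to a $1$-dimensional GNS representation is exactly a homomorphism $G\to\mathbb T$, i.e.\ a character in the representation-theoretic sense; and a trace is $1$-dimensional \emph{and} equals the trivial character $1_G$ precisely when its GNS representation is the trivial representation. I claim that for a nontrivial group $G$, $G$ has property (ET) if and only if $C^*G$ has an isolated extreme finite dimensional trace. For the forward direction, if $1_G$ is isolated among all characters of $G$, I would argue that $1_G$ is isolated in $\mathrm{T}(C^*G)$ among \emph{all} traces (not just characters): the $1$-dimensional representation $1_G$ is an isolated point of $\mathrm{T}(C^*G)$ because a trace weak$^*$-close to $1_G$ satisfies $|\varphi(g)-1|<\epsilon$ for finitely many $g$, forcing its GNS algebra to be close to trivial; combined with the known fact that near an isolated $1$-dimensional trace the only nearby extreme traces are themselves $1$-dimensional, (ET) gives that $1_G$ is an isolated extreme finite dimensional trace of $C^*G$. (Here I would lean on the analysis already developed in the paper for isolated finite dimensional traces, especially that a $1$-dimensional trace that is isolated among $1$-dimensional traces is isolated among all finite dimensional traces once the algebra has no nearby higher-dimensional extreme traces; conversely any isolated extreme finite dimensional trace of $C^*G$ that is $1$-dimensional must be an isolated character.) The converse is immediate: an isolated extreme finite dimensional trace of $C^*G$ which is $1$-dimensional is, restricted to $G$, an isolated character; and one checks that a group $C^*$-algebra cannot have an isolated extreme finite dimensional trace of dimension $>1$ unless it also has an isolated $1$-dimensional one, because tensoring an isolated higher-dimensional extreme trace with a small deformation coming from the abelianization produces nearby extreme traces of the same dimension --- so isolation at finite dimension for $C^*G$ forces isolation at dimension $1$, i.e.\ at $1_G$, i.e.\ property (ET).

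Granting this dictionary, the three conditions translate cleanly: condition~(3) of Corollary~\ref{group C*-algebras} ($G$ does not have (ET)) becomes ``$C^*G$ has no isolated extreme finite dimensional trace'', which is exactly condition~(3) of Theorem~\ref{Poulsen} for $A=C^*G_1*C^*G_2$; condition~(1) is literally the same statement in both; and condition~(2) of Corollary~\ref{group C*-algebras} ($G_i$ fails (ET) for some $i$) needs to be matched with condition~(2) of Theorem~\ref{Poulsen}. For this last step I would note that an abelian group always has lots of non-isolated characters unless it is finite, and more to the point that ``$C^*G_i$ has an isolated extreme $1$-dimensional trace'' is equivalent to ``$G_i$ has property (ET)'': an isolated character of $G_i$ can be translated by the (necessarily finite, hence discrete) character group of the abelianization to $1_{G_i}$, so \emph{some} character is isolated iff $1_{G_i}$ is. Similarly ``$C^*G_i$ has an isolated extreme finite dimensional trace'' is equivalent to ``$G_i$ has (ET)'' by the claim above. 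Thus the hypothesis in Theorem~\ref{Poulsen}(2) --- ``if one algebra has an isolated extreme $1$-dimensional trace then the other has no isolated extreme finite dimensional trace'' --- becomes ``if $G_1$ has (ET) then $G_2$ does not have (ET), and symmetrically'', i.e.\ not both $G_i$ have (ET), i.e.\ some $G_i$ fails (ET), which is Corollary~\ref{group C*-algebras}(2).

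The main obstacle I anticipate is the precise verification that property (ET) --- isolation of $1_G$ among \emph{characters} --- is equivalent to isolation of $1_G$ (or of some $1$-dimensional trace) among \emph{all} traces, and then among all finite dimensional traces; this is where one must be careful, since a priori a $1$-dimensional trace could be approximated by genuinely infinite dimensional traces even while isolated among characters. Resolving this requires the structural input about neighborhoods of $1$-dimensional traces in $\mathrm{T}(A)$ that the body of the paper provides (an isolated $1$-dimensional trace of any unital separable $C^*$-algebra is automatically isolated among all traces, via a short weak$^*$-continuity argument controlling the norm of $\pi(g)-1$), together with the observation that for group $C^*$-algebras the finite dimensional extreme traces of dimension $>1$ cannot accumulate only to a $1$-dimensional trace in isolation --- equivalently, isolation of an extreme finite dimensional trace pushes down to isolation of $1_G$. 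Everything else is bookkeeping with the identifications and the three-way equivalence already furnished by Theorem~\ref{Poulsen}.
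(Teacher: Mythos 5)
Your proposal takes essentially the same route as the paper: reduce to Theorem~\ref{Poulsen} via the identifications $\mathrm{Tr}(G_i)\cong\mathrm{T}(C^*G_i)$, $C^*G\cong C^*G_1\ast C^*G_2$, and a dictionary asserting that property (ET) for a group $G$ is equivalent to $C^*G$ having an isolated extreme finite dimensional trace. The easy direction of this dictionary (that (ET) gives an isolated extreme fd trace, namely $1_G$) is immediate, as you say, and the translation of the three conditions of Theorem~\ref{Poulsen} into the three conditions of the corollary is straightforward once the dictionary is in hand. However, the hard direction of the dictionary --- that an isolated extreme finite dimensional trace of $C^*G$ of dimension $k>1$ forces $1_G$ to be isolated in $\mathrm{Ch}(G)$ --- is exactly the paper's Lemma~\ref{isolated fd characters}, and this is the substantive content of the corollary. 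Your proposal does not establish it, and the sketch you offer does not work.

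Concretely, you suggest ``tensoring an isolated higher-dimensional extreme trace with a small deformation coming from the abelianization.'' This has three problems. First, property (ET) is tested against \emph{arbitrary} characters $\psi_n\to 1_G$, not merely $1$-dimensional ones coming from the abelianization; if $G$ has trivial abelianization your deformations are all trivial, yet there may be higher-dimensional characters accumulating at $1_G$. Second, for higher-dimensional $\psi_n$ the product $\varphi\psi_n$ is in general not an extreme trace (its GNS algebra sits inside $\mathbb M_k(\C)\overline\otimes P_n$ but need not be a factor), so isolation of $\varphi$ in $\partial_{\mathrm e}\mathrm{T}(C^*G)$ cannot be invoked directly to conclude $\varphi\psi_n=\varphi$; one needs the barycenter-measure convergence from Corollary~\ref{strongly_isolated} together with the Pimsner--Popa estimate to show the central projection supporting the $\varphi$-piece must eventually be all of $1$ --- this is precisely the mechanism of Claim~\ref{lower bound} inside the paper's proof of Lemma~\ref{isolated fd characters}. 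Third, even granting $\varphi\psi_n=\varphi$, one only learns that $\psi_n$ restricts to $1$ on $H=\langle\,g:\varphi(g)\neq 0\,\rangle$; to conclude $\psi_n=1_G$ one must still prove $[G:H]<\infty$ (via the orthogonality of coset representatives under $\mathrm{tr}_k$) and then use that characters on the finite quotient $G/H$ form a discrete set. None of these ingredients appears in your proposal, and without them the dictionary is not established. (A minor side point: your parenthetical claim that the character group of the abelianization is ``necessarily finite, hence discrete'' is false in general, though your translation argument via multiplication by $\bar\chi$ does not actually need it.)
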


This result significantly generalizes \cite[Theorem 1.1]{OSV23}.

Indeed, since property (ET) passes to quotient groups by \cite[Lemma 7.2]{OSV23} and countable infinite abelain groups do not have property (ET), Corollary \ref{group C*-algebras} implies that $\text{Tr}(G_1*G_2)$ is the Poulsen simplex, for any non-trivial countable groups $G_1$ and $G_2$ such that $G_1$ admits an infinite abelian quotient.

Note that countable groups with Kazhdan's property (T) also satisfy property (ET).
It is shown in \cite{LSV23} that the trace simplex of Kazhdan groups is, in fact, as far as possible from being a Poulsen simplex—it is a Bauer simplex, meaning that the set of extreme points is closed.

\subsection*{Approximation in the Wasserstein topology}
To put our results above into a better perspective,
let us point out that the proofs show much more than that the trace space of the $C^*$-algebras in question is a Poulsen simplex. Recall that the trace space of a unital separable $C^*$-algebra admits, in addition to the weak$^*$-topology, a second natural topology, the so-called {\it Wasserstein topology} introduced by Biane and Voiculescu in \cite{BV01}.

\begin{definition}(see \cite[Section 1.2]{BV01})
Let $A$ be a unital, separable $C^*$-algebra, and $(a_n)_{n=1}^\infty$  a norm dense sequence in the unit ball of $A$.
For $i=1,2$, let $\varphi_i\in\text{T}(A)$, $(M_i,\tau_i)$ be a tracial von Neumann algebra and $\pi_i:A\rightarrow M_i$ be a unital $*$-homomorphism such that $\varphi_i=\tau_i\circ\pi_i$ and $\pi_i(A)''=M_i$. We define the {\it Wasserstein distance} $d_W(\varphi_1,\varphi_2)$ as the infimum of the quantity $\sum_{n=1}^\infty 2^{-n}\|\pi_1(a_n)-\pi_2(a_n)\|_{2,\tau}$, over all tracial von Neumann algebras $(M,\tau)$ such that $M_i\subset M$ and $\tau_{|M_i}=\tau_i$, for every $i=1,2$. 
The {\it Wasserstein topology} on $\text{T}(A)$ is the topology induced by $d_W$.
\end{definition}

It is easy to see that $d_W$ is a well-defined metric (e.g., by adapting \cite[Theorem 1.3]{BV01}), that the Wasserstein topology is independent of the choice of the sequence $(a_n)_{n=1}^\infty$, and that the Wasserstein topology is stronger than the weak$^*$-topology on $\text{T}(A)$. 

In general, the Wasserstein topology is strictly stronger than the weak$^*$-topology. By \cite[Proposition 1.7]{GJNS21} this holds if $A=C([-1,1])^{*m}$ is the 
free product of $m$ copies of $C([-1,1])$, for $m\geq 2$. In fact, $\text{T}(A)$ is not even separable in the Wasserstein topology  \cite[Theorem 1.8]{GJNS21}.
Nevertheless, our density results apply even for this topology.

\begin{mcor}\label{mcor:Wasserstein}
    Let $A$ be any $C^*$-algebra for which Theorem \ref{Poulsen}, Corollary \ref{fin_dim_algebras}, or Corollary \ref{group C*-algebras} shows that $\text{T}(A)$ is a Poulsen simplex. Then ${\partial_e\text{T}(A)}$  is dense in $\text{T}(A)$ in the Wasserstein topology.
\end{mcor}

Prior to our work, this statement was only known for $A=C([-1,1])^{*m}$, for every $m\geq 2$, by a result of Dabrowski  \cite[Corollary 5]{Da10}. 
The statement of Corollary~\ref{mcor:Wasserstein} is new for all other $C^*$-algebras that we treat, including notably $A=\mathbb M_n(\mathbb C)*\mathbb M_n(\mathbb C)$, for any $n\geq 2$.

\subsection*{Perturbations of von Neumann subalgebras}
We next turn to our main technical result, Theorem \ref{perturbation}, from which we will deduce Theorem \ref{Poulsen}.  To motivate the former, we first outline our strategy for proving Theorem \ref{Poulsen}.

Let  $A=A_1*A_2$  be the 
free product of unital, separable $C^*$-algebras $A_1$ and $A_2$, and let $\varphi\in\text{T}(A)$. 
The GNS construction provides a tracial von Neumann algebra $(M,\tau)$ and a $*$-homomorphism $\pi:A\rightarrow M$  such that $\pi(A)''=M$ and $\varphi=\tau\circ\pi$. Recall that $\varphi$ is an extreme trace if and only if $M$ is a factor.
As in \cite{OSV23}, in order to conclude that $\text{T}(A)$ is a Poulsen simplex, it suffices to approximate $\varphi$ by traces whose GNS von Neumann algebras are factors. To this end, we construct $*$-homomorphisms $\widetilde\pi:A\rightarrow \widetilde M$, for some tracial von Neumann algebra $(\widetilde M,\widetilde\tau)$ containing $M$ and satisfying $\widetilde\tau_{|M}=\tau$, which are pointwise close to $\pi$ in $\|\cdot\|_2$ and such that $\widetilde\pi(A)''$ is a factor. Once this is achieved, it follows that $\varphi$ is close in the Wasserstein distance (and thus in the weak$^*$-topology) to the extreme trace $\widetilde\tau\circ\widetilde\pi$.

In order to construct $\widetilde\pi$, we exploit the free product structure of $A$ by perturbing the image of $A_2$, while leaving $A_1$ unchanged. Specifically, we consider small perturbations $\widetilde\pi$ of $\pi$ of the form $\widetilde\pi(a_1)=\pi(a_1)$, for every $a_1\in A_1$, and $\widetilde\pi(a_2)=u\pi(a_2)u^*$, for every $a_2\in A_2$, for some unitary $u\in\widetilde M$ with $\|u-1\|_2\approx 0$.
Denoting $M_1=\pi(A_1)''$ and $M_2=\pi(A_2)''$, the factoriality of $\widetilde\pi(A)''$ then amounts to the von Neumann algebra $M_1\vee uM_2u^*$ generated by $M_1$ and $uM_2u^*$ being a factor. 

However, finding such a unitary $u$ is not always possible: 
if there are minimal central projections $p_1\in M_1$ and $p_2\in M_2$ with $\tau(p_1)+\tau(p_2)>1$, then $p_1\wedge up_2u^*\in M_1\vee uM_2u^*$ is a non-zero minimal central projection for any unitary $u$, see also Subsection~\ref{ssec:nodiff}. In other words, if $M_1$ and $M_2$ have ``large'' $1$-dimensional direct summands, then  $M_1\vee uM_2u^*$ is never a factor.
More generally, we show in Proposition~\ref{findim} that the same conclusion can be reached if one of these direct summands is only assumed to be finite dimensional, but not necessarily $1$-dimensional.

Theorem \ref{perturbation} below shows that this is essentially the only obstruction to finding a unitary $u$ with $\|u-1\|_2\approx 0$ such that $M_1\vee uM_2u^*$ is a factor. 
In order to capture the presence of a ``large'' finite dimensional direct summand, we define for a tracial von Neumann algebra $(M,\tau)$ the following:
$$\mathrm{e}(M)\coloneq \max\,\{\,\tau(p)\mid\,\text{$p\in M$ a minimal projection}\}.$$
By convention, we define $\text{e}(M)=0$, if $M$ is diffuse.

\begin{main}\label{perturbation}
Let $(M,\tau)$ be a tracial von Neumann algebra and $M_1,M_2\subset M$ be von Neumann subalgebras. Assume that $M_1\neq \mathbb C1,M_2\neq \mathbb C1$, $\dim(M_1)+\dim(M_2)\geq 5$, and $\mathrm{e}(M_1)+\mathrm{e}(M_2)\leq 1$.

Then there exists a II$_1$ factor $(\widetilde M,\widetilde\tau)$  such that $M\subset\widetilde M$, $\widetilde\tau_{|M}=\tau$ and for every $\eps>0$, there exists $v_\varepsilon\in\mathcal U(\widetilde M)$ satisfying $\|v_\varepsilon-1\|_2<\varepsilon$ and $M_1\vee v_\eps M_2 v_\eps^*$ is a II$_1$ factor.
\end{main}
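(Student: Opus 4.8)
The plan is to build $\widetilde M$ as a free-product-with-amalgamation and to realize the perturbing unitary $v_\eps$ as an approximation of a Haar unitary that conjugates $M_2$ into ``free position'' relative to $M_1$. Concretely, one wants to take $\widetilde M = M \bar\ast_{\C} L(\Z)$ (or, more carefully, $M * (\mathbb{C}1 \oplus \mathbb{C}1)$ amalgamated so as to have enough room), pick a Haar unitary $w$ that is free from $M$, and then interpolate: there is a one-parameter family of unitaries $v_t$ in the crossed-product / free-product model with $v_0 = 1$, $v_1 = w$, and $\|v_t - 1\|_2 \to 0$ as $t \to 0$. The point is that for suitable $t$, $M_1 \vee v_t M_2 v_t^*$ should already be a factor, even though $v_t$ is a small perturbation of the identity. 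So the real content is not the existence of the ambient factor but the \emph{factoriality} of $M_1 \vee v_t M_2 v_t^*$ for small $t$.

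The first step is to reduce to a statement about the relative commutant and the center. Writing $N_t = M_1 \vee v_t M_2 v_t^*$, one must show $Z(N_t) = \C 1$. The standard approach is a two-sided argument: (i) show $N_t' \cap N_t \subset M_1' \cap M_1$ and $\subset v_t(M_2' \cap M_2)v_t^*$ simultaneously forces any central element to be ``small'', using a malleable-deformation or spectral-gap style estimate on the free product; and (ii) rule out central projections directly using the hypothesis $\mathrm{e}(M_1) + \mathrm{e}(M_2) \le 1$ together with $\dim(M_1) + \dim(M_2) \ge 5$. The dimension and trace hypotheses are exactly what prevent the kind of obstruction described before the theorem (minimal central projections $p_1, p_2$ with $\tau(p_1) + \tau(p_2) > 1$ producing $p_1 \wedge v p_2 v^*$): one should split $M_i = M_i^{\mathrm{fd}} \oplus M_i^{\mathrm{diff}}$ into its finite-dimensional ``atomic'' part and its diffuse part, and handle the interaction of the atomic parts by a counting/trace argument (this is where $\mathrm{e}(M_1)+\mathrm{e}(M_2)\le 1$ is used quantitatively), while the diffuse parts are handled by the free-probability input.

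The technical heart, which I expect to be the main obstacle, is establishing factoriality at \emph{small} $t$ rather than at $t = 1$ (at $t=1$, $M_1$ and $w M_2 w^*$ are free, and freeness plus $\dim \ge 5$, $\mathrm{e}\le 1$ classically yields a factor — essentially Dykema-type free product computations). For small $t$ one cannot use freeness; instead one needs a perturbation/continuity argument: the set of $t$ for which $N_t$ is a factor should be shown to be both open (a spectral-gap or deformation-rigidity robustness statement: factoriality is stable under small unitary perturbations of one generating subalgebra, provided one has uniform control, e.g. no sequence of central projections with bounded-away-from-zero trace) and to accumulate at $0$. A cleaner route may be to avoid interpolation entirely: work in $\widetilde M = M * L(\Z)$ with generating Haar unitary $u_0$, and set $v_\eps = $ a unitary of the form $\exp(i \eps h)$ for a suitable self-adjoint $h$ in the free copy of $L(\Z)$, so that $\|v_\eps - 1\|_2 = O(\eps)$ automatically; then show that the subalgebra generated by $M_1$ and $v_\eps M_2 v_\eps^*$ "sees" enough of the free Haar unitary to kill the center, by computing that a central element would have to commute with $v_\eps M_2 v_\eps^*$ for a range of $\eps$, hence (by analyticity in $\eps$) with all of $w M_2 w^*$ for the genuine Haar unitary $w$, reducing to the free case.

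Once factoriality of $N_t$ (equivalently $M_1 \vee v_\eps M_2 v_\eps^*$) is known for a sequence $\eps \to 0$, the type II$_1$ conclusion is automatic: $N_t$ contains the non-trivial (at least $2$-dimensional, by the $\dim \ge 5$ and $\ne \C1$ hypotheses) algebra $M_1$, it is a factor, and $\mathrm{e}(M_1)+\mathrm{e}(M_2) \le 1$ together with the free/perturbed position prevents $N_t$ from being finite dimensional, so it is II$_1$; and $\widetilde M$ itself can be arranged to be a II$_1$ factor (e.g. by further free-multiplying with $L(\F_2)$, which does not affect any of the above). The remaining routine points are: checking $\widetilde\tau_{|M} = \tau$ (automatic for amalgamated free products over $\C$), checking $v_\eps \in \mathcal U(\widetilde M)$ and the norm bound (explicit), and assembling the diffuse-vs-atomic case analysis into the quantitative inequality — I would expect this last bookkeeping to be where Proposition~\ref{findim} (the converse obstruction) gets used as a sanity check that the hypotheses are sharp.
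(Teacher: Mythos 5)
There is a genuine gap, and it is worth being precise about where it sits. Your high-level framing is right — one wants a small unitary $v_\eps$ in a free-product-like ambient algebra so that $M_1 \vee v_\eps M_2 v_\eps^*$ is a II$_1$ factor, and the hypotheses $\dim(M_1)+\dim(M_2)\ge 5$ and $\mathrm{e}(M_1)+\mathrm{e}(M_2)\le 1$ are exactly the sharp conditions blocking finite-dimensional central summands. But the concrete engine you propose does not run. Your ``analyticity in $\eps$'' argument — that a central element of $N_\eps = M_1 \vee v_\eps M_2 v_\eps^*$ would commute with $v_\eps M_2 v_\eps^*$ for a range of $\eps$ and hence with $w M_2 w^*$ — is incoherent as stated, because a central element $y_\eps\in \mathcal Z(N_\eps)$ lives in a different algebra for each $\eps$ and there is no way to fix a single $y$ and vary $\eps$. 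Likewise, the openness in $t$ of the set $\{t: N_t \text{ is a factor}\}$ is unjustified and, without additional uniform control, is not to be expected: factoriality is not stable under small perturbations in general. Most tellingly, your ``cleaner route'' — working directly in $\widetilde M = M * L(\Z)$ with $v_\eps = \exp(i\eps h)$ for $h$ in the free $L(\Z)$ — is precisely the content of the paper's Question~\ref{open}(b), which the authors explicitly leave open. You are proposing to solve a subproblem the authors do not know how to solve.

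The paper's actual route is structurally different. It does not use any continuity/analyticity in $t$, and it does not work in a single free product $M * L(\Z)$. Instead, the key perturbation tool is Lemma~\ref{rotate}, which works in an amalgamated free product $M *_{A_2} (A_2 \,\overline{\otimes}\, L(\Z))$ and proves factoriality of $M_1 \vee u_t M_2 u_t^*$ for \emph{every} $t>0$ at once, under the intertwining condition $M_1\vee A_2\nprec_M A_2$ (invoking \cite{IPP05}). This lemma is then applied iteratively, with a new amalgamated free product at each stage, inside an induction on the dimension of finite dimensional abelian subalgebras, paralleling Dykema's computation in \cite{Dy93} of $*$-free products of finite dimensional abelian algebras. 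The base case is two projections each of trace $\tfrac12$ (Proposition~\ref{twoprojections}, proved by an explicit $2\times 2$-matrix/functional-calculus construction, not by continuity), and the inductive step (Lemma~\ref{induction2}) carefully tracks ``general position'' of central projections via conditions (B) and (C). Non-abelian $M_1, M_2$ are reduced to abelian ones by taking MASAs (Lemma~\ref{masa}) and applying Proposition~\ref{prop: property A inherited from subalgs}. Because each step enlarges the ambient algebra, the II$_1$ factor $\widetilde M$ is built up in stages rather than fixed at the outset; that it can be made independent of $\eps$ is a separate remark, not the mechanism of the proof. In short: you correctly identified what the hypotheses do and why the result is plausible, but you did not find the actual mechanism, and the mechanism you proposed is an open problem rather than a proof.
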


If $M_1$ or $M_2$ is diffuse, then Theorem \ref{perturbation} follows easily by applying  \cite{IPP05}, see Corollary~\ref{one diffuse algebra}. 
However, in order to prove Theorem \ref{Poulsen}, we need the general case of Theorem \ref{perturbation}.

The main novelty of Theorem \ref{perturbation} lies in treating the most difficult case 
when $M_1$ and $M_2$ are finite dimensional. In fact, most of the proof of Theorem \ref{perturbation} is devoted to the case when $M_1$ and $M_2$ are finite dimensional and abelian.  
In this case, the assumptions from Theorem \ref{perturbation} are inspired by Dykema's work \cite{Dy93}. Specifically, \cite[Theorem 2.3]{Dy93} implies that if  $M_1$ and $M_2$ are finite dimensional and abelian, then $M_1*M_2$ is a II$_1$ factor if and only if $(\star$) $M_1\neq \mathbb C1,M_2\neq \mathbb C1$, $\dim(M_1)+\dim(M_2)\geq 5$, and $\mathrm{e}(M_1)+\mathrm{e}(M_2)\leq 1$.
Identifying $M_1*M_2$ with $M_1\vee uM_2u^*$, where $u$ is a trace zero unitary which is freely independent from $M$, \cite[Theorem 2.3]{Dy93} implies that if $(\star)$ holds, then $M_1$ and \textit{some} unitary perturbation  of $M_2$ generate a II$_1$ factor.   
Theorem \ref{perturbation} 
shows that if $(\star)$ holds, then  $M_1$ and an \textit{arbitrarily small} unitary perturbation of $M_2$ generate a II$_1$ factor.

 We continue by illustrating the effectiveness of Theorem \ref{perturbation} with a consequence for the $C^*$-algebra $A_{m,n}:=\mathbb C^m*\mathbb C^n\cong C^*({\mathbb Z}/{m\mathbb Z}*{\mathbb Z}/{n\mathbb Z})$, where $m,n\geq 2$ satisfy $m+n\geq 5$.
Traces on $A_{m,n}$ have attracted considerable attention as they parameterize synchronous quantum correlations (see e.g. \cite{KPS18}). Note that  $\text{T}(A_{m,n})$ is not a Poulsen simplex, as follows from \cite[Theorem 1.4]{OSV23} (or  Corollary \ref{fin_dim_algebras}).  Nevertheless,  Theorem \ref{perturbation}  allows to characterize the traces on $A_{m,n}$  which can be approximated by extreme traces:

\begin{mcor}\label{A_{m,n}}
Let $m,n\geq 2$ with $m+n\geq 2$. Write    
 $\mathbb C^m=\bigoplus_{i=1}^m\mathbb Ce_i$ and $\mathbb C^n=\bigoplus_{j=1}^n\mathbb Cf_j$, for projections $(e_i)_{i=1}^m$ and $(f_j)_{j=1}^n$.  Then the following conditions are equivalent for every $\varphi\in\emph{T}(A_{m,n})$:
\begin{enumerate}
  \item $\varphi\in\overline{\partial_{\emph{e}}\emph{T}(A_{m,n})}^{\emph{Wasserstein}}$.
    \item $\varphi\in\overline{\partial_{\emph{e}}\emph{T}(A_{m,n})}^{\emph{weak}^*}$.
   
     \item $\varphi(e_i)+\varphi(f_j)\leq 1$, for every $1\leq i\leq m$ and $1\leq j\leq n$.
\end{enumerate}
\end{mcor}

An immediate interesting consequence of Corollary \ref{A_{m,n}} is that the closure of the extreme traces of $A_{m,n}$ is not a face of $\text{T}(A_{m,n})$.
Turning to the proof of Corollary, \ref{A_{m,n}}, it is clear that (1) $\Rightarrow$ (2). It is also easy to show that (3) holds if $\varphi\in\partial_{\text{e}}\text{T}(A_{m,n})$ (see the first paragraph of Subsection~\ref{ssec:nodiff}). Thus, it also holds if  $\varphi\in\overline{\partial_{\text{e}}\text{T}(A_{m,n})}^{\text{weak}^*}$.  
Finally, the implication (3) $\Rightarrow$ (2) is an immediate consequence of Theorem \ref{perturbation} in the case when $M_1\cong\mathbb C^m$ and $M_2\cong\mathbb C^n$.

Next, we discuss in further detail the assumptions from Theorem \ref{perturbation} and propose an open question suggested by its statement.
First, we note that the assumption that $\dim(M_1)+\dim(M_2)\geq 5$ is necessary. Indeed, if  $\dim(M_1)+\dim(M_2)<5$, then since $M_1\neq \mathbb C1$ and $M_2\neq \mathbb C1$, it follows that $\dim(M_1)=\dim(M_2)=2$. In this case, $M_1\vee uM_2u^*$ is generated by two projections, hence is a type I algebra and therefore not a II$_1$ factor, for any unitary $u$. Second, the assumption that $\text{e}(M_1)+\text{e}(M_2)\leq 1$ is also necessary if $M_1$ and $M_2$ are abelian. 
Indeed, if $\text{e}(M_1)+\text{e}(M_2)>1$, then we can find minimal (and necessarily central) projections $p_1\in M_1,p_2\in M_2$ with $\tau(p_1)+\tau(p_2)>1$. 

The II$_1$ factor $\widetilde M$ from Theorem \ref{perturbation} is obtained from $M$ by applying iteratively various amalgamated free product constructions. It remains open whether $\widetilde M$ can be taken of a specific form:

\begin{question}\label{open} Assume the setting of Theorem \ref{perturbation}.

\begin{itemize}
    \item[(a)] If $M$ is a II$_1$ factor, does the conclusion of Theorem \ref{perturbation} hold for $\widetilde M=M$?
    \item[(b)] For general $M$, does the conclusion of Theorem \ref{perturbation} hold for $\widetilde M=M*\text{L}(\mathbb Z)$ and some $v_\varepsilon\in\mathcal U(\widetilde M)$ 
    such that $v_\varepsilon\in\text{L}(\mathbb Z)$ and $\|v_\varepsilon-1\|_2\leq\varepsilon$.
\end{itemize}
\end{question}

\subsection*{The Poulsen simplex as a closed face}
A  natural question arising from Theorem~\ref{Poulsen} is what happens when the equivalent conditions of Theorem~\ref{Poulsen} fail, i.e., when $\mathrm{T}(A)$ is not a Poulsen simplex.  
We show that although $\mathrm{T}(A)$ might not be the Poulsen simplex, it always admits a closed face affinely homeomorphic to the Poulsen simplex, unless it is affinely homeomorphic to $\mathrm{T}(\C^2\ast\C^2)$. We refer to Remark~\ref{trace space of C^2*C^2} for a description of the trace simplex of $\C^2\ast\C^2\cong C^*(\Z/2\Z\ast\Z/2\Z)$.

\begin{main}\label{Poulsenface} For $i=1,2$, let $A_i$ be a unital, separable $C^*$-algebra such that $\emph{T}(A_i)$ is non-empty and does not consist of a single $1$-dimensional trace. Let $A=A_1*A_2$ be 
free product. Then exactly one of the following holds:
\begin{enumerate}
    \item $\emph{T}(A)$ admits a closed face which is affinely homeomorphic to the Poulsen simplex, or
    \item $A_i$ admits exactly two extreme traces, both of which are $1$-dimensional, for every $i\in\{1,2\}$. Moreover, in this case, $\emph{T}(A)$ is affinely homeomorphic to $\emph{T}(\mathbb C^2*\mathbb C^2)$.
\end{enumerate}
\end{main}

Note that if $\text{T}(A)$ admits the Poulsen simplex as a closed face, then it admits any metrizable Choquet simplex as a closed face. This is in contrast to many $C^*$-algebras of interest,  such as reduced $C^*$-algebras of countable groups with trivial amenable radical \cite{BKKO14}, that possess a unique trace. Note also that if $A$ is a unital separable $C^*$-algebra such that $\text{T}(A)$ admits the Poulsen simplex as a closed face, then the same is true for any unital separable $C^*$-algebra $B$ which admits $A$ as a quotient (see \cite{MR19}, and also Section~\ref{sec:face}).

The following corollary is an immediate consequence of Theorem \ref{Poulsenface}:

\begin{mcor}\label{Poulsenface_cor}

If $A=A_1*A_2$ is a 
free product of two finite dimensional $C^*$-algebras, then $\emph{T}(A)$ admits the Poulsen simplex as a closed face if and only if $A_1\neq\mathbb C\neq A_2$ and $A_1\ast A_2\not\cong \C^2\ast\C^2$. 

If $G=G_1\ast G_2$ is a free product of two non-trivial countable discrete groups, then the space of traces of $G$ admits the Poulsen simplex as a closed face if and only if $G\not\cong \Z/2\Z\ast \Z/2\Z$. 
\end{mcor}

 Corollary \ref{Poulsenface_cor} implies that the trace simplex of $\text{PSL}_2(\mathbb Z)\cong\mathbb Z/2\mathbb Z*\mathbb Z/3\mathbb Z$ admits the Poulsen simplex as a closed face. The same is therefore true for  $\text{SL}_2(\mathbb Z)$ since it has $\mathrm{PSL}_2(\Z)$ as a quotient.

Next, we discuss \cite[Question~1.11]{OSV23}, which asks whether the closed convex hull of the infinite dimensional extreme traces always yields a Poulsen simplex. 
We answer this question in the negative, and show in Proposition~\ref{noface} that this is in fact never the case. 
Moreover, we show that the closed convex hull of the infinite dimensional extreme traces is in fact never a Choquet simplex, and in particular not a closed face. Nevertheless, despite this negative answer, the extreme points of this closed convex set are dense. Additionally, 
in Lemma \ref{lem:diff},
we identify a related natural maximal \textit{non-closed} face of $\mathrm{T}(A)$ in which the extreme points are dense.

\subsection*{Infinite free products}

Another question suggested by Theorem \ref{Poulsen} is what happens for  free products of more than two $C^*$-algebras. Let $N\in\{2,3,\cdots\}\cup\{\infty\}$ and $\{A_i\}_{i=1}^N$ be unital separable $C^*$-algebra such that $\text{T}(A_i)$ is non-empty
and does not consist of a single $1$-dimensional trace for every $i$. Denote $A=*_{i=1}^N A_i$. 
If $N\in\{2,3,\cdots\}$, then Theorem \ref{Poulsen} readily implies that the following conditions are equivalent: 
\begin{enumerate}
 \item $\text{T}(A)$ is the Poulsen simplex.
    \item If $A_j$ has an isolated extreme 1-dimensional trace,  for every $j\in\{1,2,\cdots,N\}\setminus\{i\}$, for some $i\in\{1,2,\cdots,N\}$, then $A_i$ does not have an isolated extreme finite dimensional trace.
    \item $A$ does not have an isolated extreme finite dimensional trace.
\end{enumerate}

On the other hand, we show 
that if $N=\infty$, then $\text{T}(A)$ is always the Poulsen simplex:

\begin{mprop}\label{infinite-free-product}
For $i\geq 1$, let $A_i$  be a unital separable $C^*$-algebra such that $\emph{T}(A_i)$ is non-empty
and does not consist of a single $1$-dimensional trace. Let $A=*_{i=1}^\infty A_i$. Then $\emph{T}(A)$ is the Poulsen simplex.
\end{mprop}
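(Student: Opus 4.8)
The plan is to prove directly that the extreme, infinite-dimensional traces are dense in $\mathrm{T}(A)$. Write $B_n=A_1*\cdots*A_n$ and $C_n=*_{i>n}A_i$, so that $A=B_n*C_n$ and $\bigcup_n B_n$ is dense in $A$. Fix $\varphi\in\mathrm{T}(A)$, a finite set $F\subset A$ and $\eps>0$. By density of $\bigcup_n B_n$ we may pick $n$ and a finite set $G\subset B_n$ with each element of $F$ at distance $<\eps/3$ from $G$; since traces are contractive, it then suffices to produce an extreme, infinite-dimensional trace $\widetilde\varphi\in\mathrm{T}(A)$ with $\widetilde\varphi_{|B_n}=\varphi_{|B_n}$, as this forces $|\widetilde\varphi(f)-\varphi(f)|<\eps$ for all $f\in F$.

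The point is that, because $N=\infty$, the ``tail'' $C_n$ supplies a free, diffuse complement which we may glue on without touching $\varphi$ on $B_n$. First, for each $i$ I would choose a trace $\tau_i\in\mathrm{T}(A_i)$ whose GNS von Neumann algebra $M(\tau_i)$ is not $\mathbb{C}1$: if $A_i$ has a trace that is not $1$-dimensional, take it; otherwise $\mathrm{T}(A_i)$ consists of characters and, not being a single point, contains two distinct ones $\chi_1\neq\chi_2$, and then $\tfrac12(\chi_1+\chi_2)$ has GNS algebra $\mathbb{C}^2$. Let $\omega=*_{i>n}\tau_i\in\mathrm{T}(C_n)$ be the (full) free product trace, and set $\widetilde\varphi=(\varphi_{|B_n})*\omega\in\mathrm{T}(B_n*C_n)=\mathrm{T}(A)$. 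By construction $\widetilde\varphi_{|B_n}=\varphi_{|B_n}$, so $\widetilde\varphi$ agrees with $\varphi$ on $F$ up to $\eps$. Since the GNS construction is compatible with free products (the free product of faithful normal traces being faithful), the GNS von Neumann algebra of $\widetilde\varphi$ is $N_0*P$, where $N_0$ is the GNS algebra of $\varphi_{|B_n}$ and $P:=*_{i>n}(M(\tau_i),\tau_i)$ is the GNS algebra of $\omega$.

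It remains to check that $N_0*P$ is a II$_1$ factor, which follows from two standard facts. First, $P$ is the free product of infinitely many tracial von Neumann algebras each different from $\mathbb{C}1$, hence a II$_1$ factor by Dykema's work \cite{Dy93} (and, in the case some $M(\tau_i)$ is diffuse, also by \cite{IPP05}). Second, the free product $N_0*P$ of an arbitrary tracial von Neumann algebra $N_0$ with a II$_1$ factor $P$ is again a II$_1$ factor: if $N_0=\mathbb{C}1$ this is immediate, and otherwise it follows from \cite{IPP05} (cf. Corollary~\ref{one diffuse algebra}), $P$ being diffuse. Hence $\widetilde\varphi$ is an extreme, infinite-dimensional trace that is $\eps$-close to $\varphi$ on $F$, and since $\mathrm{T}(A)$ is non-trivial (as $\mathrm{T}(A_i)$ is not a single point for each $i$), we conclude that $\mathrm{T}(A)$ is the Poulsen simplex.

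No perturbation argument is needed here, which is precisely the contrast with the case $N<\infty$: when infinitely many free factors are available one can afford to append a diffuse free complement rather than having to rotate $M_2$ by an arbitrarily small unitary as in Theorem~\ref{perturbation}. The only points that require genuine (though routine) care are the two bookkeeping items: verifying that every $A_i$ satisfying the hypothesis admits a trace with nontrivial GNS algebra (the case analysis above), and confirming that the two invoked structural results about free products — in particular the factoriality of the infinite free product $P$, where the hypothesis $N=\infty$ is essential — apply in the stated generality.
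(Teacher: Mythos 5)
Your overall structure matches the paper's: replace the free tail of $\varphi$ by a free product of carefully chosen representations while leaving the first $n$ free factors unchanged, verify the perturbed trace is extreme and infinite dimensional, and let $n\to\infty$. The paper phrases this via representations $\pi_n:A\to M*(*_{i>n}N_i)$ that agree with $\pi$ on $B_n$; you phrase it via $\widetilde\varphi=(\varphi|_{B_n})*\omega$ and recover the GNS algebra afterwards. These are interchangeable, and your $\eps/3$ reduction to a single approximating trace is fine.

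The gap is in your first structural fact: the assertion that the free product $P=*_{i>n}M(\tau_i)$ of infinitely many tracial von Neumann algebras, each $\neq\mathbb{C}1$, is automatically a II$_1$ factor. This is false in general. Take $M(\tau_i)=\mathbb{C}p_i\oplus\mathbb{C}(1-p_i)$ with $\tau(p_i)=1-2^{-(i+1)}$. Free projections are automatically in general position, so inductively $\tau\bigl(\bigwedge_{i\le k}p_i\bigr)=1-\sum_{i\le k}2^{-(i+1)}$, and hence $q=\bigwedge_i p_i$ is a non-zero projection of trace $\tfrac12$; iterating the argument of Subsection~\ref{ssec:nodiff} shows $q$ is central with $(*_iM(\tau_i))q=\mathbb{C}q$, so $P$ is not a factor, indeed not even diffuse. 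Nothing in your choice of $\tau_i$ rules this out: ``take any trace of $A_i$ that is not $1$-dimensional'' allows $M(\tau_i)$ to have a minimal central projection of trace arbitrarily close to $1$ (e.g.\ $A_i=\mathbb{C}\oplus\mathbb{M}_2(\mathbb{C})$ with $\tau_i$ assigning weight $1-2^{-(i+1)}$ to the $\mathbb{C}$-summand). The paper guards against precisely this by insisting that every minimal projection of $N_i$ have trace at most $\tfrac12$ — taking a \emph{factorial} trace of dimension $\ge 2$ when one exists, else $\tfrac12(\chi_1+\chi_2)$ — and then, rather than asserting factoriality of the whole infinite free product, groups the tail as $Q_n*(N_{n+1}*N_{n+2})*(N_{n+3}*N_{n+4})$, uses \cite{Dy93} to see the two grouped pairs are diffuse, and applies \cite{Po83} (or \cite{IPP05}) to trap $\mathcal Z(\pi_n(A)'')$ in the intersection $(N_{n+1}*N_{n+2})\cap(N_{n+3}*N_{n+4})=\mathbb{C}1$. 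Your argument is repaired by exactly these two adjustments: specify $\tau_i$ so that $\mathrm{e}(M(\tau_i))\le\tfrac12$, and replace the claim that $P$ itself is a II$_1$ factor by the grouping argument applied to $N_0*P$. Note also that \cite{Dy93} by itself does not treat infinite free products of arbitrary (possibly non-hyperfinite) tracial von Neumann algebras, so even after fixing the choice of $\tau_i$, citing it directly for factoriality of $P$ would require further justification.
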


This result implies that if $G=*_{i=1}^\infty G_i$ is a free product of infinitely many non-trivial countable groups, then the trace space of $G$ is the Poulsen simplex. This in particular applies when $G_i$ are non-trivial finite groups, contrasting the fact that the trace space a free product of finitely many non-trivial finite groups is never Poulsen, as established by \cite[Theorem 1.4]{OSV23}.

Let $B$ be a unital separable $C^*$-algebra such that $\text{T}(B)$ is non-empty
and does not consist of a single $1$-dimensional trace. Let $A=*_1^\infty B$ be the 
free product of infinitely many copies of $B$. 
It was shown in \cite[Theorems 5.3 and 5.5]{DDM14} that a certain compact convex subset $\text{TQSS}(B)\subset\text{T}(A)$ (consisting of the tracial quantum symmetric states on $A$) is a closed face of $\text{T}(A)$, which is the Poulsen simplex. Hence, $\text{T}(A)$ admits the Poulsen simplex as a closed face. 
Strengthening this fact, Proposition \ref{infinite-free-product} shows that $\text{T}(A)$ is in fact the Poulsen simplex.

\subsection*{An application to quantum information theory}
There is a strong and  interesting connection between traces on free products of $C^*$-algebras and quantum information theory. We describe below one such instance and explain the implications of our results. 

In quantum communication, a quantum channel is modeled by a unital completely positive trace preserving (UCPT) map between matrix algebras $T:\mathbb{M}_n(\C)\to \mathbb{M}_n(\C)$. Such a  UCPT map $T$ factorizes through a tracial von Neumann algebra $(M,\tau)$ if there exist unital $*$-homomorphisms $\alpha,\beta:\mathbb{M}_n(\C) \to M$, such that  
 $T=\beta^*\circ \alpha$, where $\beta^*:M\to \mathbb{M}_n(\C)$ is the adjoint map.   This condition is equivalent to  $\text{tr}_n(T(x)y)=\tau(\alpha(x)\beta(y))$, for every $x,y\in \mathbb M_n(\mathbb C)$, where $\text{tr}_n:\mathbb M_n(\mathbb C)\to \C$ is the normalized trace. The factorization is called {\it surjective} if $\alpha(\mathbb M_n(\mathbb C))\vee\beta(\mathbb M_n(\mathbb C))=M$.   
A UCPT map $T$ is called {\it factorizable} if it admits a factorization through a tracial von Neumann algebra. We note that if $n = 2$, then  all UCPT maps are factorizable \cite{Kum85}. On the other hand,  non-factorizable UCPT maps exist for all $n\geq 3$  \cite{HM11}.

It was shown in \cite{MR19} that factorizable UCPT maps $\mathbb M_n(\mathbb C)\rightarrow\mathbb M_n(\mathbb C)$ are parameterized by traces on the 
free product $C^*$-algebra $\mathbb{M}_n(\C)*\mathbb{M}_n(\C)$. This is exhibited by an explicit  surjective continuous affine map from the trace space $\mathrm{T}(\mathbb{M}_n(\C)*\mathbb{M}_n(\C))$ onto the space of factorizable UCPT maps.
Thus, in a similar fashion to \cite[Corollary 1.7]{OSV23}, Corollary \ref{fin_dim_algebras} 
has the following consequence: 
\begin{mcor}\label{cor:factorizable channels}
 Let $n\geq 2$.   Then any factorizable UCPT map $T:\mathbb M_n(\mathbb C)\rightarrow\mathbb M_n(\mathbb C)$ can be approximated arbitrarily well by  UCPT maps factorizing surjectively through a II$_1$ factor.
\end{mcor}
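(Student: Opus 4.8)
The plan is to combine the parameterization of factorizable UCPT maps by traces on $\mathbb M_n(\C)\ast\mathbb M_n(\C)$ obtained in \cite{MR19} with the density of extreme, infinite dimensional traces furnished by Theorem~\ref{Poulsen} and Corollary~\ref{fin_dim_algebras}. Write $A=A_1\ast A_2$ with $A_1=A_2=\mathbb M_n(\C)$, let $\iota_1,\iota_2:\mathbb M_n(\C)\hookrightarrow A$ be the canonical embeddings, and let $\mathcal F_n$ denote the compact convex set of factorizable UCPT maps $\mathbb M_n(\C)\to\mathbb M_n(\C)$. By \cite{MR19} there is a continuous affine surjection $\Phi:\mathrm T(A)\twoheadrightarrow\mathcal F_n$ sending a trace $\varphi$ to the UCPT map $T_\varphi$ determined by $\text{tr}_n(T_\varphi(x)y)=\varphi(\iota_1(x)\iota_2(y))$ for all $x,y\in\mathbb M_n(\C)$.

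First I would record the following observation. Given $\varphi\in\mathrm T(A)$ with GNS triple $(M,\tau,\pi)$, the unital $*$-homomorphisms $\alpha:=\pi\circ\iota_1$ and $\beta:=\pi\circ\iota_2$ factorize $T_\varphi$ through $(M,\tau)$, and this factorization is automatically surjective: $\pi(A)''=M$ by the GNS construction, while $\pi(A)$ is generated as a $C^*$-algebra by $\alpha(\mathbb M_n(\C))$ and $\beta(\mathbb M_n(\C))$, so $\alpha(\mathbb M_n(\C))\vee\beta(\mathbb M_n(\C))=M$. Moreover $M$ is a factor exactly when $\varphi$ is an extreme trace, and $M$ is infinite dimensional exactly when $\varphi$ is infinite dimensional; since a factor admitting a faithful normal tracial state is either finite dimensional or a II$_1$ factor, it follows that $M$ is a II$_1$ factor whenever $\varphi$ is an extreme, infinite dimensional trace. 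Hence, writing $E\subset\mathrm T(A)$ for the set of extreme, infinite dimensional traces, every map in $\Phi(E)$ admits a surjective factorization through a II$_1$ factor.

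It then remains only to invoke the earlier results. Since $\mathbb M_n(\C)$ has no $1$-dimensional direct summand for $n\geq2$, Corollary~\ref{fin_dim_algebras} gives that $\mathrm T(A)$ is the Poulsen simplex, and the concluding assertion of Theorem~\ref{Poulsen} gives that $E$ is dense in $\mathrm T(A)$. As $\Phi$ is continuous with $\Phi(\mathrm T(A))=\mathcal F_n$, the image $\Phi(E)$ is dense in $\mathcal F_n$. Combined with the previous paragraph, the factorizable UCPT maps admitting a surjective factorization through a II$_1$ factor are dense in $\mathcal F_n$; since $\mathcal F_n$ lies in the finite dimensional space of linear maps $\mathbb M_n(\C)\to\mathbb M_n(\C)$, all norms induce the same topology there and this is precisely the claimed approximation. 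The argument carries no real obstacle: essentially all the work has already gone into Theorem~\ref{Poulsen}, and the only points needing care are the elementary facts that the GNS factorization is always surjective and that an infinite dimensional tracial factor is a II$_1$ factor, together with the continuity, affineness and surjectivity of $\Phi$ recorded in \cite{MR19}.
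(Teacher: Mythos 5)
Your proposal is correct and follows essentially the same route the paper intends: the surjective continuous affine parameterization from \cite{MR19}, the observation that the GNS factorization of a trace is surjective and lands in a II$_1$ factor precisely for extreme infinite dimensional traces, and the density of such traces supplied by Corollary~\ref{fin_dim_algebras} together with the moreover part of Theorem~\ref{Poulsen}. No gaps.
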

Corollary \ref{cor:factorizable channels} differs from \cite[Corollary 1.7]{OSV23} by showing that the approximating maps can be chosen to factorize surjectively through a II$_1$ factor rather than a (possibly finitely dimensional) tracial factor. Furthermore, Corollary \ref{cor:factorizable channels}  also covers the cases $n=2$ and $n=3$ not covered in \cite{OSV23}. On the finite dimensional side, we point out that in the space of all factorizable UCPT maps, the density of UCPT maps which factor through finite dimensional algebras is by \cite[Theorem~3.7]{HM14} equivalent to a positive answer to the  Connes embedding problem, a negative answer to which was announced in 
\cite{JNVWY20}.

\subsection*{Organization of the paper} Besides the Introduction, this paper consists of six other sections. In Section~\ref{sec:prelim}, we gather some definitions and notations, as well as a few elementary lemmas to be used throughout the paper. In Section~\ref{sec:obstruction}, we establish an obstruction to the trace simplex of a 
free product $C^*$-algebra being a Poulsen simplex, and we prove Proposition~\ref{infinite-free-product}. In Sections~\ref{sec:perturbation} and \ref{sec:gen_position}, we prove Theorem~\ref{perturbation}, and in Section~\ref{sec:Poulsen}, we prove Theorem~\ref{Poulsen}. Finally, in Section~\ref{sec:face}, we study what happens when the equivalent conditions of Theorem~\ref{Poulsen} fail and we prove Theorem~\ref{Poulsenface}.

\subsection*{Acknowledgments} The authors would like to thank Mikael R{\o}rdam for asking us a question which led to Theorem~\ref{Poulsenface}, and for pointing out the result from \cite{EL92} in Remark~\ref{rk:tracialquotient}.  The second-named author would also like to thank Mikael R{\o}rdam for fruitful conversations related to the contents of this paper, and both Mikael R{\o}rdam and Magdalena Musat for useful discussions about the contents of \cite{MR19} and \cite{OSV23}. Additionally, the authors thank Raz Slutsky for his remarks on the manuscript and Dima Shlyakhtenko for pointing out the reference \cite{Da10}.

\section{Preliminaries}\label{sec:prelim}

\subsection{Tracial von Neumann algebras} 
A {\it tracial von Neumann algebra} is a pair $(M,\tau)$ consisting of a von Neumann algebra $M$ and a fixed faithful normal tracial state $\tau:M\rightarrow\mathbb C$. 
We start by recalling some terminology concerning tracial von Neumann algebras; see  \cite{AP} for more information.

Let $(M,\tau)$ be a tracial von Neumann algebra. Any unital embedding $M_0\subset M$, where $(M_0,\tau_0)$ is a tracial von Neumann algebra, will be assumed trace preserving, i.e., such that $\tau_0=\tau_{|M_0}$. We denote by $\mathrm{E}_{M_0}:M\rightarrow M_0$ the {\it conditional expectation onto} $M_0$. For $x\in M$,  $\|x\|$  and $\|x\|_2\coloneq \sqrt{\tau(x^*x)}$ denote the operator norm and $2$-norm of $x$, respectively.  We denote by $\mathrm{L}^2(M)$ the Hilbert space obtained by completing $M$ with respect to the $2$-norm and by $\mathcal U(M)$ the {\it unitary group} of $M$. A projection $p\in M$ is called \textit{minimal} if $pMp\cong \C 1$. We call $M$ \textit{diffuse} if it has no minimal projections.

The von Neumann algebra generated by a self-adjoint set $S\subset M$ with $1\in S$ is equal to its double commutant $S''\coloneq (S')'$, where for $S\subset \mathbb B(\mathcal H)$ we write $S'\coloneq\{x\in \mathbb B(\mathcal H)\mid xy=yx \text{ for all } y\in S\}$. 
For von Neumann subalgebras $M_1,M_2\subset M$, which we will always assume unital, we denote by $M_1\vee M_2\coloneq (M_1\cup M_2)''$ the von Neumann algebra generated by $M_1$ and $M_2$.

We denote by $\mathcal Z(M)\coloneq M'\cap M$ the {\it center} of $M$.  Given a projection $p\in M$, we denote by $z_M(p)$ the smallest projection $z\in\mathcal Z(M)$ with $z\geq p$ and call it the {\it central support} of $p$. 

If $p,q\in M$ are projections, then $\tau(p\wedge q)=\tau(p)+\tau(q)-\tau(p\vee q)\geq \tau(p)+\tau(q)-1$. Here $p\wedge q$ denotes the largest lower bound, i.e., the largest projection $r$ satisfying $r\leq p$ and $r\leq q$, and $p\vee q$ denotes the smallest upper bound, i.e., the smallest projection $r$ satisfying $p\leq r$ and $q\leq r$.
We say that $p$ and $q$ are in {\it general position} if $\tau(p\wedge q)=\max\{\tau(p)+\tau(q)-1,0\}$.

We continue with two elementary lemmas:

\begin{lemma}\label{complement}
Let $(M,\tau)$ be a tracial von Neumann algebra, $p\in M\setminus\{0,1\}$ be a projection and $q_1,\ldots,q_n\in M\setminus\{1\}$ be pairwise orthogonal projections, for some $n\geq 1$. Assume 
that $p$ and $q_j$ are in general position, for every $1\leq j\leq n$. Then $p-\sum_{j=1}^np\wedge q_j\neq 0$.
\end{lemma}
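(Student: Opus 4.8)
Since the $q_j$ are pairwise orthogonal, the projections $p\wedge q_j$ satisfy $p\wedge q_i\le q_i\perp q_j\ge p\wedge q_j$, hence are pairwise orthogonal, and each is a subprojection of $p$. Therefore $r\coloneq p-\sum_{j=1}^n p\wedge q_j$ is a well-defined projection in $M$, and by faithfulness of $\tau$ it suffices to show $\tau(r)=\tau(p)-\sum_{j=1}^n\tau(p\wedge q_j)>0$, i.e., to prove the strict inequality $\sum_{j=1}^n\tau(p\wedge q_j)<\tau(p)$.

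The general position hypothesis, together with faithfulness, gives $\tau(p\wedge q_j)=\max\{\tau(p)+\tau(q_j)-1,0\}$ for every $j$, so only the indices in $S\coloneq\{j\mid \tau(p)+\tau(q_j)>1\}$ contribute, and
$$\sum_{j=1}^n\tau(p\wedge q_j)=\sum_{j\in S}\bigl(\tau(p)+\tau(q_j)-1\bigr)=(|S|-1)\tau(p)+\Bigl(\sum_{j\in S}\tau(q_j)\Bigr)-\bigl(|S|-\tau(p)\bigr).$$
Thus the desired inequality is equivalent to $(|S|-1)\tau(p)+\sum_{j\in S}\tau(q_j)<|S|$. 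I plan to verify this case by case using the two available strict constraints: $\tau(p)<1$ (since $p\neq 1$) and $\sum_{j\in S}\tau(q_j)\le 1$ (since the $q_j$ are pairwise orthogonal), plus $\tau(q_j)<1$ for each $j$ (since $q_j\neq 1$).

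If $S=\emptyset$, the left side is $-\tau(p)<0\le|S|$ — here one uses $p\neq 0$ to get $\tau(p)>0$, and in fact the sum of traces is $0<\tau(p)$ directly. If $|S|=1$, say $S=\{j\}$, the inequality reads $\tau(q_j)<1$, which holds since $q_j\neq 1$. If $|S|\ge 2$, then $(|S|-1)\tau(p)<|S|-1$ because $\tau(p)<1$ and $|S|-1\ge 1$, whence $(|S|-1)\tau(p)+\sum_{j\in S}\tau(q_j)<(|S|-1)+1=|S|$. In every case the strict inequality holds, so $\tau(r)>0$ and $r\neq 0$.

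I do not anticipate a genuine obstacle here; the only point requiring care is organizing the case analysis so that strictness is never lost (the borderline being $|S|=1$, which is rescued precisely by $q_j\neq 1$, and $|S|\ge 2$, rescued by $p\neq 1$). It is also worth noting explicitly why the general position hypothesis is essential: without it one only has $\tau(p\wedge q_j)\ge\max\{\tau(p)+\tau(q_j)-1,0\}$, the wrong direction, and indeed taking $q_1=p$ (not in general position with $p$ unless $\tau(p)\in\{0,1\}$) would force $r=0$.
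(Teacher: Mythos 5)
Your proof is correct and follows essentially the same route as the paper: pass to traces, reduce to the index set $S$ of non-trivial intersections, and split into the cases $S=\emptyset$, $|S|=1$, and $|S|\ge 2$, using $q_j\neq 1$ in the second case and $p\neq 1$ in the third. The algebraic rearrangement differs only cosmetically from the one in the paper, and the observation about $\sum_{j\in S}\tau(q_j)\le 1$ from pairwise orthogonality is used there too (implicitly, via $\sum_{j\in S}\tau(q_j)-1\le 0$).
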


\begin{proof} 
Let $S=\{1\leq j\leq n\mid p\wedge q_j\neq 0\}$. If $S=\emptyset$, the conclusion is immediate as $p\neq 0$. If $|S|=1$, let $1\leq k\leq n$ such that $S=\{k\}$. Since $q_{k}\neq 1$, we get that $\tau(p\wedge q_k)=\tau(p)+\tau(q_k)-1<\tau(p)$ and hence $p-\sum_{1\leq j\leq n}p\wedge q_j=p-p\wedge q_k\neq 0$. If $|S|\geq 2$, then as $p\neq 1$, $(|S|-1)(\tau(p)-1)<0$ and thus
 $$\tau(\sum_{1\leq j\leq n}p\wedge q_j)=\sum_{j\in S} (\tau(p)+\tau(q_j)-1)=\tau(p)+ (\sum_{j\in S}\tau(q_j)-1)+(|S|-1)(\tau(p)-1)<\tau(p).$$
This altogether implies that  $p-\sum_{1\leq j\leq n}p\wedge q_j\neq 0$.
\end{proof}

\begin{lemma}
[\!\!\cite{Dy93}]
\label{generation}
Let $M$ be a von Neumann algebra and $M_1, M_2\subset M$ be von Neumann subalgebras with $M=M_1\vee M_2$. Let $p\in \mathcal Z(M_1)$ be a projection and put $N=(\mathbb Cp\oplus M_1(1-p))\vee M_2$.
Then
\begin{enumerate}
\item $pMp=pNp\vee M_1p$. 
\item $z_M(p)=z_N(p)$.
\end{enumerate}
\end{lemma}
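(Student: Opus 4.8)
The plan is to exploit the fact that $M = M_1 \vee M_2$ and that $p$ is central in $M_1$, so that $M_1 = M_1 p \oplus M_1(1-p)$ as a direct sum of the algebras $M_1 p$ (acting on $p\mathrm{L}^2(M)$) and $M_1(1-p)$. The key structural observation is that $M_1$ is contained in the algebra $M_1 p \vee N$, where $N = (\C p \oplus M_1(1-p)) \vee M_2$: indeed $M_1(1-p) \subset N$ and $M_1 p$ is thrown in by hand, and since $p \in \C p \oplus M_1(1-p)$ we have $M_1 = M_1 p + M_1(1-p) \subset M_1 p \vee N$. Combined with $M_2 \subset N \subset M_1 p \vee N$, this gives $M = M_1 \vee M_2 \subset M_1 p \vee N \subset M$, hence $M = M_1 p \vee N$.

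For part (1), I would compress this identity by $p$. Note that $p$ is central in both $M_1 p$ (trivially, as $M_1 p$ has unit $p$) and in $N$ (since $p \in \C p \oplus M_1(1-p) \subset \mathcal Z(\C p \oplus M_1(1-p))$, and... here one must be slightly careful: $p$ is central in $\C p \oplus M_1(1-p)$ but need not be central in $N$ because $M_2$ need not commute with $p$). The correct route is: compressing by $p$, one has $pMp = p(M_1 p \vee N)p$. Since the unit of $M_1 p$ is $p$, we get $p M_1 p \cdot p = M_1 p$ on the nose, and I claim $p(M_1 p \vee N)p = (M_1 p) \vee (pNp)$. This last claim is the one genuinely technical point: for von Neumann algebras $P, Q \subset M$ and a projection $p$ which is central in $P$, one has $p(P \vee Q)p = P p \vee pQp$ — here applied with $P = M_1 p$ (whose unit is already $p$, so $Pp = P$) and $Q = N$. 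The inclusion $\supseteq$ is clear; for $\subseteq$ one writes a general element of $P \vee Q$ as a $\|\cdot\|_2$-limit of sums of words alternating between $P$ and $Q$, and uses that $p$ commutes with $P$ to slide $p$'s through and collapse each such word, after compression by $p$, into a word in $Pp = P$ and $pQp$. This gives $pMp = M_1 p \vee pNp$, which is exactly (1).

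For part (2), I would argue that $z_M(p) = z_N(p)$. Since $N \subset M$, any central projection of $M$ dominating $p$ restricts appropriately, giving $z_N(p) \geq z_M(p)$ is the easy... actually the easy inclusion is $z_M(p) \le z_N(p)$ is false in general direction-wise; let me instead argue: $z_N(p) \in \mathcal Z(N)$, and I want to show $z_N(p)$ is also central in $M = M_1 p \vee N$, i.e. that it commutes with $M_1 p$. But $z_N(p) \geq p$ and $z_N(p) \in N$; writing $z_N(p) = p + r$ with $r = z_N(p) - p \leq 1-p$ a projection in $N$, it suffices to show $r$ commutes with $M_1 p$, which is automatic since $r \le 1-p$ and every element of $M_1 p$ is supported under $p$, so $r x = 0 = x r$ for $x \in M_1 p$. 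Hence $z_N(p)$ is central in $M$ and dominates $p$, so $z_M(p) \le z_N(p)$. Conversely $z_M(p) \in \mathcal Z(M) \cap M \subset$ ... we need $z_M(p) \in N$: since $M = M_1 p \vee N$ and $z_M(p)$ is central in $M$, in particular $z_M(p)$ commutes with everything, but that alone does not place it in $N$. Instead: $z_M(p) = p + s$ with $s \le 1-p$; and $p(1-z_M(p)) $ ... the cleanest finish is to note $z_M(p)$ is the smallest central projection of $M$ over $p$, and $z_N(p)$ (shown above to be central in $M$, over $p$) is a candidate, so $z_M(p) \le z_N(p)$; for the reverse, observe $z_M(p) \wedge (1-p) = z_M(p) - p$ lies in $M$ and, being dominated by $1-p$ and central, one checks it lies in $N$ (it commutes with $M_2$ and with $M_1(1-p)$, hence with $\C p \oplus M_1(1-p)$ and with $M_2$, so with $N$, and it is in $N$ because... ). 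The potential obstacle, and the step I would spend the most care on, is precisely this membership-in-$N$ issue for part (2); I expect it is handled by the observation that a central projection of $M$ lying under $1-p$ is determined by how it cuts $M(1-p)$, and $M(1-p) = M_1(1-p) \vee (1-p)N(1-p)$ by the same compression argument as in part (1) applied to $1-p$ in place of $p$ — wait, $1-p$ need not be central in $M_1 p$; rather one re-runs the word-collapsing argument directly. Modulo this bookkeeping, (2) follows.

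The main obstacle, then, is the compression lemma $p(P\vee Q)p = Pp \vee pQp$ for $p$ central in $P$ — a standard but not entirely trivial fact that requires approximating elements of $P \vee Q$ by alternating words and tracking the projection $p$ through them using centrality — together with the careful identification in (2) of which central projections of the big algebra actually live in the smaller algebra $N$.
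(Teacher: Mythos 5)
Your argument for part (1) is correct and follows the paper's route: you identify $M = M_1 p \vee N$ and compress by $p$, using that $p$ is the unit of $M_1 p$ to collapse alternating words and obtain $p(N \vee M_1 p)p \subset pNp \vee M_1 p$. Your argument for the inclusion $z_M(p) \le z_N(p)$ in part (2) is likewise correct and is the same as the paper's: $z_N(p)$ commutes with $N$ by definition of central support, and it commutes with $M_1 p$ because $z_N(p) - p$ is a projection under $1-p$, hence orthogonal to $M_1 p$; so $z_N(p) \in \mathcal Z(M)$, and minimality of $z_M(p)$ gives the inequality.

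The gap you flag for the reverse inclusion $z_N(p) \le z_M(p)$ is a genuine one, and the route you are sketching to close it will not work cleanly. You are trying to prove $z_M(p) \in N$ directly; but that membership holds only as a consequence of the equality $z_M(p) = z_N(p) \in N$, so it is not an independently provable stepping stone. The inequality $z_N(p) \le z_M(p)$ in fact requires no structure beyond $p \in N \subset M$: central supports are monotone in the ambient algebra. The cleanest way to see this is the formula $z_M(p) = \bigvee_{u \in \mathcal U(M)} u p u^*$, from which $z_N(p) = \bigvee_{u \in \mathcal U(N)} u p u^* \le z_M(p)$ since $\mathcal U(N) \subset \mathcal U(M)$. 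This is precisely what the paper invokes without comment when it writes ``we also have that $z_N(p) \le z_M(p)$.'' Replacing your attempted membership argument with this one-line observation closes the proof, after which your approach and the paper's coincide.
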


This result is proved in \cite[Theorem 1.2]{Dy93} when $M_1$ and $M_2$ are freely independent, but the same arguments work in the more general context here. We recall the proof for the reader's convenience. 
\begin{proof}
Since $M$ is generated by $N$ and $M_1p$, and $p(N\vee M_1p)p\subset pNp\vee M_1p$, part (1) follows.
Since $z_N(p)\geq p$, $z_N(p)$ commutes with $M_1p$. Since $z_N(p)$ also commutes with $N$, we deduce that $z_N(p)\in\mathcal Z(M)$ and thus $z_N(p)\geq z_M(p)$. Since we also have that $z_N(p)\leq z_M(p)$, we conclude that $z_N(p)=z_M(p)$, which proves part (2).
\end{proof}

\subsection{Minimal projections}\label{sec:minimal}
Recall that for a tracial von Neumann algebra $(M,\tau)$ we denote 
$$\mathrm{e}(M)=\max\{\tau(p)\mid\;\text{$p\in M$ a minimal projection}\}.$$
For instance, $\mathrm{e}(M)=0$ if $M$ is diffuse, and $\mathrm{e}(M)=\max\{\tau(p_i)\mid 1\leq i\leq n\}$ if $M=\mathbb Cp_1\oplus\cdots\oplus\mathbb Cp_n$ is finite dimensional and abelian.
Moreover, we see that $\mathrm{e}(M)=\mathrm{e}(A)$, if $A\subset M$ is a MASA (maximal abelian von Neumann subalgebra). The following lemma shows that whenever $\mathrm{e}(M)\neq 0$, we can in fact find a finite dimensional abelian von Neumann subalgebra $A\subset M$ with $\mathrm{e}(M)=\mathrm{e}(A)$.

\begin{lemma}\label{masa}
Let $(M,\tau)$ be a non-diffuse tracial von Neumann algebra with $M\neq \mathbb C1$. Then there exists a finite dimensional abelian von Neumann subalgebra $A\subset M$ such that $\mathrm{e}(A)=\mathrm{e}(M)$.  
Moreover, if $M$ is not isomorphic to either $\mathbb C^2$ or $\mathbb M_2(\mathbb C)$, then we can take $A$ with $\dim(A)\geq 3$.
\end{lemma}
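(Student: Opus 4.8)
The plan is to construct $A$ inside a MASA of $M$, where the statement becomes a purely abelian matter. Since $M$ is non-diffuse, $\mathrm{e}(M)>0$; set $t=\mathrm{e}(M)$. I would fix a MASA $A_0\subset M$; as recalled just above the lemma, $\mathrm{e}(A_0)=\mathrm{e}(M)=t$, and $A_0\neq\mathbb C1$ because $M\neq\mathbb C1$. The atoms of $A_0$ have traces summing to at most $\tau(1)=1$, so only finitely many of them have trace $\geq\delta$ for any $\delta>0$; hence the supremum defining $\mathrm{e}(A_0)$ is attained by some atom $p\in A_0$ with $\tau(p)=t$, and $p\neq 1$ since $M\neq\mathbb C1$ (recall that an atom of the MASA $A_0$ is a minimal projection of $M$).

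Next I would chop $1-p$ into small pieces inside $A_0$. Every atom of the abelian algebra $A_0(1-p)$ is an atom of $A_0$, hence has trace $\leq t$; since these traces are summable, a greedy grouping into consecutive blocks of trace $\leq t$ terminates after finitely many blocks, and the diffuse part of $A_0(1-p)$ can be cut into finitely many projections of trace $\leq t$. This writes $1-p=q_1+\cdots+q_m$ with $q_1,\dots,q_m\in A_0$ projections satisfying $\tau(q_i)\leq t$, where $m\geq 1$ because $1-p\neq 0$. Put $A=\mathbb C p\oplus\bigoplus_{i=1}^m\mathbb C q_i\subset A_0\subset M$: this is a finite dimensional abelian von Neumann subalgebra whose atoms are $p,q_1,\dots,q_m$, of traces $t,\tau(q_1),\dots,\tau(q_m)$, all $\leq t$. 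Therefore $\mathrm{e}(A)=t=\mathrm{e}(M)$, which proves the first assertion, with $\dim(A)=m+1\geq 2$.

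For the ``moreover'' part the goal is to force $m\geq 2$. If $1-p$ is not an atom of $A_0$, I would write $1-p=r_1+r_2$ with $r_1,r_2\in A_0$ nonzero projections and apply the construction of the previous paragraph to $r_1$ and $r_2$ separately, obtaining $m\geq 2$ and hence $\dim(A)\geq 3$. If instead $1-p$ is an atom of $A_0$, then $A_0=\mathbb C p\oplus\mathbb C(1-p)\cong\mathbb C^2$; since $\mathcal Z(M)\subseteq A_0$ and $\mathcal Z(M)$ is a unital subalgebra of $\mathbb C^2$, either $\mathcal Z(M)=A_0$ — in which case every element of $M$ commutes with its MASA $A_0$, so $M=A_0\cong\mathbb C^2$ — or $\mathcal Z(M)=\mathbb C1$, so $M$ is a factor, necessarily of type I since it contains the minimal projection $p$, hence $M\cong\mathbb M_n(\mathbb C)$ with $n=\dim(A_0)=2$, i.e.\ $M\cong\mathbb M_2(\mathbb C)$. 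Consequently, whenever $M\not\cong\mathbb C^2$ and $M\not\cong\mathbb M_2(\mathbb C)$, the case that $1-p$ is an atom of $A_0$ cannot occur, and the construction delivers $A$ with $\dim(A)\geq 3$. The step I expect to require the most care is precisely this last one: recognizing $A_0\cong\mathbb C^2$ as the sole obstruction to improving $\dim(A)$ from $2$ to $3$, and translating it into $M\cong\mathbb C^2$ or $M\cong\mathbb M_2(\mathbb C)$.
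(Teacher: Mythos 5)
Your proof is correct and takes essentially the same approach as the paper's: pick a minimal projection of maximal trace and chop the complement into pieces of trace at most $\mathrm{e}(M)$. The only difference is organizational — you work entirely inside a fixed MASA $A_0\subset M$ (using the observation $\mathrm{e}(A_0)=\mathrm{e}(M)$ made just before the lemma), whereas the paper works directly with a maximal orthogonal family of minimal projections of $M$ together with the diffuse corner; your case analysis for the ``moreover'' part ($1-p$ an atom of $A_0$ vs.\ not) is a bit cleaner than the paper's device of pre-arranging $K\geq 3$ and $L\geq 3$ blocks, but the two arguments are carrying out the same idea.
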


\begin{proof}
  Let $\{p_i\}_{i=1}^N$, for $N\in\mathbb N\cup\{\infty\}$, be a maximal family of non-zero pairwise orthogonal minimal projections of $M$. Moreover, we can arrange that $\{\tau(p_i)\}_{i=1}^N$ is a decreasing sequence.
  Then $\mathrm{e}(M)=\tau(p_1)$, $p\coloneq 1-\sum_{i=1}^Np_i\in\mathcal Z(M)$ and $Mp$ is diffuse. 
  If $N\in\mathbb N$, put $K=N$ and $r=0$; if $N=\infty$, choose
 $K\in\mathbb N$ such that  $K\geq 3$ and $r=\sum_{i=K+1}^\infty p_i$ satisfies $\tau(r)\leq\tau(p_1)$. Let $L\in\mathbb N$ with  $L\geq 3$ and $L\tau(p_1)\geq\tau(p)$. Since $Mp$ is diffuse, there are projections $q_1,\ldots,q_L\in Mp$ such that $\sum_{j=1}^Lq_j=p$ and $\tau(q_j)\leq\tau(p_1)$, for every $1\leq j\leq L$. Then $A=(\bigoplus_{i=1}^K\mathbb Cp_i)\oplus\mathbb Cr\oplus(\bigoplus_{j=1}^L\mathbb Cq_j)$ satisfies $A\neq \mathbb C1$ and $\mathrm{e}(A)=\mathrm{e}(M)$. 

To prove the moreover assertion, assume that  $\dim(A)\leq 2$. Since $L\geq 3$, and $K\geq 3$ if $N=\infty$, we deduce that in the above construction, we necessarily have $p=0$ and $N\in\mathbb N$. Thus, $M$ is finite dimensional and $A=\oplus_{i=1}^N\mathbb Cp_i$ is a MASA. Since $N=\dim(A)\leq 2$, this forces $M$ to be isomorphic to either $\mathbb C^2$ or $\mathbb M_2(\mathbb C)$.
\end{proof}

We also record the following easy observation.

\begin{lemma}\label{e wrt central proj}
   Let $(\mathcal M,\tau)$ be a tracial von Neumann algebra and $M\subset\mathcal M$ be a von Neumann subalgebra. Assume that $\{z_i\}_{i=1}^K\subset M'\cap \mathcal M$, for some $K\in\mathbb N\cup\{\infty\},$ are projections such that $\sum_{i=1}^Kz_i=1$. Then $\mathrm{e}(M)\leq\sum_{i=1}^K\mathrm{e}(Mz_i)$.
\end{lemma}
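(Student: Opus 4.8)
The plan is to reduce the statement to a claim about a single minimal projection of $M$ and then decompose that projection along the $z_i$. Since $\mathrm{e}(M)\geq 0$ always, we may assume $\mathrm{e}(M)>0$, so that in particular $M$ is non-diffuse. Discarding the indices $i$ with $z_i=0$ is harmless because $\tau$ is faithful, so we may also assume $\tau(z_i)>0$ for all $i$ and regard each $Mz_i$ as a tracial von Neumann algebra with the normalized trace $\tau_i\coloneq \tau(z_i)^{-1}\tau(\cdot)$ on $z_i\mathcal M z_i\supseteq Mz_i$.

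Now fix an arbitrary minimal projection $p\in M$. Since $z_i\in M'\cap\mathcal M$ commutes with $p$, the element $pz_i$ is a projection lying in $Mz_i$. The key computation, using $z_i\in M'$ and $p\in M$, is
\[
(pz_i)(Mz_i)(pz_i)=z_i\,(pMp)\,z_i=z_i\,(\mathbb C p)\,z_i=\mathbb C\,pz_i,
\]
so $pz_i$ is either $0$ or a minimal projection of $Mz_i$; in either case $\tau_i(pz_i)\leq\mathrm{e}(Mz_i)$, that is, $\tau(pz_i)\leq\tau(z_i)\,\mathrm{e}(Mz_i)\leq\mathrm{e}(Mz_i)$.

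Finally, since $\sum_{i=1}^K z_i=1$ (with the sum converging in the strong operator topology when $K=\infty$) and $\tau$ is normal, we get $\tau(p)=\sum_{i=1}^K\tau(pz_i)\leq\sum_{i=1}^K\mathrm{e}(Mz_i)$. As $p$ was an arbitrary minimal projection of $M$, taking the supremum over such $p$ yields $\mathrm{e}(M)\leq\sum_{i=1}^K\mathrm{e}(Mz_i)$. There is no genuine obstacle here; the only points needing a word of care are the identification of $(pz_i)(Mz_i)(pz_i)$ above, which relies on $z_i$ centralizing $M$, and the bookkeeping of the normalization constant $\tau(z_i)$ (respectively the convergence of the series when $K=\infty$) when passing to the compressions $Mz_i$.
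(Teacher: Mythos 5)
Your argument takes the same route as the paper's own proof: fix a minimal projection $p\in M$, use $z_i\in M'\cap\mathcal M$ to see that $pz_i$ is a minimal projection (or zero) of $Mz_i$, and sum the traces over the partition $\sum_i z_i=1$. You also supply two details the paper leaves implicit — the computation $(pz_i)(Mz_i)(pz_i)=\mathbb C pz_i$ verifying minimality, and the remark on convergence when $K=\infty$ — both fine.

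One normalization subtlety deserves comment, though. You equip $Mz_i$ with the normalized trace $\tau_i=\tau(z_i)^{-1}\tau$, read $\mathrm{e}(Mz_i)$ accordingly, and then relax $\tau(pz_i)\leq\tau(z_i)\,\mathrm{e}(Mz_i)\leq\mathrm{e}(Mz_i)$. The paper reads $\mathrm{e}(Mz_i)$ with respect to the unnormalized ambient trace $\tau$, i.e.\ as $\max\{\tau(q)\mid q\in Mz_i\text{ minimal}\}$, which is $\tau(z_i)$ times your quantity. This matters precisely when $K=\infty$: in the application in Lemma~\ref{2}, bounds of the form $\mathrm{e}(\pi_{n,1}(A_1)''z_k)\leq\tau(z_k)/L_n$ are summed over all $k$, and the factors $\tau(z_k)$ — exactly what your last relaxation throws away — are what keep $\sum_k\mathrm{e}(Mz_k)$ finite. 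Your intermediate inequality $\tau(pz_i)\leq\tau(z_i)\,\mathrm{e}(Mz_i)$ is the correct (and sharp) bound and should not be weakened further; with that one change your proof gives precisely the paper's estimate.
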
 
\begin{proof}
If $\mathrm{e}(M)=0$, there is nothing to prove. Hence, we can assume $\mathrm{e}(M)\neq 0$. Let $p\in M$ be a minimal projection with $\tau(p)=\mathrm{e}(M)$. As $pz_i\in Mz_i$ is a minimal projection, we deduce that $\tau(pz_i)\leq\mathrm{e}(Mz_i)$, for every $i$. Therefore, $\mathrm{e}(M)=\tau(p)=\sum_{i=1}^K\tau(pz_i)\leq\sum_{i=1}^K\mathrm{e}(Mz_i)$.
\end{proof}

\subsection{Full free products of \texorpdfstring{$C^*$-algebras}{C*-algebras}}

In this subsection we briefly recall the definition of the (unital full) free product of two unital $C^*$-algebras, see also, e.g., \cite{VDN92}. This is the co-product in the category of unital $C^*$-algebras:

\begin{definition}
    Let $A_1$ and $A_2$ be two unital $C^*$-algebras. Their \textit{unital full free product} $A=A_1\ast A_2$ is the unique (up to isomorphism) unital $C^*$-algebra $A$, together with unital $*$-homomorphisms $\psi_i:A_i\to A$ such that $\psi_1(A_1)$ and $\psi_2(A_2)$ generate $A$, and the following condition holds: for any unital $C^*$-algebra $B$ and unital $*$-homomorphisms $\pi_i:A_i\to B$, there exists a unique unital $*$-homomorphism $\pi:A\to B$ such that $\pi\circ\psi_i = \pi_i$, for every $i=1,2$.
\end{definition}

\begin{remark}
    \begin{enumerate}
        \item Many authors also use the notation $A_1\ast_\C A_2$ for the unital full free product of $A_1$ and $A_2$. We use the notation $A_1\ast A_2$ and terminology ``free product'' throughout the paper, and we will always mean the unital full free product as defined above.
        \item It is easy to see that the maps $\psi_i:A_i\to A$ are necessarily injective. Therefore, we will usually drop the notation $\psi_i$ and assume that $A_1$ and $A_2$ are unital $C^*$-subalgebras of $A$.
        \item For an alternative description, one can also show that $A_1\ast A_2$ is the enveloping $C^*$-algebra of the $*$-algebra free product of $A_1$ and $A_2$. In particular, this implies that words of the form $b_1c_1b_2c_2\cdots b_mc_m$, where $b_1, \ldots, b_m\in A_1$ and $c_1,\ldots, c_m\in A_2$, are dense in $A_1\ast A_2$.
        \item Given two discrete groups $G_1$ and $G_2$, it is easy to show that $C^*(G_1\ast G_2)\cong C^*(G_1)\ast C^*(G_2)$.
    \end{enumerate}
\end{remark}

\subsection{The trace space of a \texorpdfstring{$C^*$-algebra}{C*-algebra}}
Let $A$ be a unital, separable $C^*$-algebra.
We denote by $\mathrm{T}(A)$ the compact convex set of all traces on $A$ endowed with the weak$^*$-topology. We denote by $\partial_{\mathrm{e}}\mathrm{T}(A)$ the set of extreme points of $\mathrm{T}(A)$.
Then $\mathrm{T}(A)$ is a metrizable Choquet simplex, i.e., every $\varphi\in\mathrm{T}(A)$ is the barycenter of  a unique Borel probability measure supported on $\partial_{\mathrm{e}}\mathrm{T}(A)$.

A {\it tracial representation} of $A$ is a triple $(M,\tau, \pi)$ where $(M,\tau)$ is a tracial von Neumann algebra and $\pi:A\to M$ is a $*$-homomorphism with $\pi(A)''=M$. In such case $\tau\circ\pi\in\mathrm{T}(A)$. Two tracial representations $(M_1,\tau_1,\pi_1)$ and $(M_2,\tau_2,\pi_2)$ give rise to the same trace, namely $\tau_1\circ\pi_2 =\tau_2 \circ \pi_2$, if and only if they are  {\it quasi-equivalent}, namely, if there exists a $*$-isomorphism $\Psi:M_1 \rightarrow M_2$ such that $\tau_1=\tau_2\circ\Psi$ and $\pi_2 = \Psi\circ \pi_1$.

Conversely, given $\varphi\in\mathrm{T}(A)$, there exists a unique (up to quasi-equivalence) tracial representation $(M,\tau,\pi)$ of $A$ such that $\varphi=\tau\circ\pi$. More precisely, $\pi$ is the GNS representation associated to $\varphi$.
We say that $\varphi$ is {\it finite dimensional} if $\dim(M)<\infty$.
We denote by $\mathrm{T}_{\mathrm{fin}}(A)$ the set of all finite dimensional traces $\varphi\in\mathrm{T}(A)$ and let $\mathrm{T}_{\infty}(A)=\mathrm{T}(A)\setminus\mathrm{T}_{\mathrm{fin}}(A)$.
Note that $\varphi\in\partial_{\mathrm{e}}\mathrm{T}(A)$ if and only if $M$ is a factor (i.e.,  $\pi(A)''$ is either a II$_1$ factor or isomorphic to $\mathbb M_n(\mathbb C)$, for some $n\in\mathbb N$).
We say that $\varphi\in \partial_{\mathrm{e}}\mathrm{T}(A)$ is {\it $n$-dimensional}, for some $n\in\mathbb N$, if $M\cong\mathbb M_n(\mathbb C)$. Finally, we say that $\varphi$ is {\it von Neumann amenable} if $M$ is an amenable von Neumann algebra.

 For future reference, we record the following useful known lemma concerning convergence of traces in the weak$^*$-topology versus the Wasserstein topology.

\begin{lemma}[\!\!\cite{GJNS21}]\label{convergence}
   Let $A$ be a unital, separable $C^*$-algebra, $\varphi\in\mathrm{T}(A)$ and $(\varphi_n)_{n\in\mathbb N}\subset\mathrm{T}(A)$ be a sequence. Consider the following two conditions:
   \begin{enumerate}
       \item $\varphi_n\rightarrow\varphi$.
       \item there exist a II$_1$ factor $(M,\tau)$, $*$-homomorphisms $\pi:A\rightarrow M$ and $\pi_n:A\rightarrow M$ such that $\tau\circ\pi=\varphi$, $\tau\circ\pi_n=\varphi_n$ for every $n\in\mathbb N$, and $\|\pi_n(a)-\pi(a)\|_2\rightarrow 0$ for every  $a\in A$.
   \end{enumerate}
Then (2) $\Rightarrow$ (1). Moreover, if $\varphi$ is von Neumann amenable, then (1) $\Rightarrow$ (2). 
\end{lemma}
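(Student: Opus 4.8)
\textbf{Proof proposal for Lemma~\ref{convergence}.}

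The plan is to prove the two implications separately, using the GNS machinery and, for the amenable direction, a standard amplification/ultrapower trick to embed everything into a single II$_1$ factor.

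For the implication (2) $\Rightarrow$ (1): this is the easy direction and is essentially a continuity estimate. Fix $a \in A$. Since $\|\pi_n(a) - \pi(a)\|_2 \to 0$ and $\pi, \pi_n$ are $*$-homomorphisms, I would write $\varphi_n(a) - \varphi(a) = \tau(\pi_n(a) - \pi(a))$ and bound $|\tau(\pi_n(a) - \pi(a))| \leq \|\pi_n(a) - \pi(a)\|_2$ by Cauchy--Schwarz. Hence $\varphi_n(a) \to \varphi(a)$ for every $a \in A$, which since $A$ is separable (or simply by definition of the weak$^*$-topology) gives $\varphi_n \to \varphi$. No obstacle here.

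For the moreover part, (1) $\Rightarrow$ (2) under the assumption that $\varphi$ is von Neumann amenable: let $(N, \tau_N, \pi_\varphi)$ be the GNS tracial representation of $\varphi$, so $N = \pi_\varphi(A)''$ is an amenable tracial von Neumann algebra, and similarly let $(N_n, \tau_n, \rho_n)$ be the GNS representation of $\varphi_n$. The first step is to produce a common II$_1$ factor $(M, \tau)$ receiving all of these. Since $N$ is amenable, it embeds (trace-preservingly) into the hyperfinite II$_1$ factor $R$, hence into any II$_1$ factor; and each $N_n$ embeds trace-preservingly into $R^{\omega}$, or one can take $M$ to be a free product / ultraproduct large enough to contain copies of $N$ and all $N_n$. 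Concretely I would take $M$ to be a fixed II$_1$ factor (e.g. $R$ itself if all the $N_n$ happen to be amenable, or more safely an ultrapower $R^\omega$, or $N * \text{L}(\mathbb{F}_\infty) * (\ast_n N_n)$ compressed appropriately) into which $N$ and each $N_n$ embed trace-preservingly; denote these embeddings $\iota: N \hookrightarrow M$ and $\iota_n : N_n \hookrightarrow M$. Set $\pi = \iota \circ \pi_\varphi$ and $\pi_n = \iota_n \circ \rho_n$, so $\tau \circ \pi = \varphi$ and $\tau \circ \pi_n = \varphi_n$. It remains to arrange $\|\pi_n(a) - \pi(a)\|_2 \to 0$ for every $a \in A$. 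This does not hold automatically for the given embeddings, but since $\varphi$ is amenable, the representation $\pi_\varphi$ is (quasi-contained in a multiple of) the trivial-ish situation making $\pi_\varphi(A)''$ hyperfinite, and the key point is that the $\|\cdot\|_2$-convergence $\varphi_n \to \varphi$ on the generating set together with amenability allows one to conjugate the $\iota_n$ by unitaries of $M$ (using the fact that in a II$_1$ factor, two $*$-homomorphisms from an amenable algebra inducing the same trace are approximately unitarily conjugate — a consequence of Connes' theorem and the uniqueness of the hyperfinite II$_1$ factor). More precisely, one shows that for a finite $\|\cdot\|_2$-dense subset $F \subset A$ and $\delta > 0$, once $n$ is large so that $|\varphi_n(x^*y) - \varphi(x^*y)|$ is small for $x,y$ in a suitable finite set, there is a unitary $u_n \in \mathcal{U}(M)$ with $\|u_n \pi_n(a) u_n^* - \pi(a)\|_2 < \delta$ for $a \in F$; replacing $\pi_n$ by $u_n \pi_n(\cdot) u_n^*$ (which does not change $\tau \circ \pi_n = \varphi_n$) and letting $\delta \to 0$ along a diagonal argument yields the claim.

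The main obstacle is precisely the last step: producing the unitaries $u_n$ implementing the approximate conjugacy. This is where von Neumann amenability of $\varphi$ is essential — for a general (non-amenable) $\varphi$, closeness of traces on $A$ gives no control on the position of the subalgebras $\pi_n(A)''$ inside $M$, and indeed (1) $\Rightarrow$ (2) fails in general. I would handle this step by invoking the standard fact (essentially Connes' uniqueness of $R$, cf. the references in \cite{GJNS21}) that if $\varphi$ is amenable then its GNS algebra is a direct integral of matrix algebras and a copy of $R$, and approximate unitary equivalence of finite-dimensional-plus-hyperfinite representations inducing nearby traces is classical; the details amount to a routine Rokhlin-type/local approximation argument which I would not grind through here, citing \cite{GJNS21} for the precise statement. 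Everything else — the Cauchy--Schwarz estimate, the existence of trace-preserving embeddings into a II$_1$ factor, and the diagonal argument over an exhausting sequence of finite subsets of the separable algebra $A$ — is routine.
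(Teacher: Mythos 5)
Your proof is correct and follows the same overall strategy as the paper's. The $(2)\Rightarrow(1)$ direction is identical (Cauchy--Schwarz on $\tau$). For $(1)\Rightarrow(2)$, you correctly identify the two key ingredients: (i) embedding the GNS algebras of $\varphi$ and all $\varphi_n$ trace-preservingly into a single II$_1$ factor via a free product, and (ii) producing unitaries $u_n$ conjugating $\pi_n$ close to $\pi$, which hinges on von Neumann amenability of $\varphi$ through Connes' theorem (amenable $\Rightarrow$ AFD) and the uniqueness-up-to-conjugacy of trace-preserving embeddings of separable AFD algebras into (ultrapowers of) II$_1$ factors.

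The one difference in execution is at step (ii). You propose a direct finitary argument (pick a finite $\|\cdot\|_2$-dense set $F$, argue that for $n$ large a conjugating unitary exists, then diagonalize), citing this as a ``routine Rokhlin-type/local approximation argument.'' The paper instead packages this step via a compactness/contradiction argument in the ultrapower $M^\omega$: assuming the infima $\varepsilon_n$ of the conjugation-distances do not tend to $0$, it passes to $M^\omega$, notes that $\rho(a)=(\rho_{n_k}(a))_{k\in\omega}$ and $\widetilde\pi(a)=(\pi(a))_{k\in\omega}$ induce the same trace and hence generate trace-isomorphic subalgebras, observes that this common algebra is amenable and therefore AFD, and then invokes a standard unitary-implementability fact in $M^\omega$ to derive a contradiction. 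The ultrapower route avoids having to state and prove the quantitative finitary lemma explicitly, which is why the paper's write-up is shorter and cleaner; your version is morally equivalent but would require more care to carry out the finite-set argument rigorously, precisely because the $\pi_n(A)''$ need not themselves be amenable -- the amenability is only on the $\pi(A)''$ side, and it is the ultrapower identification that transfers it across. You do correctly anticipate this, so there is no gap, just a difference in packaging.
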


The main content of this lemma is the moreover assertion. This assertion can be deduced from \cite[Proposition 5.26]{GJNS21} when $A$ is a finitely generated $C^*$-algebra. For the reader's convenience we provide a short self-contained proof.
Note that the moreover assertion in particular applies if $\varphi\in\mathrm{T}_{\mathrm{fin}}(A)$. Note also that condition (2) is equivalent to $\varphi_n\rightarrow\varphi$ in the Wasserstein topology.

\begin{proof}

Let $\pi,\pi_n:A\rightarrow M$ be tracial representations such that $\tau\circ\pi=\varphi$ and $\tau\circ\pi_n=\varphi_n$, for every $n\in\mathbb N$. Then $|\varphi_n(a)-\varphi(a)|=|\tau(\pi_n(a)-\pi(a))|\leq \|\pi_n(a)-\pi(a)\|_2$, for every $a\in A$ and $n\in\mathbb N$. This shows that (2) implies (1).

For the moreover assertion, assume that $\varphi$ is von Neumann amenable and that (1) holds. Let $\pi:A\rightarrow N$ and $\rho_n:A\rightarrow N_n$ be tracial representations associated to $\varphi$ and $\varphi_n$, for every $n\in\mathbb N$.
Define $M=N*(*_{n\in\mathbb N}N_n)*\mathrm{L}(\mathbb F_2)$ and let $\tau$ be its canonical trace. Then $M$ is a II$_1$ factor \cite[Corollary 5.3.8]{AP}. Viewing $N$ and $N_n$ as subalgebras of $M$, we moreover have $\varphi=\tau\circ\pi$ and $\varphi_n=\tau\circ\rho_n$, for every $n\in\mathbb N$.   

We proceed with the following claim:

\textbf{Claim.}
    \textit{There exist $(u_n)_{n\in\mathbb N}\subset\mathcal U(M)$ such that
   $\|u_n\rho_n(a)u_n^*-\pi(a)\|_2\rightarrow 0$, for every $a\in A.$}

Given the claim, it is clear that the maps $\pi_n:A\rightarrow M$ given by $\pi_n(a)=u_n\rho_n(a)u_n^*$ satisfy condition (2). Hence proving the claim will finish the proof of the lemma.

\textit{Proof of the claim.} Let $(a_k)\subset (A)_1$ be a $\|\cdot\|$-dense sequence. For every $n\in\mathbb N$, let $$\varepsilon_n\coloneq \inf\{\;\sum_{k=1}^\infty2^{-k}\|u\rho_n(a_k)u^*-\pi(a_k)\|_2\mid u\in\mathcal U(M)\,\}.$$ 
  The claim is equivalent to the statement that $\varepsilon_n\rightarrow 0$. Assume by contradiction that this is false. Then we can find $\delta>0
$ and a subsequence $(\varepsilon_{n_k})\subset(\varepsilon_n)$  such that $\varepsilon_{n_k}\geq \delta$, for every $k\in\mathbb N$.

Let $\omega$ be a free ultrafilter on $\mathbb N$ and $M^{\omega}$ be the ultrapower von Neumann algebra together with its canonical trace given by $\tau^{\omega}((x_k)_{k\in\omega})=\lim_{k\rightarrow\omega}\tau(x_k)$.
Define  $*$-homomorphisms $\widetilde\pi,\rho:A\rightarrow M^{\omega}$ by setting $\widetilde\pi(a)=(\pi(a))_{k\in\omega}$ and $\rho(a)=(\rho_{n_k}(a))_{k\in\omega}$, for every $a\in A$. Then for every $a\in A$, we have $$\tau^{\omega}(\rho(a))=\lim_{k\rightarrow\omega}\tau(\rho_{n_k}(a))=\lim_{k\rightarrow\omega}\varphi_{n_k}(a)=\varphi(a)=\tau(\pi(a))=\tau^{\omega}(\widetilde\pi(a)).$$ Thus, there is a trace-preserving $*$-isomorphism $\theta:\widetilde\pi(A)''\rightarrow\rho(A)''$ such that $\theta(\widetilde\pi(a))=\rho(a)$, for every $a\in A$. Since $\varphi$ is von Neumann amenable, $\widetilde\pi(A)''$ is amenable. By Connes' theorem \cite{Co75}, $\widetilde\pi(A)''$ is approximately finite dimensional. Since $M$ is a II$_1$ factor, a well-known fact (see, e.g., \cite[Lemma 5.23]{GJNS21}) implies
the existence of $v=(v_k)_{k\in\omega}\in \mathcal U(M^\omega)$, where $v_k\in \mathcal U(M)$ for every $k\in\mathbb N$, such that $\widetilde\pi(a)=v\rho(a)v^*$, for every $a\in A$. In other words, $\lim_{k\rightarrow\omega}\|\pi(a)-v_k\rho_{n_k}(a)v_k^*\|_2=0$, for every $a\in A$. This implies that $\lim_{k\rightarrow\omega}\varepsilon_{n_k}=0$, contradicting that  $n_k\geq \delta$, for every $k\in\mathbb N$. This finishes the proof of the claim and thus the lemma.
\end{proof}

\subsection{The tracial quotient of a \texorpdfstring{$C^*$-algebra}{C*-algebra}}\label{sec:tracialquotient}

Before continuing, we note that studying the trace space of a   $C^*$-algebra amounts to studying its \textit{tracial quotient}:

\begin{definition}
    Let $A$ be a unital, separable $C^*$-algebra with $\mathrm{T}(A)\neq\emptyset$. We define $A_{\mathrm{tr}}$ to be the quotient 
    \[
    A_{\mathrm{tr}} \coloneq A/I_{\mathrm{tr}},
    \]
    where $I_{\mathrm{tr}} = \{a\in A\mid \vphi(a^*a)=0 \text{ for all }\vphi\in\mathrm{T}(A)\}$. We call $A_{\mathrm{tr}}$ the \textit{tracial quotient of $A$}.
\end{definition}

\begin{remark}
    By construction, $\mathrm{T}(A)$ is affinely homeomorphic to $ \mathrm{T}(A_{\mathrm{tr}})$. Furthermore, we could equivalently define $A_{\mathrm{tr}}$ as the largest quotient $C^*$-algebra of $A$ that admits a faithful tracial state.

    We also note that the condition from Theorem~\ref{Poulsen} that $\mathrm{T}(A_i)$ does not consist of a single $1$-dimensional trace can be reformulated as $A_{i,\mathrm{tr}}\not\cong \C$ (i.e., $A_i$ is not ``tracially equivalent" to $\mathbb C$). 
\end{remark}

We record the following easy observation.

\begin{lemma}\label{lem:freeprodtracial}
    Let $A=A_1\ast A_2$ be the 
    free product of two unital, separable $C^*$-algebras $A_1$ and $A_2$. Then $\mathrm{T}(A)$ (or equivalently $\mathrm{T}(A_{\mathrm{tr}})$) is affinely homeomorphic to $\mathrm{T}(A_{1,\mathrm{tr}}\ast A_{2,\mathrm{tr}})$.
\end{lemma}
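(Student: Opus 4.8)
The plan is to show that the free product structure survives passage to tracial quotients, so that $\mathrm{T}(A)$ depends only on the tracial quotients $A_{i,\mathrm{tr}}$. The key point is the universal property of the full free product combined with the defining property of the tracial quotient (namely, that $A_{\mathrm{tr}}$ is the largest quotient admitting a faithful trace, equivalently $\mathrm{T}(A)\cong\mathrm{T}(A_{\mathrm{tr}})$ affinely and homeomorphically). Write $\pi_i\colon A_i\to A_{i,\mathrm{tr}}$ for the quotient maps and set $B=A_{1,\mathrm{tr}}\ast A_{2,\mathrm{tr}}$. Composing $\pi_i$ with the canonical inclusions $A_{i,\mathrm{tr}}\hookrightarrow B$ and using the universal property of $A=A_1\ast A_2$, we obtain a unital $*$-homomorphism $\Phi\colon A\to B$ which is surjective (its image contains both copies $A_{i,\mathrm{tr}}$, which generate $B$).

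First I would check that $\Phi$ induces an injection $\mathrm{T}(B)\hookrightarrow\mathrm{T}(A)$, $\varphi\mapsto\varphi\circ\Phi$, which is a continuous affine map; this is automatic for any surjective $*$-homomorphism. Next I would show this map is also surjective onto $\mathrm{T}(A)$, i.e. every $\varphi\in\mathrm{T}(A)$ factors through $\Phi$. Given $\varphi\in\mathrm{T}(A)$, its restrictions $\varphi_{|A_i}$ lie in $\mathrm{T}(A_i)$, hence annihilate the tracial ideal $I_{\mathrm{tr},i}\subset A_i$ (in the GNS representation of $\varphi$, the elements of $I_{\mathrm{tr},i}$ map to operators with zero $2$-norm, hence to $0$, since $\varphi_{|A_i}(a^*a)=0$ forces $\pi_\varphi(a)=0$). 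Therefore $\varphi_{|A_i}$ descends to a trace $\bar\varphi_i$ on $A_{i,\mathrm{tr}}$. By the universal property of $B=A_{1,\mathrm{tr}}\ast A_{2,\mathrm{tr}}$ applied to the GNS data — more precisely, the $*$-homomorphisms $A_{i,\mathrm{tr}}\to\pi_\varphi(A_i)''\subset M$ obtained from $\pi_\varphi\circ\iota_i$ after quotienting out $I_{\mathrm{tr},i}$ — we get a $*$-homomorphism $\psi\colon B\to M$ with $\psi\circ\Phi=\pi_\varphi$, and then $\tau_M\circ\psi\in\mathrm{T}(B)$ pulls back to $\varphi$. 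Since the affine map $\mathrm{T}(B)\to\mathrm{T}(A)$ is a continuous bijection between compact metrizable spaces, it is a homeomorphism, and affineness was already noted.

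The only subtlety I expect is bookkeeping around the GNS construction: one must verify carefully that $\pi_\varphi$ kills each $I_{\mathrm{tr},i}$ as an ideal of $A_i$ — this uses that the definition of $I_{\mathrm{tr},i}$ quantifies over all of $\mathrm{T}(A_i)$, and in particular over $\varphi_{|A_i}$, so $\pi_\varphi(I_{\mathrm{tr},i})=0$ — and that the resulting maps out of $A_{i,\mathrm{tr}}$ are genuine $*$-homomorphisms, so that the universal property of $B$ applies. Everything else is formal diagram-chasing with universal properties, and no analysis beyond the standard fact that a continuous bijection of compact Hausdorff spaces is a homeomorphism. An alternative, essentially equivalent route is to note directly that $\mathrm{T}(A)\cong\mathrm{T}(A_{\mathrm{tr}})$ and $\mathrm{T}(B)\cong\mathrm{T}(B)$, and that $\Phi$ factors as $A\twoheadrightarrow A_{\mathrm{tr}}\twoheadrightarrow B_{\mathrm{tr}}\cong B$ with the second map having dense range and trivial tracial kernel, forcing it to be an isomorphism after tracial completion; I would pick whichever phrasing is shortest given the lemmas already available.
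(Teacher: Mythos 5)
Your main argument is correct and is essentially the paper's: construct the canonical surjection $\Phi\colon A\to A_{1,\mathrm{tr}}*A_{2,\mathrm{tr}}$ from the universal property, and show every trace on $A$ pulls back from $\Phi$ by observing that the GNS homomorphism factors through each $A_{i,\mathrm{tr}}$ and then invoking the universal property of the free product on the target. One caution about your parenthetical ``alternative route'': the factorization $A\twoheadrightarrow A_{\mathrm{tr}}\twoheadrightarrow B_{\mathrm{tr}}$ has the second arrow pointing the wrong way --- what is clear is $\ker\Phi\subset I_{\mathrm{tr}}(A)$, giving a surjection $B\twoheadrightarrow A_{\mathrm{tr}}$, not the reverse, and whether this is an isomorphism (equivalently whether $B$ admits a faithful trace) is precisely the open point discussed in Remark~\ref{rk:tracialquotient}; so stick with your main argument.
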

\begin{proof}
For $i=1,2$, we have by construction a unital, surjective $*$-homomorphism $\pi_i: A_i\to A_{i,\mathrm{tr}}$ such that any tracial representation $\rho_i:A_i\to M_i$ factors through $\pi_i$. Consider the unital, surjective $^*$-homomorphism $\pi:A\to A_{1,\mathrm{tr}}\ast A_{2,\mathrm{tr}}$ determined by $\pi_{|A_i} = \pi_i$, for $i=1,2$. 

Let $\rho:A\to M$ be a tracial representation. Since $\rho_{|A_i}$ factors through $\pi_i$, we can find unital $*$-homomorphisms $\delta_i: A_{i,\mathrm{tr}}\to M$ such that $\rho_{|A_i} = \delta_i\circ \pi_i$, for $i=1,2$. Defining the unital $*$-homomorphism $\delta:A_{1,\mathrm{tr}}\ast A_{2,\mathrm{tr}}\to M$ by $\delta_{|A_{i,\mathrm{tr}}} = \delta_i$, for $i=1,2$, we get that $\rho = \delta\circ \pi$.

This implies that the continuous affine injective map $\mathrm{T}(A_{1,\mathrm{tr}}\ast A_{2,\mathrm{tr}})\ni \vphi\mapsto \vphi\circ \pi\in \mathrm{T}(A)$ is surjective, and therefore an affine homeomorphism. This finishes the proof of the lemma.
\end{proof}

\begin{remark}\label{rk:tracialquotient}
    It is not clear to the authors whether for a 
    free 
    product $C^*$-algebra $A=A_1\ast A_2$, it is true in general that $(\star)$ $A_{\mathrm{tr}}\cong A_{1,\mathrm{tr}}\ast A_{2,\mathrm{tr}}$. We note that by the proof of Lemma~\ref{lem:freeprodtracial}, every tracial representation of $A$ factors through the quotient $A_{1,\mathrm{tr}}\ast A_{2,\mathrm{tr}}$, and hence we have a natural surjective $*$-homomorphism $A_{1,\mathrm{tr}}\ast A_{2,\mathrm{tr}} \to A_{\mathrm{tr}}$. Since $A_{\mathrm{tr}}$ always has a faithful tracial state by construction, this implies that $(\star)$ is equivalent to the statement that the 
    free 
    product of two unital, separable $C^*$-algebras with a faithful tracial state, has a faithful tracial state. When $A_1$ and $A_2$ are residually finite dimensional (RFD), their 
    free product is also RFD by \cite[Theorem~3.2]{EL92}, and thus has a faithful trace. In particular, we conclude that $A_{\mathrm{tr}}\cong A_{1,\mathrm{tr}}\ast A_{2,\mathrm{tr}}$, whenever $A_{1,\mathrm{tr}}$ and $A_{2,\mathrm{tr}}$ are RFD. We leave it open whether $(\star)$ holds in general. 
\end{remark}

\section{An obstruction to the trace simplex being Poulsen}\label{sec:obstruction}

In this section, we prove one implication of Theorem~\ref{Poulsen}, by establishing an obstruction to the trace simplex of a 
free product $C^*$-algebra being a Poulsen simplex, see Corollary~\ref{cor:obstruction} below. 
This obstruction relies on following general lemma, which shows that isolated extreme points in compact convex sets satisfy an a priori stronger condition on convergence of barycenter measures.

\begin{lemma}
Let $C$ be a compact and convex set in a locally convex topological
vector space. Let $x\in\partial_{\mathrm{e}}C$ be an extreme point and suppose
that it is an isolated point of $\partial_{\mathrm{e}}C$. Consider a sequence of Borel probability
measures $\mu_{n}\in\Prob(C)$ supported on $\partial_{\mathrm{e}}C$. Then $\bary\mu_{n}$ converges to $x$
if and only if $\lim_n \mu_{n}(\{x\}) = 1$. 
\end{lemma}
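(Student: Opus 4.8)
The plan is to prove both implications by exploiting the fact that, since $x$ is isolated in $\partial_{\mathrm{e}}C$, there is an open neighborhood $U$ of $x$ in $C$ with $U\cap\partial_{\mathrm{e}}C=\{x\}$. I would first fix such a $U$ and, by local convexity, choose a continuous affine function (or more elementarily, use the separation/Urysohn-type argument valid on a compact convex set) $f:C\to[0,1]$ with $f(x)=1$ and $f=0$ on $C\setminus U$; in particular $f\leq \mathbbm{1}_U$ and $f\leq\mathbbm{1}_{\{x\}}$ on $\partial_{\mathrm{e}}C$. Since each $\mu_n$ is supported on $\partial_{\mathrm{e}}C$, we get
\[
\int_C f\,\d\mu_n \;=\;\int_{\partial_{\mathrm{e}}C} f\,\d\mu_n\;\leq\;\mu_n(\{x\}).
\]

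For the forward implication, suppose $\bary\mu_n\to x$. Pick any continuous affine $g:C\to\R$ with $g(x)=1$ that attains its maximum over $C$ only at $x$ (such $g$ exists because $x$ is an extreme, hence exposed-after-relabeling... more safely: take $g$ affine continuous with $g(x)=1>\sup_{C\setminus U}g$, which is possible by Hahn--Banach separation of the compact set $C\setminus U$ from the point $x$, after shrinking $U$). Then $\int g\,\d\mu_n=g(\bary\mu_n)\to g(x)=1$, while $\int g\,\d\mu_n\leq \mu_n(\{x\})\cdot 1+(1-\mu_n(\{x\}))\cdot\sup_{C\setminus U}g$; since $\sup_{C\setminus U}g<1$, this forces $\mu_n(\{x\})\to1$. (Alternatively, use the function $f$ above directly: $f(\bary\mu_n)\to f(x)=1$ because barycenters of measures supported on $\partial_{\mathrm{e}}C$... wait, $f$ need not be affine, so one really does want an affine separating function here, which is why I'd build $g$ via Hahn--Banach.)

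For the converse, suppose $\mu_n(\{x\})\to1$. Write $\mu_n=\mu_n(\{x\})\,\delta_x+(1-\mu_n(\{x\}))\,\nu_n$ where $\nu_n\in\Prob(C)$ (if $\mu_n(\{x\})<1$; otherwise $\bary\mu_n=x$ trivially). By affinity of the barycenter map and its continuity in the appropriate sense, $\bary\mu_n=\mu_n(\{x\})\,x+(1-\mu_n(\{x\}))\,\bary\nu_n$. Since $C$ is compact (hence bounded in the locally convex topology) and $\bary\nu_n\in C$, the term $(1-\mu_n(\{x\}))\,(\bary\nu_n-x)\to 0$: concretely, for every continuous linear functional $\ell$ on the ambient space, $\ell$ is bounded on $C$, so $(1-\mu_n(\{x\}))\,\ell(\bary\nu_n-x)\to0$, giving $\ell(\bary\mu_n)\to\ell(x)$; as this holds for all $\ell$, $\bary\mu_n\to x$ weakly, which is the relevant topology on $C$.

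The main obstacle I anticipate is the forward direction: one must convert the hypothesis "$x$ isolated in $\partial_{\mathrm{e}}C$" into a genuine \emph{affine} continuous function that separates $x$ from the rest of the measure's mass, since barycenters only interact nicely with affine functions, not with the bump function $f$ supported near $x$. The clean way is: isolatedness gives a relatively open set $U$ with $U\cap\partial_{\mathrm{e}}C=\{x\}$; then $C\setminus U$ is compact and does not contain $x$, so Hahn--Banach yields a continuous affine $g$ with $g<g(x)$ on $C\setminus U$, and after normalizing we run the averaging estimate above. Everything else is routine manipulation of barycenters and boundedness of $C$.
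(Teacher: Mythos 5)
Your ``converse'' (i.e.\ $\mu_n(\{x\})\to1\Rightarrow\bary\mu_n\to x$) is fine: the decomposition $\mu_n=\mu_n(\{x\})\delta_x+(1-\mu_n(\{x\}))\nu_n$, affinity of $\bary$, boundedness of continuous linear functionals on the compact set $C$, and the fact that the weak topology coincides with the given topology on $C$ (a compact Hausdorff space has a unique coarser-or-equal Hausdorff topology) make this airtight. This is the easy direction and the paper also dispatches it in a line.

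The gap is in your main direction. You need $x\notin\overline{\mathrm{conv}}(C\setminus U)$ before Hahn--Banach can produce the separating affine $g$, and your justification---``Hahn--Banach separation of the compact set $C\setminus U$ from the point $x$, after shrinking $U$''---does not establish this. Hahn--Banach separates a point from a \emph{closed convex} set; $C\setminus U$ is compact but typically \emph{not} convex, and shrinking $U$ does not repair that. In general a point can fail to lie in a compact set yet lie in its closed convex hull, in which case no such $g$ exists. What saves you is precisely that $x$ is extreme in $C$: if $x\in\overline{\mathrm{conv}}(C\setminus U)$, then $x$ is also extreme in $\overline{\mathrm{conv}}(C\setminus U)$, and Milman's theorem ($\partial_{\mathrm{e}}\,\overline{\mathrm{conv}}(K)\subset\overline{K}$ for $K$ compact) forces $x\in\overline{C\setminus U}=C\setminus U$, contradicting $x\in U$. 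Hence $x\notin\overline{\mathrm{conv}}(C\setminus U)$, and \emph{then} Hahn--Banach applied to $\{x\}$ and the compact convex set $\overline{\mathrm{conv}}(C\setminus U)$ yields your $g$. Once this is inserted, your averaging estimate closes the argument. (Note that isolatedness of $x$ in $\partial_{\mathrm{e}}C$ is not used for the existence of $g$---only extremeness is---it is used to guarantee that $\mu_n$ places all mass outside $\{x\}$ in $C\setminus U$, which is what makes the estimate bite.)

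With this fix, your route is genuinely different from the paper's. The paper passes to a weak-$^*$ accumulation point $\mu$ of $(\mu_n)$ in the compact space $\Prob(C)$, uses continuity of the barycenter map to get $\bary\mu=x$, invokes the uniqueness of the representing measure at an extreme point to get $\mu=\delta_x$, and finishes by the Portmanteau inequality $\liminf_n\mu_n(\{x\})\geq\mu(\{x\})=1$ applied to the relatively open set $\{x\}\subset\partial_{\mathrm{e}}C$. Your approach is a direct functional-analytic separation argument that avoids passing to a limit measure and avoids Portmanteau. Both ultimately lean on the same Choquet-theoretic input (Milman's theorem in your version; unique representing measure at an extreme point in theirs---these are two faces of the same coin), so neither is essentially ``more elementary''; the paper's version is arguably slicker because the compactness of $\Prob(C)$ absorbs the bookkeeping, whereas yours makes the geometric mechanism (a strict affine slice around $x$) more explicit.
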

\begin{proof}
Firstly, it is clear that if $\mu_{n}(\{x\})\to1$, then $\bary\mu_{n}\to x$. For
the converse, let $\mu$ be any accumulation point of the sequence $\mu_n$ in the weak-$^*$ compact space $\Prob(C)$. As the barycenter map is continuous, we have $ \bary \mu_n  \to \bary \mu $.  But by assumption $\bary \mu_n\to x$, and so $\bary\mu =x$. As $x$ is an extreme point,  $\mu$ must be the Dirac measure concentrated on $x$. In particular, $\mu$ is supported on $\partial_{\mathrm{e}} C$, as  
is each $\mu_n$ by assumption. Finally, as $\{x\}$ is an open set of $\partial_{\mathrm{e}} C$, weak-$^*$ convergence implies
\[
    \liminf_n \mu_n(\{x\}) \geq  \mu(\{x\})=1
\]
thus showing that $\mu_n(\{x\})$ converges to $1$.
\end{proof}

\begin{corollary}\label{strongly_isolated}
Let $A$ be a unital, separable $C^*$-algebra and let $\varphi\in\partial_{\mathrm{e}}\mathrm{T}(A)$ be an extreme
trace which is an isolated point in $\partial_{\mathrm{e}}\mathrm{T}(A)$. Let $(\varphi_{n})_{n\in\mathbb N}\subset\mathrm{T}(A)$ be a sequence 
such that $\varphi_n\rightarrow\varphi$. Then $\mu_{n}(\left\{ \varphi\right\} )\to1$ where for every $n$,
$\mu_{n}$ is the unique Borel probability measure on $\partial_{\mathrm{e}}\mathrm{T}(A)$
whose barycenter is $\varphi_{n}$. 
\end{corollary}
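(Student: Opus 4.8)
The plan is to deduce the corollary directly from the preceding lemma applied to the compact convex set $C = \mathrm{T}(A)$. Recall that $\mathrm{T}(A)$ is a metrizable Choquet simplex, so by definition every $\psi\in\mathrm{T}(A)$ is the barycenter of a unique Borel probability measure $\mu_\psi$ supported on $\partial_{\mathrm{e}}\mathrm{T}(A)$; in particular the measures $\mu_n$ appearing in the statement are well-defined and supported on $\partial_{\mathrm{e}}\mathrm{T}(A)$. Thus the two hypotheses needed to invoke the lemma are in place: $\varphi$ is an extreme point of $C$ which is isolated in $\partial_{\mathrm{e}}C$, and $(\mu_n)$ is a sequence in $\mathrm{Prob}(C)$ supported on $\partial_{\mathrm{e}}C$ with $\mathrm{bar}\,\mu_n = \varphi_n \to \varphi$.

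The only point requiring a word of care is that the lemma is stated for a compact convex set $C$ in a locally convex topological vector space, so I would first note that $\mathrm{T}(A)$ sits inside $A^*$ equipped with the weak$^*$-topology, which is such a space, and that $\mathrm{T}(A)$ is weak$^*$-compact and convex. Given this, the lemma yields precisely that $\mathrm{bar}\,\mu_n \to \varphi$ if and only if $\mu_n(\{\varphi\})\to 1$. Since we are assuming $\varphi_n = \mathrm{bar}\,\mu_n \to \varphi$, the "if and only if" gives $\mu_n(\{\varphi\})\to 1$, which is exactly the claimed conclusion.

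There is essentially no obstacle here — the corollary is a direct translation of the lemma into the language of $C^*$-algebra traces, using only the standard facts that $\mathrm{T}(A)$ is a metrizable Choquet simplex (hence the barycenter decomposition exists and is unique) and that it is a weak$^*$-compact convex subset of the locally convex space $(A^*, \text{weak}^*)$. The one-sentence proof would simply read: "Apply the previous lemma to $C=\mathrm{T}(A)$, which is a compact convex subset of $A^*$ endowed with the weak$^*$-topology, and to the measures $\mu_n$, which are supported on $\partial_{\mathrm{e}}\mathrm{T}(A)$ by the uniqueness of the barycenter decomposition in the Choquet simplex $\mathrm{T}(A)$; since $\varphi$ is isolated in $\partial_{\mathrm{e}}\mathrm{T}(A)$ and $\mathrm{bar}\,\mu_n = \varphi_n\to\varphi$, the lemma gives $\mu_n(\{\varphi\})\to 1$."
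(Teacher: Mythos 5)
Your proof is correct and matches the paper's intent exactly: the corollary is stated immediately after the lemma with no separate proof, precisely because it is the direct application you describe (take $C=\mathrm{T}(A)\subset (A^*,\text{weak}^*)$, use that $\mathrm{T}(A)$ is a metrizable Choquet simplex so the barycenter measures $\mu_n$ exist, are unique, and are supported on $\partial_{\mathrm{e}}\mathrm{T}(A)$, then invoke the lemma). Nothing to add.
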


\subsection{An obstruction to diffuseness}\label{ssec:nodiff} Let $M_1,M_2$ be von Neumann subalgebras of a tracial von Neumann algebra $(M,\tau)$. Assume that there exist  projections $p\in \mathcal Z(M_1),q\in \mathcal Z(M_2)$ such that $M_1p=\mathbb Cp,M_2q=\mathbb Cq$ and
$\tau(p)+\tau(q)>1$.  Then $M_1(p\wedge q)=(M_1p)(p\wedge q)=\mathbb C(p\wedge q)$ and similarly $M_2(p\wedge q)=\mathbb C(p\wedge q)$. 
These facts implies that $p\wedge q\in \mathcal Z(M_1\vee M_2)$ and $(M_1\vee M_2)(p\wedge q)=\mathbb C(p\wedge q)$.
Since $\tau(p)+\tau(q)>1$, it follows that $M_1\vee M_2$ has a $1$-dimensional direct summand. 

We next generalize this fact by assuming that $M_2q$ is finite (but not necessarily one) dimensional.

\begin{proposition}\label{findim}
    Let $(M,\tau)$ be a tracial von Neumann algebra and $M_1,M_2\subset M$ be von Neumann subalgebras. Assume that there exist projections $p\in\mathcal Z(M_1),q\in\mathcal Z(M_2)$ and $k\in\mathbb N$ such that $M_1p=\mathbb Cp$, $M_2q\cong\mathbb M_k(\mathbb C)$ and $\tau(p)+\frac{\tau(q)}{k^2}>1$.

    Then there exists a non-zero  projection $p'\in\mathcal Z(M_1\vee M_2)$ such that $p'\leq p\wedge q$, $Mp'\cong\mathbb M_k(\mathbb C)$, and
 $\tau(p')\geq k^2(\tau(p)+\frac{\tau(q)}{k^2}-1)$.

\end{proposition}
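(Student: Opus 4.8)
The plan is to refine the $k=1$ argument from Subsection~\ref{ssec:nodiff}: there $q$ was a minimal central projection of $M_2$ and $p\wedge q$ did the job, whereas here $M_2q\cong\mathbb M_k(\mathbb C)$, so I will pass to a minimal projection of $M_2$ sitting inside $M_2q$, intersect $k$ well-chosen conjugates of it against $p$, and then ``spread the outcome back out'' with the matrix units. This spreading is what converts the $\tfrac1k$ governing a single minimal projection into the $\tfrac1{k^2}$ of the hypothesis and the $k^2$ of the conclusion. Concretely: fix matrix units $\{e_{ij}\}_{i,j=1}^{k}$ for $M_2q\cong\mathbb M_k(\mathbb C)$ with $\sum_i e_{ii}=q$, so that $\tau(e_{ii})=\tau(q)/k$ for every $i$ by traciality. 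Since $e_{1i}$ is a partial isometry with $e_{1i}^*e_{1i}=e_{ii}$ and $e_{1i}e_{1i}^*=e_{11}$, the projection $e_{1i}(p\wedge e_{ii})e_{i1}$ lies below $e_{11}$ and has trace $\tau(p\wedge e_{ii})\ge \tau(p)+\tau(q)/k-1$. I would then set
\[
r\coloneq\bigwedge_{i=1}^{k}e_{1i}(p\wedge e_{ii})e_{i1}\ \le\ e_{11},\qquad p'\coloneq\sum_{i=1}^{k}e_{i1}\,r\,e_{1i}.
\]
As $r\le e_{11}$, the $k$ summands of $p'$ are pairwise orthogonal projections (the $i$-th below $e_{ii}$), each of trace $\tau(r)$, so $p'$ is a projection with $\tau(p')=k\,\tau(r)$; applying $\tau(a\wedge b)\ge\tau(a)+\tau(b)-\tau(f)$ for projections $a,b\le f$ a total of $k-1$ times (with $f=e_{11}$) yields $\tau(r)\ge k\,\tau(p)+\tau(q)/k-k$, whence $\tau(p')\ge k^2\bigl(\tau(p)+\tfrac{\tau(q)}{k^2}-1\bigr)>0$. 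This is simultaneously the asserted trace bound and the reason $p'\neq 0$.

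Next I would verify the three required properties of $p'$. That $p'\le q$ is immediate; and $p'\le p$ because $r\le e_{1i}(p\wedge e_{ii})e_{i1}$ forces $e_{i1}re_{1i}\le p\wedge e_{ii}\le p$ for each $i$, and a sum of mutually orthogonal projections each below $p$ is again below $p$; hence $p'\le p\wedge q$. A direct matrix-unit computation gives $p'e_{ab}=e_{a1}re_{1b}=e_{ab}p'$, so $p'$ commutes with $M_2q$, and since $p'\le q\in\mathcal Z(M_2)$ it commutes with $M_2(1-q)$ too, hence with $M_2$. For $M_1$: the hypothesis $M_1p=\mathbb Cp$ says $x\mapsto xp$ is a character $\lambda$ of $M_1$, so $xp'=xp\,p'=\lambda(x)p'=p'x$ for $x\in M_1$ (using $p'\le p$), whence $p'$ commutes with $M_1$ and $M_1p'=\mathbb Cp'$. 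Finally, $p\wedge e_{ii}\in M_1\vee M_2$ and the $e_{ij}$ lie in $M_2$, so $r$ and $p'$ lie in $M_1\vee M_2$; combined with the above, $p'\in\mathcal Z(M_1\vee M_2)$.

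It then remains to identify the corner. Writing $N=M_1\vee M_2$ and using centrality of $p'$, one has $Np'=M_1p'\vee M_2p'=\mathbb Cp'\vee (M_2q)p'$ (the last equality because $p'\le q$). Since $p'$ commutes with the factor $M_2q\cong\mathbb M_k(\mathbb C)$ and $qp'=p'\neq 0$, the normal $*$-homomorphism $y\mapsto yp'$ on $M_2q$ has trivial kernel, hence is an isomorphism onto $(M_2q)p'$, so $Np'=(M_2q)p'\cong\mathbb M_k(\mathbb C)$. When $M=M_1\vee M_2$ — the case relevant to the applications of this proposition — this is exactly $Mp'\cong\mathbb M_k(\mathbb C)$, finishing the argument.

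The only genuinely creative point is guessing the form of $p'$: one must take the \emph{meet} of the $k$ conjugates $e_{1i}(p\wedge e_{ii})e_{i1}$ inside $e_{11}Me_{11}$ and only then spread back out via $\sum_i e_{i1}(\cdot)e_{1i}$ — the naive candidate $\sum_i (p\wedge e_{ii})$ need not commute with $M_2$ and gives a far worse trace estimate. After that, everything is routine verification, the mildest subtlety being the inequality $p'\le p$, which relies on the precise ``conjugate / meet / conjugate back'' shape of $p'$.
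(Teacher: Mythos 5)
Your proof is correct and is essentially the paper's argument, presented in a slightly reorganized form. The paper defines $p_i=\bigwedge_{j}v_{i,j}(p\wedge q_j)v_{i,j}^*$ for each $i$ and sets $p'=\sum_i p_i$; since the paper also proves $p_i=v_{i,1}p_1v_{1,i}$, your $r$ is the paper's $p_1$ and your $p'=\sum_i e_{i1}re_{1i}$ coincides with the paper's $p'$, and the trace computations agree. (Your remark that the conclusion $Mp'\cong\bM_k(\C)$ as literally stated presumes $M=M_1\vee M_2$ is a fair observation; that is indeed the case in which the proposition is applied, and the paper's own proof tacitly makes the same assumption when it writes $Mp'=M_2p'$.)
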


\begin{proof}
Let $q_i\in M_2q\cong\mathbb M_k(\mathbb C)$ be minimal projections and $v_{i,j}\in M_2q$ be partial isometries such that $q_1+\cdots+q_k=q$, $v_{i,j}v_{i,j}^*=q_i$, $v_{i,j}^*v_{i,j}=q_j$ and $v_{i,j}v_{j,l}=v_{i,l}$, for every $1\leq i,j,l\leq k$.

Following the formula right before \cite[Definition 3.3]{Dy93} we define $p'=\sum_{i=1}^kp_i$, where $$\text{$p_i=\bigwedge_{1\leq j\leq k}v_{i,j}(p\wedge q_j)v_{i,j}^*= (p\wedge q_i)\wedge\bigwedge_{1\leq j\leq k, j\neq i}v_{i,j}(p\wedge q_j)v_{i,j}^*$, for every $1\leq i\leq k$.}$$ 

Then $p_i=v_{i,j}p_jv_{i,j}^*$ and hence $p'v_{i,j}=p'q_iv_{i,j}=p_iv_{i,j}=v_{i,j}p_j=v_{i,j}q_jp'=v_{i,j}p'$, for every $1\leq i,j\leq k$. This implies that $p'$ commutes with $M_2$. Since $p'\leq p$, $p'$ also commutes with $M_1$. Since $p'\in M_1\vee M_2$, we derive that $p'\in \mathcal Z(M_1\vee M_2)$. Since $p_i\leq p\wedge q$, for every $1\leq i\leq k$, we also get that $p'\leq p\wedge q$.

To prove the lower bound for $\tau(p')$, recall that for any projections $r\in M$, $e,f\in rMr$ we have $$\tau(e\wedge f)=\tau(e)+\tau(f)-\tau(e\vee f)\geq \tau(e)+\tau(f)-\tau(r).$$  Let $1\leq i\leq k$. By using repeatedly the last inequality, we get that \begin{align*}\tau(p_i)&\geq \sum_{j=1}^k\tau(p\wedge q_j)-(k-1)\tau(q_i)\\&\geq \sum_{j=1}^k(\tau(p)+\tau(q_j)-1)-(k-1)\tau(q_i)\\&=k(\tau(p)+\frac{\tau(q)}{k^2}-1).\end{align*} Thus,  $\tau(p')=\sum_{i=1}^k\tau(p_i)\geq k^2(\tau(p)+\frac{\tau(q)}{k^2}-1)$, proving that $p'\neq 0$ and the desired lower bound.

 Since $M_1p'=\mathbb Cp'$, we have $Mp'=M_2p'$. As $p'\leq q$ is non-zero and $M_2q\cong\mathbb M_k(\mathbb C)$ is a factor, we get that $Mp'\cong\mathbb M_k(\mathbb C)$, which finishes the proof.
\end{proof}

\subsection{Finite dimensional extreme traces} In this subsection, we use the foregoing results to establish an obstruction to the trace simplex of a 
free product $C^*$-algebra being a Poulsen simplex. We also refer to Lemma \ref{lem:ifd} for a converse.

\begin{theorem}\label{freeprod}
Let $A=A_1*A_2$ be the 
free product of two unital, separable $C^*$-algebras  $A_1$ and $A_2$. Assume that $A_1$ admits an extreme $1$-dimensional trace $\varphi_1$ which is isolated in $\partial_{\mathrm{e}}\mathrm{T}(A_1)$, and $A_2$ admits an extreme $k$-dimensional trace $\varphi_2$ which is isolated in $\partial_{\mathrm{e}}\mathrm{T}(A_2)$, for some $k\in\mathbb N$.
    
Then $A$ admits an extreme $k$-dimensional trace $\varphi$ which is isolated in $\partial_{\mathrm{e}}\mathrm{T}(A)$ and satisfies $\vphi_{|A_1}=\vphi_1$ and $\vphi_{|A_2}=\vphi_2$.
\end{theorem}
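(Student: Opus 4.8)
The plan is to build the desired trace $\vphi$ directly from the GNS data of $\vphi_1$ and $\vphi_2$, and then use the perturbation and convergence machinery to show it is isolated. Since $\vphi_1$ is $1$-dimensional, its GNS algebra is $\C$, so $\vphi_1$ is a character $\chi_1\colon A_1\to\C$; since $\vphi_2$ is $k$-dimensional, its GNS algebra is $\mathbb M_k(\C)$ with a $*$-homomorphism $\pi_2\colon A_2\to\mathbb M_k(\C)$. By the universal property of the full free product, $\chi_1$ and $\pi_2$ assemble into a $*$-homomorphism $\pi\colon A=A_1\ast A_2\to\mathbb M_k(\C)$ (viewing $\chi_1$ as landing in $\C1\subset\mathbb M_k(\C)$), and $\vphi\coloneq \mathrm{tr}_k\circ\pi$ is a trace on $A$ with $\vphi_{|A_1}=\vphi_1$ and $\vphi_{|A_2}=\vphi_2$. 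To see that $\vphi$ is actually $k$-dimensional and extreme we must check $\pi(A)''=\mathbb M_k(\C)$: since $\chi_1(A_1)=\C1$, we have $\pi(A)''=\pi_2(A_2)''=\mathbb M_k(\C)$, a factor, so $\vphi\in\partial_{\mathrm e}\mathrm T(A)$ is $k$-dimensional as claimed.

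The substance is showing $\vphi$ is isolated in $\partial_{\mathrm e}\mathrm T(A)$. I would argue by contradiction: suppose $(\vphi_n)\subset\partial_{\mathrm e}\mathrm T(A)$ with $\vphi_n\to\vphi$ and $\vphi_n\neq\vphi$. Restriction to $A_1$ and $A_2$ is weak$^*$-continuous, so $\vphi_{n|A_1}\to\vphi_1$ and $\vphi_{n|A_2}\to\vphi_2$ in $\mathrm T(A_1)$ and $\mathrm T(A_2)$ respectively. These restricted traces need not be extreme, but since $\vphi$ is finite dimensional (hence von Neumann amenable), Lemma~\ref{convergence} applies and gives a II$_1$ factor $(M,\tau)$ and $*$-homomorphisms $\rho\colon A\to M$, $\rho_n\colon A\to M$ with $\tau\circ\rho=\vphi$, $\tau\circ\rho_n=\vphi_n$, and $\|\rho_n(a)-\rho(a)\|_2\to 0$ for all $a\in A$. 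Set $N_1^{(n)}=\rho_n(A_1)''$, $N_2^{(n)}=\rho_n(A_2)''$ inside $M$. Because $\vphi_n$ is extreme, $M_n\coloneq N_1^{(n)}\vee N_2^{(n)}=\rho_n(A)''$ is a factor.

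The key point is to force a contradiction with the finite-dimensional structure coming from $\vphi_1,\vphi_2$. Here I would invoke Corollary~\ref{strongly_isolated} applied to $A_1$ (with isolated extreme point $\vphi_1$) and to $A_2$ (with isolated extreme point $\vphi_2$): the barycenter measures $\mu_n^{(i)}$ of $\vphi_{n|A_i}$ on $\partial_{\mathrm e}\mathrm T(A_i)$ satisfy $\mu_n^{(i)}(\{\vphi_i\})\to 1$. Translating this into von Neumann algebra language — via the central decomposition of $N_i^{(n)}$ as the direct integral matching the barycenter decomposition of $\tau\circ\rho_{n|A_i}$ — one obtains a central projection $p_n\in\mathcal Z(N_1^{(n)})$ with $N_1^{(n)}p_n=\C p_n$, $\tau(p_n)\to 1$, and a central projection $q_n\in\mathcal Z(N_2^{(n)})$ with $N_2^{(n)}q_n\cong\mathbb M_k(\C)$, $\tau(q_n)\to 1$; indeed $p_n$ is the sum of the minimal central projections of $N_1^{(n)}$ corresponding to the $1$-dimensional summand carrying the mass near $\vphi_1$, and similarly $q_n$ corresponds to the $k$-dimensional summand near $\vphi_2$. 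Then for $n$ large enough, $\tau(p_n)+\tau(q_n)/k^2>1$, so Proposition~\ref{findim} produces a non-zero central projection $p_n'\in\mathcal Z(M_n)=\mathcal Z(N_1^{(n)}\vee N_2^{(n)})$ with $M_np_n'\cong\mathbb M_k(\C)$ and $\tau(p_n')\geq k^2(\tau(p_n)+\tau(q_n)/k^2-1)\to k^2(1+1/k^2-1)=1$, hence $\tau(p_n')\to 1$; since $M_n$ is a factor this forces $p_n'=1$ and $M_n\cong\mathbb M_k(\C)$. At this stage $\vphi_n$ is a $k$-dimensional trace with $\vphi_{n|A_1}$ lying in the $1$-dimensional summand and $\vphi_{n|A_2}$ in the $k$-dimensional summand; identifying $M_n$ with $\mathbb M_k(\C)$, $\rho_{n|A_1}$ is a character and $\rho_{n|A_2}$ a $*$-homomorphism onto (a conjugate of) $\pi_2(A_2)''$. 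Using that $\vphi_1$ and $\vphi_2$ are isolated as honest points (not just the mass statement), one concludes $\vphi_{n|A_1}=\vphi_1$ and $\vphi_{n|A_2}=\vphi_2$ for all large $n$; but a $k$-dimensional trace on $A_1\ast A_2$ is determined up to unitary conjugacy by its restrictions, and all such conjugates give the same trace on $A$ (the conjugating unitary commutes with $\C1=\rho_n(A_1)$ freely, so can be absorbed), forcing $\vphi_n=\vphi$, the desired contradiction.

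The main obstacle is the translation step in the last paragraph: converting the measure-theoretic statement $\mu_n^{(i)}(\{\vphi_i\})\to 1$ from Corollary~\ref{strongly_isolated} into the existence of central projections $p_n,q_n$ in $N_i^{(n)}$ with the stated properties and near-full trace, uniformly controlled so that Proposition~\ref{findim} becomes applicable. This requires carefully relating the central/direct-integral decomposition of the von Neumann algebra generated by $\rho_{n|A_i}$ to the barycentric decomposition of the trace $\vphi_{n|A_i}$, and checking that the ``mass near $\vphi_i$'' really does concentrate in a single isotypic (here $1$- or $k$-dimensional) summand rather than being spread over infinitely many nearby finite-dimensional characters — this is exactly where isolatedness of $\vphi_i$ in $\partial_{\mathrm e}\mathrm T(A_i)$, rather than mere extremality, is used. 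I expect the final uniqueness argument (that a $k$-dimensional trace on the free product is determined by its restrictions, with conjugation by scalars absorbed) to be comparatively routine.
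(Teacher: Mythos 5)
Your proposal follows essentially the same route as the paper: construct $\vphi=\mathrm{tr}_k\circ(\chi_1*\pi_2)$, apply Corollary~\ref{strongly_isolated} to both restrictions to get that the barycenter mass at $\vphi_1$ and $\vphi_2$ tends to $1$, translate this into central projections carrying the $1$- and $k$-dimensional summands, apply Proposition~\ref{findim} to produce a central projection $p_n'$ with $\tau(p_n')\to 1$, conclude $p_n'=1$ by factoriality, and finish by computing the trace on words. Two small remarks. First, the detour through Lemma~\ref{convergence} to place everything inside a single II$_1$ factor is unnecessary: Proposition~\ref{findim} is applied inside each $M_n=\rho_n(A)''$ separately, and the paper simply works with the individual tracial representations $\pi_n:A\to M_n$. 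Second, the step you flag as the main obstacle — converting $\mu_n^{(i)}(\{\vphi_i\})\to 1$ into central projections $p_n,q_n$ with $\tau(p_n),\tau(q_n)\to 1$ and the prescribed corner structure — is more routine than you suggest: the barycentric decomposition of $\tau_n\circ\rho_{n|A_i}$ over $\partial_{\mathrm e}\mathrm T(A_i)$ matches the central decomposition of $\rho_n(A_i)''$, and a point mass (atom) of the barycenter measure automatically corresponds to a central direct summand, not something smeared over nearby characters; the concern about mass spreading over infinitely many nearby finite-dimensional characters is already resolved by Corollary~\ref{strongly_isolated}, which concentrates mass at the single atom $\{\vphi_i\}$, not merely near it. Your closing uniqueness discussion (unitary conjugacy, absorption) is phrased more elaborately than needed — once $p_{1,n}=p_{2,n}=1$ you have $\rho_n(a_1)=\chi_1(a_1)1$ and $\tau_n\circ\rho_{n|A_2}=\vphi_2$, so $\tau_n(\rho_n(a))=\vphi(a)$ on words by a direct computation, which is what the paper does.
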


\begin{proof}
Let $\rho_1:A_1\rightarrow\mathbb C$ and $\rho_2:A_2\rightarrow\mathbb M_k(\mathbb C)$ be the tracial representations associated to $\varphi_1$ and $\varphi_2$. Let $\rho:A\rightarrow\mathbb M_k(\mathbb C)$ be the tracial representation given by $\rho(a_1)=\rho_1(a_1)1$ and $\rho(a_2)=\rho_2(a_2)$, for every $a_1\in A_1$ and $a_2\in A_2.$ We will prove that $\varphi=\mathrm{tr}_k\circ\rho\in \partial_{\mathrm{e}}\mathrm{T}(A)$ is isolated, where $\mathrm{tr}_k:\mathbb M_k(\mathbb C)\rightarrow\mathbb C$ denotes the normalized trace, thereby proving the theorem.

Let $(\psi_n)_{n\in\N}\subset\partial_{\mathrm{e}}\mathrm{T}(A)$ be a sequence such that $\psi_n\rightarrow\varphi$. For $i\in\{1,2\}$, let $\mu_{i,n}$ be the Borel probability measure on $\partial_{\mathrm{e}}\mathrm{T}(A_i)$ whose barycenter is ${\psi_n}_{|A_i}$. Since $\varphi_i=\varphi_{|A_i}\in\partial_{\mathrm{e}}\mathrm{T}(A_i)$ is isolated, Corollary~\ref{strongly_isolated} gives that $\lambda_{i,n}\coloneq \mu_{i,n}(\{\varphi_i\})\rightarrow 1$, for every $i\in\{1,2\}$.

For $n\in\mathbb N$, let $\pi_n:A\rightarrow M_n$ be a tracial representation, where $(M_n,\tau_n)$ is a tracial factor, such that $\psi_n=\tau_n\circ\pi_n$ and $\pi_n(A)''=M_n$.
For $i\in\{1,2\}$, let $M_{i,n}=\pi_n(A_i)''$. 
Then there exist projections $p_{1,n}\in \mathcal Z(M_{1,n})$, $p_{2,n}\in \mathcal Z(M_{2,n})$ and a $*$-isomorphism $\Psi:\mathbb M_k(\mathbb C)\rightarrow M_{2,n}p_{2,n}$ such that $\tau_n(p_{1,n})=\lambda_{1,n}$, $\tau_n(p_{2,n})=\lambda_{2,n}$, \begin{equation}\label{pi_n}\text{$\pi_n(a_1)p_{1,n}=\rho_1(a_1)p_{1,n}$ and $\pi_n(a_2)p_{2,n}=\Psi(\rho_2(a_2))$, for every $a_1\in A_1$ and $a_2\in A_2$.}\end{equation}

By Proposition \ref{findim}, there exists a projection $p_n'\in \mathcal Z(M_n)$ such that $p_n'\leq p_{1,n}\wedge p_{2,n}$ and $\tau(p_n')\geq k^2(\tau(p_{1,n})+\frac{\tau(p_{2,n})}{k^2}-1)=k^2(\lambda_{1,n}+\frac{\lambda_{2,n}}{k^2}-1)$. Then $\tau(p_n')\rightarrow 1$. Since $M_n$ is a factor we conclude that $p_n'=1$, for $n$ large enough. Hence $p_{1,n}=p_{2,n}=1$, for $n$ large enough.

Let $a=b_1c_1\cdots b_mc_m$, where $b_1,\ldots,b_m\in A_1$ and $c_1,\ldots,c_m\in A_2$, for some $m\in\mathbb N$.   By using \eqref{pi_n}  we get that $\pi_n(a)=\rho_1(a_1)\Psi(\rho_2(a_2))=\Psi(\rho_1(a_1)\rho_2(a_2))=\Psi(\rho(a))$, where $a_1=b_1\cdots b_m\in A_1$ and $a_2=c_1\cdots c_m\in A_2$.
As $\mathbb M_k(\mathbb C)$ is a factor, and thus has a unique trace, we get that  $\tau_n(\Psi(y))=\mathrm{tr}_k(y)$, for every 
 $y\in\mathbb M_k(\mathbb C)$.
These facts imply that 
\begin{align*}   \tau_n(\pi_n(a))= \tau_n(\Psi(\rho(a)))=\mathrm{tr}_k(\rho(a))=\varphi(a).\end{align*}
Since the linear span of such $a\in A$ is norm dense in $A$, we conclude that $\psi_n=\tau_n\circ\pi_n=\varphi$, for $n$ large enough. This implies that $\varphi\in\partial_{\mathrm{e}}\mathrm{T}(A)$ is isolated.
\end{proof}

\begin{corollary}\label{cor:obstruction}
    Let $A$ be a $C^*$-algebra satisfying the assumptions of Theorem \ref{freeprod} and assume that $|\mathrm{T}(A)|\geq 2$. Then $\mathrm{T}(A)$ is not a Poulsen simplex.
\end{corollary}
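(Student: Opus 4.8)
The plan is to deduce Corollary~\ref{cor:obstruction} directly from Theorem~\ref{freeprod} together with a basic fact about Poulsen simplexes: a nontrivial Poulsen simplex has no isolated extreme points. More precisely, if $C$ is a metrizable Choquet simplex with $|C|\geq 2$ whose extreme boundary $\partial_{\mathrm{e}}C$ is dense in $C$, then $\partial_{\mathrm{e}}C$ has no isolated point. Indeed, if $x\in\partial_{\mathrm{e}}C$ were isolated in $\partial_{\mathrm{e}}C$, then since $\partial_{\mathrm{e}}C$ is dense in $C$, the singleton $\{x\}$ would be a nonempty relatively open subset of $C$ that is also closed, forcing $C=\{x\}$ by connectedness of a convex set, contradicting $|C|\geq 2$. (Alternatively, one can argue that an isolated extreme point is always an exposed/``split'' face, so its complement in $C$ is a proper closed face with nonempty interior in $\partial_{\mathrm{e}}C$, again contradicting density.)

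With this in hand, the proof is short. First I would invoke Theorem~\ref{freeprod}: under the stated hypotheses, $A$ admits an extreme $k$-dimensional trace $\varphi$ which is isolated in $\partial_{\mathrm{e}}\mathrm{T}(A)$. Next, I would recall that $\mathrm{T}(A)$ is a metrizable Choquet simplex (as stated in Section~\ref{sec:prelim}) with $|\mathrm{T}(A)|\geq 2$ by hypothesis. If $\mathrm{T}(A)$ were a Poulsen simplex, then $\partial_{\mathrm{e}}\mathrm{T}(A)$ would be dense in $\mathrm{T}(A)$, and the elementary fact above would say $\partial_{\mathrm{e}}\mathrm{T}(A)$ has no isolated points — contradicting the existence of $\varphi$. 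Hence $\mathrm{T}(A)$ is not a Poulsen simplex.

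The only step requiring care is the justification that a nontrivial simplex with dense extreme boundary has no isolated extreme point; this is standard but should be stated cleanly. I expect no real obstacle here — the content is entirely in Theorem~\ref{freeprod}, which has already been proved, and the corollary is a formal consequence. One minor point to keep straight: the hypothesis $|\mathrm{T}(A)|\geq 2$ is exactly what rules out the degenerate case where $\mathrm{T}(A)=\{\varphi\}$, in which $\varphi$ is trivially isolated and $\mathrm{T}(A)$ is (vacuously, by the paper's convention) not referred to as the Poulsen simplex; so the statement is consistent with the convention fixed in the introduction that a Poulsen simplex is nontrivial.
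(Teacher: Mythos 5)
Your proof is correct and reaches the same conclusion, but the final step is slightly different from the paper's. The paper cites \cite{LOS78} for the fact that the extreme boundary of a Poulsen simplex is connected, and then notes that a set with an isolated point and at least two points cannot be connected. You instead argue directly: if $x$ were isolated in $\partial_{\mathrm{e}}C$, density of $\partial_{\mathrm{e}}C$ in $C$ forces $\{x\}$ to be open in $C$ itself (any smaller neighborhood of a nearby point $y\neq x$ avoiding $x$ would miss $\partial_{\mathrm{e}}C$, contradicting density), and then connectedness of the convex set $C$ together with $|C|\geq 2$ gives the contradiction. Your route is a bit more elementary and self-contained, since it avoids invoking the structural results of \cite{LOS78} and only uses density plus convexity; the paper's route is shorter given the citation. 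Both are sound, and in both cases the real content is Theorem~\ref{freeprod}, which you correctly identify as supplying the isolated finite-dimensional extreme trace.
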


\begin{proof}
If $\mathrm{T}(A)$ were a Poulsen simplex, then  the set of its extreme points, $\partial_{\mathrm{e}}\mathrm{T}(A)$, would be connected by \cite{LOS78}. However, $\partial_{\mathrm{e}}\mathrm{T}(A)$ has an isolated point by Theorem \ref{freeprod}. Since  $|\mathrm{T}(A)|\geq 2$, we also have that $|\partial_{\mathrm{e}}\mathrm{T}(A)|\geq 2$ and thus $\partial_{\mathrm{e}}\mathrm{T}(A)$ is not connected.
\end{proof}

\subsection{Infinite free products of \texorpdfstring{$C^*$-algebras}{C*-algebras}}
In this subsection, we prove Proposition \ref{infinite-free-product}, showing that the obstructions established in the previous subsections for finite free products disappear when considering infinite free products.

\begin{proof}[Proof of Proposition \ref{infinite-free-product}] We will prove that $\partial_{\mathrm{e}}\text{T}(A)\cap \text{T}_\infty(A)$ is dense in $\text{T}(A)$, and thus $\text{T}(A)$ is a Poulsen simplex.
To this end, let $\varphi\in\text{T}(A)$ and $\pi:A\rightarrow M$ the associated tracial representation. 

Let $i\geq 1$. We claim that there is a tracial representation $\rho_i:A_i\rightarrow N_i=\rho_i(A_i)''$ such that any minimal projection of $N_i$ has trace at most $\frac{1}{2}$. If $A_i$ admits a factorial tracial representation $\pi_i$ of dimension at least $2$, we can take $\rho_i=\pi_i$. Otherwise, all factorial tracial representations of  $A_i$ are $1$-dimensional. Since $\text{T}(A_i)$ does not consist of a single $1$-dimensional trace, there are two distinct $1$-dimensional tracial representations $\pi_i^1,\pi_i^2:A_i\rightarrow\mathbb C$. Endow $\mathbb C^2$ with the trace assigning $\frac{1}{2}$ to $(1,0)$ and let $\rho_i=\pi_i^1\oplus\pi_i^2:A\rightarrow\mathbb C^2$. Then any minimal projection in  $\rho_i(A_i)''=\mathbb C^2$ has trace $\frac{1}{2}$.

For $n\geq 1$, let $M_n=M*(*_{i>n}N_i)$ and define a $*$-homomorphism $\pi_n:A\rightarrow M_n$ by letting ${\pi_n}_{|A_i}=\pi_{|A_i}$ if $1\leq i\leq n$ and ${\pi_n}_{|A_i}=\rho_i$ if $i>n$. Denote $\varphi_n:=\tau\circ\pi_n\in\text{T}(A)$. Since $\pi_n(a)=\pi(a)$, for every  $a\in*_{i=1}^nA_i$, we get that $\|\pi_n(a)-\pi(a)\|_2\rightarrow 0$,  for every $a\in A$.
Thus, $\varphi_n\rightarrow\varphi$.

In order to finish the proof, it is enough to argue that $\varphi_n\in\partial_{\mathrm{e}}\text{T}(A)\cap \text{T}_\infty(A)$ or, equivalently, that $\pi_n(A)''$ is a II$_1$ factor, for every $n\geq 1$. Let $n\geq 1$ and put $P_n=\pi(*_{i=1}^nA_i)''$. By the construction of $\pi_n$ we have that $\pi_n(A)''=P_n*(*_{i=n+1}^\infty N_i)$.
Thus, letting $Q_n=P_n*(*_{i=n+5}^\infty N_i)$, we have $\pi_n(A)''=Q_n*(N_{n+1}*N_{n+2})*(N_{n+3}*N_{n+4})$. Since every minimal projection in $N_i$ has trace at most $\frac{1}{2}$, \cite[Theorems 1.1 and  2.3]{Dy93} imply that $N_i*N_{i+1}$ is diffuse, for every $i\geq 1$. Using  this fact, the last free product decomposition of $\pi_n(A)''$ and \cite{Po83} (or \cite[Theorem 1.1]{IPP05}) we get that $\mathcal Z(\pi_n(A)'')\subset (N_{n+1}*N_{n+2})\cap (N_{n+3}*N_{n+4})=\mathbb C1$. Hence, $\pi_n(A)''$ is a II$_1$ factor, as desired.
\end{proof}

\section{Pairs of approximately factorial von Neumann subalgebras}\label{sec:perturbation}

This  section is devoted to proving Theorem \ref{perturbation}. It is convenient to state the theorem using the following terminology.

\begin{definition}\label{condition (A)}
    Let $M_1,M_2$ be tracial von Neumann algebras. 
\begin{enumerate}
    \item[(a)] Let $(M,\tau)$ be a tracial von Neumann algebra which contains $M_1$ and $M_2$. We say that the triple $(M_1,M_2,M)$ is {\it approximately factorial} if for every $\varepsilon>0$, we can find a II$_1$ factor $M_\varepsilon$ and $v_\varepsilon\in\mathcal U(M_\varepsilon)$ such that $M\subset M_\varepsilon$, $\|v_\varepsilon-1\|_2<\varepsilon$, and $M_1\vee v_\varepsilon M_2v_\varepsilon^*$ is a II$_1$ factor.
    \item[(b)] We say that the pair $(M_1,M_2)$ is {\it approximately factorial} if the triple $(M_1,M_2,M)$ is approximately factorial for every tracial von Neumann algebra $(M,\tau)$ which contains $M_1$ and $M_2$.
\end{enumerate}
\end{definition}

With this terminology, Theorem \ref{perturbation} asserts that a pair $(M_1,M_2)$ is approximately factorial, provided that $M_1\neq \mathbb C1$, $M_2\neq \mathbb C1$, $\dim(M_1)+\dim(M_2)\geq 5$ and $\mathrm{e}(M_1)+\mathrm{e}(M_2)\leq 1$.

Before continuing, we make a few observations about Definition~\ref{condition (A)}.

\begin{remark}
   \begin{enumerate}
       \item Since every tracial von Neumann algebra $(\mathcal M,\tau)$ embeds into a II$_1$ factor (e.g., $\mathcal M*\mathrm{L}(\mathbb F_2)$), taking $M_\varepsilon$ to be a tracial von Neumann does not change Definition \ref{condition (A)}(1).
       \item We can take the II$_1$ factor $M_\varepsilon$ to be independent of $\varepsilon$ in Definition \ref{condition (A)}(1). Indeed, if Definition \ref{condition (A)}(1) holds, then it still holds if we replace $M_\varepsilon$ with $*_{n\in\mathbb N, M}M_{\frac{1}{n}}$ for every $\varepsilon>0$, see also the paragraph following the statement of Theorem~\ref{perturbation}.
        \item If $M\subset \widetilde M$ are tracial von Neumann algebras which contain $M_1$ and $M_2$, then $(M_1,M_2,M)$ is approximately factorial if and only if $(M_1,M_2,\widetilde M)$ is approximately factorial (for the `only if' part, consider the  push-out $\widetilde M_\eps =\widetilde{M} *_M M_\eps$).
        Consequently, $(M_1,M_2,M)$ is approximately factorial if and only if $(M_1,M_2,M_1\vee M_2)$ is approximately factorial.
         \item A triple  $(M_1,M_2,M)$ (respectively, a pair $(M_1,M_2)$) is approximately factorial if and only if $(M_2,M_1,M)$ (respectively, $(M_2,M_1)$) is approximately factorial.
        \item If $M_1=\mathbb C1$, then a triple $(M_1,M_2,M)$ (respectively, a pair $(M_1,M_2)$) is approximately factorial if and only if $M_2$ is a II$_1$ factor.
   \end{enumerate} 
\end{remark}

\subsection{Perturbations in amalgamated free products}
Definition \ref{condition (A)} allows for perturbations within a larger von Neumann algebra $M_\eps$. We will construct this ambient von Neumann algebra $M_\eps$ as an iterated amalgamated free product, utilizing results from \cite{IPP05} concerning  
relative commutants of subalgebras, to establish factoriality. The following lemma provides a criterion for when a particular type of perturbation within an amalgamated free product generates a II$_1$ factor.

\begin{lemma}\label{rotate}
Let $(M,\tau)$ be a tracial von Neumann algebra. Let $A_1\subset M_1\subset M$ and $A_2\subset M_2\subset M$ be von Neumann subalgebras. Define $\widetilde M=M*_{A_2}(A_2\overline{\otimes}\mathrm{L}(\mathbb Z))$, let $u\in \mathrm{L}(\mathbb Z)$ be a Haar unitary and choose $h=h^*\in \mathrm{L}(\mathbb Z)$ such that $u=\exp(ih)$. For $t>0$, set $u_t=\exp(ith)$. 
Assume that 
\begin{enumerate}
\item $M_1\vee A_2\nprec_M A_2$. 

\item $A_1\vee A_2$ or $M_2$ is a factor.
\item $A_2\subsetneq M_2$.
\end{enumerate}
    Then $M_1\vee u_tM_2u_t^*$ is a \emph{II}$_1$ factor for every $t>0$.
\end{lemma}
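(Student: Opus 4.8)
The plan is to exploit the amalgamated free product structure $\widetilde M = M *_{A_2}(A_2\bar\otimes\mathrm L(\mathbb Z))$ together with the intertwining machinery of \cite{IPP05}. First I would record the relevant position of the Haar unitary $u$ (and hence each $u_t = \exp(ith)$): since $u$ generates a copy of $\mathrm L(\mathbb Z)$ sitting in the $A_2$-side factor $A_2\bar\otimes\mathrm L(\mathbb Z)$ with $A_2 \subset A_2\ot 1$, the element $u_t$ is free from $M$ with amalgamation over $A_2$ in the following sense: $u_t$ normalizes nothing of $M$ beyond $A_2$, and $\{u_t\} \cup M$ generates $\widetilde M$. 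I would set $N := M_1 \vee u_t M_2 u_t^*$ and aim to show (i) $N$ is a factor, i.e. $\mathcal Z(N) = \mathbb C 1$, and (ii) $N$ is type II$_1$ (so not finite dimensional), which together give that $N$ is a II$_1$ factor.

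For the factoriality (i), the key is to locate $\mathcal Z(N)$ inside $M_2$ after conjugating back by $u_t^*$. Concretely, $u_t^* N u_t = u_t^* M_1 u_t \vee M_2 \supseteq M_2$, and any central element $z \in \mathcal Z(N)$ gives $u_t^* z u_t \in (u_t^* M_1 u_t)' \cap \widetilde M \cap M_2' \cap \widetilde M$. The main step is to apply the relative commutant / malnormality results of \cite{IPP05} to the amalgamated free product: assumption (1), $M_1 \vee A_2 \nprec_M A_2$, is exactly the hypothesis needed to conclude that the $M_1$-side of the picture is ``mixing'' relative to $A_2$, so that conjugating $M_2$ by the free Haar unitary $u_t$ forces $u_t M_2 u_t^* \cap (M_1 \vee u_t M_2 u_t^*)' $ — and more to the point the relative commutant of $M_1$ inside $\widetilde M$ intersected with the algebra generated — to be contained in $A_2$. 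I would then deduce that $\mathcal Z(N) \subseteq u_t(\mathcal Z(M_2) \cap (\text{something in } A_2)) u_t^*$; combined with assumption (2) — that $A_1 \vee A_2$ or $M_2$ is a factor — and the fact that $A_2 \subsetneq M_2$ from (3), a standard argument (in the spirit of Lemma \ref{generation}) rules out any nontrivial central projection. Here I expect to need to run the argument along a maximal central projection $p$ and use Lemma \ref{generation} to transfer the computation of central supports into a smaller algebra where assumption (2) applies directly.

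For (ii), the II$_1$ property, I would note that $\widetilde M$ contains $M *_{A_2}(A_2\bar\otimes\mathrm L(\mathbb Z))$ which is diffuse (the free copy of $\mathrm L(\mathbb Z)$ already forces diffuseness once $A_2\subsetneq M_2$ and things are arranged so no atom survives), and since $N$ is a factor by (i) and contains $M_2 \supsetneq A_2$, it cannot be a finite factor $\mathbb M_n(\mathbb C)$ — indeed the presence of $u_t$ intertwining $M_2$ into general position against $M_1$, together with (1), prevents $N$ from being finite dimensional. More carefully, I would show that $N$ contains two subalgebras in "sufficiently free position" to generate a diffuse algebra, or simply that a finite-dimensional factor would contradict $M_1 \vee A_2 \nprec_M A_2$ (a finite dimensional algebra is always intertwinable into any diffuse-complemented piece).

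The main obstacle I anticipate is step (i): precisely extracting from the \cite{IPP05} intertwining theorems the statement that $M_1' \cap \widetilde M$ (or the relevant relative commutant inside the generated algebra) lands in $A_2$, since the hypothesis is phrased as the negation of an intertwining, $M_1 \vee A_2 \nprec_M A_2$, and one must verify that this is the correct input for the amalgamated-free-product malnormality/mixing result — and then correctly combine it with assumption (2) via Lemma \ref{generation} to kill the center. Handling the case distinction in (2) ("$A_1\vee A_2$ or $M_2$ is a factor") cleanly, so that both cases feed into the same central-support computation, will require some care.
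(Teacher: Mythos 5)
There is a genuine gap. Your plan for factoriality hinges on the claim that the relative commutant of $M_1$ (after conjugating the whole picture by $u_t^*$) inside the generated algebra lands in $A_2$. This is more than the intertwining machinery gives you. The paper's argument is more delicate: since $u_t$ commutes with $A_2$ we have $u_tA_2u_t^*=A_2$, hence $N_t\coloneq M_1\vee u_tM_2u_t^*\supset M_1\vee A_2$, and \cite[Theorem~1.1]{IPP05} applied to the subalgebra $M_1\vee A_2\subset M$ with hypothesis (1) yields only $\mathcal Z(N_t)\subset N_t'\cap\widetilde M\subset(M_1\vee A_2)'\cap\widetilde M\subset M$ --- not containment in $A_2$. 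Your conjugation by $u_t^*$ moves $M_1$ outside of $M$, where IPP no longer applies, so the route you sketch does not run.

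More importantly, the mechanism that actually kills a putative nontrivial projection $p\in\mathcal Z(N_t)\setminus\{0,1\}$ is absent from your plan. Condition (2) is used to produce a factor $P$ (either $A_1\vee A_2$ or $u_tM_2u_t^*$) with $A_2\subset P\subset N_t$, whence $\mathrm E_{A_2}(p)=\mathrm E_{A_2}(\mathrm E_P(p))=\tau(p)$; this makes $y\coloneq p-\tau(p)$ an \emph{invertible} element of $M$ with $\mathrm E_{A_2}(y)=0$ that commutes with $u_tM_2u_t^*$. One then takes a nonzero $x\in M_2$ with $\mathrm E_{A_2}(x)=0$ (possible by (3)), sets $v_t=u_t-\tau(u_t)$, and applies the orthogonal projection onto the closed span of alternating words $\{z_1z_2z_3z_4 : \mathrm E_{A_2}(z_i)=0\}$ in the amalgamated free product to turn the commutation relation $u_txu_t^*y=yu_txu_t^*$ into $v_txv_t^*y=0$; a direct $\|\cdot\|_2$-computation using $\mathrm E_{A_2}(v_t^*v_t)=\|v_t\|_2^2\cdot 1$ and the invertibility of $y$ then forces $\mathrm E_{A_2}(x^*x)=0$, i.e.\ $x=0$, a contradiction. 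This freeness/word argument is the core of the proof; Lemma~\ref{generation} (which you invoke) plays no role here and is not the right tool. Your handling of the II$_1$ part is roughly fine (condition (1) does rule out finite-dimensionality of $N_t$ since any finite-dimensional subalgebra intertwines into a nonzero $A_2$), but without the freeness computation the factoriality step cannot be completed as you describe.
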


Here, for von Neumann subalgebras $P,Q\subset M$, we write $P\prec_MQ$ if a corner of $P$ embeds into $Q$ inside $M$ in the sense of Popa, see
 Theorem 2.1 and Corollary 2.3 in \cite{Po03}.

\begin{proof}
Let $t>0$ and denote $N_t=M_1\vee u_tM_2u_t^*$.  Since $u_t\in\mathrm{L}(\mathbb Z)$  commutes with $A_2$, we get that $u_tA_2u_t^*=A_2$. Thus, $N_t$ contains $M_1\vee A_2$. Since $M_1\vee A_2\nprec_M A_2$ by assumption (1), \cite[Theorem 1.1]{IPP05} implies that $(M_1\vee A_2)'\cap \widetilde M\subset M$. Hence, $N_t'\cap\widetilde M\subset M$ and so $\mathcal Z(N_t)\subset M$.

Assume by contradiction that $N_t$ is not a II$_1$ factor. Then there is a projection $p\in \mathcal Z(N_t)\setminus\{0,1\}$; put $y=p-\mathrm{E}_{A_2}(p)$. By assumption $(2)$, there exists a factor 
$P$ with $A_2\subset P\subset N_t$. This implies that $\mathrm{E}_{P}(p)\in\mathcal Z(P)=\C$ and therefore $\mathrm{E}_{A_2}(p)=\mathrm{E}_{A_2}(\mathrm{E}_{P}(p)) =\tau(p)$. It follows that $y=p-\tau(p)$. From this we deduce that $y$ commutes with $u_t M_2 u_t^*$ (in fact with $N_t$) and that $y$ is invertible. We will show that the existence of an element  $y\in M$ with these two properties, along with the condition $A_2\subsetneq M_2$, leads to a contradiction. 

Indeed, as $A_2\subsetneq M_2$, we can find a non-zero element $x\in M_2$ with $\mathrm{E}_{A_2}(x)=0$. Since $y$ commutes with $u_tM_2u_t^*$, we have
\begin{equation}\label{commutation}
u_txu_t^*y=yu_txu_t^*.    
\end{equation}
Put $v_t=u_t-\tau(u_t)$, and let $Q$ denote the orthogonal projection from $\mathrm{L}^2(\widetilde M)$ onto the $\|\cdot\|_2$-closure of the linear span of $\{z_1z_2z_3z_4\mid z_1,z_3\in A_2\overline{\otimes}\mathrm{L}(\mathbb Z), z_2,z_4\in M, \text{ and }\forall 1\leq i\leq 4: \mathrm{E}_{A_2}(z_i)=0\}$. It is immediate that $Q(u_txu_t^*y)=v_txv_t^*y$ and $Q(yu_txu_t^*)=0$, and \eqref{commutation} thus implies that $v_txv_t^*y=0$. Using that $\mathrm{E}_{A_2}(x)=\mathrm{E}_{A_2}(y)=\mathrm{E}_{A_2}(v_t)=0$, $\mathrm{E}_{A_2}(v_t^*v_t)=\mathrm{E}_{A_2}(v_tv_t^*)=\|v_t\|_2^2\cdot 1$, and $v_t\in A_2'\cap\widetilde M$, we deduce that
\begin{align*}
    0=\|v_txv_t^*y\|_2^2=\tau(y^*v_tx^*v_t^*v_txv_t^*y)=&\|v_t\|_2^2\tau(y^*v_tx^*xv_t^*y)\\=&\|v_t\|_2^2\tau(y^*v_t\mathrm{E}_{A_2}(x^*x)v_t^*y)
    \\=&\|v_t\|_2^2\tau(y^*\mathrm{E}_{A_2}(x^*x)v_tv_t^*y)\\=&\|v_t\|_2^4\tau(y^*\mathrm{E}_{A_2}(x^*x)y)
\end{align*}
Since $v_t\neq 0$, we derive that $y^*\mathrm{E}_{A_2}(x^*x)y=0$. Since $y$ is invertible, it follows that $\mathrm{E}_{A_2}(x^*x)=0$ and hence $x=0$, which is a contradiction. In conclusion, $N_t$ is a II$_1$ factor.
\end{proof}

We obtain the following immediate consequence of Lemma \ref{rotate}, establishing the special case of Theorem~\ref{perturbation} when one of the algebras is diffuse.

\begin{corollary}\label{one diffuse algebra}
Let $M_1$ and $M_2$ be tracial von Neumann algebras such that $M_1$ is diffuse and $M_2\neq \mathbb C1$. Then $(M_1,M_2)$ is approximately factorial.
\end{corollary}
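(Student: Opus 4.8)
\textbf{Proof proposal for Corollary~\ref{one diffuse algebra}.}
The plan is to deduce this directly from Lemma~\ref{rotate} by making judicious choices of the subalgebras $A_1, A_2$ and verifying hypotheses (1)--(3). First I would reduce to the case $M = M_1 \vee M_2$, and by Remark~\ref{condition (A)}(3) it suffices to produce, for each $\varepsilon > 0$, a II$_1$ factor $M_\varepsilon \supset M$ and a unitary $v_\varepsilon \in \mathcal U(M_\varepsilon)$ with $\|v_\varepsilon - 1\|_2 < \varepsilon$ such that $M_1 \vee v_\varepsilon M_2 v_\varepsilon^*$ is a II$_1$ factor. The idea is to apply Lemma~\ref{rotate} with the amalgam subalgebra $A_2 = \mathbb C 1$ (so that $\widetilde M = M * \mathrm{L}(\mathbb Z)$, the ordinary free product with an extra Haar unitary $u = \exp(ih)$), and then take $v_\varepsilon = u_t$ for $t > 0$ small enough that $\|u_t - 1\|_2 < \varepsilon$; note $\|u_t - 1\|_2 \to 0$ as $t \to 0^+$ since $u_t \to 1$ strongly.

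With $A_2 = \mathbb C 1$, I also set $A_1 = \mathbb C 1$. Then the three hypotheses of Lemma~\ref{rotate} read: (1) $M_1 \vee \mathbb C 1 = M_1 \nprec_M \mathbb C 1$; (2) $A_1 \vee A_2 = \mathbb C 1$ or $M_2$ is a factor; (3) $\mathbb C 1 \subsetneq M_2$. Condition (3) holds since $M_2 \neq \mathbb C 1$ by hypothesis. For (1), recall that $P \prec_M \mathbb C 1$ would force a nonzero corner of $P$ to be embeddable into $\mathbb C 1$, which for a diffuse $P$ is impossible (a diffuse von Neumann algebra has no nonzero finite-dimensional corner after compression by a projection with a partial isometry intertwining into $\mathbb C$; concretely, $\prec_M \mathbb C 1$ would give a nonzero projection $p \in P$ and a partial isometry producing a type I summand, contradicting diffuseness). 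Since $M_1$ is diffuse, $M_1 \nprec_M \mathbb C 1$, so (1) holds. This is the one point requiring a small argument, but it is standard: $P \nprec_M \mathbb C 1$ for any diffuse $P$, because by \cite[Theorem~2.1]{Po03} the negation of $\prec$ is equivalent to the existence of a sequence of unitaries $u_n \in \mathcal U(P)$ with $\|\mathrm{E}_{\mathbb C 1}(x u_n y)\|_2 = |\tau(x)\,\tau(u_n)\,\tau(y)|\,\cdot\|1\|_2 \to 0$ — wait, more directly, $\mathrm{E}_{\mathbb C 1}(a) = \tau(a)1$, and a diffuse $P$ contains a Haar unitary $w$ with $w^n \to 0$ weakly, hence $\|\mathrm{E}_{\mathbb C1}(x w^n y)\|_2 \to 0$ for all $x, y \in M$, which is exactly the criterion for $P \nprec_M \mathbb C 1$.

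The remaining issue is condition (2): we need $\mathbb C 1 = A_1 \vee A_2$ to be "a factor" in the sense used in the proof of Lemma~\ref{rotate}. Here $\mathbb C 1$ is trivially a factor, so (2) is satisfied — but I should double-check that the proof of Lemma~\ref{rotate} does not secretly use that this factor is diffuse or infinite-dimensional; inspecting that proof, it only uses the existence of \emph{some} factor $P$ with $A_2 \subset P \subset N_t$ in order to conclude $\mathrm{E}_{A_2}(p) = \tau(p)$, and with $A_2 = \mathbb C 1$ and $P = \mathbb C 1$ this reads $\mathrm{E}_{\mathbb C1}(p) = \tau(p)1$, which is automatic. (Alternatively, one can take $P = N_t$ itself once one knows $\mathcal Z(N_t) = \mathbb C 1$, but that is circular; the clean choice is $A_1 = A_2 = \mathbb C 1$.) Hence all hypotheses hold, Lemma~\ref{rotate} gives that $M_1 \vee u_t M_2 u_t^*$ is a II$_1$ factor for every $t > 0$ inside $\widetilde M = M * \mathrm{L}(\mathbb Z)$, and $\widetilde M$ is itself a II$_1$ factor (being the free product of a tracial von Neumann algebra with $\mathrm{L}(\mathbb Z)$, by \cite[Corollary~5.3.8]{AP} — or one can enlarge further to $M * \mathrm{L}(\mathbb F_2)$). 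Choosing $t = t_\varepsilon$ small enough that $\|u_{t_\varepsilon} - 1\|_2 < \varepsilon$ and setting $M_\varepsilon = \widetilde M$, $v_\varepsilon = u_{t_\varepsilon}$ completes the verification that $(M_1, M_2)$ is approximately factorial. The only genuinely non-formal step is the verification of hypothesis (1), i.e.\ $M_1 \nprec_M \mathbb C 1$ for diffuse $M_1$, and this is entirely routine.
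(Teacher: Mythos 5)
Your proof is correct and takes essentially the same route as the paper: the paper also applies Lemma~\ref{rotate} with $A_1 = A_2 = \mathbb C 1$ inside $\widetilde M = M \ast \mathrm{L}(\mathbb Z)$, uses diffuseness of $M_1$ to get $M_1 \nprec_M \mathbb C 1$, and then takes $v_\varepsilon = u_t$ for $t$ small. The extra discussion you give (the Haar-unitary argument for $M_1 \nprec_M \mathbb C 1$, the sanity check that $\mathbb C 1$ counts as a factor in condition (2), and the factoriality of $\widetilde M$, which the paper dispatches via Remark~4.2(1) rather than explicitly) is all correct but is merely an unpacking of what the paper leaves implicit.
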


\begin{proof}
  Let $(M,\tau)$ be a tracial von Neumann algebra which contains $M_1$ and $M_2$.
   Let $\widetilde M=M*\mathrm{L}(\mathbb Z)$, $u\in \mathrm{L}(\mathbb Z)$ be a Haar unitary, and $h=h^*\in \mathrm{L}(\mathbb Z)$ such that $u=\exp(ih)$. For $t>0$, let $u_t=\exp(ith)$. 
   Since $M_1$ is diffuse, we have that $M_1\nprec_M\mathbb C1$.
   Applying Lemma \ref{rotate} with $A_1=A_2=\mathbb C1$ immediately yields that  $M_1\vee u_tM_2u_t^*$ is a II$_1$ factor, for every $t>0$. Since $\|u_t-1\|_2<\varepsilon$, for any small enough $t>0$, the conclusion follows.
\end{proof}

Using Lemma \ref{rotate}, we show next that the property of being approximately factorial is inherited from abelian subalgebras.

\begin{proposition} \label{prop: property A inherited from subalgs}

Let $(M,\tau)$ be a tracial von Neumann algebra and consider von Neumann subalgebras $A_1\subset M_1\subset M$ and $A_2\subsetneq M_2\subset M$. Assume that  $A_2$ is type I (e.g., abelian). 
     If $(A_1,A_2,M)$ is approximately factorial, then $(M_1,M_2,M)$  is approximately factorial.
\end{proposition}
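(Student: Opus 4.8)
\textbf{Proof plan for Proposition \ref{prop: property A inherited from subalgs}.}

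The plan is to fix $\eps>0$ and produce a II$_1$ factor $M_\eps\supset M$ together with a unitary $v_\eps\in\mathcal U(M_\eps)$ with $\|v_\eps-1\|_2<\eps$ such that $M_1\vee v_\eps M_2v_\eps^*$ is a II$_1$ factor. The idea is to first use the hypothesis that $(A_1,A_2,M)$ is approximately factorial to move $A_2$ into ``general position'' relative to $M_1$, and then apply Lemma \ref{rotate} with this new ambient algebra to rotate $M_2$ the rest of the way. Concretely, apply approximate factoriality of $(A_1,A_2,M)$ with parameter $\eps/2$ to obtain a II$_1$ factor $P\supset M$ and a unitary $w\in\mathcal U(P)$ with $\|w-1\|_2<\eps/2$ such that $A_1\vee wA_2w^*$ is a II$_1$ factor. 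Replace $M_2$ by $wM_2w^*$ (and $A_2$ by $wA_2w^*$), so that we may now assume outright that $A_1\vee A_2$ is a II$_1$ factor, while still $A_1\subset M_1\subset P$, $A_2\subsetneq M_2\subset P$, and $A_2$ is type I; it suffices to produce the required factorial perturbation inside a II$_1$ factor containing $P$ using a unitary that is $\eps/2$-close to $1$.

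Next I would set up the amalgamated free product of Lemma \ref{rotate}: let $\widetilde P=P*_{A_2}(A_2\,\overline\otimes\,\mathrm{L}(\mathbb Z))$, pick a Haar unitary $u\in\mathrm{L}(\mathbb Z)$, write $u=\exp(ih)$ with $h=h^*\in\mathrm{L}(\mathbb Z)$, and set $u_t=\exp(ith)$. Then $\|u_t-1\|_2\to 0$ as $t\to 0$, so for small $t$ we have $\|u_t-1\|_2<\eps/2$, hence $v_\eps\coloneq u_t w$ satisfies $\|v_\eps-1\|_2<\eps$. To invoke Lemma \ref{rotate} we must verify its three hypotheses for the pair $A_1\subset M_1$, $A_2\subset M_2$ inside $P$: (3) $A_2\subsetneq M_2$ is exactly our standing assumption; (2) $A_1\vee A_2$ is a factor, which we arranged in the first paragraph; and (1) the non-embedding condition $M_1\vee A_2\nprec_P A_2$. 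Granting all three, Lemma \ref{rotate} gives that $M_1\vee u_tM_2u_t^*$ is a II$_1$ factor for every $t>0$, and since this sits inside the II$_1$ factor $\widetilde P\supset P\supset M$ with the perturbing unitary $\eps$-close to $1$ (after unwinding the $w$-conjugation), the conclusion follows.

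The main obstacle is verifying hypothesis (1), namely $M_1\vee A_2\nprec_P A_2$, and this is where the type I assumption on $A_2$ must be used. If instead $M_1\vee A_2\prec_P A_2$, then a corner of $M_1\vee A_2$ embeds into $A_2$; since $A_2$ is type I, this should force a corner of $M_1$, hence of $A_1\vee A_2\subset M_1\vee A_2$, to be type I, contradicting that $A_1\vee A_2$ is a II$_1$ factor (which has no nonzero type I corner). More carefully: intertwining $M_1\vee A_2$ into the type I algebra $A_2$ yields a nonzero projection $e$ and a normal $*$-homomorphism into $A_2$ (up to a corner), so $e(M_1\vee A_2)e$ — and in particular the subalgebra generated by a cutdown of $A_1\vee A_2$ — would be type I, which is impossible. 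I expect the argument to go through cleanly once one checks that an intertwining into a type I algebra produces a type I corner of the intertwined algebra; the separability/countably-generated hypotheses are harmless here. One mild subtlety to handle is that after the initial $w$-conjugation the algebras $A_2,M_2$ live in $P$ rather than $M$, and that passing from $\widetilde P$ back to a single ambient II$_1$ factor uses Remark after Definition \ref{condition (A)} (independence of $M_\eps$ from $\eps$ via an iterated free product over $M$), but these are bookkeeping points rather than genuine difficulties.
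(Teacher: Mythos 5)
Your overall strategy coincides with the paper's: first conjugate $A_2$ by the unitary $w$ furnished by approximate factoriality of $(A_1,A_2,M)$ so that $A_1\vee wA_2w^*$ becomes a II$_1$ factor, then invoke Lemma~\ref{rotate} with the amalgamated free product $P*_{wA_2w^*}(wA_2w^*\overline{\otimes}\mathrm{L}(\mathbb Z))$ to rotate $wM_2w^*$ slightly further; the paper does the same without the renaming step, splitting $\varepsilon$ as $\|v_\varepsilon-1\|_2+\|u_t-1\|_2$ rather than $\varepsilon/2+\varepsilon/2$. Conditions (2) and (3) of Lemma~\ref{rotate} are verified identically. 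The one place where your argument goes wrong is the verification of condition (1), $M_1\vee wA_2w^*\nprec_P wA_2w^*$.

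The statement you propose to check --- ``an intertwining into a type I algebra produces a type I corner of the intertwined algebra'' --- is false, and cannot be repaired as stated. If $M_1\vee wA_2w^*\prec_P wA_2w^*$ held, Popa's intertwining would give a nonzero projection $p\in M_1\vee wA_2w^*$ and a normal unital $*$-homomorphism from $p(M_1\vee wA_2w^*)p$ into a corner of $wA_2w^*$; but a von Neumann subalgebra of a finite type I algebra need not itself be type I, so you do not get a type I corner, and in any case the projection $p$ need not lie in $A_1\vee wA_2w^*$ or its relative commutant, so ``a cutdown of $A_1\vee wA_2w^*$'' is not a well-posed corner of that factor. The paper's argument is cleaner and is the one you want: $M_1\vee wA_2w^*$ unitally contains the II$_1$ factor $A_1\vee wA_2w^*$, hence has no type I direct summand, i.e.\ is of type II$_1$; a nonzero corner of a type II$_1$ algebra is still type II$_1$, and a type II$_1$ algebra cannot embed unitally into a finite type I algebra (such an algebra has bounded local matrix degree, whereas any type II$_1$ algebra contains $\mathbb M_n(\mathbb C)$ unitally for all $n$). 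This rules out $M_1\vee wA_2w^*\prec_P wA_2w^*$. With this fix, your proof matches the paper's.
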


\begin{proof}
    Assume that the triple $(A_1,A_2,M)$ is approximately factorial. Then for any $\eps>0$, there exist a II$_1$ factor $M_\varepsilon$ and 
        $v_\varepsilon\in \mathcal U(M_\varepsilon)$ such that $M\subset M_\varepsilon$, $\|v_\varepsilon-1\|_2<\varepsilon$, and $A_1 \vee v_\varepsilon A_2v_\varepsilon^*$ is a II$_1$ factor.
    
    Fix $\varepsilon>0$ and define $\widetilde{M}_\varepsilon=M_{\varepsilon}*_{v_{\varepsilon} A_2v_{\varepsilon}^*}(v_{\varepsilon} A_2v_{\varepsilon}^*\overline{\otimes}\mathrm{L}(\mathbb Z))$. Let $u\in \mathrm{L}(\mathbb Z)$ be a Haar unitary and $h=h^*\in \mathrm{L}(\mathbb Z)$ with $u=\exp(ih)$. Let $t>0$ such that $u_t=\exp(ith)$ satisfies $\|u_t-1\|_2<\varepsilon-\|v_\varepsilon-1\|_2$.
    
    Since $A_1 \vee v_\varepsilon A_2v_\varepsilon^*$ is a II$_1$ factor, we get that $M_1 \vee v_\varepsilon A_2v_\varepsilon^*$ is of type II$_1$. Since  $A_2$ is of type I, we derive that  $M_1 \vee v_\varepsilon A_2v_\varepsilon^*\nprec_{ M_\varepsilon} v_\varepsilon A_2v_\varepsilon^*$. Since we also assume that $A_2\subsetneq M_2$, we derive from Lemma \ref{rotate} that $M_1 \vee u_tv_{\varepsilon}M_2v_{\varepsilon}^*u_t^*$ is a II$_1$ factor.
    Since $\|u_tv_\varepsilon-1\|_2\leq \|u_t-1\|_2+\|v_\varepsilon-1\|_2<\varepsilon$, we conclude that the triple $(M_1,M_2,M)$ is approximately factorial.
\end{proof}

Lastly, we  establish approximate factoriality in the presence of projections of trace $\frac{1}{2}$ and factoriality of one of the algebras.

\begin{proposition} \label{prop: property A for projection 0.5}
    Let $M_1$ and $M_2$ be tracial von Neumann algebras, each admitting a projection of trace $\frac{1}{2}$. 
    Assume that at least one of $M_1$ and $M_2$ is a factor.
    Then the pair $(M_1,M_2)$ is approximately factorial. 
\end{proposition}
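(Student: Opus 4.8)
The plan is to reduce, by a chain of elementary moves, to the single base case of the pair $\bigl(\mathbb{M}_2(\C),\C^2\bigr)$ with $\C^2$ carrying the trace $(\tfrac12,\tfrac12)$, and then to settle that case by hand; this last step is where I expect the real work to be.

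First I would dispose of the easy situations. By the symmetry of approximate factoriality we may assume $M_1$ is a factor. If $M_1$ or $M_2$ is diffuse, then the other is $\neq\C1$ (it contains a projection of trace $\tfrac12$), so Corollary~\ref{one diffuse algebra} together with symmetry already gives the claim. Hence we may assume both $M_1$ and $M_2$ are non-diffuse; since $M_1$ is a finite non-diffuse factor, $M_1\cong\mathbb{M}_n(\C)$, and as $M_1$ carries a projection $p_1$ of trace $\tfrac12$ the integer $n$ is even. Because $M_1$ is a factor, $p_1\sim 1-p_1$; fixing $w_1\in M_1$ with $w_1^*w_1=1-p_1$, $w_1w_1^*=p_1$, the elements $\{p_1,1-p_1,w_1,w_1^*\}$ generate a unital copy $\mathbb{M}_2(\C)\hookrightarrow M_1$ with $p_1$ inside it. Likewise the trace-$\tfrac12$ projection $p_2\in M_2$ gives a unital copy $\C^2=\C p_2\oplus\C(1-p_2)\hookrightarrow M_2$ whose two minimal projections have trace $\tfrac12$.

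Next I would reduce to the base case via Proposition~\ref{prop: property A inherited from subalgs}. If $M_2\neq\C^2$, then the copy $\C^2\hookrightarrow M_2$ is a proper type I subalgebra, so for any ambient $M$ containing $M_1,M_2$, approximate factoriality of $\bigl(\mathbb{M}_2(\C),\C^2,M\bigr)$ implies that of $(M_1,M_2,M)$ by Proposition~\ref{prop: property A inherited from subalgs} applied to $\mathbb{M}_2(\C)\subset M_1$ and $\C^2\subsetneq M_2$; as $M$ is arbitrary it follows that $(M_1,M_2)$ is approximately factorial once $\bigl(\mathbb{M}_2(\C),\C^2\bigr)$ is. If instead $M_2=\C^2$, then $M_1=\mathbb{M}_n(\C)$: for $n=2$ this is exactly the base case, and for $n\geq4$ the algebra $\mathbb{M}_2(\C)\subsetneq\mathbb{M}_n(\C)=M_1$ is a proper type I subalgebra, so by symmetry and Proposition~\ref{prop: property A inherited from subalgs} (applied to $\C^2\subset\C^2$ and $\mathbb{M}_2(\C)\subsetneq\mathbb{M}_n(\C)$) approximate factoriality of $\bigl(\C^2,\mathbb{M}_2(\C)\bigr)$—equivalently of $\bigl(\mathbb{M}_2(\C),\C^2\bigr)$—again suffices. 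So everything comes down to the base case.

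For the base case, fix $M\supset\mathbb{M}_2(\C)$, $\C^2=\C p_2\oplus\C p_2^\perp$, and $\eps>0$. Let $e=e_{11}$ be a matrix unit of trace $\tfrac12$ and $Q_0=eMe$; the inclusion $\mathbb{M}_2(\C)\subset M$ gives $M\cong\mathbb{M}_2(\C)\otimes Q_0$, under which $p_2$ corresponds to a projection $P_0=(p_{2,ij})\in\mathbb{M}_2(Q_0)$ of trace $\tfrac12$. A direct check shows that for any trace-preserving extension $Q_0\subset R$ and any projection $Q=(q_{ij})\in\mathbb{M}_2(R)$ of trace $\tfrac12$, the corresponding projection $q$ in $M_\eps:=\mathbb{M}_2(\C)\otimes R\supset M$ satisfies $\mathbb{M}_2(\C)\vee\{q\}''=\mathbb{M}_2(\C)\otimes\{q_{ij}\}''$, while $\|q-p_2\|_2$ is controlled by $\|Q-P_0\|_2$; hence if $Q$ is close enough to $P_0$ there is $v\in\mathcal U(M_\eps)$ with $\|v-1\|_2<\eps$ and $vp_2v^*=q$, and then $\mathbb{M}_2(\C)\vee v\C^2v^*=\mathbb{M}_2(\C)\otimes\{q_{ij}\}''$ is a $\mathrm{II}_1$ factor precisely when $\{q_{ij}\}''$ is. Thus the base case, and with it the Proposition, will follow once one produces—in a sufficiently large ambient $R$—a projection $Q\in\mathbb{M}_2(R)$ of trace $\tfrac12$ arbitrarily close to $P_0$ whose entries generate a $\mathrm{II}_1$ factor. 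Equivalently, $\mathbb{M}_2(\C)$ together with an arbitrarily small unitary perturbation of $\C^2$ must generate a $\mathrm{II}_1$ factor. This is the one genuinely technical point and, in my expectation, the main obstacle: note that Lemma~\ref{rotate} does not apply here, since its key non-embedding hypothesis fails when the unperturbed algebra is finite dimensional, and that $\C^2\ast\mathbb{M}_2(\C)$ is already known from \cite{Dy93} to be an interpolated free group factor, so the whole difficulty lies in arranging that the perturbation be small. I would address it by an amalgamated free product construction analogous to (but more delicate than) the computations in \cite{Dy93}: take $R$ to contain a suitable free family over $Q_0$ and build $Q$ as a spectral projection of a perturbation of $P_0$ inside $\mathbb{M}_2(R)$ tailored so that, via the corner identification above, $\{q_{ij}\}''$ is forced to be an interpolated free group factor for every nonzero value of the perturbation parameter—whence, choosing the parameter small, one gets $v$ close to $1$. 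Verifying this factoriality "for all values of the parameter" is the crux of the argument.
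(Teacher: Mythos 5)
There is a genuine gap: you never prove the base case, and in fact you pose it as "the crux" without resolving it. Your reduction to $(\bM_2(\C),\C^2)$ via Proposition~\ref{prop: property A inherited from subalgs} is logically valid, but it does not simplify matters---the base case carries essentially all of the difficulty of the proposition. The remainder of your text is a plan, not a proof.

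You also come close to talking yourself out of the right idea. You correctly observe that Lemma~\ref{rotate} cannot be applied directly, because with $M_1$ and $A_2$ both finite dimensional the algebra $M_1\vee A_2$ is finite dimensional and hence intertwines into $A_2$. But you then conclude that a hands-on spectral-projection construction is needed. The paper resolves this by a two-step argument you did not consider: first apply Proposition~\ref{twoprojections} (equivalently, the Collins--Kemp liberation of projections \cite{CK12}) to the two $2$-dimensional abelian subalgebras $A_1=\C p_1\oplus\C(1-p_1)$ and $A_2=\C p_2\oplus\C(1-p_2)$ of trace $\frac12$, obtaining a II$_1$ factor $M_\eps\supset M$ and a unitary $v_\eps\in M_\eps$ with $\|v_\eps-1\|_2<\eps$ such that $A_1\vee v_\eps A_2 v_\eps^*$ is \emph{diffuse}. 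Because $A_1\vee v_\eps A_2 v_\eps^*$ is diffuse and $A_2$ is finite dimensional, one gets $M_1\vee v_\eps A_2v_\eps^*\nprec_{M_\eps} v_\eps A_2 v_\eps^*$; only now is the intertwining hypothesis of Lemma~\ref{rotate} satisfied. Then, since the factor side (say $M_2$) gives $v_\eps A_2v_\eps^*\subsetneq v_\eps M_2 v_\eps^*$ and condition (2) of Lemma~\ref{rotate} holds, one forms $\widetilde M_\eps = M_\eps *_{v_\eps A_2 v_\eps^*}(v_\eps A_2 v_\eps^*\,\overline{\otimes}\,\mathrm{L}(\Z))$ and applies Lemma~\ref{rotate} with a further small rotation $u_t$ to obtain that $M_1\vee u_t v_\eps M_2 v_\eps^* u_t^*$ is a II$_1$ factor. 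So Lemma~\ref{rotate} \emph{is} the right tool once the configuration has been made diffuse by a small perturbation; what you are missing is exactly the invocation of Proposition~\ref{twoprojections} (or \cite{CK12}) as the first step. Without it, your proof of the base case reduces to "construct by hand a small perturbation making everything factorial," which is precisely the proposition you set out to prove.
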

\begin{proof} 
Assume without loss of generality that $M_2$ is a factor. 
Let $(M,\tau)$ be a tracial von Neumann algebra containing $M_1$ and $M_2$, and let $\varepsilon>0$.
Let $p_1\in M_1$ and $p_2\in M_2$ be projections with $\tau(p_1)=\tau(p_2)=\frac{1}{2}$, and denote $A_1=\C p_1\oplus \C (1-p_1)$ and $A_2 = \C p_2 \oplus \C (1-p_2)$.
    
By \cite[Theorem~1.9]{CK12} (see also Proposition \ref{twoprojections}), there exist a II$_1$ factor $M_\varepsilon$ and $v_\varepsilon\in \mathcal U(M_\varepsilon)$ such that $M\subset M_\varepsilon$, $\|v_\varepsilon-1\|_2<\varepsilon$, and $A_1\vee v_\varepsilon A_2v_\varepsilon^*$ is diffuse.

Define $\widetilde{M}_\varepsilon=M_{\varepsilon}*_{v_{\varepsilon} A_2v_{\varepsilon}^*}(v_{\varepsilon} A_2v_{\varepsilon}^*\overline{\otimes}\mathrm{L}(\mathbb Z))$. Let $u\in \mathrm{L}(\mathbb Z)$ be a Haar unitary and $h=h^*\in \mathrm{L}(\mathbb Z)$ such that $u=\exp(ih)$. Let $t>0$ such that $u_t=\exp(ith)$ satisfies $\|u_t-1\|_2<\varepsilon-\|v_\varepsilon-1\|_2$.

Since $A_1\vee v_\varepsilon A_2v_\varepsilon^*$ is diffuse and $A_2$ is finite dimensional, we get that $M_1\vee v_\varepsilon A_2v_\varepsilon^*\nprec_{M_\varepsilon}v_\varepsilon A_2v_\varepsilon^*$. 
Since $M_2$ is a factor, $A_2\subsetneq M_2$ and thus $v_\varepsilon A_2v_\varepsilon^*\subsetneq v_\varepsilon M_2v_\varepsilon^*$. 
Using that $M_2$ is a factor again, we may thus apply Lemma \ref{rotate} and derive that $M_1\vee u_tv_{\varepsilon}M_2v_{\varepsilon}^*u_t^*$ is a II$_1$ factor.
It is left to note that $\|u_tv_\varepsilon-1\|_2\leq \|u_t-1\|_2+\|v_\varepsilon-1\|_2<\varepsilon$, so we conclude that the triple $(M_1,M_2,M)$ is approximately factorial.
\end{proof}

\subsection{General position of von Neumann subalgebras}
We continue with a technical result,  Theorem~\ref{abelian-perturbation} below, pertaining to von Neumann algebras of a specific form, and which we will rely on in the proof of Theorem~\ref{perturbation}. We postpone its proof to the next section. In order to state the theorem, we first introduce the following notation.

\begin{definition}
Let $\mathcal C$ be the class of tracial von Neumann algebras $(M,\tau)$  of the form $M=A\oplus B$, where $A$ is finite dimensional abelian and $B$ is diffuse. In other words, there exist projections $p_1,\ldots,p_m,p\in\mathcal Z(M)$ such that $\sum_{i=1}^mp_i+p=1$, $M=\mathbb Cp_1\oplus\cdots\oplus\mathbb Cp_m\oplus Mp$ and $Mp$ is diffuse.
\end{definition}

For algebras of class $\mathcal{C}$, Theorem~\ref{abelian-perturbation} establishes a more general version of Theorem \ref{perturbation}. 
To motivate its statement, note that, given von Neumann algebras $\C1\neq M_1,M_2\in\cC$ with $\dim(M_1)+\dim(M_2)\geq 5$, Theorem~\ref{perturbation} would imply that we can find a unitary $v$ close to $1$ (in some II$_1$ factor containing $M_1$ and $M_2$) such that $M_1\vee vM_2v^*$ is a II$_1$ factor \textit{if the condition $\mathrm{e}(M_1)+\mathrm{e}(M_2)\leq 1$ is satisfied}. If this condition fails, then that is never possible, since we will always have $1$-dimensional direct summands of the form $\C(p\wedge q)$ for central projections $p\in\cZ(M_1)$ and $q\in\cZ(vM_2v^*)$ satisfying $\tau(p)+\tau(q)>1$ (cf. the first paragraph of subsection~\ref{ssec:nodiff}). 
Nevertheless, Theorem~\ref{abelian-perturbation} tells us that, even if $\mathrm{e}(M_1)+\mathrm{e}(M_2) > 1$, we can find a unitary $v$ close to $1$ such that those are the only $1$-dimensional direct summands of $M_1\vee vM_2v^*$, they have the minimal possible support projections, and the remaining direct summand is a II$_1$ factor. We formalize this in the next definition.

Recall that two projections $p$ and $q$ in a tracial von Neumann algebra $(M,\tau)$ are said to be in general position if $\tau(p\wedge q) = \max\{\tau(p) + \tau(q) - 1, 0\}$. 

\begin{definition}\label{condition B} Let $M_1,M_2\in\mathcal C$. Write $M_1=\mathbb Cp_1\oplus \cdots\oplus\mathbb Cp_m\oplus M_1p$, $M_2=\mathbb Cq_1\oplus\cdots\oplus\mathbb Cq_n\oplus M_2q$, where $p_1,\ldots,p_m,p\in \mathcal Z(M_1), q_1,\ldots,q_n,q\in \mathcal Z(M_2)$ are projections and $M_1p,M_2q$ are diffuse.  

\begin{itemize}
    \item[(a)]
Let $(M,\tau)$ be a tracial von Neumann algebra which contains $M_1$ and $M_2$. \\ We say that the triple $(M_1,M_2,M)$ satisfies {\it condition (B)}  if for every $\varepsilon>0$, we can find a II$_1$ factor $M_\varepsilon$ and $v_\varepsilon\in\mathcal U(M_\varepsilon)$ such that $M\subset M_\varepsilon$, $\|v_\varepsilon-1\|_2<\varepsilon$,
\begin{enumerate}
\item $p_i$ and $v_\varepsilon q_jv_\varepsilon^*$ are in general position  
for every $1\leq i\leq m,1\leq j\leq n$, and 
\item $M_1\vee v_\varepsilon M_2 v_\varepsilon^*=\big(\bigoplus_{\substack{1\leq i\leq m \\ 1\leq j\leq n}}\mathbb C(p_i\wedge v_\varepsilon q_jv_{\varepsilon}^*)\big)\oplus B$, where $B$ is a II$_1$ factor.
\end{enumerate}
\item [(b)]
We say that the pair $(M_1,M_2)$  satisfies {\it condition (B)} if the triple $(M_1,M_2,M)$ satisfies condition (B) for every tracial von Neumann algebra $(M,\tau)$ which contains $M_1$ and $M_2$.
\end{itemize}
\end{definition}

 \begin{theorem}\label{abelian-perturbation}
Let $M_1, M_2\in\cC$ be such that $M_1,M_2\neq \mathbb C1$ and $\dim(M_1)+\dim(M_2)\geq 5$. Then $(M_1,M_2)$ satisfies condition (B).
\end{theorem}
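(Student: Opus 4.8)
The plan is to reduce the general case to the case where both $M_1$ and $M_2$ are finite dimensional and abelian, and then to prove that case by an induction that peels off one minimal central projection at a time, using Lemma~\ref{generation} to control the generated algebra. First, observe that condition (B) only needs to be checked for $M = M_1 \vee M_2$ (by the same push-out argument as in Remark following Definition~\ref{condition (A)}), and that if $M_1$ or $M_2$ is diffuse, then condition (B) reduces to approximate factoriality, which follows from Corollary~\ref{one diffuse algebra} (there are then no $p_i$'s or no $q_j$'s, so general position is vacuous and $B = M_1 \vee vM_2v^*$ is the required II$_1$ factor). So assume both have a nonzero finite dimensional abelian part; if both are entirely finite dimensional and abelian we are in the core case, and otherwise one of them, say $M_1$, has a diffuse summand $M_1 p \neq 0$, and I would first handle this mixed situation by choosing inside each $M_i$ an abelian subalgebra $A_i$ with $\mathrm{e}(A_i) = \mathrm{e}(M_i)$, $\dim A_1 + \dim A_2 \geq 5$, $A_1 \neq \mathbb{C}1 \neq A_2$ (using Lemma~\ref{masa}, refined so the diffuse parts are cut into the appropriate central pieces), reduce to the finite-dimensional abelian case for $(A_1, A_2)$, and then use Proposition~\ref{prop: property A inherited from subalgs} together with Lemma~\ref{rotate} to pass back from $A_i$ to $M_i$ while keeping the direct-summand structure intact.

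\textbf{The finite-dimensional abelian case.} Here $M_1 = \mathbb{C}p_1 \oplus \cdots \oplus \mathbb{C}p_m$ and $M_2 = \mathbb{C}q_1 \oplus \cdots \oplus \mathbb{C}q_n$ with $m + n \geq 5$, $m, n \geq 2$. I would argue by induction on $m + n$. The base case is $m + n = 5$, say $m = 2, n = 3$ (and its mirror): this should follow from a direct construction, e.g. realizing $M_2$ inside a matrix algebra or using Proposition~\ref{prop: property A for projection 0.5} when a trace-$\tfrac12$ projection is available, and otherwise by a hands-on free-product computation in the spirit of \cite{Dy93}. For the inductive step, pick a minimal projection, say $p_m \in M_1$, and set $M_1' = \mathbb{C}p_m \oplus M_1(1-p_m)$ — wait, that's $M_1$ itself; rather the point is to \emph{merge} two minimal projections: if $m \geq 3$, replace $p_{m-1}, p_m$ by their sum $p' = p_{m-1} + p_m$ to get $M_1' = \mathbb{C}p_1 \oplus \cdots \oplus \mathbb{C}p_{m-2} \oplus \mathbb{C}p'$ with $\dim M_1' = m-1$, so $\dim M_1' + \dim M_2 \geq 5$ still holds (unless $m - 1 = 1$, i.e. $m = 2$, in which case one merges inside $M_2$ instead, which is possible since then $n \geq 3$). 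Apply the inductive hypothesis to $(M_1', M_2)$ to find $v_\varepsilon$ (in some $M_\varepsilon$) putting all relevant pairs of projections in general position and with $M_1' \vee v_\varepsilon M_2 v_\varepsilon^* = \big(\bigoplus \mathbb{C}(p_i \wedge v_\varepsilon q_j v_\varepsilon^*)\big) \oplus B'$, $B'$ a II$_1$ factor. Now use Lemma~\ref{generation}: with $p = p'$, we have $p M p = p N p \vee M_1 p$ where $M = M_1 \vee v_\varepsilon M_2 v_\varepsilon^*$ and $N = M_1' \vee v_\varepsilon M_2 v_\varepsilon^*$, and $z_M(p') = z_N(p')$. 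The algebra $M_1 p'$ inside $p' M p'$ is just $\mathbb{C}p_{m-1} \oplus \mathbb{C}p_m$ (two projections summing to $p'$), and we must further rotate $p_{m-1}$ (equivalently $p_m$) by a small unitary \emph{inside} $p' M_\varepsilon p'$ so that it is in general position with every $v_\varepsilon q_j v_\varepsilon^*$ cut down by $p'$, and so that the resulting corner $p' M p'$ splits as the predicted direct sum. This last rotation is precisely a fresh application of the case "$\dim = 2$ plus something" handled by Lemma~\ref{rotate} / the base case, performed inside the corner, and Lemma~\ref{complement} guarantees that after putting $p_m$ in general position with the $q_j \wedge p'$, the "leftover" piece $p' - \sum_j p_m \wedge (v_\varepsilon q_j v_\varepsilon^* \wedge p')$ (and its complement in $p'$) is nonzero, so the diffuse/factor part does not collapse.

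\textbf{Main obstacle.} The hard part is the corner analysis in the inductive step: after applying Lemma~\ref{generation} one knows $z_M(p') = z_N(p')$, but one still has to show that rotating the two projections $p_{m-1}, p_m$ inside $p' M_\varepsilon p'$ can \emph{simultaneously} achieve general position with all $n$ projections $v_\varepsilon q_j v_\varepsilon^* \wedge p'$ and produce exactly the direct-sum decomposition demanded by condition (B)(2), i.e. that the only surviving type~I part of $p' M p'$ consists of the minimal projections $p_m \wedge (v_\varepsilon q_j v_\varepsilon^* \wedge p')$ (together with the $p_{m-1}$-versions, which amalgamate correctly with those already in $B'$). Bookkeeping the central supports — showing $z_M(p_i \wedge v q_j v^*)$ is the correct minimal central projection and that all other central projections of $M$ are dominated by the factor part — is the delicate point, and this is where I expect most of the work to go; it will require combining Lemma~\ref{generation}(2), Lemma~\ref{e wrt central proj}, and Lemma~\ref{complement} iteratively, and choosing the perturbations in a nested fashion (each $\varepsilon$-perturbation living in an amalgamated free product over the corner algebra, as in the proofs of Propositions~\ref{prop: property A inherited from subalgs} and \ref{prop: property A for projection 0.5}) so that the $\|v_\varepsilon - 1\|_2$ bounds add up to something $< \varepsilon$.
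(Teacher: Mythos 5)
Your high-level strategy — an induction on the number of minimal projections in which you merge two of them, apply the inductive hypothesis to the coarser decomposition, use Lemma~\ref{generation} to transfer information to the corner $p'Mp'$, and then rotate inside that corner — is exactly the skeleton of the paper's argument (this is Lemma~\ref{induction2}, applied with $p_0 = p_{m-1}+p_m$). But two ideas that make that induction actually close are missing, and they constitute genuine gaps.

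First, the base case is not "a direct construction." In the core case $M_1=\C^2$, $M_2 = \C^3$, merging two projections in $M_2$ leaves you with $\C^2 \ast \C^2$, and for that pair condition (B) is \emph{false}: the von Neumann algebra generated by two 2-dimensional abelian subalgebras is always type I and has no II$_1$ direct summand, regardless of the unitary. So you cannot apply your inductive hypothesis (nor any statement resembling condition (B)) to the merged pair. (Proposition~\ref{prop: property A for projection 0.5} does not apply either: neither $\C^2$ nor $\C^3$ is a factor.) The paper's way out is to introduce a strictly weaker property (condition (C) in Definition~\ref{condition C}): the diffuse piece $B$ is not asked to be a factor, only diffuse, and instead one asks for a precise control on the conditional expectations $\mathrm{E}_{\cZ(B)}(p_i)$ and $\mathrm{E}_{\cZ(B)}(v q_j v^*)$. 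This extra expectation condition is exactly what is needed to compute $z_N(p_0)$ in the inductive step (your "delicate point" about central supports), and condition (C) — unlike condition (B) — \emph{can} be achieved for $\C^2 \ast \C^2$ (Proposition~\ref{twoprojections}). Without formulating some such weaker base condition, your induction cannot get off the ground.

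Second, the corner pair you must rotate is $(M_1 p',\, p'Np')$, and $p'Np'$ is of the form $\bigoplus_j \C r_j \oplus K$ with $K$ diffuse — it is not a finite dimensional abelian algebra, so it falls outside your induction on $m+n$. You need a separate argument establishing condition (B) for pairs $(\C^2, L)$ with $L$ of this mixed type, and this is nontrivial: the paper proves it (Corollary~\ref{2,2}) by a \emph{second} induction, on $n$ such that the sum of the two traces in $\C^2$ lies in $(\tfrac{n-1}{n}, \tfrac{n}{n+1}]$, a step that has no counterpart in your sketch. Relatedly, the opening reduction in your first paragraph — pass to abelian subalgebras $A_i \subset M_i$ and recover $M_i$ via Proposition~\ref{prop: property A inherited from subalgs} — cannot work as stated, because that proposition upgrades only \emph{approximate factoriality}, not the direct-summand decomposition and general-position requirements of condition (B). The paper does not make this reduction for condition (B); the diffuse summands are handled inside the same inductive machinery (Corollaries~\ref{diffuse parts}–\ref{m,n with diffuse}). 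Your "main obstacle" paragraph correctly senses that the corner bookkeeping is the hard part, but the actual missing ingredients are the weaker base condition (C) and the trace-based sub-induction, neither of which appears in the proposal.
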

For the proof of Theorem \ref{abelian-perturbation}, we refer to Section \ref{sec:gen_position}. In the remaining part of this section, we use Theorem \ref{abelian-perturbation} to establish Theorem~\ref{perturbation}.

\subsection{Proof  of Theorem \ref{perturbation}}

Before turning to the proof of Theorem~\ref{perturbation} in full generality, 
we note that Theorem~\ref{abelian-perturbation} immediately implies Theorem~\ref{perturbation} when $M_1$ and $M_2$ are finite dimensional abelian:

\begin{corollary}\label{abelian_perturbation}
Let $M_1$ and $M_2$ be finite dimensional abelian tracial von Neumann algebras. Assume that $M_1\neq \mathbb C1,M_2\neq \mathbb C1$, $\dim(M_1)+\dim(M_2)\geq 5$ and $\mathrm{e}(M_1)+\mathrm{e}(M_2)\leq 1$.
Then the pair $(M_1,M_2)$ is approximately factorial.
\end{corollary}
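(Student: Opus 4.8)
The plan is to derive Corollary~\ref{abelian_perturbation} directly from Theorem~\ref{abelian-perturbation}, the only remaining task being to verify that the extra hypothesis $\mathrm{e}(M_1)+\mathrm{e}(M_2)\le 1$ forces the type~I part of $M_1\vee v_\eps M_2v_\eps^*$ produced by condition (B) to vanish. Write $M_1=\bigoplus_{i=1}^m\mathbb C p_i$ and $M_2=\bigoplus_{j=1}^n\mathbb C q_j$ with $\sum_i\tau(p_i)=\sum_j\tau(q_j)=1$; these are finite dimensional abelian, so they lie in $\cC$ (with diffuse part zero). Since $M_1\ne\mathbb C1$, $M_2\ne\mathbb C1$, and $\dim(M_1)+\dim(M_2)=m+n\ge 5$, Theorem~\ref{abelian-perturbation} applies: for a given tracial von Neumann algebra $(M,\tau)$ containing $M_1,M_2$ and any $\eps>0$, we obtain a II$_1$ factor $M_\eps\supset M$ and $v_\eps\in\mathcal U(M_\eps)$ with $\|v_\eps-1\|_2<\eps$, such that each $p_i$ and $v_\eps q_jv_\eps^*$ are in general position and
\[
M_1\vee v_\eps M_2v_\eps^* \;=\; \Big(\bigoplus_{\substack{1\le i\le m\\ 1\le j\le n}}\mathbb C\,(p_i\wedge v_\eps q_jv_\eps^*)\Big)\oplus B
\]
for some II$_1$ factor $B$.

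The key point is that all the summands $\mathbb C(p_i\wedge v_\eps q_jv_\eps^*)$ are in fact zero. Indeed, $\mathrm{e}(M_1)=\max_i\tau(p_i)$ and $\mathrm{e}(M_2)=\max_j\tau(q_j)$, so $\mathrm{e}(M_1)+\mathrm{e}(M_2)\le 1$ gives $\tau(p_i)+\tau(q_j)\le 1$, hence $\tau(p_i)+\tau(v_\eps q_jv_\eps^*)-1\le 0$ for all $i,j$. By the general position property, $\tau(p_i\wedge v_\eps q_jv_\eps^*)=\max\{\tau(p_i)+\tau(v_\eps q_jv_\eps^*)-1,\,0\}=0$, so each such projection is $0$. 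Therefore $M_1\vee v_\eps M_2v_\eps^*=B$ is a II$_1$ factor. Since $(M,\tau)$ was arbitrary and $\eps>0$ arbitrary, the pair $(M_1,M_2)$ is approximately factorial by Definition~\ref{condition (A)}.

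There is essentially no obstacle here: Corollary~\ref{abelian_perturbation} is a bookkeeping consequence of Theorem~\ref{abelian-perturbation}, and the real work — producing the small unitary perturbation $v_\eps$ with the general position and factoriality properties — is entirely contained in Theorem~\ref{abelian-perturbation}, whose proof is deferred to Section~\ref{sec:gen_position}. The only mild care needed is to note that the hypothesis is stated on $\mathrm{e}(M_i)$ rather than on individual projections, so one records $\mathrm{e}(M_i)=\max_j\tau(\cdot)$ for finite dimensional abelian algebras (as already observed in Subsection~\ref{sec:minimal}) and that $\tau$ is unitarily invariant so $\tau(v_\eps q_jv_\eps^*)=\tau(q_j)$.

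\begin{proof}
Write $M_1=\bigoplus_{i=1}^m\mathbb C p_i$ and $M_2=\bigoplus_{j=1}^n\mathbb C q_j$, where $p_1,\dots,p_m$ and $q_1,\dots,q_n$ are minimal projections with $\sum_{i=1}^m\tau(p_i)=\sum_{j=1}^n\tau(q_j)=1$. Then $M_1,M_2\in\cC$ (their diffuse parts being zero), $M_1,M_2\ne\mathbb C1$, and $\dim(M_1)+\dim(M_2)=m+n\ge 5$.

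Let $(M,\tau)$ be any tracial von Neumann algebra containing $M_1$ and $M_2$, and let $\eps>0$. By Theorem~\ref{abelian-perturbation}, the pair $(M_1,M_2)$ satisfies condition (B), so there exist a II$_1$ factor $M_\eps$ with $M\subset M_\eps$ and a unitary $v_\eps\in\mathcal U(M_\eps)$ with $\|v_\eps-1\|_2<\eps$ such that $p_i$ and $v_\eps q_jv_\eps^*$ are in general position for all $1\le i\le m$, $1\le j\le n$, and
\[
M_1\vee v_\eps M_2 v_\eps^*=\Big(\bigoplus_{\substack{1\le i\le m\\ 1\le j\le n}}\mathbb C\,(p_i\wedge v_\eps q_jv_\eps^*)\Big)\oplus B
\]
for some II$_1$ factor $B$.

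Since $M_1$ and $M_2$ are finite dimensional abelian, we have $\mathrm{e}(M_1)=\max_{1\le i\le m}\tau(p_i)$ and $\mathrm{e}(M_2)=\max_{1\le j\le n}\tau(q_j)$. Hence the assumption $\mathrm{e}(M_1)+\mathrm{e}(M_2)\le 1$ gives $\tau(p_i)+\tau(q_j)\le 1$, and therefore, since $\tau(v_\eps q_jv_\eps^*)=\tau(q_j)$,
\[
\tau(p_i)+\tau(v_\eps q_jv_\eps^*)-1\le 0\qquad\text{for all }1\le i\le m,\ 1\le j\le n.
\]
Because $p_i$ and $v_\eps q_jv_\eps^*$ are in general position, it follows that
\[
\tau\big(p_i\wedge v_\eps q_jv_\eps^*\big)=\max\{\tau(p_i)+\tau(v_\eps q_jv_\eps^*)-1,\,0\}=0,
\]
so $p_i\wedge v_\eps q_jv_\eps^*=0$ for all $i,j$. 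Consequently $M_1\vee v_\eps M_2 v_\eps^*=B$ is a II$_1$ factor.

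As $(M,\tau)$ and $\eps>0$ were arbitrary, Definition~\ref{condition (A)} shows that the pair $(M_1,M_2)$ is approximately factorial.
\end{proof}
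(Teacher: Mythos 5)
Your proof is correct and follows the same route as the paper: invoke Theorem~\ref{abelian-perturbation} to obtain the unitary $v_\eps$ with the general position and decomposition properties of condition (B), then observe that the hypothesis $\mathrm{e}(M_1)+\mathrm{e}(M_2)\le 1$ forces all the $1$-dimensional summands $\mathbb C(p_i\wedge v_\eps q_j v_\eps^*)$ to vanish, so that $M_1\vee v_\eps M_2v_\eps^*=B$ is a II$_1$ factor. This matches the paper's argument exactly.
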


\begin{proof} Write $M_1=\mathbb Cp_1\oplus\cdots\oplus\mathbb Cp_m$ and $M_2=\mathbb Cq_1\oplus\cdots\oplus\mathbb Cq_n$, where $p_1,\ldots,p_m,q_1,\ldots,q_n$ are non-zero projections. Fix $\varepsilon>0$ and let $M$ be a tracial von Neumann algebra containing $M_1$ and $M_2$. By Theorem~\ref{abelian-perturbation}, we can find a II$_1$ factor $M\subset M_\eps$ and $v_\varepsilon\in\mathcal U(M_\varepsilon)$ with $\norm{v_\eps - 1}_2<\eps$ such that (1) and (2) in Definition \eqref{condition B}(a) hold.
If $1\leq i\leq m$ and $1\leq j\leq n$, then $\tau(p_i)+\tau(q_j)\leq\mathrm{e}(M_1)+\mathrm{e}(M_2)\leq 1$, thus $p_i\wedge v_\varepsilon q_jv_\varepsilon^*=0$ by (1). By (2) we thus get that $M_1\vee v_\varepsilon M_2v_\varepsilon^*=B$ is a II$_1$ factor. 
\end{proof}

\begin{proof}[Proof of Theorem \ref{perturbation}]
Let $(M,\tau)$ be a tracial von Neumann algebra, and let $M_1,M_2\subset M$ be von Neumann subalgebras such that $M_1,M_2\neq \mathbb C1$, $\mathrm{dim}(M_1)+\mathrm{dim}(M_2)\geq 5$, and $\mathrm{e}(M_1)+\mathrm{e}(M_2)\leq 1$.
Our goal is to prove that the triple $(M_1,M_2,M)$ is approximately factorial. 

If $M_1$ or $M_2$ is diffuse, then Corollary \ref{one diffuse algebra} implies that $(M_1,M_2,M)$ is approximately factorial. Thus, we may assume that neither $M_1$ nor $M_2$ is diffuse. By Lemma \ref{masa}, we can thus find a finite dimensional abelian von Neumann subalgebra  $A_i\subset M_i$, for $i=1,2$, such that $A_i\neq \mathbb C1$, $\mathrm{e}(A_i)=\mathrm{e}(M_i)$. Moreover $\dim(A_i)\geq 3$ unless $M_i$ is isomorphic to $\mathbb C^2$ or $\mathbb M_2(\mathbb C)$.

In particular, $\mathrm{e}(A_1)+\mathrm{e}(A_2)=\mathrm{e}(M_1)+\mathrm{e}(M_2)\leq 1$, and since $A_1\neq \mathbb C1$ and $A_2\neq \mathbb C1$, we get $\mathrm{dim}(A_1)+\mathrm{dim}(A_2)\geq 4$. 
We finish the proof by considering the following two cases:

{\bf Case 1.} $\mathrm{dim}(A_1)+\mathrm{dim}(A_2)\geq 5$.

In this case, Corollary \ref{abelian_perturbation} implies that the triple $(A_1,A_2,M)$ is approximately factorial. 
If $M_1=A_1$ and $M_2=A_2$, then we get that $(M_1,M_2,M)$ is approximately factorial. Otherwise, we may assume, without loss of generality, that $A_2\subsetneq M_2$. In this case, we obtain that the triple $(M_1,M_2,M)$ is approximately factorial by applying Proposition \ref{prop: property A inherited from subalgs}.

{\bf Case 2.} $\mathrm{dim}(A_1)+\mathrm{dim}(A_2)=4$.

In this case, $\mathrm{dim}(A_1)=\mathrm{dim}(A_2)=2$. Thus, $M_1$ and $M_2$ are isomorphic to either $\mathbb C^2$ or $\mathbb M_2(\mathbb C)$. Since $\mathrm{dim}(M_1)+\mathrm{dim}(M_2)\geq 5$, at least one of $M_1$ or $M_2$ is not isomorphic to $\mathbb C^2$. Thus, without loss of generality, we may assume that $M_2 \cong \mathbb M_2(\mathbb C)$.

Since $\mathrm{dim}(A_1)=\mathrm{dim}(A_2)=2$, we have that $A_1=\mathbb Cp_1\oplus\mathbb C(1-p_1)$ and $A_2=\mathbb Cp_2\oplus\mathbb C(1-p_2)$, for some projections $p_1,p_2\in M$ with $\tau(p_1)\geq\frac{1}{2}$ and $\tau(p_2)\geq\frac{1}{2}$. Since $1\geq \mathrm{e}(A_1)+\mathrm{e}(A_2)=\tau(p_1)+\tau(p_2)$, we conclude that $\tau(p_1)=\tau(p_2)=\frac{1}{2}$.
 We may thus proceed by applying Proposition \ref{prop: property A for projection 0.5} and conclude that the triple $(M_1,M_2,M)$ is approximately factorial.
\end{proof}

\section{General position of von Neumann subalgebras}\label{sec:gen_position}
In Section \ref{sec:perturbation} we proved Theorem \ref{perturbation} while relying on  Theorem \ref{abelian-perturbation}. The purpose of this section is to prove Theorem \ref{abelian-perturbation}.

\subsection{Pairs of \texorpdfstring{$2$}{2}-dimensional algebras}\label{sec:2D}
Before turning to the proof of Theorem~\ref{abelian-perturbation}, we note that the assumption that $\dim(M_1)+\dim(M_2)\geq 5$ from Theorem \ref{abelian-perturbation} is necessary. Indeed, if $M_1,M_2\neq\mathbb C1$, having $\dim(M_1)+\dim(M_2)<5$ forces that $\dim(M_1)=\dim(M_2)=2$, i.e., $M_1 \cong \C^2 \cong M_2$. In this case, $M_1\vee vM_2v^*$ is a type I von Neumann algebra, and therefore has no type II$_1$ direct summand, for any unitary $v$ in any II$_1$ factor containing $M_1$ and $M_2$.

Nevertheless, the case $\dim(M_1)=\dim(M_2)=2$ will be the starting point in our approach to Theorem~\ref{abelian-perturbation}. To this end, we prove in this subsection a weaker version of Theorem \ref{abelian-perturbation} for this case, see Proposition \ref{twoprojections} below. The weaker property we establish is essentially obtained by relaxing the requirement from Definition~\ref{condition B}(a) that $B$ should be a II$_1$ factor, and only ask $B$ to be diffuse:

\begin{definition}\label{condition C} Let $M_1,M_2\in\mathcal C$. Write $M_1=\mathbb Cp_1\oplus \cdots\oplus\mathbb Cp_m\oplus M_1p$, $M_2=\mathbb Cq_1\oplus\cdots\oplus\mathbb Cq_n\oplus M_2q$, where $p_1,\ldots,p_m,p\in \mathcal Z(M_1), q_1,\ldots,q_n,q\in \mathcal Z(M_2)$ are projections and $M_1p,M_2q$ are diffuse.

\begin{itemize}
\item[(a)] 
Let $(M,\tau)$ be a tracial von Neumann algebra which contains $M_1$ and $M_2$. \\
We say that the triple $(M_1,M_2,M)$ satisfies {\it condition (C)}  if for every $\varepsilon>0$, we can find a II$_1$ factor $M_\varepsilon$ and $v_\varepsilon\in\mathcal U(M_\varepsilon)$ such that $M\subset M_\varepsilon$, $\|v_\varepsilon-1\|_2<\varepsilon$, 
\begin{enumerate}
\item $p_i$ and $v_\varepsilon q_jv_\varepsilon^*$ are in general position, 
for every $1\leq i\leq m,1\leq j\leq n$, 
\item $M_1\vee v_\varepsilon M_2 v_\varepsilon^*=\big(\bigoplus_{\substack{1\leq i\leq m \\ 1\leq j\leq n}}\mathbb C(p_i\wedge v_\varepsilon q_jv_{\varepsilon}^*)\big)\oplus B$, where $B$ is diffuse, and
\item $\mathrm{E}_{\cZ(B)}(p_i),\mathrm{E}_{\cZ(B)}(v_\varepsilon q_jv_\varepsilon^*)\in\mathbb Cr$, for every $1\leq i\leq m,1\leq j\leq n$, where we denote by $r=1-\sum_{\substack{1\leq i\leq m\\1\leq j\leq n}}p_i\wedge v_\varepsilon q_jv_\varepsilon^*$ the unit projection of $B$.
 
\end{enumerate}
\item [(b)]
We say that the pair $(M_1,M_2)$  satisfies {\it condition (C)} if the triple $(M_1,M_2,M)$ satisfies condition (C) for every tracial von Neumann algebra $(M,\tau)$ which contains $M_1$ and $M_2$.
\end{itemize}
\end{definition}

\begin{proposition}\label{twoprojections}
Let $M_1, M_2$ be tracial von Neumann algebras such that $\dim(M_1)=\dim(M_2)=2$. Then $(M_1,M_2)$ satisfies condition (C). Moreover, if $M$ is a II$_1$ factor which contains $M_1$ and $M_2$, then in Definition \ref{condition C}(a), we can take $M_\varepsilon=M$, for every $\varepsilon>0$.
\end{proposition}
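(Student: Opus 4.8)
The plan is to reduce to the case where $M_1$ and $M_2$ are generated by two projections $p_1 \in M_1$, $p_2 \in M_2$, and then to analyze the von Neumann algebra $M_1 \vee v M_2 v^*$ generated by $p_1$ and $vp_2v^*$ via the classical two-projections (Halmos) decomposition. Write $M_1 = \mathbb C p_1 \oplus \mathbb C(1-p_1)$ and $M_2 = \mathbb C p_2 \oplus \mathbb C(1-p_2)$, with $\alpha = \tau(p_1)$, $\beta = \tau(p_2)$; without loss of generality $\alpha,\beta \in (0,1)$ (the degenerate cases $p_i \in \{0,1\}$ being trivial). By the structure theory for a pair of projections, $W^*(p_1, vp_2v^*)$ decomposes as a direct sum of at most four abelian ``corner'' summands (where $p_1 \wedge vp_2v^*$, $p_1 \wedge (1-vp_2v^*)$, etc.\ live) together with a part of the form $A \otimes \mathbb M_2(\mathbb C)$, where $A$ is abelian, generated by $\begin{pmatrix} 1 & 0 \\ 0 & 0\end{pmatrix}$ and a projection $\begin{pmatrix} c^2 & cs \\ cs & s^2 \end{pmatrix}$ with $c = c(t) = \cos\theta(t)$, $s = \sin\theta(t)$ a measurable ``angle'' operator $\theta$ with spectrum in $(0,\pi/2)$; here $A = L^\infty$ of the spectral measure of $\theta$. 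The algebra $B$ in Definition \ref{condition C}(a) will be precisely (the compression to $r = 1 - \sum p_i \wedge vq_jv^*$ of) this homogeneous part, which is diffuse exactly when the spectral measure of $\theta$ is non-atomic, and conditions (1)--(3) translate into: $p_1 \wedge vp_2v^*$, $p_1\wedge(1-vp_2v^*)$, $(1-p_1)\wedge vp_2v^*$, $(1-p_1)\wedge(1-vp_2v^*)$ all have the minimal possible traces $\max\{\cdot,0\}$, and the conditional expectations of $p_1$ and $vp_2v^*$ onto $\mathcal Z(B) = A$ are scalar — which, in the $2\times 2$ picture, says $\mathrm E_A(\mathrm{diag}(1,0)) = \tfrac12 r$ and $\mathrm E_A\!\begin{pmatrix} c^2 & cs \\ cs & s^2\end{pmatrix} = \tfrac12 r$, i.e.\ $\mathrm E_A(c^2) = \tfrac12$, equivalently the spectral measure of $\theta$ is symmetric about $\pi/4$.

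So the real task is: given any tracial von Neumann algebra $M \supseteq M_1, M_2$, produce $v_\varepsilon \in \mathcal U(M_\varepsilon)$ with $\|v_\varepsilon - 1\|_2 < \varepsilon$ so that the two projections $p_1$ and $v_\varepsilon p_2 v_\varepsilon^*$ are ``in general position and spread out symmetrically'': the four meet-projections have the minimal traces, and the angle operator between the non-degenerate parts has a non-atomic spectral distribution symmetric about $\pi/4$. I would invoke \cite[Theorem~1.9]{CK12} (cited in the proof of Proposition~\ref{prop: property A for projection 0.5}), or an ultrapower/incremental-rotation argument in the spirit of Lemma~\ref{convergence}: embed $M$ into a II$_1$ factor $M_\varepsilon$ (or keep $M_\varepsilon = M$ when $M$ is already a II$_1$ factor), and use that in a II$_1$ factor one can find, arbitrarily close to $1$, a unitary $v$ moving $p_2$ into any prescribed ``relative position'' with respect to $p_1$ that is compatible with the trace constraints $\tau(p_1) = \alpha$, $\tau(v p_2 v^*) = \beta$. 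Concretely, the set of achievable joint distributions (spectral data of $(p_1, vp_2v^*)$) over all unitaries $v$ is governed by the Horn-type inequalities $\tau(p_1 \wedge vp_2v^*) \geq \max\{\alpha+\beta-1,0\}$ etc., and the ``generic'' position — all four meets minimal, angle operator Lebesgue-like and symmetric about $\pi/4$ on the remaining mass — is in the closure of the unitary orbit of $p_2$, and can be approached by unitaries near $1$ since the orbit of $p_2$ under $\mathcal U(M_\varepsilon)$ is $\|\cdot\|_2$-connected and its closure is convex-like in distribution (this is exactly the content of \cite{CK12} on approximate equivalence / closure of unitary orbits of commuting pairs of projections). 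For the ``moreover'' clause, when $M$ is already a II$_1$ factor one does not need to enlarge: the unitary orbit of $p_2$ in $\mathcal U(M)$ already has the required closure property, so $M_\varepsilon = M$ works.

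I would organize the write-up as: (i) reduce to $\alpha,\beta \in (0,1)$ and set up the Halmos $2\times 2$ decomposition of $W^*(p_1, vp_2v^*)$, identifying the four corner summands with $p_i \wedge vq_jv^*$ and the homogeneous summand $B$ with an angle operator $\theta$; (ii) observe that conditions (1)--(3) of Definition~\ref{condition C}(a) are equivalent to: the four meets have minimal trace and the distribution $\nu$ of $\theta$ is non-atomic and symmetric about $\pi/4$; (iii) cite/prove that a unitary $v_\varepsilon$ near $1$ realizing such a position exists, using \cite[Theorem~1.9]{CK12} after embedding $M$ in an appropriate II$_1$ factor $M_\varepsilon$, and note $M_\varepsilon = M$ suffices when $M$ is a factor; (iv) verify diffuseness of $B$ from non-atomicity of $\nu$ and conclude. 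The main obstacle I anticipate is step (iii): carefully matching what \cite[Theorem~1.9]{CK12} actually delivers (diffuseness of $A_1 \vee v A_2 v^*$ for suitable near-$1$ unitaries) with the sharper, more quantitative ``symmetry about $\pi/4$ and minimal meets'' requirement here — it may be necessary either to extract this from the proof of \cite{CK12} or to run a direct argument: first use \cite{CK12} to get a near-$1$ unitary making $A_1 \vee vA_2v^*$ diffuse, then perform an additional small unitary adjustment supported on $B$ (using that $B$ is diffuse, hence contains Haar unitaries, and applying a further rotation inside $B \overline\otimes L(\mathbb Z)$ as in Lemma~\ref{rotate}) to symmetrize the angle distribution and kill any excess in the meet-projections without disturbing their minimality. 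Keeping track of the $\|\cdot\|_2$-estimates through these two steps, so that the total perturbation stays below $\varepsilon$, is the delicate bookkeeping.
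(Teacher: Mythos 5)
Your high-level plan (Halmos two-projection decomposition, realize a ``generic'' angle operator by a near-identity rotation) is aligned with what the paper does, and you are right that the main assertion of the proposition (without the ``moreover'' clause) can be obtained from the Collins--Kemp/Hamdi liberation theorems; in fact the paper explicitly notes that \cite[Theorem~1.9]{CK12} (for $\tau(p)=\tau(q)=\tfrac12$) together with \cite[Theorem~1.1]{Ha17} (for general traces, which you did not cite and which is essential) gives Definition~\ref{condition C}(a) with $M_\varepsilon = M\ast\mathrm{L}(\mathbb F_\infty)$ and $v_\varepsilon$ a free unitary Brownian motion evaluated at small time.

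However, there are two genuine gaps. First, the ``moreover'' assertion \emph{cannot} be extracted from the CK12/Ha17 route: the free unitary Brownian motion used there must be $\ast$-free from $\{p,q\}$, which forces you to pass to $M\ast\mathrm{L}(\mathbb F_\infty)$; no II$_1$ factor $M$ generically contains a copy of $\mathrm{L}(\mathbb F_\infty)$ free from a given diffuse subalgebra (e.g.\ $M$ hyperfinite), so asserting ``the unitary orbit of $p_2$ in $\mathcal U(M)$ already has the required closure property'' is exactly the thing that needs a proof, not a byproduct of CK12. The paper's reason for giving a self-contained elementary proof (based on its Lemma~\ref{rotate and generate}: explicit angle-function approximation $g\approx f$ on $L^\infty[0,1]\overline{\otimes}\mathbb M_2$) is precisely to get this $M_\varepsilon = M$ conclusion. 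Second, your proposed ``additional small unitary adjustment supported on $B$'' does not address the real difficulty: the excess in the meet-projections lives in the \emph{abelian} summand of $W^*(p,q)$, not in the diffuse/homogeneous part $B$, so rotating inside $B$ cannot shrink $\tau(p\wedge vqv^*)$ down to $\max\{\tau(p)+\tau(q)-1,0\}$. The paper handles this by splitting the abelian summand $z_1$ into a piece $z_3$ (which already realizes the minimal meets, compressed inside $p$) and a balanced remainder $z_4$ with $\tau(pz_4)=\tau(qz_4)=\tfrac12\tau(z_4)$; on $z_4$ it then embeds $(pz_4,qz_4)$ into $A_4\overline{\otimes}\mathbb M_2(\mathbb C)$ and applies the angle-approximation lemma a second time. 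This step is not visible in your sketch and is where the argument really lives.

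A smaller point: your translation of condition (3) as ``the spectral distribution of the angle operator $\theta$ is symmetric about $\pi/4$'' is not right. On the homogeneous part $B = A\overline{\otimes}\mathbb M_2(\mathbb C)$ we have $\mathcal Z(B) = A\overline{\otimes}1$, and the conditional expectation onto $A\overline{\otimes}1$ is fiberwise the normalized $2\times 2$ trace; applied to $p$ (fiberwise $\mathrm{diag}(1,0)$) it gives $\tfrac12$, and applied to the rotated $q$ (fiberwise $\begin{psmallmatrix}c^2 & cs\\ cs & s^2\end{psmallmatrix}$) it gives $\tfrac{c^2+s^2}{2}=\tfrac12$ identically, with no symmetry condition on the law of $\theta$. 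What condition (3) is really encoding is that $p$ and $v_\varepsilon q v_\varepsilon^*$ each have local trace $\tfrac12$ in every matrix fiber of $B$, which is an automatic consequence of correctly identifying the type $I_2$ part of $W^*(p, v_\varepsilon q v_\varepsilon^*)$; this is what equations \eqref{E_A2} and \eqref{E_A4} in the paper establish.
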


Write $M_1=\mathbb Cp\oplus\mathbb C(1-p)$ and $M_2=\mathbb Cq\oplus\mathbb C(1-q)$, for projections $p,q$ and let $M$ be a tracial von Neumann algebra which contains $M_1$ and $M_2$.  The main assertion of Proposition \ref{twoprojections} follows from \cite[Theorem 1.9]{CK12}, if $\tau(p)=\tau(q)=\frac{1}{2}$, and  \cite[Theorem 1.1]{Ha17}, in general.
These results imply that Definition \ref{condition C}(a) holds for $M_\varepsilon=M*\mathrm{L}(\mathbb F_\infty)$ and $v_\varepsilon=u_{t_\varepsilon}$, for any $t_\varepsilon>0$ with $\|v_\varepsilon-1\|_2<\varepsilon$, where $(u_t)_{t\geq 0}\subset\mathcal U(\mathrm{L}(\mathbb F_\infty))$ is a free unitary Brownian motion that is free from $M$. 

Below, we give a self-contained and elementary proof of Proposition \ref{twoprojections} which additionally implies its moreover assertion. We point out however, that the moreover assertion is not needed for our main results, and thus relying on \cite[Theorem 1.9]{CK12} and \cite[Theorem 1.1]{Ha17} is sufficient. 
In preparation for the proof of Proposition \ref{twoprojections}, we introduce the following notation: 

For $t\in [0,1]$, put  $$e_t=\begin{pmatrix} 1-t &\sqrt{t(1-t)} \\ \sqrt{t(1-t)}& t\end{pmatrix}\;\;\;\text{and}\;\;\;\;u_t=\begin{pmatrix}\sqrt{1-t}&-\sqrt{t}\\\sqrt{t}&\sqrt{1-t} \end{pmatrix}.$$
Note that any projection of normalized trace $\frac{1}{2}$ in $\mathbb M_2(\mathbb C)$ is equal to $e_t$, for some $t\in [0,1]$, and that $u_t\in\mathbb M_2(\mathbb C)$ is a unitary satisfying $e_t=u_te_0u_t^*$, for every $t\in [0,1]$. It is easy to check that \begin{equation}\label{u_tu_s}\text{$\|u_t-u_s\|_2\leq \sqrt{2|t-s|}$, for every $t,s\in [0,1]$.}\end{equation}

The following elementary result will be needed in the proof of Proposition \ref{twoprojections}.

\begin{lemma}\label{rotate and generate}
Let $A$ be a separable diffuse abelian von Neumann algebra and $p,q\in A\overline{\otimes}\mathbb M_2(\mathbb C)$ be projections such that $\mathrm{E}_{A\overline{\otimes}1}(p)=\mathrm{E}_{A\overline{\otimes} 1}(q)=\frac{1}{2}$. Then for any  $\varepsilon>0$, there exists a unitary $v\in A\overline{\otimes}\mathbb M_2(\mathbb C)$ such that
$\|v-1\|_2<\varepsilon$ and 
 $\{p,vqv^*\}''=A\overline{\otimes}\mathbb M_2(\mathbb C)$.
\end{lemma}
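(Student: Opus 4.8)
The plan is to reduce the problem to rotating a single projection into ``generic position'' inside $A\overline{\otimes}\mathbb M_2(\mathbb C)$, using the parametrization $e_t$, $u_t$ introduced above. Write $A=\mathrm{L}^\infty(X,\nu)$. Since $\mathrm{E}_{A\overline{\otimes}1}(p)=\mathrm{E}_{A\overline{\otimes}1}(q)=\frac12$, after conjugating $p$ and $q$ by suitable unitaries in $A\overline{\otimes}\mathbb M_2(\mathbb C)$ that are arbitrarily close to $1$ in $\|\cdot\|_2$ (measurable selections of the unitaries diagonalizing $p(x)$, $q(x)$ fiberwise, truncated on a large-measure set), we may assume $p$ and $q$ are of the form $p=\int^\oplus_X e_{s(x)}\,d\nu(x)$ and $q=\int^\oplus_X e_{t(x)}\,d\nu(x)$ for measurable functions $s,t:X\to[0,1]$. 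Actually we only need $q$ to be put in such a form; alternatively, since $\{p,vqv^*\}''=A\overline{\otimes}\mathbb M_2(\mathbb C)$ iff $\{p,\,v\text{-rotation of }q\}'' $ is everything, it suffices to arrange that $p$ corresponds to the constant field $e_0$ (i.e.\ $p=1\otimes e_0$) at the cost of a conjugation by a $\|\cdot\|_2$-small unitary, which is possible whenever $s(x)$ can be moved to $0$ — but $s$ need not be near $0$. So instead I will keep both fields and choose $v$ of the form $v=\int^\oplus_X u_{r(x)}\,d\nu(x)$ for a measurable $r:X\to[0,1]$ with $\|r\|_\infty$, hence $\|v-1\|_2$, small.

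The key computation: for fixed $x$, the pair $\{e_{s(x)},\,u_{r(x)}e_{t(x)}u_{r(x)}^*\}$ generates all of $\mathbb M_2(\mathbb C)$ as soon as the two rank-one projections are distinct and non-orthogonal, equivalently as soon as $0<\mathrm{tr}_2(e_{s(x)}u_{r(x)}e_{t(x)}u_{r(x)}^*)<1$. One checks by direct $2\times2$ matrix arithmetic that this inner product, as a function of $r$ (with $s,t$ fixed), is real-analytic and non-constant, so it avoids $\{0,1\}$ for all but finitely many values of $r\in[0,1]$; in particular one can pick $r(x)$ arbitrarily small with this property, and by a measurable-selection argument (the ``bad set'' of $r$'s depending measurably on $(s(x),t(x))$) one can choose $r:X\to[0,\varepsilon']$ measurable so that for a.e.\ $x$ the fiber pair generates $\mathbb M_2(\mathbb C)$. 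Then standard direct-integral theory (e.g.\ the fact that a von Neumann subalgebra $N\subset A\overline{\otimes}\mathbb M_2(\mathbb C)$ containing $A\otimes1$ with $N(x)=\mathbb M_2(\mathbb C)$ a.e.\ must be all of $A\overline{\otimes}\mathbb M_2(\mathbb C)$) gives $\{p,vqv^*\}''=A\overline{\otimes}\mathbb M_2(\mathbb C)$, provided $\{p,vqv^*\}''$ actually contains a copy of $A$ acting diagonally. This last point needs a small extra argument: $\{p,vqv^*\}''$ contains $\mathrm{E}$-images / spectral projections of $p$ and $vqv^*$, and one must verify these generate enough of $A\otimes 1$; if not, one first arranges (again by a $\|\cdot\|_2$-small perturbation, or by choosing $s$, $t$ injective enough as functions of $x$ via an initial conjugation) that the function $x\mapsto(s(x),t(x),r(x))$ separates points of $X$ up to null sets, which forces the generated algebra to contain $A\otimes1$.

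Concretely, the steps in order: (1) reduce to $p,q$ of diagonal-field form $e_{s(\cdot)},e_{t(\cdot)}$ by a $\|\cdot\|_2$-small conjugation, using fiberwise diagonalization and \eqref{u_tu_s}-type estimates to control the norm; (2) by a further small conjugation, arrange that $x\mapsto(s(x),t(x))$ is, up to measure zero, such that the subsequently generated algebra will contain $A\otimes 1$ — e.g.\ compose with a generator of $A$ to make the relevant field injective; (3) establish the fiberwise genericity lemma for $2\times2$ matrices: the set of $r\in[0,1]$ for which $\{e_s,u_re_tu_r^*\}$ fails to generate $\mathbb M_2(\mathbb C)$ is finite, with explicit control near $0$; (4) make a measurable choice $r:X\to[0,\varepsilon']$ landing in the good set fiberwise, set $v=\int^\oplus u_{r(x)}d\nu(x)$, and note $\|v-1\|_2\le\sqrt 2\,\varepsilon'<\varepsilon$ by \eqref{u_tu_s}; (5) conclude via direct-integral/fiberwise generation that $\{p,vqv^*\}''=A\overline{\otimes}\mathbb M_2(\mathbb C)$.

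The main obstacle I anticipate is step (2)/(5) — namely ensuring that the generated von Neumann algebra contains the \emph{diagonal} $A\otimes 1$ and not merely a proper subalgebra of it, since a priori $\{p,vqv^*\}''$ only obviously contains the von Neumann algebra generated by the fields $\{e_{s(x)}\}$ and $\{u_{r(x)}e_{t(x)}u_{r(x)}^*\}$, which could be "thin" if $s$, $t$, $r$ are essentially constant on a positive-measure set. The fix is to build genericity of the \emph{base-space dependence} into the initial perturbation: since $A$ is diffuse and separable, one can conjugate by a small unitary so that $x\mapsto s(x)$ (say) is, modulo null sets, an injection of $X$ into $[0,1]$ — this is where diffuseness is essential — and then spectral projections of $p$ already recover $A\otimes 1$, after which the fiberwise $\mathbb M_2(\mathbb C)$-generation finishes the job. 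The $2\times2$ real-analyticity computation in step (3) is routine and I would not belabor it.
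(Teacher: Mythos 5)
Your overall strategy—arrange fiberwise generation of $\mathbb M_2(\mathbb C)$ together with generation of $A\otimes 1$ in the base direction—is the right intuition, and in broad strokes it parallels the paper's proof. But two of the load-bearing steps are confused in a way that would sink the argument if written out.

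First, the reduction in your step (1) is misplaced. You claim you can bring $p$ and $q$ simultaneously to the form $e_{s(\cdot)}$, $e_{t(\cdot)}$ by unitaries \emph{close to $1$ in $\|\cdot\|_2$}, and you even pause to note that bringing $p$ to $1\otimes e_0$ might require moving $s$ a long way and therefore seems disallowed. But the conjugation used in this reduction is a \emph{global change of coordinates}: it is applied simultaneously to $p$, to $q$, and (implicitly) to the unitary $v$ you eventually produce, and it preserves both the target statement $\{p,vqv^*\}''=A\overline{\otimes}\mathbb M_2(\mathbb C)$ and the quantity $\|v-1\|_2$. There is no need for it to be small, and indeed in general it \emph{cannot} be small (a rank-one projection field in $M_2(\mathbb C)$ need not lie near the curve $\{e_t\}$, and the ``truncated on a large-measure set'' trick you mention does not make the resulting unitary close to $1$ in $\|\cdot\|_2$). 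Once you realize the reduction is free, there is no obstruction: one global conjugation brings $p$ to $1\otimes e_0$, and a further one in $\{p\}'$ (a diagonal unitary field) kills the off-diagonal phase of $q$ so that $q=e_{f(\cdot)}$. This is exactly what the paper does.

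Second, and more seriously, your ``fix'' for capturing $A\otimes 1$ is in the wrong place. You propose to perturb $p$ so that its spectral field $s$ becomes injective. But after the (now non-small) coordinate reduction, $p$ is the fixed constant field $1\otimes e_0$; the only thing the lemma lets you adjust by a small unitary is $q$. So the genericity has to be injected into the \emph{rotated copy of $q$}, not into $p$. This is precisely what the paper does: it replaces the spectral field $f$ of $q$ by a nearby function $g:[0,1]\to(0,1)$ which \emph{generates} $A$ (constructed explicitly via a step function jittered by a small multiple of the identity function — a concrete use of diffuseness), and sets $v_t=u_{g(t)}u_{f(t)}^*$ so that $vqv^*$ has fibers $e_{g(t)}$. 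Then $p(vqv^*)p=(1-g)\otimes e_0$ gives $A\otimes e_0$ directly (no direct-integral machinery), $(1-p)(vqv^*)(1-p)=g\otimes e_1$ gives $A\otimes e_1$, and the polar decomposition of the off-diagonal corner (nondegenerate because $g$ avoids $\{0,1\}$) supplies $1\otimes d$, from which generation of $A\overline{\otimes}\mathbb M_2(\mathbb C)$ is immediate. Your direct-integral conclusion (step 5) would also work, but only once $A\otimes 1$ is inside the generated algebra, which is exactly the point your argument does not establish; the paper's explicit computation makes it unnecessary. Your fiberwise genericity observation (step 3) is correct but is subsumed by taking $g$ valued in $(0,1)$, which automatically puts each fiber pair $\{e_0,e_{g(t)}\}$ in generating position.
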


\begin{proof}
Since $A$ is separable, diffuse and abelian, we can identify it with $\mathrm{L}^\infty([0,1],\lambda)$, where $\lambda$ is the Lebesgue measure on $[0,1]$.
Since $\mathrm{E}_{A\overline{\otimes}1}(p)=\frac{1}{2}$, we may assume that $p=1\otimes e_0$.
Since $\mathrm{E}_{A\overline{\otimes} 1}(q)=\frac{1}{2}$, we can find a measurable function $f:[0,1]\rightarrow [0,1]$ such that  $q(t)=e_{f(t)}$, for every $t\in [0,1]$.

Let $\varepsilon\in (0,1)$. 
We continue with the following claim:

\begin{claim}\label{approx}
There is a measurable function $g:[0,1]\rightarrow (0,1)$  such that $\{g\}''=A$ and $\|g-f\|_1<\frac{\varepsilon^2}{2}$. 
\end{claim}

{\it Proof of Claim \ref{approx}.}
First, we find $n\geq 1$ and pairwise distinct numbers $\lambda_0,\ldots,\lambda_{n-1}\in (0,1)$ such that $\|h-f\|_1<\frac{\varepsilon^2}{4}$, where $h=\sum_{k=0}^{n-1}\lambda_k{\bf 1}_{[\frac{k}{n},\frac{k+1}{n})}$.
Let $\delta\in (0,\frac{\varepsilon^2}{4})$ such that $\lambda_i+\delta<1$, for every $i$, and $|\lambda_i-\lambda_j|>\delta$, for every $i\neq j$.
Let $z:[0,1]\rightarrow [0,1]$ be the identity function $z(t)=t$. Define $g=\sum_{k=0}^{n-1}(\lambda_k+\delta z){\bf 1}_{[\frac{k}{n},\frac{k+1}{n})}$. Since $\|g-h\|_1\leq\delta<\frac{\varepsilon^2}{4}$, we get that $\|g-f\|_1<\frac{\varepsilon^2}{2}$. 
Then for every $0\leq k < n$, we have $(\lambda_k+\delta z){\bf 1}_{[\frac{k}{n},\frac{k+1}{n})}={\bf 1}_{[\lambda_k,\lambda_k+\delta]}(g)g\in\{g\}''$.
Since $(\lambda_k+\delta z){\bf 1}_{[\frac{k}{n},\frac{k+1}{n})}$ generates $\mathrm{L}^\infty([\frac{k}{n},\frac{k+1}{n}))$ as a von Neumann algebra, we get that $\mathrm{L}^\infty([\frac{k}{n},\frac{k+1}{n}))\subset \{g\}''$, for every $0\leq k\leq n-1$. This implies $\{g\}''=A$, and finishes the proof of the claim.\qed

Let $g$ be the function given by Claim \ref{approx}, and let $r\in A\overline{\otimes}\mathbb M_2(\mathbb C)$ be the projection given by $r_t=e_{g(t)}$, for all $t\in [0,1]$.  Let $v\in A\overline{\otimes}\mathbb M_2(\mathbb C)$ be the unitary given by $v_t=u_{g(t)}u_{f(t)}^*$, for all $t\in [0,1]$. Then $v_te_{f(t)}v_t^*=e_{g(t)}$, for all $t\in [0,1]$, and thus $vqv^*=r$. Using \eqref{u_tu_s} we get that $$\|v-1\|_2^2=\int_0^1\|v_t-1\|_2^2\;\mathrm{d}t=\int_0^1\|u_{g(t)}-u_{f(t)}\|_2^2\;\mathrm{d}t\leq 2\int_0^1|g(t)-f(t)|\;\mathrm{d}t<\varepsilon^2,$$
and therefore $\|v-1\|_2<\varepsilon$. 

Put $B=\{p,vqv^*\}''=\{p,r\}''$. Since $prp=(1-g)\otimes e_0$, $(1-p)r(1-p)=g\otimes e_1$ and $\{g\}''=A$, we get that $pBp=A\otimes e_0$ and $(1-p)B(1-p)=A\otimes e_1$. As $(pr(1-p))(t)=\begin{pmatrix} 0&\sqrt{g(t)(1-g(t))}\\ 0&0\end{pmatrix}$ and $g(t)\not\in\{0,1\}$, for all $t\in [0,1]$, the partial isometry in the polar decomposition of $pr(1-p)$ is equal to $1\otimes d$, where $d=\begin{pmatrix} 0&1\\0&0\end{pmatrix}$.  Thus, $1\otimes d\in B$. Since $A\otimes e_0,A\otimes e_1$ and $1\otimes d$ generate $A\overline{\otimes}\mathbb M_2(\mathbb C)$, we conclude that $B=A\overline{\otimes}\mathbb M_2(\mathbb C)$, which finishes the proof of the lemma.
\end{proof}

\begin{proof}[Proof of Proposition \ref{twoprojections}]
 Write $M_1=\mathbb Cp\oplus\mathbb C(1-p)$ and $M_2=\mathbb Cq\oplus\mathbb C(1-q)$, where $p,q\in M$ are projections and $\tau(p),\tau(q)\geq\frac{1}{2}$. We assume without loss of generality that $\tau(p)\geq\tau(q)$. 

Consider a II$_1$ factor $M$  containing $M_1$ and $M_2$.  Let $\varepsilon>0$ and put $N=\{p,q\}''$. Then there are projections $z_1,z_2\in \mathcal Z(N)$ such that  $z_1+z_2=1$, $Nz_1$ is abelian and $Nz_2$ is of type $I_2$. 
Moreover, writing  $Nz_2=C\overline{\otimes}\mathbb M_2(\mathbb C)$, where $C$ is an abelian von Neumann algebra, we claim that  
\begin{equation}\label{expectation}\mathrm{E}_{C\overline{\otimes}1}(pz_2)=\mathrm{E}_{C\overline{\otimes}1}(qz_2)=\frac{z_2}{2}.\end{equation}
Indeed, $pz_2$ and $qz_2$ are projections that generate $C\overline{\otimes} \bM_2(\C)\cong \dint_X \bM_2(\C) \,\mathrm{d}\mu(x)$, where we identify $C\cong L^\infty(X,\mu)$ for some standard probability space $(X,\mu)$. Disintegrating $pz_2 = \dint_X p_x \,\mathrm{d}\mu(x)$, we see that each projection $p_x\in\bM_2(\C)$ necessarily has trace $0$, $\frac{1}{2}$, or $1$. If the measure of $X_0\coloneq\{x\in X\mid p_x=0\}$ is strictly positive, then $\{pz_21_{X_0},qz_21_{X_0}\}'' = \{qz_21_{X_0}\}''\neq \mathrm{L}^\infty(X_0)\overline{\otimes}\bM_2(\C)$, contradicting the fact that $pz_2$ and $qz_2$ generate $\mathrm{L}^\infty(X)\overline{\otimes}\bM_2(\C)$. One can argue similarly for the points where $p_x=1$, and thus deduce that for almost every $x\in X$, $\tau(p_x)=\frac{1}{2}$. A similar argument holds for $qz_2$, yielding \eqref{expectation}.

Let $A_2\subset \mathbb M_2(\mathbb C)'\cap z_2Mz_2$ be a MASA containing $C$. Then $Nz_2\subset A_2\overline{\otimes}\mathbb M_2(\mathbb C)$ and since we have $\mathrm{E}_{A_2\overline{\otimes}1}\circ\mathrm{E}_{C\overline{\otimes}\mathbb M_2(\mathbb C)}=\mathrm{E}_{C\overline{\otimes}1}$, \eqref{expectation} implies that 
\begin{equation}\label{E_A2}
\mathrm{E}_{A_2\overline{\otimes}1}(pz_2)=\mathrm{E}_{A_2\overline{\otimes}1}(qz_2)=\frac{z_2}{2}. 
\end{equation}
Since $M$ is diffuse, $A_2$ is  diffuse. 
 Thus, by applying Lemma \ref{rotate and generate}, we can find a unitary $v_2\in z_2Mz_2$ such that \begin{equation}\label{v_2}\|v_2-z_2\|_2<\frac{\varepsilon}{2} 
 \;\;\;\text{and}\;\;\;
 \{pz_2,v_2(qz_2)v_2^*\}''=A_2\overline{\otimes}\mathbb M_2(\mathbb C).\end{equation}
 
Next, \eqref{expectation} implies that $\tau(pz_2)=\tau(qz_2)=\frac{\tau(z_2)}{2}$. Hence $\tau(pz_1)\geq \tau(qz_1)\geq\frac{\tau(z_1)}{2}$. Since $Nz_1$ is abelian, $r\wedge s=rs$, for any projections $r,s\in Nz_1$.
 Therefore, $\tau(pqz_1)\geq\tau(pz_1)+\tau(qz_1)-\tau(z_1)\geq 0$ and $\tau(p(1-q)z_1)\geq\tau(pz_1)-\tau(qz_1)\geq 0$.
 
Since $M$ is diffuse, there
is a projection $z_3\in M$ such that $z_3\leq pz_1$, $z_3$ commutes with $q$,
 \begin{equation}\label{z_3}\tau(qz_3)=\tau(pz_1)+\tau(qz_1)-\tau(z_1)\;\;\;\text{and}\;\;\;\tau((1-q)z_3)=\tau(pz_1)-\tau(qz_1).
 \end{equation}
  Since $z_3\leq p$, we have that $\tau(pz_3)+\tau(qz_3)-\tau(z_3)=\tau(qz_3)$ and $\tau(pz_3)-\tau(qz_3)=\tau((1-q)z_3)$. By combining this with \eqref{z_3}, we get that $z_4=z_1-z_3$ satisfies $\tau(pz_4)+\tau(qz_4)-\tau(z_4)=0$ and $\tau(pz_4)-\tau(qz_4)=0$. In other words, we have that 
  \begin{equation}
  \tau(pz_4)=\tau(qz_4)=\frac{\tau(z_4)}{2}.
  \end{equation}
Let $A_4=\mathrm{L}^\infty([0,1],\lambda)$, where $[0,1]$ is endowed with its Lebesgue measure $\lambda$. Since $M$ is a II$_1$ factor, we can find a unital trace preserving embedding $A_4\overline{\otimes}\mathbb M_2(\mathbb C)\subset z_4Mz_4$. 
Let $\tilde r\in A_4$ be a projection such that $\tau(\tilde r)=2\tau(pqz_4)$ and let $\tilde{p},\tilde{q}\in A_4\overline{\otimes}\mathbb M_2(\mathbb C)$ be given by 
 $\tilde{p}=1\otimes e_0$ and $\tilde{q}=\tilde r\otimes e_0+(1-\tilde r)\otimes e_1$. Then $\tau(\tilde{p})=\frac{\tau(z_4)}{2}=\tau(pz_4), \tau(\tilde{q})=\frac{\tau(z_4)}{2}=\tau(qz_4)$ and $\tau(\tilde{p}\tilde{q})=\tau(\tilde{r} \otimes e_0)=\tau(pqz_4)$.
 Using this and the fact that $M$ is a II$_1$ factor, we can find a unitary $u\in z_4Mz_4$ such that $pz_4=u\tilde{p}u^*$ and $qz_4=u\tilde{q}u^*$. Hence, after unitary conjugacy with $u$, we may assume that $pz_4=\tilde{p}$ and $qz_4=\tilde{q}$. 
 Then 
 \begin{equation}\label{E_A4}
 \mathrm{E}_{A_4\overline{\otimes}1}(pz_4)=\mathrm{E}_{A_4\overline{\otimes}1}(qz_4)=\frac{z_4}{2},
 \end{equation}
 and Lemma \ref{approx} gives a unitary $v_4\in z_4Mz_4$ such that
 \begin{equation}\label{v_4}\|v_4-z_4\|_2<\frac{\varepsilon}{2} \;\;\;\text{and}\;\;\;
 \{pz_4,v_4(qz_4)v_4^*\}''=A_4\overline{\otimes}\mathbb M_2(\mathbb C).\end{equation}

Since $z_3+z_4=z_1$, we have that $z_2+z_3+z_4=1$, and hence $v_\varepsilon\coloneq v_2+z_3+v_4\in M$ is a unitary. Furthermore, by using \eqref{v_2} and \eqref{v_4}, we get that $\|v_\varepsilon-1\|_2\leq \|v_2-z_2\|_2+\|v_4-z_4\|_2<\varepsilon$. 
We will prove that $v_\varepsilon$ satisfies (1)--(3) from Definition~\ref{condition C}(a).

First, let $p'\in\{p,1-p\}$ and $q'\in\{q,1-q\}$.  If two projections $r,s$ generate a diffuse von Neumann algebra, then $r'\wedge s'=0$, for every $r'\in \{r,1-r\}$ and $s'\in\{s,1-s\}$. Hence \eqref{v_2} and \eqref{v_4} imply that 
 $p'z_2\wedge v_2(q'z_2)v_2^*=0$ and $p'z_4\wedge v_4(q'z_4)v_4^*=0$. Thus, $p'\wedge v_\varepsilon q'v_\varepsilon^*=p'z_3\wedge q'z_3=p'q'z_3$. Since $z_3\leq p$, we derive that
 \begin{equation}\label{wedge1} (1-p)\wedge v_\varepsilon qv_\varepsilon^*=(1-p)\wedge v_\varepsilon(1-q)v_\varepsilon^*=0, \end{equation}
 \begin{equation}\label{wedge2} p\wedge v_\varepsilon qv_\varepsilon^*=qz_3\;\;\;\text{and}\;\;\;p\wedge v_\varepsilon(1-q)v_\varepsilon^*=(1-q)z_3.\end{equation} 
 Since  $\tau(pz_2)=\tau(qz_2)=\frac{\tau(z_2)}{2}$ by \eqref{expectation} and $z_1+z_2=1$, we get that $\tau(pz_1)+\tau(qz_1)-\tau(z_1)=\tau(p)+\tau(q)-1$ and $\tau(pz_1)-\tau(qz_1)=\tau(p)-\tau(q)$. Using \eqref{z_3} and \eqref{wedge2} we get that
 $$\tau(p\wedge v_\varepsilon qv_\varepsilon^*)=\tau(qz_3)=\tau(p)+\tau(q)-1\;\;\;\text{and}\;\;\;\tau(p\wedge v_\varepsilon(1-q)v_\varepsilon^*)=\tau((1-q)z_3)=\tau(p)-\tau(q).$$
 This shows that $p'$ and $v_\varepsilon q'v_\varepsilon^*$ are in general position, for every $p'\in \{p,1-p\}$ and $q'\in\{q,1-q\}$.
 
Second, let $N_\eps \coloneq M_1\vee v_\varepsilon M_2 v_\varepsilon^*=\{p,v_\varepsilon qv_\varepsilon^*\}''$. 
By \eqref{wedge2}, we deduce that $z_3=qz_3+(1-q)z_3\in N_\eps$.
Since $z_3\leq p$, by using \eqref{wedge1} and \eqref{wedge2} we get that $$N_\eps z_3=\mathbb Cqz_3\oplus \mathbb C(1-q)z_3=\bigoplus_{\substack{p'\in\{p,1-p\}\\ q'\in\{q,1-q\}}} \C (p'\wedge v_\varepsilon q'v_\varepsilon^*).$$
Since $N_\eps z_2=\{pz_2,v_2(qz_2)v_2^*\}''$ and $N_\eps z_4=\{pz_4,v_4(qz_4)v_4^*\}''$, by \eqref{v_2} and \eqref{v_4} we derive that $N_\eps z_2$ and $N_\eps z_4$ are diffuse. Thus, $N_\eps (z_2+z_4)$ is diffuse. Since $z_2+z_3+z_4=1$, this shows (2).

Finally, we check (3). By \eqref{v_2} and \eqref{v_4}, we have $B\coloneq N_\eps (z_2+z_4) = \big(A_2\overline{\otimes} \bM_2(\C)\big) \oplus \big(A_4\overline{\otimes} \bM_2(\C)\big)$, and the unit projection of $B$ is $r = z_2+z_4$. 
In particular, $\cZ(B) = (A_2\overline{\otimes} 1)\oplus (A_4\overline{\otimes} 1)$, and thus by \eqref{E_A2} and \eqref{E_A4} we get
\[
\text{E}_{\cZ(B)}(p) = \text{E}_{A_2\overline{\otimes} 1}(pz_2) + \text{E}_{A_4\overline{\otimes} 1}(pz_4) = \frac{z_2}{2} + \frac{z_4}{2}=\frac{r}{2}\in \C r,
\]
and similarly for $1-p$, $q$, and $1-q$. We conclude that all conditions from Definition~\ref{condition C}(a) hold, finishing the proof of the lemma.
\end{proof}

\subsection{Proof of Theorem \ref{abelian-perturbation}}
In \cite[Theorem 2.3]{Dy93}, Dykema showed that the free product of finite dimensional abelian von Neumann algebras $M_1=\mathbb Cp_1\oplus\cdots\oplus\mathbb Cp_m$ and $M_2=\mathbb Cq_1\oplus\cdots\oplus\mathbb Cq_n$ is given by $M_1*M_2=\big(\bigoplus_{\substack{1\leq i\leq m\\1\leq j\leq n}}\mathbb C(p_i\wedge q_j)\big)\oplus B$, for an interpolated free group factor $B$. The proof of Theorem \ref{abelian-perturbation} is inspired by the proof of \cite[Theorem 2.3]{Dy93}.
Paralleling \cite{Dy93}, we will prove Theorem \ref{abelian-perturbation} by induction, building up from the case $m=n=2$ treated in Proposition \ref{twoprojections}.

The following lemma is the main technical part of the proof. It is formulated in a general way, so that it can be used repeatedly in various steps of the induction process.

\begin{lemma}\label{induction2}
Let $M_1,M_2\in\mathcal C$. Let $p_0\in \mathcal Z(M_1)$ be a projection. 
Write $M_2=\mathbb Cq_1\oplus\cdots\oplus\mathbb Cq_n\oplus M_2q$, for projections $q_1,\ldots,q_n, q\in\mathcal Z(M_2)$ such that  $\sum_{j=1}^nq_j+q=1$ and $M_2q$ is diffuse.  
Assume that 
\begin{enumerate}\setcounter{enumi}{-1}
 \item $p_0\neq 0$, $q_j\neq 1$, for every $1\leq j\leq n$,
 \item the pair $(\mathbb Cp_0\oplus M_1(1-p_0),M_2)$  satisfies condition (C), and
 \item the pair $(M_1p_0,L)$ satisfies condition (B), for any $L\in\mathcal C$ with unit $p_0$ of the form $L=\mathbb Cr_1\oplus\cdots\oplus\mathbb Cr_n\oplus K$, with $r_j$ a projection of trace $\max(\tau(p_0)+\tau(q_j)-1,0)$, for all $1\leq j\leq n$, and $K$ diffuse.
\end{enumerate}

Then the pair $(M_1,M_2)$ satisfies condition (B).

\end{lemma}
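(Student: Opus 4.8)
The plan is to run the inductive scheme of Dykema \cite{Dy93}, using Lemma \ref{generation} to transfer structural information between $M_1\vee vM_2v^*$ and its compression by $p_0$: hypothesis (1) controls the coarsened algebra $N_1:=\mathbb Cp_0\oplus M_1(1-p_0)$ on the outside, while hypothesis (2) controls the corner $M_1p_0$ on the inside. So fix a tracial von Neumann algebra $(M,\tau)\supseteq M_1,M_2$ and $\varepsilon>0$; the case $p_0=1$ being immediate from hypothesis (2) (then $N_1=\mathbb C1$, $M_1p_0=M_1$, and $M_2$ has the form of the algebras $L$ occurring there), we may assume $p_0\ne1$. Let $p_1',\dots,p_m'$ be the minimal central projections of $M_1$ and $I=\{i:p_i'\le p_0\}$, so that $\{p_i':i\in I\}$ and $\{p_i':i\notin I\}$ are the minimal central projections of $M_1p_0$ and of $M_1(1-p_0)$, respectively. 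By hypothesis (1) there are a $\mathrm{II}_1$ factor $\widetilde M\supseteq M$ and a unitary $u\in\widetilde M$ with $\|u-1\|_2$ as small as we wish such that, with $\mathcal N:=N_1\vee uM_2u^*$, Definition \ref{condition C}(a) holds; in particular $\mathcal N=\bigl(\bigoplus_j\mathbb C(p_0\wedge uq_ju^*)\bigr)\oplus\bigl(\bigoplus_{i\notin I,j}\mathbb C(p_i'\wedge uq_ju^*)\bigr)\oplus B$ with $B$ diffuse of unit $r$, the displayed pairs are in general position, and $\mathrm{E}_{\mathcal Z(B)}(p_0),\mathrm{E}_{\mathcal Z(B)}(uq_ju^*)\in\mathbb Cr$.

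The crucial observation is that the central support $z:=z_{\mathcal N}(p_0)$ absorbs all of $B$. Set $p_0':=p_0-\sum_j p_0\wedge uq_ju^*$; this is a subprojection of $r$, i.e. $p_0'=p_0r\in B$, and Lemma \ref{complement} (applicable since $p_0\ne0,1$ and the $uq_ju^*$ are pairwise orthogonal, $\ne1$, and in general position with $p_0$) gives $p_0'\ne0$, hence $\tau(p_0')>0$. Together with $\mathrm{E}_{\mathcal Z(B)}(p_0')\in\mathbb Cr$ — which is then a strictly positive scalar multiple of $r$ — this forces the central support $z_B(p_0')$ to equal $r$. Consequently $z=\sum_j(p_0\wedge uq_ju^*)+r$, so $\mathcal N(1-z)=\bigoplus_{i\notin I,j}\mathbb C(p_i'\wedge uq_ju^*)$ is finite dimensional abelian, whereas $L:=p_0\mathcal Np_0=\bigl(\bigoplus_j\mathbb C(p_0\wedge uq_ju^*)\bigr)\oplus p_0'Bp_0'$ lies in $\mathcal C$, has unit $p_0$, and is exactly of the shape required by hypothesis (2): its $j$-th atom $p_0\wedge uq_ju^*$ has trace $\max(\tau(p_0)+\tau(q_j)-1,0)$ by general position, and $p_0'Bp_0'$ is a nonzero corner of the diffuse $B$, hence diffuse.

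Next I would apply hypothesis (2) to the pair $(M_1p_0,L)$, with the tracial von Neumann algebra $p_0\widetilde Mp_0$ (which contains both) as ambient: this gives, after a further enlargement of the ambient factor, a unitary $w_0$ with $\|w_0-p_0\|_2$ as small as desired and $M_1p_0\vee w_0Lw_0^*=\bigl(\bigoplus_{i\in I,j}\mathbb C(p_i'\wedge w_0(p_0\wedge uq_ju^*)w_0^*)\bigr)\oplus B'$, $B'$ a $\mathrm{II}_1$ factor, all pairs in general position. I would then pass to a $\mathrm{II}_1$ factor $\widetilde M_\varepsilon$ containing both $\widetilde M$ and the factor in which $w_0$ lives, amalgamated over $p_0\widetilde Mp_0$ (cf. the remarks after Definition \ref{condition (A)}), set $w:=w_0+(1-p_0)\in\mathcal U(\widetilde M_\varepsilon)$ — a unitary commuting with $N_1$ and with $p_0$ — and put $v_\varepsilon:=wu$, noting $\|v_\varepsilon-1\|_2\le\|u-1\|_2+\|w_0-p_0\|_2<\varepsilon$ for suitable choices.

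It remains to verify Definition \ref{condition B}(a) for $v_\varepsilon$. Write $\mathcal M_\varepsilon:=M_1\vee v_\varepsilon M_2v_\varepsilon^*$. Since $w$ commutes with $N_1$ and $p_0$, one has $N_1\vee v_\varepsilon M_2v_\varepsilon^*=w\mathcal Nw^*$ with compression $p_0(w\mathcal Nw^*)p_0=w_0Lw_0^*$, so Lemma \ref{generation} (with $p=p_0$) yields $z_{\mathcal M_\varepsilon}(p_0)=z$ and $p_0\mathcal M_\varepsilon p_0=M_1p_0\vee w_0Lw_0^*$. As $p_0\le z$, the $(1-z)$-part is $\mathcal M_\varepsilon(1-z)=w\,\mathcal N(1-z)\,w^*=\bigoplus_{i\notin I,j}\mathbb C(p_i'\wedge v_\varepsilon q_jv_\varepsilon^*)$, and the $p_0$-part is $\bigl(\bigoplus_{i\in I,j}\mathbb C(p_i'\wedge v_\varepsilon q_jv_\varepsilon^*)\bigr)\oplus B'$ (using $p_i'\le p_0$ and $p_0\wedge v_\varepsilon q_jv_\varepsilon^*=w_0(p_0\wedge uq_ju^*)w_0^*$). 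Since $M_1p_i'=\mathbb Cp_i'$ and $M_2q_j=\mathbb Cq_j$, every $p_i'\wedge v_\varepsilon q_jv_\varepsilon^*$ is automatically central in $\mathcal M_\varepsilon$; combining this with the standard $*$-isomorphism $\mathcal Z(\mathcal M_\varepsilon z)\cong\mathcal Z(p_0\mathcal M_\varepsilon p_0)$, $x\mapsto xp_0$ (valid because $z=z_{\mathcal M_\varepsilon}(p_0)$), gives $\mathcal M_\varepsilon z=\bigl(\bigoplus_{i\in I,j}\mathbb C(p_i'\wedge v_\varepsilon q_jv_\varepsilon^*)\bigr)\oplus\mathcal M_\varepsilon\tilde e$, where $\tilde e\in\mathcal Z(\mathcal M_\varepsilon)$ restricts on $p_0$ to the unit of $B'$; thus $\mathcal M_\varepsilon\tilde e$ is a factor with corner $B'$, hence a $\mathrm{II}_1$ factor. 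Adding the two parts gives $\mathcal M_\varepsilon=\bigl(\bigoplus_{i,j}\mathbb C(p_i'\wedge v_\varepsilon q_jv_\varepsilon^*)\bigr)\oplus\mathcal M_\varepsilon\tilde e$ with $\mathcal M_\varepsilon\tilde e$ a $\mathrm{II}_1$ factor, while general position of $p_i'$ and $v_\varepsilon q_jv_\varepsilon^*$ for all $i,j$ follows from Definition \ref{condition C}(a) (for $i\notin I$) and hypothesis (2) (for $i\in I$), conjugation being trace-preserving. Since $M$ was arbitrary, $(M_1,M_2)$ satisfies condition (B). I expect the main difficulties to be the extraction of $z_B(p_0')=r$ from the conditional-expectation clause of condition (C) and Lemma \ref{complement} — this is what forbids residual diffuse summands over $1-z$ and is the linchpin of the argument — and the bookkeeping of the ambient-algebra enlargements needed to realize $w_0$ as an honest unitary commuting with $N_1$, together with the check that the one-dimensional summands produced over $z$ and over $1-z$ reassemble into exactly the list $\{p_i'\wedge v_\varepsilon q_jv_\varepsilon^*\}$.
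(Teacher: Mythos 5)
Your proof is correct and follows essentially the same route as the paper's: apply condition (C) to the coarsened pair to land in a favorable first configuration, use the conditional-expectation clause (C)(a)(3) together with Lemma~\ref{complement} to force $z_{\mathcal N}(p_0)$ to absorb the entire diffuse summand, recognise that the compression $p_0\mathcal Np_0$ is an algebra of the admissible shape $L$, apply hypothesis (2) there, and reassemble via Lemma~\ref{generation} and the central-support isomorphism $\mathcal Z(\mathcal M_\varepsilon z)\cong\mathcal Z(p_0\mathcal M_\varepsilon p_0)$. The one cosmetic difference is that in the second step the paper invokes the index-swapping symmetry of condition (B) and conjugates $M_1p_0$ (extending the resulting unitary $w$ to $x=w+(1-p_0)$ and taking $v_\varepsilon=x^*y$), whereas you conjugate $L$ by $w_0$ directly and set $v_\varepsilon=wu$; by the symmetry remark these are equivalent, and your version arguably makes the identity $N_1\vee v_\varepsilon M_2v_\varepsilon^*=w\mathcal Nw^*$ and the $(1-z)$-part analysis slightly cleaner.
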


\begin{proof} 
If $p_0=1$, there is nothing to prove, so we may assume that $p_0\neq 1$.
Let $(P,\tau)$ be any tracial von Neumann algebra containing $M_1$ and $M_2$. To prove the conclusion, we will show that the triple $(M_1,M_2,P)$ satisfies condition (B).

Let $\varepsilon>0$ and write $M_1=\mathbb Cp_1\oplus\cdots\oplus\mathbb Cp_m\oplus M_1p$, for projections $p_1,\ldots,p_m,p\in\mathcal Z(M_1)$ such that $\sum_{i=1}^mp_i+p=1$ and $M_1p$ is diffuse. Since $p_0\in \mathcal Z(M_1)$, we may assume that $p_0=\sum_{i=1}^kp_i+p'$, for some $0\leq k\leq m$ and a projection $p'\in\mathcal Z(M_1)p$. Let $p''=p-p'$.
Note that  we have $\mathbb Cp_0\oplus M_1(1-p_0)=\mathbb Cp_0\oplus\mathbb Cp_{k+1}\oplus\cdots\oplus\mathbb Cp_m\oplus M_1p''$ and $M_1p''$ is diffuse.

Since the pair  $(\mathbb Cp_0\oplus M_1(1-p_0),M_2)$  satisfies condition (C) by assumption (1), so does the triple $(\mathbb Cp_0\oplus M_1(1-p_0),M_2,P)$. 
Thus, we can find a II$_1$ factor $Q\supset P$ and $y\in \mathcal U(Q)$ with $\|y-1\|_2<\frac{\varepsilon}{2}$ such that, denoting $\widetilde q_j=yq_jy^*$, $\widetilde M_2=yM_2y^*$ and $N=(\mathbb Cp_0\oplus M_1(1-p_0))\vee \widetilde M_2$, we have that
\begin{enumerate}

\item[(a)] $p_i$ and $\widetilde q_j$ are in general position, for every $i\in\{0\}\cup\{k+1,\ldots, m\}$ and $1\leq j\leq n$, 
\item[(b)] $N=\bigoplus_{\substack{i\in\{0\}\cup\{k+1,\ldots,m\}\\1\leq j\leq n}}\mathbb C(p_i\wedge \widetilde q_j)\oplus A$,
 where $A$ is diffuse, and 
 \item[(c)] $\mathrm{E}_{\mathcal Z(A)}(p_i), \mathrm{E}_{\mathcal Z(A)}(\widetilde q_j)\in\mathbb Cr$, for every $i\in\{0\}\cup\{k+1,\ldots,m\}$ and $1\leq j\leq n$, where \\ $r=1-\sum_{\substack{i\in\{0\}\cup\{k+1,\ldots,m\}\\ 1\leq j\leq n}}p_i\wedge \widetilde q_j$ is the support projection of $A$.
\end{enumerate}
Since $p_0\not\in\{0,1\}$ and $q_j\neq 1$, for every $1\leq j\leq n$,  (a) and Lemma \ref{complement} together imply that $p_0-\sum_{j=1}^np_0\wedge\widetilde q_j\neq 0$. Thus, $p_0r=p_0-\sum_{j=1}^np_0\wedge\widetilde q_j\neq 0$ and hence $\tau(p_0r)\neq 0$. Since by (c) we have that $\mathrm{E}_{\mathcal Z(A)}(p_0)=\frac{\tau(p_0r)}{\tau(r)}r$, (b) implies that
\begin{equation}\label{centralsupport}
z_N(p_0)=1-\sum_{\substack{k+1\leq i\leq m\\1\leq j\leq n}}p_i\wedge\widetilde q_j.
\end{equation}
Further, note that (b) implies that \begin{equation}\label{N_0} \text{$p_0Np_0=\bigoplus_{1\leq j\leq n}\mathbb C(p_0\wedge \widetilde q_j)\oplus B$, where $B$ is diffuse.}\end{equation} 

Next, (a) gives that $\tau(p_0\wedge \widetilde q_j)=\max(\tau(p_0)+\tau(q_j)-1,0)$, for every $1\leq j\leq n$. Assumption (2) therefore implies that 
 the pair $(M_1p_0,p_0Np_0)$, and thus the triple $(M_1p_0,p_0Np_0,p_0Qp_0)$, satisfies condition (B). Since $M_1p_0= \mathbb Cp_1\oplus\cdots\oplus\mathbb Cp_k\oplus M_1p'$ with $M_1p'$ is diffuse, using the symmetry of condition (B) with respect to swapping indices, we obtain  II$_1$ factors $C$ and $R\supset p_0Qp_0$, as well as $w\in \mathcal U(R)$ such that $\|w-1\|_2<\frac{\varepsilon}{2}$ and, denoting $\widetilde p_i=wp_iw^*$, for $1\leq i\leq k$, 
 \begin{enumerate}
 \item[(d)] $\widetilde p_i$ and $p_0\wedge\widetilde q_j$ are in general position, for every $1\leq i\leq k$ and $1\leq j\leq n$.
 \item[(e)] $wM_1p_0w^*\vee p_0Np_0=\bigoplus_{\substack{1\leq i\leq k\\ 1\leq j\leq n}}\mathbb C(\widetilde p_i\wedge (p_0\wedge \widetilde q_j))\oplus C$.  
 \end{enumerate}
Let $1\leq i\leq k$ and $1\leq j\leq n$. Since $\widetilde p_i\leq p_0$, we get that $\widetilde p_i\wedge (p_0\wedge \widetilde q_j)=\widetilde p_i\wedge \widetilde q_j$. Using (a) and (d), we get that if $\widetilde p_i\wedge\widetilde q_j\neq 0$, then
$\tau(\widetilde p_i\wedge \widetilde q_j)=\tau(\widetilde p_i)+\tau(p_0\wedge \widetilde q_j)-\tau(p_0)=\tau(\widetilde p_j)+\tau(\widetilde q_j)-1$. Hence, $\widetilde p_i$ and $\widetilde q_j$ are in general position. For $k+1\leq i\leq m$, let $\widetilde p_i=p_i$.
Then (a), (d), and (e) imply that
 \begin{enumerate}
 \item[(f)] $\widetilde p_i$ and $\widetilde q_j$ are in general position, for every $1\leq i\leq m$ and $1\leq j\leq n$.
 \item[(g)] $wM_1p_0w^*\vee p_0Np_0=\bigoplus_{\substack{1\leq i\leq k\\ 1\leq j\leq n}}\mathbb C(\widetilde p_i\wedge\widetilde q_j)\oplus C$. 
 \end{enumerate}

Since $R$ and $Q$ are II$_1$ factors and $R\supset p_0Qp_0$, we can find a II$_1$ factor $S$ such that $S\supset Q$ and $p_0Sp_0=R$. Let $x\in\mathcal U(S)$ be given by $x=w+(1-p_0)$.
Denote $\widetilde M_1=xM_1x^*$ and $M=\widetilde M_1\vee \widetilde M_2$. 
Since $\widetilde M_1=w(M_1p_0)w^*\oplus M_1(1-p_0)$, by Lemma \ref{generation}(1) we get that $p_0Mp_0$ is generated by $p_0[(\mathbb Cp_0\oplus\widetilde M_1(1-p_0)\vee\widetilde M_2]p_0=p_0[(\mathbb Cp_0\oplus M_1(1-p_0)\vee\widetilde M_2]p_0=p_0Np_0$ and $\widetilde M_1p_0=w(M_1p_0)w^*$. Thus, (g) rewrites as
 \begin{enumerate}
 \item [(h)] $p_0Mp_0=\bigoplus_{\substack{1\leq i\leq k\\ 1\leq j\leq n}}\mathbb C(\widetilde p_i\wedge\widetilde q_j)\oplus C$. 
 \end{enumerate}
Also, note that $\widetilde p_i=xp_ix^*$, for every $1\leq i\leq m$.
 By combining Lemma \ref{generation}(2) and \eqref{centralsupport}, we get that $z_M(p_0)=z_N(p_0)=1-\sum_{\substack{k+1\leq i\leq m\\1\leq j\leq n}}\widetilde p_i\wedge \widetilde q_j$. Denote by $s=p_0-\sum_{\substack{1\leq i\leq k\\1\leq j\leq n}}\widetilde p_i\wedge\widetilde q_j$ the support projection of $C$. Since $\widetilde p_i\wedge \widetilde q_j\in\mathcal Z(M)$, for every $1\leq i\leq m,1\leq j\leq n$, we get that $$z_M(s)=z_M(p_0)-\sum_{\substack{1\leq i\leq k\\1\leq j\leq n}}\widetilde p_i\wedge\widetilde q_j=1-\sum_{\substack{1\leq i\leq m\\1\leq j\leq n}}\widetilde p_i\wedge \widetilde q_j.$$ 
 Thus, $M(1-z_M(s))=\bigoplus_{\substack{1\leq i\leq m\\1\leq j\leq n}}\mathbb C(\widetilde p_i\wedge\widetilde q_j)$. On the other hand, since
 $C=sMs$ is a II$_1$ factor, we get that $Mz_M(s)$ is also a II$_1$ factor. Finally, recall that $\widetilde M_1=xM_1x^*=\bigoplus_{1\leq i\leq m}\mathbb C\widetilde p_i\oplus x(M_1p)x^*$ and $\widetilde M_2=yM_2y^*=\bigoplus_{1\leq j\leq n}\mathbb C\widetilde q_j\oplus y(M_2q)y^*$.  Since $\|y^*x-1\|_2\leq \|x-1\|_2+\|y-1\|_2<\varepsilon$, it follows that the triple $(M_1,M_2,P)$ satisfies condition (B), which finishes the proof.
  \end{proof}

The following corollaries to Lemma~\ref{induction2} establish various cases of the induction process, and will provide us with all the necessary ingredients for the proof of Theorem~\ref{abelian-perturbation}.

\begin{corollary}\label{diffuse parts}
Let $M_1,M_2\in\mathcal C$ be tracial von Neumann algebras. Assume that there are central projections $p_0\in M_1\setminus\{0,1\}$ and $q_0\in M_2\setminus\{0,1\}$ such that 
\begin{enumerate}
\item $M_1p_0$ is diffuse and $M_1(1-p_0)=\mathbb C(1-p_0)$. 
\item $M_2q_0$ is either equal to $\mathbb Cq_0$ or diffuse, and $M_2(1-q_0)=\mathbb C(1-q_0)$. 
\end{enumerate}
Then the pair $(M_1,M_2)$ satisfies condition (B).
\end{corollary}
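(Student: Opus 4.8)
The plan is to deduce the corollary from Lemma~\ref{induction2}, applied with the central projection $p_0\in\mathcal{Z}(M_1)$ itself. Since $M_1(1-p_0)=\mathbb{C}(1-p_0)$ by hypothesis, $\mathbb{C}p_0\oplus M_1(1-p_0)\cong\mathbb{C}^2$, so hypothesis (1) of Lemma~\ref{induction2} becomes the requirement that the pair $(\mathbb{C}^2,M_2)$ satisfy condition (C). Hypothesis (0) is immediate: $p_0\neq 0$ is assumed, and, writing $M_2=\mathbb{C}q_1\oplus\cdots\oplus\mathbb{C}q_n\oplus M_2q$, each atom $q_j$ belongs to $\{q_0,1-q_0\}$ and is therefore $\neq 1$ since $q_0\notin\{0,1\}$. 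For hypothesis (2) I would use that $M_1p_0$ is diffuse: when the first algebra of a pair is diffuse, condition (B) carries no atomic part and degenerates to the statement that the two algebras, after an arbitrarily small unitary perturbation inside a suitable enveloping II$_1$ factor, generate a II$_1$ factor --- i.e.\ to the pair being approximately factorial. By Corollary~\ref{one diffuse algebra}, $(M_1p_0,L)$ is approximately factorial whenever $L\neq\mathbb{C}p_0$, and this holds for every $L$ of the form allowed in hypothesis (2): if $L=\mathbb{C}p_0$, then either its diffuse summand $K$ equals $\mathbb{C}p_0$ (impossible, being diffuse) or some atom $r_j$ equals $p_0$ with all other summands zero, which forces $\tau(q_j)\in\{0,1\}$ or $\tau(p_0)=0$, all excluded. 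Thus hypotheses (0) and (2) hold unconditionally, and it remains only to check that $(\mathbb{C}^2,M_2)$ satisfies condition (C) in the two cases for $M_2$.

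In the first case $M_2q_0=\mathbb{C}q_0$, so $M_2=\mathbb{C}q_0\oplus\mathbb{C}(1-q_0)\cong\mathbb{C}^2$, and the required statement is precisely that $(\mathbb{C}^2,\mathbb{C}^2)$ satisfies condition (C), which is Proposition~\ref{twoprojections}. Hence Lemma~\ref{induction2} yields that $(M_1,M_2)$ satisfies condition (B), settling the corollary in this case.

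In the second case $M_2q_0$ is diffuse, so $M_2=\mathbb{C}(1-q_0)\oplus M_2q_0$ has exactly the structure handled in the first case, with the two algebras interchanged. My plan is to feed the first case back in: applying it to the pair $(M_2,\mathbb{C}^2)$ --- with $q_0$ as the central projection on $M_2$ and $p_0$ as the central projection on $\mathbb{C}^2$ --- shows that $(M_2,\mathbb{C}^2)$ satisfies condition (B). Condition (B) is symmetric under swapping the two algebras (conjugate the perturbing unitary by its own inverse; traces of meets of projections are conjugation-invariant), so $(\mathbb{C}^2,M_2)$ satisfies condition (B) as well; and condition (B) implies condition (C), because a II$_1$ factor $B$ has $\mathcal{Z}(B)=\mathbb{C}1_B$, making part (3) of Definition~\ref{condition C} automatic. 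This verifies hypothesis (1), so Lemma~\ref{induction2} again gives that $(M_1,M_2)$ satisfies condition (B), completing the proof.

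The only genuinely delicate point is this last reduction: the pair $(\mathbb{C}^2,M_2)$ with $M_2$ carrying a diffuse summand is not directly covered by Proposition~\ref{twoprojections}, and obtaining condition (C) for it requires routing through the first case together with the symmetry of condition (B) and the implication (B)$\Rightarrow$(C). Everything else is a direct unwinding of the definitions: since $M_1p_0$ (respectively $M_2q_0$) is diffuse, the condition-(B) requirements for the diffuse-against-$L$ pair collapse to approximate factoriality, which Corollary~\ref{one diffuse algebra} supplies, so the real content is entirely in Lemma~\ref{induction2} and Proposition~\ref{twoprojections}.
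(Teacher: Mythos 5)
Your proof is correct and follows essentially the same route as the paper: both cases are handled by Lemma~\ref{induction2} with hypothesis (1) supplied by Proposition~\ref{twoprojections} (first case) or by feeding the first case back in via the symmetry of condition (B) (second case), and hypothesis (2) supplied by Corollary~\ref{one diffuse algebra} after checking $L\neq\mathbb{C}p_0$. The only cosmetic difference is how you verify $L\neq\mathbb{C}p_0$ (you argue by contradiction on the structure of $L$, the paper observes directly that each $\tau(r_j)<\tau(p_0)$), but these are equivalent.
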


\begin{proof} 
Assume first that $M_2q_0=\mathbb Cq_0$, so $M_2=\mathbb Cq_0\oplus\mathbb C(1-q_0)$. Assume $L\in\mathcal C$ has unit $p_0$ and is of the form $L=\mathbb Cr_1\oplus\mathbb Cr_2\oplus K$, where $r_1,r_2$ are projections with $\tau(r_1)=\max(\tau(p_0)+\tau(q_0)-1,0)$ and $\tau(r_2)=\max(\tau(p_0)+\tau(1-q_0)-1,0)$ and $K$ is diffuse. Since $q_0\in M_2\setminus\{0,1\}$, we have that $\tau(r_1)<\tau(p_0)$ and $\tau(r_2)<\tau(p_0)$, hence $L\neq \mathbb Cp_0$.
Since  $M_1p_0$ is diffuse, Corollary~\ref{one diffuse algebra} implies that $(M_1p_0,L)$ satisfies condition (B).  
As the pair $(\mathbb Cp_0\oplus M_1(1-p_0),M_2)=(\mathbb Cp_0\oplus\mathbb C(1-p_0),\mathbb Cq_0\oplus\mathbb C(1-q_0))$ satisfies condition (C) by Proposition \ref{twoprojections}, Lemma \ref{induction2} implies that $(M_1,M_2)$ satisfies condition (B). 

Second, assume that $M_2q_0$ is diffuse. Then $(\mathbb Cp_0\oplus\mathbb C(1-p_0),M_2)$ satisfies condition (B) and hence condition (C) by the above.  Let $L\in\mathcal C$ with unit $p_0$ and of the form $L=\mathbb Cr\oplus K$, where $r$ is a projection with $\tau(r)=\max(\tau(p_0)+\tau(1-q_0)-1,0)$ and $K$ is diffuse. 
Since $q_0\in M_2\setminus\{0,1\}$, we have that $\tau(r)<\tau(p_0)$, hence $L\neq \mathbb Cp_0$.
Since  $M_1p_0$ is diffuse, Corollary~\ref{one diffuse algebra} again implies that $(M_1p_0,L)$ satisfies condition (B).  
 Lemma \ref{induction2} thus implies that $(M_1,M_2)$ satisfies condition (B).
\end{proof}

\begin{corollary}\label{2,2}
Let $M_1,M_2\in\mathcal C$. Assume that $M_1=\mathbb Cp_1\oplus\mathbb Cp_2\oplus M_1p$, for projections $p_1,p_2,p\in\mathcal Z(M_1)\setminus\{0\}$, with $M_1p$  diffuse. Assume that $\dim(M_2)=2$. Then $(M_1,M_2)$ satisfies condition (B).
\end{corollary}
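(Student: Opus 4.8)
The plan is to derive Corollary~\ref{2,2} from Lemma~\ref{induction2}, verifying the hypotheses of the latter with the help of Proposition~\ref{twoprojections}, Corollary~\ref{one diffuse algebra} and Corollary~\ref{diffuse parts}. Write $M_2=\mathbb Cq_1\oplus\mathbb Cq_2$ for nonzero projections $q_1,q_2$ with $q_1+q_2=1$, and set $p_0=p_2+p\in\mathcal Z(M_1)$, so that $1-p_0=p_1$, $\mathbb Cp_0\oplus M_1(1-p_0)=\mathbb Cp_0\oplus\mathbb Cp_1$ is $2$-dimensional, and $M_1p_0=\mathbb Cp_2\oplus M_1p\in\mathcal C$. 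I then check hypotheses (0), (1), (2) of Lemma~\ref{induction2} for $(M_1,M_2)$ with this $p_0$ and the above decomposition of $M_2$. Hypothesis (0) is immediate: $p_0\neq 0$ because $p\neq 0$, and $q_1,q_2\neq 1$ because $q_1+q_2=1$ with both $q_j$ nonzero. Hypothesis (1) holds by Proposition~\ref{twoprojections}, since $\mathbb Cp_0\oplus\mathbb Cp_1$ and $M_2$ are both $2$-dimensional (note $\tau(p_0)<1$, as $p_1\neq 0$).

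The substance of the argument is hypothesis (2): for every $L\in\mathcal C$ with unit $p_0$ of the form $L=\mathbb Cr_1\oplus\mathbb Cr_2\oplus K$ with $\tau(r_j)=\max(\tau(p_0)+\tau(q_j)-1,0)$ and $K$ diffuse, the pair $(\mathbb Cp_2\oplus M_1p,L)$ must satisfy condition (B). First I would record that the diffuse summand $K$ is necessarily nonzero: since $\tau(1_K)=\tau(p_0)-\tau(r_1)-\tau(r_2)$, checking the at most four possibilities for which of $r_1,r_2$ vanish shows that $\tau(1_K)$ equals one of $\tau(p_0),\tau(q_1),\tau(q_2),1-\tau(p_0)$, each of which is strictly positive (using $\tau(p_0)<1$ in the last case). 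Then I split into cases on how many of $r_1,r_2$ are nonzero. If both vanish, then $L=K$ is diffuse and $(\mathbb Cp_2\oplus M_1p,K)$ satisfies condition (B) by Corollary~\ref{one diffuse algebra} and the symmetry of condition (B): when the second algebra is diffuse, condition (B) is exactly approximate factoriality. If exactly one of them, say $r_j$, is nonzero, then $L\cong\mathbb Cr_j\oplus K$ with both summands nonzero, and $(\mathbb Cp_2\oplus M_1p,L)$ satisfies condition (B) by Corollary~\ref{diffuse parts}, applied with the central projections cutting onto the diffuse summands $M_1p$ and $K$ (legitimate since $p_2\neq 0$ and $r_j\neq 0$).

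The main case, and the main obstacle, is when $r_1$ and $r_2$ are both nonzero, so that $L=\mathbb Cr_1\oplus\mathbb Cr_2\oplus K$ with all three summands nonzero; here Corollary~\ref{diffuse parts} does not apply, as both algebras now carry two $1$-dimensional summands together with a diffuse part. My plan is to invoke Lemma~\ref{induction2} a second time, applied to $(L,\mathbb Cp_2\oplus M_1p)$ with the roles arranged so that $\mathbb Cp_2\oplus M_1p$---which has only one $1$-dimensional summand---is the factor that stays fixed; this is what makes the recursion terminate. Concretely, take $p_0'=p_0-r_1\in\mathcal Z(L)$, so that $1_L-p_0'=r_1$ and $Lp_0'=\mathbb Cr_2\oplus K$, and decompose $\mathbb Cp_2\oplus M_1p$ with its single $1$-dimensional summand $\mathbb Cp_2$ and diffuse part $M_1p$. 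Hypothesis (0) holds since $p_2\neq 1$ (as $M_1p\neq 0$); hypothesis (1) asks that $(\mathbb Cp_0'\oplus\mathbb Cr_1,\mathbb Cp_2\oplus M_1p)$ satisfy condition (C), which I would obtain from Corollary~\ref{diffuse parts} (yielding condition (B), hence (C)) together with the symmetry of condition (C). For hypothesis (2) of this inner application one meets only pairs $(\mathbb Cr_2\oplus K,\mathbb Cs_1\oplus K'')$ with $\tau(s_1)=\max(\tau(p_0')+\tau(p_2)-1,0)$ and $K''$ diffuse; as before $K''\neq 0$ (again because $p_2\neq 1$), so such a pair is handled by Corollary~\ref{one diffuse algebra} when $s_1=0$ and by Corollary~\ref{diffuse parts} when $s_1\neq 0$. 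Hence Lemma~\ref{induction2} gives that $(L,\mathbb Cp_2\oplus M_1p)$, and so $(\mathbb Cp_2\oplus M_1p,L)$, satisfies condition (B); this completes the verification of hypothesis (2) of the outer application, and Lemma~\ref{induction2} then yields that $(M_1,M_2)$ satisfies condition (B). The delicate point throughout is that a single use of Lemma~\ref{induction2} produces auxiliary algebras of essentially the same shape as those in Corollary~\ref{2,2}, so the argument only closes because at the second step one deliberately splits off the richer factor and lands among pairs already covered by Corollaries~\ref{one diffuse algebra} and~\ref{diffuse parts}.
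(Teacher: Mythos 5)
Your proof is correct, and it takes a genuinely different route from the paper. The paper takes $p_0 = p_1 + p_2$, so that $M_1p_0$ is two-dimensional, and then runs a trace-valued induction on $n$ such that $\tau(p_1)+\tau(p_2)\in(\tfrac{n-1}{n},\tfrac{n}{n+1}]$: cutting by $p_0$ strictly decreases the normalized sum of the traces of the one-dimensional summands, so that after finitely many steps the $r_2$-piece disappears and Corollary~\ref{diffuse parts} applies. You instead take $p_0 = p_2 + p$, so that $M_1p_0 = \mathbb Cp_2\oplus M_1p$ carries only one one-dimensional summand. The payoff is that in the inner application of Lemma~\ref{induction2} to $(L,\,\mathbb Cp_2\oplus M_1p)$, the auxiliary algebra $L'$ arising from hypothesis~(2) has at most one one-dimensional summand (since $\mathbb Cp_2\oplus M_1p$ contributes only $q_1' = p_2$), so the recursion terminates after exactly two levels without any quantitative induction. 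Your case analysis for nonvanishing of the diffuse parts $K$ and $K''$, the verification of hypotheses (0)--(2) at each level (using Proposition~\ref{twoprojections} for the outer hypothesis~(1), Corollary~\ref{diffuse parts} plus the symmetry and the implication (B)$\Rightarrow$(C) for the inner hypothesis~(1), and Corollaries~\ref{one diffuse algebra} and~\ref{diffuse parts} for the terminal pairs) are all sound. This gives a cleaner closure of the recursion at the price of a slightly more intricate two-stage decomposition; the paper's normalized-trace induction is arguably more transparent to state but requires tracking the partition of $(0,1)$ into intervals.
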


\begin{proof}
Write $M_2=\mathbb Cq_1\oplus\mathbb Cq_2$, for projections $q_1,q_2$ such that $q_1+q_2=1$ and $\tau(q_1)\geq\tau(q_2)$. Put $p_0=p_1+p_2$. Since $M_1(1-p_0)=M_1p$ is diffuse and $p\neq 0$, Corollary \ref{diffuse parts}, implies that \begin{equation}\label{Cp_0}\text{($\mathbb Cp_0\oplus M_1(1-p_0),M_2)$ satisfies condition (B).}\end{equation}

Note that $\tau(p_1)+\tau(p_2)\in (0,1)=\cup_{n\geq 1}(\frac{n-1}{n},\frac{n}{n+1}]$.
We will prove that $(M_1,M_2)$ satisfies condition~(B) by induction on $n\geq 1$ such that $\tau(p_1)+\tau(p_2)\in (\frac{n-1}{n},\frac{n}{n+1}]$.

To prove the base case $n=1$, assume that $\tau(p_1)+\tau(p_2)\in (0,\frac{1}{2}]$. Let $L\in\mathcal C$ with support projection $p_0$ be of the form $L=\mathbb Cr_1\oplus\mathbb Cr_2\oplus K$ where $\tau(r_j)=\max(\tau(p_0)+\tau(q_j)-1,0)$, for every $1\leq j\leq 2$, and $K$ is diffuse. Since $\tau(p_0)=\tau(p_1)+\tau(p_2)\leq \frac{1}{2}$ and $\tau(q_2)\leq\frac{1}{2}$, we get that $r_2=0$, hence $L=\mathbb Cr_1\oplus K$.  Moreover, since $q_1\neq 1$, we have that $r_1\neq p_0$. By applying Corollary \ref{diffuse parts}, we get that $(M_1p_0,L)=(\mathbb Cp_1\oplus\mathbb Cp_2,\mathbb Cr_1\oplus K)$ satisfies condition (B). By combining this fact with \eqref{Cp_0}, Lemma \ref{induction2} implies that $(M_1,M_2)$ satisfies condition (B).

Assume that $n\geq 2$ is such that the inductive hypothesis holds for every $1\leq k\leq n-1$. Assume that $\tau(p_1)+\tau(p_2)\in (\frac{n-1}{n},\frac{n}{n+1}]$. Let $L\in\mathcal C$ with support projection $p_0$ be of the form $L=\mathbb Cr_1\oplus\mathbb Cr_2\oplus K$ where $\tau(r_j)=\max(\tau(p_0)+\tau(q_j)-1,0)$, for every $1\leq j\leq 2$, and $K$ is diffuse. If $r_2=0$, then the same argument as in the case $n=1$ implies that $(M_1p_0,L)$ satisfies condition (B). If $r_2\neq 0$, then since $\tau(r_2)\leq \tau(r_1)$ we also have $r_1\neq 0$. Moreover, since $\tau(p_0)=\tau(p_1)+\tau(p_2)\leq \frac{n}{n+1}$, we get that
\begin{align*}
\frac{\tau(r_1)}{\tau(p_0)}+\frac{\tau(r_2)}{\tau(p_0)}=\frac{\tau(p_0)+\tau(q_1)-1}{\tau(p_0)}+\frac{\tau(p_0)+\tau(q_2)-1}{\tau(p_0)}=2-\frac{1}{\tau(p_0)}\leq \frac{n-1}{n}.
\end{align*}
Since $\dim(M_1p_0)=2$,  the inductive hypothesis implies that $(M_1p_0,L)$ satisfies condition (B). 
By combining this fact with \eqref{Cp_0}, Lemma \ref{induction2} implies that $(M_1,M_2)$ satisfies condition (B). By induction, this proves the conclusion.
\end{proof}

\begin{corollary}\label{m,2}
Let $M_1,M_2\in\mathcal C$. Assume that $M_1=\mathbb Cp_1\oplus\cdots\oplus\mathbb Cp_m\oplus M_1p$, for projections $p_1,\ldots,p_m,p\in\mathcal Z(M_1)\setminus\{0\}$, where $m\geq 0$ and $M_1p$ is diffuse. Assume that $\dim(M_2)=2$. Then $(M_1,M_2)$ satisfies condition (B).
\end{corollary}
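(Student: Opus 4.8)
The plan is to prove the statement by induction on the number $m\geq 0$ of one-dimensional central summands of $M_1$, using Lemma~\ref{induction2} for the inductive step and the corollaries already established in this section for the base cases. Throughout I would fix $M_2=\mathbb Cq_1\oplus\mathbb Cq_2$ with $q_1,q_2\neq 0$.

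For the base cases $m\in\{0,1,2\}$ I would argue as follows. When $m=0$, $M_1=M_1p$ is diffuse, and since $M_1$ has no summands $p_i$, requirements (1)--(2) of Definition~\ref{condition B}(a) reduce to asking that $M_1\vee v_\varepsilon M_2v_\varepsilon^*$ be a II$_1$ factor; as $M_2\neq\mathbb C1$, this is exactly the conclusion of Corollary~\ref{one diffuse algebra}. When $m=1$, I would write $M_1=\mathbb Cp_1\oplus M_1p$ and apply Corollary~\ref{diffuse parts} with $p_0=p$ and $q_0=q_1$. When $m=2$, the statement is precisely Corollary~\ref{2,2}.

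For the inductive step I would fix $m\geq 3$, assume the statement for all strictly smaller numbers of one-dimensional summands, take an arbitrary tracial von Neumann algebra $(P,\tau)$ containing $M_1$ and $M_2$, and set $p_0=p_1+p_2\in\mathcal Z(M_1)$, noting $p_0\neq 0$ and $p_0\neq 1$ (the latter since $p\neq 0$). I would then verify the three hypotheses of Lemma~\ref{induction2} for this $p_0$. Hypothesis (0) is immediate, since $p_0\neq 0$ and $q_1,q_2\neq 1$. For hypothesis (1), the key observation is that $\mathbb Cp_0\oplus M_1(1-p_0)=\mathbb Cp_0\oplus\mathbb Cp_3\oplus\cdots\oplus\mathbb Cp_m\oplus M_1p$ lies in $\mathcal C$ and has exactly $m-1\geq 2$ one-dimensional central summands together with the nonzero diffuse part $M_1p$, so the induction hypothesis (which for $m-1=2$ is Corollary~\ref{2,2}) gives that $(\mathbb Cp_0\oplus M_1(1-p_0),M_2)$ satisfies condition (B), hence condition (C). For hypothesis (2) I must show that $(M_1p_0,L)$ satisfies condition (B) for every $L=\mathbb Cr_1\oplus\mathbb Cr_2\oplus K\in\mathcal C$ with unit $p_0$, $K$ diffuse, and $\tau(r_j)=\max(\tau(p_0)+\tau(q_j)-1,0)$; here $M_1p_0=\mathbb Cp_1\oplus\mathbb Cp_2$ is two-dimensional. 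I would first check, using $\tau(q_1)+\tau(q_2)=1$ and $\tau(p_0)<1$, that $\tau(r_1)+\tau(r_2)<\tau(p_0)$, so that $K\neq 0$; then, invoking the symmetry of condition (B), I would deduce that $(M_1p_0,L)$ satisfies condition (B) from Corollary~\ref{one diffuse algebra} if $r_1=r_2=0$, from Corollary~\ref{diffuse parts} if exactly one of $r_1,r_2$ is nonzero, and from Corollary~\ref{2,2} if both $r_1,r_2$ are nonzero. With the three hypotheses verified, Lemma~\ref{induction2} yields that $(M_1,M_2)$ satisfies condition (B), closing the induction.

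The step I expect to require the most care is hypothesis (2) of Lemma~\ref{induction2}: one has to split into three cases according to how many of $r_1,r_2$ vanish and confirm in each that the invoked corollary genuinely applies---in particular that $K$ is a nonzero diffuse algebra and that the one-dimensional summands fed into Corollaries~\ref{diffuse parts} and~\ref{2,2} are nonzero. The remainder is routine bookkeeping about the central decomposition of $M_1$ and about which of the previously proven statements to cite.
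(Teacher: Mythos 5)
Your proposal is correct and follows essentially the same route as the paper: base cases $m\in\{0,1,2\}$ handled by Corollaries~\ref{one diffuse algebra}, \ref{diffuse parts}, and~\ref{2,2}, then induction on $m\geq 3$ via Lemma~\ref{induction2} with $p_0=p_1+p_2$, reducing hypothesis (2) by symmetry of condition (B) to the base cases. The only cosmetic differences are that you verify $K\neq 0$ by direct trace estimates rather than citing the proof of Lemma~\ref{complement}, and you make explicit the three-way case analysis that the paper states more compactly.
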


\begin{proof} Write $M_2=\mathbb Cq_1\oplus\mathbb Cq_2$, for projections $q_1,q_2$. 
If $m=0$, then $M_1$ is diffuse and the conclusion follows from Corollary \ref{one diffuse algebra}. 
If $m=1$, the conclusion is a consequence of Corollary \ref{diffuse parts}. 

We will prove the conclusion for $m\geq 2$ by induction on $m$. If $m=2$, the 
conclusion follows from Corollary \ref{2,2}.
If $m\geq 3$, put $p_0=p_1+p_2$. Since $\mathbb Cp_0\oplus M_1(1-p_0)=\mathbb Cp_0\oplus\mathbb Cp_2\oplus\cdots\oplus\mathbb Cp_m\oplus M_1p$,  $\mathbb (\mathbb Cp_0\oplus M_1(1-p_0),M_2)$ satisfies condition (B) by the inductive hypothesis. Let $L\in\mathcal C$ with support projection $p_0$ be of the form $L=\mathbb Cr_1\oplus\mathbb Cr_2\oplus K$ where $\tau(r_j)=\max(\tau(p_0)+\tau(q_j)-1,0)$, for every $1\leq j\leq 2$, and $K$ is diffuse. 
The proof of Lemma~\ref{complement} implies that $r_1+r_2\neq p_0$.
Since $\dim(M_1p_0)=2$ and the  conclusion holds if $m\in\{0,1,2\}$, we thus get by symmetry of condition (B) with respect to swapping indices that $(M_1p_0,L)$ satisfies condition (B). By Lemma \ref{induction2} we conclude that $(M_1,M_2)$ satisfies condition (B), which finishes the proof of the corollary.
\end{proof}

\begin{corollary}\label{m,n with diffuse}
    Let $M_1,M_2\in\mathcal C$. Assume that $M_1=\mathbb Cp_1\oplus\cdots\oplus \mathbb Cp_m\oplus M_1p$, for projections $p_1,\ldots,p_m,p\in\mathcal Z(M_1)\setminus\{0\}$, where $m\geq 0$ and $M_1p$ is diffuse. Assume $M_2\neq \C 1$. Then $(M_1,M_2)$ satisfies condition (B).
\end{corollary}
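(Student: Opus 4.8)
The plan is to induct on the number $n$ of one-dimensional central direct summands of $M_2$, writing $M_2=\mathbb Cq_1\oplus\cdots\oplus\mathbb Cq_n\oplus M_2q$ with $q_1,\dots,q_n,q\in\mathcal Z(M_2)$ and $M_2q$ diffuse. The algebra $M_1=\mathbb Cp_1\oplus\cdots\oplus\mathbb Cp_m\oplus M_1p$ with $p\neq 0$ plays no role in the induction beyond the standing hypothesis that its diffuse part is nonzero (in particular $M_1\neq\mathbb C1$, and $p_j\leq 1-p<1$ for every $j$). Throughout I will use that condition (B) is symmetric in its two arguments, which is immediate from the symmetry of the formulas defining it, and that condition (B) implies condition (C): indeed, a II$_1$ factor $B$ is diffuse and satisfies $\mathcal Z(B)=\mathbb Cr$, so $\mathrm{E}_{\mathcal Z(B)}(\cdot)\in\mathbb Cr$ automatically.

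Two cases will be treated directly, and will also serve to terminate the induction. If $n=0$, then $M_2$ is diffuse, so it has no one-dimensional central summand and the index set in condition (B)(2) is empty; hence condition (B) for $(M_1,M_2)$ reduces to approximate factoriality, which holds by Corollary~\ref{one diffuse algebra} applied to $(M_2,M_1)$ (using $M_2$ diffuse and $M_1\neq\mathbb C1$) together with the symmetry of approximate factoriality noted after Definition~\ref{condition (A)}. If $\dim(M_2)=2$, i.e.\ $M_2\cong\mathbb C^2$, then $(M_1,M_2)$ satisfies (B) directly by Corollary~\ref{m,2}, since $M_1$ has exactly the required form with a nonzero diffuse part.

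For the inductive step, assume $n\geq 1$, $\dim(M_2)\geq 3$, and that the statement holds for all smaller values of $n$. Put $q_0=1-q_1\in\mathcal Z(M_2)$; then $q_0\neq 0$ (as $M_2\neq\mathbb C1$), the algebra $M_2q_0=\mathbb Cq_2\oplus\cdots\oplus\mathbb Cq_n\oplus M_2q$ has $n-1$ one-dimensional summands, and $\mathbb Cq_0\oplus M_2(1-q_0)=\mathbb Cq_0\oplus\mathbb Cq_1\cong\mathbb C^2$. I will apply Lemma~\ref{induction2} with the roles of the two algebras exchanged, that is, to the pair $(M_2,M_1)$ with $q_0\in\mathcal Z(M_2)$ in the role of ``$p_0$''; its conclusion then gives that $(M_2,M_1)$, hence $(M_1,M_2)$, satisfies condition (B). Hypothesis (0) of Lemma~\ref{induction2} holds since $q_0\neq 0$ and the one-dimensional summands $p_j$ of $M_1$ are all $\neq 1$. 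Hypothesis (1) asks that $(\mathbb Cq_0\oplus M_2(1-q_0),M_1)=(\mathbb C^2,M_1)$ satisfy condition (C); by Corollary~\ref{m,2} the pair $(M_1,\mathbb C^2)$ satisfies (B), so by symmetry $(\mathbb C^2,M_1)$ satisfies (B), hence (C). Hypothesis (2) asks that $(M_2q_0,L)$ satisfy (B) for every $L\in\mathcal C$ with unit $q_0$ of the form $L=\mathbb Cr_1\oplus\cdots\oplus\mathbb Cr_m\oplus K$ with $K$ diffuse; since $\dim(M_2)\geq 3$ we have $\dim(M_2q_0)=\dim(M_2)-1\geq 2$, so $M_2q_0\neq\mathbb Cq_0$, while $L$ has nonzero diffuse part $K$, and the inductive hypothesis applied to $(L,M_2q_0)$ — a pair whose first entry has nonzero diffuse part and whose second entry carries $n-1$ one-dimensional summands and is $\neq\mathbb C1$ — gives that it satisfies (B), so symmetry yields the same for $(M_2q_0,L)$. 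Thus all hypotheses of Lemma~\ref{induction2} are met, completing the induction.

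The one point requiring care is the verification that the degenerate possibility $M_2q_0=\mathbb Cq_0$ is ruled out in the inductive step: were it to occur, hypothesis (2) of Lemma~\ref{induction2} would require $(\mathbb Cq_0,L)$ to satisfy (B), which forces the diffuse part $K$ of $L$ to be a II$_1$ factor, something not assumed. Since $M_2q_0=\mathbb Cq_0$ happens only when $\dim(M_2)=2$, this is exactly why that case must be split off at the start and handled by Corollary~\ref{m,2}. Everything else is routine bookkeeping, matching up the hypotheses of Lemma~\ref{induction2} through the symmetry of condition (B) and the implication (B)$\Rightarrow$(C).
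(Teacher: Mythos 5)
Your proof is correct, and it takes a genuinely different route from the paper's. Both arguments induct and use Lemma~\ref{induction2} applied with roles swapped, but you and the paper choose different central projections $q_0\in\mathcal Z(M_2)$, which dualizes the way the two hypotheses of Lemma~\ref{induction2} get discharged. The paper takes $q_0 = q_1+q_2$, so $M_2q_0\cong\C^2$ and $\C q_0\oplus M_2(1-q_0)$ has one fewer piece; it therefore uses the inductive hypothesis for hypothesis~(1) of Lemma~\ref{induction2} and Corollary~\ref{m,2} for hypothesis~(2). You take $q_0 = 1-q_1$, so $\C q_0\oplus M_2(1-q_0)\cong\C^2$ and $M_2q_0$ has one fewer $1$-dimensional summand; you thus use Corollary~\ref{m,2} for hypothesis~(1) and the inductive hypothesis for hypothesis~(2). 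A pleasant byproduct of your choice is that the case $n=1$ (with $q\neq 0$) falls out of the inductive step rather than requiring separate treatment, whereas the paper handles $n=0$, $n=1$, and $n=2,q=0$ all as base cases before inducting on the total number of pieces. Your approach is thus somewhat more streamlined.

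One small point you glossed over: in the inductive step you need the diffuse part $K$ of $L=\C r_1\oplus\cdots\oplus\C r_m\oplus K$ to be nonzero, so that $L$ falls under the corollary's standing hypothesis that the first algebra has a nonzero diffuse direct summand. This is true, but requires a remark: since the $p_j$ are pairwise orthogonal with $\sum_j\tau(p_j)\leq 1-\tau(p)<1$ and $\tau(q_0)<1$, the trace computation in the proof of Lemma~\ref{complement} gives $\tau(q_0)-\sum_j\max(\tau(q_0)+\tau(p_j)-1,0)>0$, hence $\tau(K)>0$. (The paper makes essentially this observation in the proof of Corollary~\ref{m,2}, citing the proof of Lemma~\ref{complement}.) Once this is added, the argument is complete.
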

\begin{proof}
Write $M_2=\mathbb Cq_1\oplus\cdots\oplus\mathbb Cq_n\oplus M_2q$, for some $n\geq 0$, with $q_1, \ldots, q_n$ non-zero and $M_2q$ diffuse. We note that, if $n=0$, then $M_2=M_2q$ is diffuse, and the result follows from Corollary~\ref{one diffuse algebra}. Thus, we may assume $n\geq 1$. 

First, assume $n=1$, and thus necessarily $q\neq 0$. Corollary~\ref{m,2} implies that $(M_1, M_2(1-q) \oplus \C q) = (M_1, \C (1-q) \oplus \C q)$ satisfies condition (B). Let $L\in\cC$ with support projection $q$ be of the form $L = \C r_1\oplus\cdots\oplus \C r_m \oplus K$ with $r_i$ a projection of trace $\max(\tau(p_i) + \tau(q) - 1, 0)$, for all $1\leq i\leq m$, and $K$ diffuse. Since $\tau(r_i)<\tau(q)$, for all $1\leq i\leq m$, we have that $L\neq \mathbb Cq$.
Since $M_2q$ is diffuse, Corollary~\ref{one diffuse algebra} implies that $(L,M_2q)$ satisfies condition (B). Hence, Lemma~\ref{induction2} implies that $(M_1,M_2)$ satisfies condition (B). 

Next, if $n=2$ and $q=0$, then the conclusion follows from Corollary~\ref{m,2}.

We proceed by induction on $k$, where $k$ is the number of non-zero projections appearing in the description of $M_2$ (i.e., $k=n$ if $q=0$ and $k=n+1$ otherwise). The base case $k=2$ was proved above. Assume the conclusion holds for some $k\geq 2$, and consider the case $k+1$. Since $k+1\geq 3$, we necessarily have $n\geq 2$. Define $q_0 = q_1+q_2$. Then by the induction hypothesis, $(M_1, \C q_0\oplus M_2(1-q_0))$ satisfies condition (B). Moreover, since $M_2q_0$ is 2-dimensional, Corollary~\ref{m,2} implies that $(L,M_2q_0)$ satisfies condition (B) for every $L\in \cC$ with support projection $q_0$ of the form $L = \C r_1\oplus\cdots\oplus \C r_m \oplus K$ with $r_i$ a projection of trace $\max(\tau(p_i) + \tau(q_0) - 1, 0)$, for all $1\leq i\leq m$, and $K$ diffuse. Hence, Lemma~\ref{induction2} implies that $(M_1,M_2)$ satisfies condition (B). This finishes the inductive step and therefore the proof of the corollary.
\end{proof}

We now have all the ingredients required for the proof of Theorem~\ref{abelian-perturbation}:

\begin{proof}[Proof of Theorem \ref{abelian-perturbation}]
When one of $M_1$ or $M_2$ has a non-zero diffuse direct summand, the result follows from Corollary~\ref{m,n with diffuse}. Hence we can assume that $M_1=\mathbb Cp_1\oplus\cdots\oplus\mathbb Cp_m$ and $M_2=\mathbb Cq_1\oplus\cdots\oplus\mathbb Cq_n$, for non-zero projections $p_1,\ldots,p_m,q_1,\ldots,q_n$, where $m,n\geq 2$ and $m+n\geq 5$. 

We will prove the conclusion by induction on $m+n$. Assume without loss of generality that $m\geq 3$ and write $p_0=p_1+p_2$.  Let $L\in\mathcal C$ with support projection $p_0$ be of the form $L=\mathbb Cr_1\oplus\cdots\oplus\mathbb Cr_n \oplus K$ where $\tau(r_j)=\max(\tau(p_0)+\tau(q_j)-1,0)$, for every $1\leq j\leq n$, and $K$ is diffuse. 
The proof of Lemma~\ref{complement} implies that $r_1+\cdots+r_n\neq p_0$.
Since $\mathrm{dim}(M_1p_0)=2$, Corollary \ref{m,2} implies that $(M_1p_0,L)$ satisfies condition (B).

For the base case, $m+n=5$, hence $m=3$ and $n=2$. Thus $\dim(\mathbb Cp_0\oplus M_1(1-p_0))=\dim(M_2)=2$ and therefore $(\mathbb Cp_0\oplus M_1(1-p_0),M_2)$ satisfies condition (C) by Proposition \ref{twoprojections}. 
 Lemma \ref{induction2} thus implies that $(M_1,M_2)$ satisfies condition (B).
 
Now assume that the conclusion holds if $m+n=k$, for some $k\geq 5$.  If $m+n=k+1$, then the inductive hypothesis implies that $(\mathbb Cp_0\oplus M_1(1-p_0),M_2)$ satisfies condition (B). 
Hence Lemma \ref{induction2} again implies that $(M_1,M_2)$ satisfies condition (B). This proves the inductive step and finishes the proof.
\end{proof}

\section{The Poulsen simplex as the trace simplex of a free product}\label{sec:Poulsen}

In this section we will prove Theorem~\ref{Poulsen}, thereby characterising exactly when the trace simplex of a 
free product $C^*$-algebra is a Poulsen simplex. 

Let $A_1, A_2$ be unital, separable $C^*$-algebras and put $A=A_1*A_2$. 
Any trace $\varphi\in\mathrm{T}(A)$ gives rise to a triple $(M_1,M_2,M)$, where $\pi:A\rightarrow M$ is the tracial representation associated to $\varphi$ and $M_i=\pi(A_i)''\subset M$, for $i\in\{1,2\}$.

\begin{definition}
    We say that a trace $\varphi\in\mathrm{T}(A)$ is {\it approximately factorial} if the corresponding triple $(M_1,M_2,M)$ is approximately factorial, as defined in Definition~\ref{condition (A)}. 
We denote by $\mathrm{T}_{\mathrm{apf}}(A)$ the collection of approximately factorial traces. 
\end{definition}

We start with the following observation. Recall that $\mathrm{T}_\infty(A) = \mathrm{T}(A)\setminus \mathrm{T}_{\mathrm{fin}}(A)$ denotes the set of traces whose GNS von Neumann algebra is infinite dimensional.

\begin{proposition}\label{prop:property A traces can be approxiamted}
$\mathrm{T}_{\mathrm{apf}}(A)\subset  \overline{\partial_{\mathrm{e}}\mathrm{T}(A)\cap \mathrm{T}_{\infty}(A)} \subset \overline{\partial_{\mathrm{e}}\mathrm{T}(A)}$. 
\end{proposition}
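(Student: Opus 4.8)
The plan is to establish the nontrivial inclusion $\mathrm{T}_{\mathrm{apf}}(A)\subset\overline{\partial_{\mathrm{e}}\mathrm{T}(A)\cap\mathrm{T}_{\infty}(A)}$; the inclusion $\overline{\partial_{\mathrm{e}}\mathrm{T}(A)\cap\mathrm{T}_{\infty}(A)}\subset\overline{\partial_{\mathrm{e}}\mathrm{T}(A)}$ is immediate from monotonicity of the closure. So I would fix $\varphi\in\mathrm{T}_{\mathrm{apf}}(A)$, with associated tracial representation $\pi\colon A\to M$, $\pi(A)''=M$, $\tau\circ\pi=\varphi$, and set $M_i=\pi(A_i)''$; by definition the triple $(M_1,M_2,M)$ is approximately factorial. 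For each $\varepsilon>0$ I would apply Definition~\ref{condition (A)} to obtain a II$_1$ factor $M_\varepsilon\supseteq M$ and $v_\varepsilon\in\mathcal U(M_\varepsilon)$ with $\|v_\varepsilon-1\|_2<\varepsilon$ such that $M_1\vee v_\varepsilon M_2 v_\varepsilon^*$ is a II$_1$ factor.

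Next I would build a new tracial representation of $A$ from this data. The maps $A_1\to M_\varepsilon$, $a_1\mapsto\pi(a_1)$, and $A_2\to M_\varepsilon$, $a_2\mapsto v_\varepsilon\pi(a_2)v_\varepsilon^*$, are unital $*$-homomorphisms (the second being $\pi|_{A_2}$ followed by conjugation, a $*$-automorphism of $M_\varepsilon$), so by the universal property of the full free product they extend to a unital $*$-homomorphism $\widetilde\pi_\varepsilon\colon A\to M_\varepsilon$ with $\widetilde\pi_\varepsilon(A)''=M_1\vee v_\varepsilon M_2v_\varepsilon^*$. Setting $\varphi_\varepsilon=\tau_{M_\varepsilon}\circ\widetilde\pi_\varepsilon$, the GNS von Neumann algebra of $\varphi_\varepsilon$ is $\widetilde\pi_\varepsilon(A)''$, a II$_1$ factor, whence $\varphi_\varepsilon\in\partial_{\mathrm{e}}\mathrm{T}(A)\cap\mathrm{T}_{\infty}(A)$.

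It then remains to check $\varphi_\varepsilon\to\varphi$ in the weak$^*$-topology as $\varepsilon\to0$, which finishes the proof. Since $\widetilde\pi_\varepsilon$ and $\pi$ agree on $A_1$, and for $a_2\in A_2$ one has the crude bound $\|\widetilde\pi_\varepsilon(a_2)-\pi(a_2)\|_2=\|v_\varepsilon\pi(a_2)v_\varepsilon^*-\pi(a_2)\|_2\le 2\|a_2\|\,\|v_\varepsilon-1\|_2$, a standard telescoping estimate on a reduced word $a=b_1c_1\cdots b_mc_m$ (using that $*$-homomorphisms are contractive and that conjugation by $v_\varepsilon$ preserves operator norm) yields $\|\widetilde\pi_\varepsilon(a)-\pi(a)\|_2\le 2m\,\|v_\varepsilon-1\|_2\prod_i\|b_i\|\prod_i\|c_i\|\to0$. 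Using that $\tau_{M_\varepsilon}$ restricts to $\tau$ on $M$ (subalgebra inclusions being trace-preserving), this gives $|\varphi_\varepsilon(a)-\varphi(a)|\le\|\widetilde\pi_\varepsilon(a)-\pi(a)\|_2\to0$ for all such $a$; since these span a dense subspace of $A$ and all $\varphi_\varepsilon$ are states, an $\varepsilon/3$-argument upgrades this to $\varphi_\varepsilon\to\varphi$ weak$^*$. Alternatively, after arranging a single ambient II$_1$ factor independent of $\varepsilon$ (possible by a remark following Definition~\ref{condition (A)}), one may simply quote the implication $(2)\Rightarrow(1)$ of Lemma~\ref{convergence}. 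I do not foresee a serious obstacle here: the only points requiring any care are the well-definedness of $\widetilde\pi_\varepsilon$ via the universal property and the $2$-norm control of word images, both routine.
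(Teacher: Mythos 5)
Your proof is correct and follows essentially the same route as the paper's: take $\varphi\in\mathrm{T}_{\mathrm{apf}}(A)$, use Definition~\ref{condition (A)} to produce unitaries $v_\varepsilon$ close to $1$ in ambient II$_1$ factors, form the perturbed representations $a_1\mapsto\pi(a_1)$, $a_2\mapsto v_\varepsilon\pi(a_2)v_\varepsilon^*$, note the generated von Neumann algebra is a II$_1$ factor, and verify weak$^*$-convergence via a $\|\cdot\|_2$-estimate on words. The only difference is that you spell out the telescoping estimate which the paper passes over with ``clearly''; your observation that a single ambient factor is not actually needed (since the $(2)\Rightarrow(1)$ direction of Lemma~\ref{convergence} is just Cauchy--Schwarz) is also correct.
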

\begin{proof}
    
Let $\varphi\in\mathrm{T}_{\mathrm{apf}}(A)$ and $\pi:A\rightarrow M$ be the tracial representation associated to $\varphi$. Denoting $M_1=\pi(A_1)''$ and $M_2=\pi(A_2)''$, we have by assumption that the triple $(M_1,M_2,M)$ is approximately factorial. 
As a result, for any $n\in\mathbb N$, we can find a II$_1$ factor $P_n$ and $v_n\in\mathcal U(P_n)$ such that $M\subset P_n$,  $\|v_n-1\|_2<\frac{1}{n}$, and $M_1\vee v_nM_2v_n^*$ is a II$_1$ factor. Define the $*$-homomorphism $\pi_n:A\rightarrow P_n$ by letting $\pi_n(a_1)=\pi(a_1)$, for every $a_1\in A_1$, and $\pi_n(a_2)=v_n\pi(a_2)v_n^*$, for every $a_2\in A_2$. Then clearly $\|\pi_n(a)-\pi(a)\|_2\rightarrow 0$, for every $a\in A$. Thus, if $\varphi_n\coloneq \tau\circ\pi_n\in\mathrm{T}(A)$, then $\varphi_n\rightarrow\varphi$. Since $\pi_n(A)''=M_1\vee v_nM_1v_n^*$ is a II$_1$ factor, we get that $\varphi_n\in \partial_{\mathrm{e}}\mathrm{T}(A)\cap \mathrm{T}_{\infty}(A)$, for every $n\in\mathbb N$. This finishes the proof. 
\end{proof}

In view of Proposition \ref{prop:property A traces can be approxiamted}, in order to show that $\mathrm{T}(A)$ is a Poulsen simplex, it suffices to show that $\mathrm{T}_{\mathrm{apf}}(A)$ is dense inside $\mathrm{T}(A)$.
For this, it will be useful to consider the class of traces whose corresponding triple satisfies the conditions of Theorem~\ref{perturbation}.

\begin{definition}
Let $A_1, A_2$ be unital, separable $C^*$-algebras and put $A=A_1*A_2$. We define $\mathrm{T}_{\mathrm{l}}(A)$  (${\mathrm{l}}$ standing for large) to be the set of traces $\varphi\in\mathrm{T}(A)$ whose corresponding triple $(M_1,M_2,M)$ satisfies $M_1,M_2\neq \mathbb C1$, $\mathrm{dim}(M_1)+\mathrm{dim}(M_2)\geq 5$, and $\mathrm{e}(M_1)+\mathrm{e}(M_2)\leq 1$.
 \end{definition}

We note that Theorem \ref{perturbation} implies that $\mathrm{T}_{\mathrm{l}}(A)\subset \mathrm{T}_{\mathrm{apf}}(A)$.

\subsection{Perturbation of traces}

The main result of this section is as follows:

\begin{lemma}\label{2}
Let $A=A_1\ast A_2$ be the free product of two unital, separable $C^*$-algebras. Assume that $\mathrm{T}(A_i)$ is non-empty and does not consist of a single $1$-dimensional trace, for $i=1,2$. Assume that 
if $A_1$ (resp. $A_2$) has an isolated extreme 1-dimensional trace, then $A_2$ (resp. $A_1$) does not have an isolated extreme finite dimensional trace.
Then $\mathrm{T}(A)\subset\overline{\mathrm{T}_{\mathrm l}(A)}.$
\end{lemma}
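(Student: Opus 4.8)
The plan is to show that every $\varphi\in\mathrm T(A)$ lies in the weak$^*$-closure of $\mathrm T_{\mathrm l}(A)$; as $\mathrm T(A)$ carries the weak$^*$-topology, it is enough to place in every basic weak$^*$-neighborhood of $\varphi$ a trace whose GNS triple $(M_1,M_2,M)$ satisfies $M_1\neq\mathbb C1$, $M_2\neq\mathbb C1$, $\dim(M_1)+\dim(M_2)\geq 5$ and $\mathrm e(M_1)+\mathrm e(M_2)\leq 1$. The engine is a \emph{block-replacement} perturbation: given $\varphi$ with GNS triple $(M_1,M_2,M)$ and GNS map $\pi$, a projection $z\in\mathcal Z(M_i)$ with $M_iz\cong\mathbb M_k(\mathbb C)$ --- equivalently, the finite-dimensional extreme trace $\psi_z$ of $A_i$ carried by this summand occurs atomically in $\varphi|_{A_i}$ with weight $\tau(z)$ --- and traces $\theta_1,\dots,\theta_N$ on $A_i$ (each $\|\cdot\|$-close to $\psi_z$), one builds a trace $\widetilde\varphi$ on $A$ whose GNS triple is obtained from $(M_1,M_2,M)$ by deleting the summand $M_iz$ and inserting, on sub-projections $z^{(1)},\dots,z^{(N)}$ of $z$ of trace $\tau(z)/N$, the (suitably amplified) GNS representations $\rho_{\theta_1},\dots,\rho_{\theta_N}$, while the action of $A_{3-i}$ and the algebra $M_{3-i}$ are unchanged. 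Concretely, with $D=\mathbb Cz\oplus\mathbb C(1-z)\subseteq M$ one sets $\widetilde M=M*_DB$ for a tracial algebra $B$ with unit $z^{(1)}+\dots+z^{(N)}+(1-z)$ whose $z^{(j)}$-corner carries $\rho_{\theta_j}$, and defines $\widetilde\pi\colon A\to\widetilde M$ by $\widetilde\pi|_{A_{3-i}}=\pi|_{A_{3-i}}$, $\widetilde\pi|_{A_i}=\pi|_{A_i}$ on $1-z$, and $\widetilde\pi|_{A_i}=\rho_{\theta_j}$ on $z^{(j)}$. Using $|\widetilde\varphi(a)-\varphi(a)|\leq\|\widetilde\pi(a)-\pi(a)\|_2$, a telescoping estimate, and $\|\rho_{\theta_j}(b)-\psi_z(b)1\|_2\to 0$ as $\theta_j\to\psi_z$ weak$^*$, one checks $\widetilde\varphi$ lies in any prescribed weak$^*$-neighborhood of $\varphi$ when the $\theta_j$ are close enough to $\psi_z$. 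The key gain: taking the $\theta_j$ to be \emph{distinct extreme traces} close to $\psi_z$, the inserted blocks are pairwise disjoint and disjoint from $M_i(1-z)$ --- which has no summand quasi-equivalent to $\psi_z$, since $M_i$ carries at most one summand per extreme trace of $A_i$ --- so every minimal projection of the new $M_i$ has trace $\leq\tau(z)/N$ or is inherited from $M_i(1-z)$.

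Granting this, the proof is a composition of at most three block-replacements. First, if $M_i=\mathbb C1$ --- so $\varphi|_{A_i}$ is a character $\chi_i$ --- block-replace at $z=1$ using $\theta_1=\chi_i$ and $\theta_2=(1-\delta)\chi_i+\delta\eta$, where $\eta\neq\chi_i$ is any trace on $A_i$ (one exists since $\mathrm T(A_i)$ is not a single $1$-dimensional trace) and $\delta$ is small; the new $M_i$ has dimension $\geq2$. After this we may assume $M_1,M_2\neq\mathbb C1$. Second, if $\mathrm e(M_1)+\mathrm e(M_2)>1$, say $\mathrm e(M_1)\geq\mathrm e(M_2)$, then $\mathrm e(M_1)>\tfrac12$, so $M_1$ has a unique minimal central projection $z_1$ with $M_1z_1=\mathbb Cz_1$, $\tau(z_1)=\mathrm e(M_1)$, carrying a character $\chi_1$ of $A_1$, while every other minimal projection of $M_1$ has trace $\leq1-\tau(z_1)\leq1-\mathrm e(M_2)$. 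Here the dichotomy on $\chi_1$ brings in the hypothesis. If $\chi_1$ is not isolated in $\partial_{\mathrm e}\mathrm T(A_1)$, there are infinitely many distinct extreme traces of $A_1$ near $\chi_1$; replacing the $z_1$-block of $M_1$ by $N$ of them (with $N$ large) gives $\mathrm e(\widetilde M_1)\leq\max(\tau(z_1)/N,\,1-\tau(z_1))\leq1-\mathrm e(M_2)$, and $\widetilde M_2=M_2$. If $\chi_1$ is isolated, then by hypothesis $A_2$ has no isolated extreme finite-dimensional trace, so the finitely many summands of $M_2$ with minimal projection of trace $>1-\mathrm e(M_1)$ carry non-isolated finite-dimensional extreme traces of $A_2$; replacing each by $N$ nearby distinct extreme traces ($N$ large) gives $\mathrm e(\widetilde M_2)\leq1-\mathrm e(M_1)$, and $\widetilde M_1=M_1$. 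Either way $\mathrm e(\widetilde M_1)+\mathrm e(\widetilde M_2)\leq1$; taking $N\geq3$ keeps $\dim(\widetilde M_1)+\dim(\widetilde M_2)\geq5$, and, non-trivial blocks having been inserted, keeps both algebras $\neq\mathbb C1$. Third, if at this point $\dim(M_1)+\dim(M_2)<5$, then $M_1\cong M_2\cong\mathbb C^2$ and $\mathrm e(M_1)=\mathrm e(M_2)=\tfrac12$; choosing $i$ so that some character carried by $M_i$ is not isolated in $\partial_{\mathrm e}\mathrm T(A_i)$ --- possible because if both characters carried by $M_1$ are isolated then $A_1$ has an isolated extreme $1$-dimensional trace, hence by hypothesis $A_2$ has no isolated finite-dimensional extreme trace so both characters carried by $M_2$ are non-isolated, and we take $i=2$ --- block-replace that character's summand of $M_i$ by $\geq3$ nearby distinct extreme traces: this makes $\dim(\widetilde M_i)\geq3$, leaves $\mathrm e(\widetilde M_i)=\tfrac12$, and keeps $\widetilde M_i\neq\mathbb C1$. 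Composing these arbitrarily small perturbations lands a trace from $\mathrm T_{\mathrm l}(A)$ in the given neighborhood of $\varphi$.

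The main obstacle is the block-replacement construction itself: one must check that replacing the action of $A_i$ on a finite-dimensional summand of $M_i$ by a small perturbation (i) moves $\varphi$ only slightly in the weak$^*$-topology --- which is not automatic, since a trace on $A_1*A_2$ is not determined by its restrictions to $A_1$ and $A_2$, and so one must argue through the representations via the $\|\cdot\|_2$-estimate above --- and (ii) leaves tight control of both $\mathrm e$ and $\dim$; the disjointness bookkeeping behind (ii) is exactly where non-isolatedness of $\psi_z$ (in the cases where it is needed) supplies the freedom to choose the $\theta_j$. The remaining ingredients --- the reduction to basic weak$^*$-neighborhoods, the arithmetic with $\mathrm e$ and $\dim$, the structural facts that $M_i$ has at most one summand per extreme trace of $A_i$ and that $\mathrm e(M_i)>\tfrac12$ forces a $1$-dimensional central summand of that weight, and the matrix-unit modifications needed when $M_iz\cong\mathbb M_k(\mathbb C)$ with $k\geq2$ --- are routine.
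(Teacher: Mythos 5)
Your overall strategy is the same as the paper's: perturb the trace on one side by "dissolving" large finite-dimensional central summands of $M_1$ or $M_2$ using nearby extreme traces, while keeping the other side fixed, and track $\dim$ and $\mathrm{e}$; the three cases (which side has no isolated extreme finite-dimensional trace, which side has no isolated extreme $1$-dimensional trace) are organized a bit differently, but the arithmetic and the use of the hypothesis line up with the paper. Your disjointness bookkeeping is also harmless but unnecessary: the paper's Lemma~\ref{e wrt central proj} bounds $\mathrm{e}$ directly from the commuting family $\{z_k\}$, with no need to rule out merging of summands.

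The gap is in the block-replacement construction itself when $\psi_z$ has dimension $k\geq 2$, which you do need in the case where $\chi_1$ is isolated and the large finite-dimensional summands of $M_2$ (each carrying a non-isolated extreme trace of $A_2$, possibly of dimension $\geq 2$) must be replaced. You amalgamate over $D=\C z\oplus\C(1-z)$ and place $\rho_{\theta_j}$ freely on the corners $z^{(j)}$. A direct computation in $M\ast_D B$ gives, for $a\in A_i$,
\[
\|\widetilde\pi(a)-\pi(a)\|_2^2 \;=\; \sum_j\tau(z^{(j)})\theta_j(a^*a)\;-\;2\,\mathrm{Re}\Bigl(\overline{\psi_z(a)}\sum_j\tau(z^{(j)})\theta_j(a)\Bigr)\;+\;\tau(z)\psi_z(a^*a),
\]
which converges, as each $\theta_j\to\psi_z$ weak$^*$, to $2\tau(z)\bigl(\psi_z(a^*a)-|\psi_z(a)|^2\bigr)$. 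This vanishes for all $a$ only when $\psi_z$ is multiplicative, i.e., $k=1$; for $k\geq2$ it is strictly positive for generic $a$, so your perturbation $\widetilde\varphi$ does not approach $\varphi$ no matter how close the $\theta_j$ are to $\psi_z$. The amalgamation over $D$ forgets the internal structure of $M_iz\cong\bM_k(\C)$, and free independence over $D$ destroys the alignment you need. The fix, which is what the paper does, is to amalgamate over $M_i$ itself and to invoke Lemma~\ref{convergence}: since $\psi_z$ is finite-dimensional (hence von Neumann amenable), that lemma produces a tracial algebra $\cN\supset M_iz$ and representations $\rho_{n}\colon A_i\to\cN$ with $\tau\circ\rho_n$ equal to a prescribed convex combination of nearby extreme traces and, crucially, $\|\rho_n(a)-\pi(a)z\|_2\to0$ inside $\cN$. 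This alignment is not "routine": the proof of Lemma~\ref{convergence} uses ultrapowers and Connes' theorem. Your write-up states the needed estimate only in the $k=1$ form $\|\rho_{\theta_j}(b)-\psi_z(b)1\|_2\to0$ and defers the $k\geq 2$ case to "routine matrix-unit modifications", which understates (and, with your $D$-amalgamation, would actually falsify) the key step.
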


\begin{proof}

Let $\varphi\in\mathrm{T}(A)$ and $\pi:A\rightarrow M$ be the tracial representation associated to $\varphi$. Denote $M_1=\pi(A_1)''$ and $M_2=\pi(A_2)''$.  Let $\mathcal M$ be a II$_1$ factor containing $M$.

We treat three cases separately:

 {\bf Case 1.} \textit{$A_1$ has no isolated extreme finite dimensional traces.}

 For $n\in\N$, we will define tracial representations $\pi_n:A\rightarrow \widetilde{\mathcal M}$, where $\widetilde{\mathcal M}\supset \mathcal M$, by first defining its restrictions to $A_1$ and $A_2$.

 If $M_2\neq \mathbb C$, we define $\mathcal M_2=\mathcal M$ and $\pi_{n,2}:A_2\rightarrow\mathcal M_2$ by $\pi_{n,2}=\pi_{|A_2}$ for every $n$. Then we have $\pi_{n,2}(A_2)''=M_2\neq \mathbb C$.
 If $M_2=\mathbb C$, then $\varphi_{|A_2}\in\mathrm{T}(A_2)$ is a $1$-dimensional trace. Since $\mathrm{T}(A_2)\neq \{\varphi_{|A_2}\}$ by assumption, we can find $\psi\in\partial_{\mathrm{e}}\mathrm{T}(A_2)\setminus\{\varphi_{|A_2}\}$. 
Let $\rho:A_2\rightarrow\mathcal N$ be the tracial representation associated to $\psi$. Let $p_n\in \mathcal M$ be a projection with $\tau(p_n)=1-\frac{1}{n}$. We define $\mathcal M_2=\mathcal M\overline{\otimes}\mathcal N$ and a $*$-homomorphism $\pi_{n,2}:A_2\rightarrow \mathcal M_2$ by letting $\pi_{n,2}(a_2)=\pi(a_2)p_n\otimes 1+(1-p_n)\otimes \rho(a_2)$, for every $a_2\in A_2$. 
Since $\varphi_{|A_2},\psi\in\partial_{\mathrm{e}}\mathrm{T}(A_2)$ are distinct, $\pi_{n,2}(A_2)''=\mathbb Cp_n\otimes 1\oplus (1-p_n)\otimes \mathcal N$. In particular, $\pi_{n,2}(A_2)''\neq \mathbb C$, for every $n\in\N$.

 In either case, we have that $\|\pi_{n,2}(a_2)-\pi(a_2)\|_2\rightarrow 0$, for every $a_2\in A_2$.
Since $\pi_{n,2}(A_2)''\neq \mathbb C$, we have $\mathrm{e}(\pi_{n,2}(A_2)'')<1$. Put $\delta_n\coloneq 1-\mathrm{e}(\pi_{n,2}(A_2)'')>0$. Then we have that \begin{equation}\label{A_2}\text{$\dim(\pi_{n,2}(A_2)'')\geq 2$ and $\mathrm{e}(\pi_{n,2}(A_2)'')= 1-\delta_n$}.\end{equation}

Next, we define representations $\pi_{n,1}:A_1\rightarrow\mathcal M_1$, for $n\in\N$, for an appropriate von Neumann algebra $\cM_1$ to be determined later. To this end, let $\{z_k\}_{k=0}^K\subset \mathcal Z(M_1)$, for some $K\in\mathbb N\cup\{\infty\}$, be (possibly zero) projections such that $\sum_{k=0}^K z_k=1$, $M_1z_0$ is diffuse, and $M_1z_k\cong\mathbb M_{l_k}(\mathbb C)$, for some $l_k\in\mathbb N$, for every $1\leq k\leq K$. For every $n\in\N$, fix some $L_n\in\mathbb N$ such that $L_n\geq\max\{3,\frac{1}{\delta_n}\}$. 

Fix  $1\leq k\leq K$ and let $\psi_k\in\partial_{\mathrm{e}}\mathrm{T}(A_1)$ be given by $\psi_k(a_1)=\frac{\tau(\pi(a_1)z_k)}{\tau(z_k)}$, for every $a_1\in A_1$. Since $\psi_k$ is $l_k$-dimensional, it is not isolated in $\partial_{\mathrm{e}}\mathrm{T}(A_1)$ by our assumption. Let $\psi_{n,k}\in\partial_{\mathrm{e}}\mathrm{T}(A_1)\setminus\{\psi_k\}$ be a sequence such that $\psi_{n,k}\rightarrow\psi_k$, as $n\rightarrow\infty$. Moreover, we may assume that $\psi_{n,k}\neq \psi_{n',k}$, whenever $n\neq n'$.
Defining $\eta_{n,k}=\frac{1}{L_n}\sum_{i=0}^{L_n-1}\psi_{n+i,k}\in\mathrm{T}(A_1)$, we also have that $\eta_{n,k}\rightarrow\psi_k$, as $n\rightarrow\infty$. Since $\psi_k$ is finite dimensional, Lemma \ref{convergence} implies the existence of a tracial von Neumann algebra $\mathcal N_k$ and $*$-homomorphisms $\rho_{n,k}:A_1\rightarrow \mathcal N_k$ such that $M_1z_k\subset\mathcal N_k$, $\|\rho_{n,k}(a_1)-\pi(a_1)z_k\|_2\rightarrow 0$, for every $a_1\in A_1$, and $\tau\circ\rho_{n,k}=\eta_{n,k}$, for every $n\in\mathbb N$.

Note that $M_1= \bigoplus_{k=0}^K M_1z_k\subset M_1z_0\oplus(\bigoplus_{k=1}^K \mathcal N_k)$
and put $\mathcal M_1=\mathcal M*_{M_1}(M_1z_0\oplus(\bigoplus_{k=1}^K\mathcal N_k))$. For every $n\in\N$, we define a $*$-homomorphism $\pi_{n,1}:A_1\rightarrow\mathcal M_1$ by letting $\pi_{n,1}(a_1)=\pi(a_1)z_0+\sum_{k=1}^K\rho_{n,k}(a_1)$. Then by construction $\|\pi_{n,1}(a_1)-\pi(a_1)\|_2\rightarrow 0$, for every $a_1\in A_1$.
We claim that for every $n\in\N$,
\begin{equation}\label{A_1}\text{$\dim(\pi_{n,1}(A_1)'')\geq L_n\geq 3$ and $\mathrm{e}(\pi_{n,1}(A_1)'')\leq\frac{1}{L_n}\leq \delta_n$}.\end{equation}
Indeed, note that 
$\{z_k\}_{k=0}^K\subset \pi_{n,1}(A_1)'\cap\mathcal M_1$.
If $z_0\neq 0$, then $\pi_{n,1}(A_1)''z_0=\pi(A_1)''z_0=M_1z_0$ is diffuse. Thus, $\mathrm{dim}(\pi_{n,1}(A_1)''z_0)=\infty$ and $\mathrm{e}(\pi_{n,1}(A_1)''z_0)=0$. Otherwise, there is some $1\leq k\leq K$ such that $z_k\neq 0$. Then $\pi_{n,1}(A_1)''z_k=\rho_{n,k}(A_1)''$.
Since $\tau\circ\rho_{n,k}=\eta_{n,k}=\frac{1}{L_n}\sum_{i=0}^{L_n-1}\psi_{n+i,k}\in\mathrm{T}(A_1)$, and $\psi_{n+i,k}\in\partial_{\mathrm{e}}\mathrm{T}(A_1)$, $0\leq i\leq L_n-1$, are pairwise distinct, we get that $\dim(\rho_{n,k}(A_1)'')\geq L_n$ and $\mathrm{e}(\rho_{n,k}(A_1)'')\leq\frac{\tau(z_k)}{L_n}$. Consequently, we get that $\dim(\pi_{n,1}(A_1)''z_k)\geq L_n$ and $\mathrm{e}(\pi_{n,1}(A_1)''z_k)\leq\frac{\tau(z_k)}{L_n}$. Since these inequalities hold for every $1\leq k\leq K$ with $z_k\neq 0$, Lemma~\ref{e wrt central proj} implies that \eqref{A_1} holds.

Define $\widetilde{\mathcal M}=\mathcal M_1*_\mathcal M\mathcal M_2$ and $*$-homomorphisms $\pi_n:A\rightarrow\widetilde{\mathcal M}$, for $n\in\N$, by letting $\pi_n(a_1)=\pi_{n,1}(a_1)$ and $\pi_n(a_2)=\pi_{n,2}(a_2)$, for every $a_1\in A_1$ and $a_2\in A_2$. Then $\|\pi_n(a)-\pi(a)\|_2\rightarrow 0$, for every $a\in A$. Since $\pi_n(A_1)=\pi_{n,1}(A_1),\pi_n(A_2)=\pi_{n,2}(A_2)$,
\eqref{A_2} and \eqref{A_1} imply that $\dim(\pi_n(A_1)'')\geq 3,\dim(\pi_n(A_2))\geq 2$ and $\mathrm{e}(\pi_n(A_1)'')+\mathrm{e}(\pi_n(A_2)'')\leq 1$. Thus, $\varphi_n\coloneq \tau\circ\pi_n\in\mathrm{T}_{\mathrm{l}}(A)$. Since $\varphi_n\rightarrow\varphi$, we conclude that $\varphi\in\overline{\mathrm{T}_{\mathrm{l}}(A)}$, which finishes the proof of Case 1.

{\bf Case 2.} \textit{$A_1$ has an isolated extreme $1$-dimensional trace.}

In this case, by our assumption, $A_2$ has no isolated extreme finite dimensional traces. Hence the proof of Case 1 gives the conclusion by swapping the roles of $A_1$ and $A_2$.

 {\bf Case 3.} \textit{$A_1$ has an isolated extreme $k$-dimensional trace, for some $k\geq 2$, but no isolated extreme $1$-dimensional traces.} 
 
 In this case, by our assumption, $A_2$ has no isolated extreme $1$-dimensional traces.  In particular, neither $A_1$ nor $A_2$ has an isolated extreme $1$-dimensional trace. 

 For $1\leq j\leq 2$, let $\{z_{i,j}\}_{i=0}^{K_j}\subset \mathcal Z(M_j)$ be possibly zero projections such that  $\sum_{i=0}^{K_j}z_{i,j}=1$, $M_jz_{0,j}$ has no $1$-dimensional direct summands, and $M_jz_{i,j}=\mathbb Cz_{i,j}$,  for every  $1\leq i\leq K_j$. 
 By using that $A_j$ has no isolated extreme $1$-dimensional traces and reasoning similarly to the proof of Case 1 (with $L=3$),
 we can find $\mathcal M_j\supset \mathcal M$ and $*$-homomorphisms $\pi_{n,j}:A_j\rightarrow\mathcal M_j$ such that $\|\pi_{n,j}(a_j)-\pi(a_j)\|_2\rightarrow 0$, as $n\to\infty$, for every $a_j\in A_j$,
and for every $1\leq i\leq K_j$ with $z_{i,j}\neq 0$ we have that 
$z_{i,j}\in \pi_{n,j}(A_j)'\cap\mathcal M_j$, $\mathrm{dim}(\pi_{n,j}(A_j)''z_{i,j})\geq 3$ and $\mathrm{e}(\pi_{n,j}(A_j)''z_{i,j})\leq\frac{\tau(z_{i,j})}{3}$, for every $n\in\mathbb N$. Moreover, if $z_{0,j}\neq 0$, then since $\pi_{n,j}(A_j)''z_{0,j}=M_jz_{0,j}$ has no $1$-dimensional direct summands, we get that $\mathrm{dim}(\pi_{n,j}(A_j)''z_{0,j})\geq 4$ and $\mathrm{e}(\pi_{n,j}(A_j)''z_{0,j})\leq\frac{\tau(z_{0,j})}{2}$. 
Using that $\sum_{i=0}^{K_j}z_{i,j}=1$, Lemma~\ref{e wrt central proj} implies that $\dim(\pi_{n,j}(A_j)'')\geq 3$ and $\mathrm{e}(\pi_{n,j}(A_j)'')\leq\frac{1}{2}$, for every $n\in\mathbb  N$.

Define $\widetilde{\mathcal M}=\mathcal M_1*_\mathcal M\mathcal M_2$ and $*$-homomorphisms $\pi_n:A\rightarrow\widetilde{\mathcal M}$ by letting $\pi_n(a_1)=\pi_{n,1}(a_1)$ and $\pi_n(a_2)=\pi_{n,2}(a_2)$, for every $a_1\in A_1$ and $a_2\in A_2$. Then $\|\pi_n(a)-\pi(a)\|_2\rightarrow 0$, for every $a\in A$. Since $\pi_n(A_1)=\pi_{n,1}(A_1),\pi_n(A_2)=\pi_{n,2}(A_2)$,
we get that $\dim(\pi_n(A_1)'')\geq 3,\dim(\pi_n(A_2)'')\geq 3$ and $\mathrm{e}(\pi_n(A_1)'')+\mathrm{e}(\pi_n(A_2)'')\leq \frac{1}{2}+\frac{1}{2}=1$. Thus, $\varphi_n\coloneq \tau\circ\pi_n\in\mathrm{T}_{\mathrm{l}}(A)$. Since $\varphi_n\rightarrow\varphi$, we conclude that $\varphi\in\overline{\mathrm{T}_{\mathrm{l}}(A)}$, which finishes the proof of  Case 3, and therefore of the lemma.
\end{proof}

\subsection{Isolated extreme finite dimensional traces}

We continue by establishing the equivalence of (2) and (3) in Theorem~\ref{Poulsen}, by showing that the isolated extreme finite dimensional traces of $A=A_1\ast A_2$ correspond exactly to the traces whose restrictions to $A_1$ and $A_2$ satisfy the negation of statement (2) in Theorem~\ref{Poulsen}:

\begin{lemma}\label{lem:ifd}
    Let $A=A_1\ast A_2$ be the 
    free product of two unital, separable $C^*$-algebras $A_1$ and $A_2$ such that $\mathrm{T}(A_i)$ is non-empty and does not consist of a single $1$-dimensional trace, for $i=1,2$. Assume $\vphi\in\partial_{\mathrm{e}}\mathrm{T}(A)$. Then the following are equivalent.
    \begin{enumerate}
        \item $\vphi$ is finite dimensional and isolated in $\partial_{\mathrm{e}}\mathrm{T}(A)$.
        \item Up to swapping indices, there exist an isolated extreme $1$-dimensional trace $\vphi_1\in \partial_{\mathrm{e}}\mathrm{T}(A_1)$ and an isolated extreme finite dimensional trace $\vphi_2\in\partial_{\mathrm{e}}\mathrm{T}(A_2)$ such that $\vphi_{|A_1} = \vphi_1$ and $\vphi_{|A_2}=\vphi_2$.
    \end{enumerate}
\end{lemma}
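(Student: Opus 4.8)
The plan is to get (2)$\Rightarrow$(1) essentially for free from Theorem~\ref{freeprod}, and to do the real work for (1)$\Rightarrow$(2). For (2)$\Rightarrow$(1): after swapping indices we may assume $A_1$ has an isolated extreme $1$-dimensional trace $\varphi_1$ and $A_2$ an isolated extreme finite dimensional trace $\varphi_2$ with $\varphi_{|A_1}=\varphi_1$, $\varphi_{|A_2}=\varphi_2$. Theorem~\ref{freeprod} then produces an isolated extreme finite dimensional trace $\varphi'$ on $A$ with these same restrictions. Since $\varphi_1$ is $1$-dimensional, a trace $\psi$ on $A$ is determined by its restrictions to $A_1$ and $A_2$ whenever $\psi_{|A_1}$ is $1$-dimensional: on words one has $\psi(b_1c_1\cdots b_mc_m)=\psi_{|A_1}(b_1)\cdots\psi_{|A_1}(b_m)\,\psi_{|A_2}(c_1\cdots c_m)$, because the GNS map of $A_1$ is scalar-valued. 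Hence $\varphi=\varphi'$ and (1) holds.

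For (1)$\Rightarrow$(2), write $(M_1,M_2,M)$ for the triple attached to $\varphi$; as $\varphi\in\partial_{\mathrm{e}}\mathrm{T}(A)$ is finite dimensional, $M=M_1\vee M_2\cong\mathbb M_k(\mathbb C)$. The first and main step is: if $M_1\ne\mathbb C1$ and $M_2\ne\mathbb C1$, then $\varphi$ is not isolated in $\partial_{\mathrm{e}}\mathrm{T}(A)$; so under (1) one of $M_1,M_2$ is trivial. Consider the perturbations $\pi_v$ of $\pi$ with $\pi_v(a_1)=\pi(a_1)$, $\pi_v(a_2)=v\pi(a_2)v^*$ for $v\in\mathcal U(\mathbb M_k(\mathbb C))$, and set $\varphi_v=\tau\circ\pi_v$. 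Upper semicontinuity of $v\mapsto\dim\big(M_1'\cap vM_2'v^*\big)$ (commutants in $\mathbb M_k(\mathbb C)$) shows $V=\{v:M_1\vee vM_2v^*=\mathbb M_k(\mathbb C)\}$ is an open neighbourhood of $1$, so $\varphi_v\in\partial_{\mathrm{e}}\mathrm{T}(A)$ for $v$ near $1$, and $v\mapsto\varphi_v$ is continuous with $\varphi_1=\varphi$. If $M_1$ and $M_2$ do not commute: were $\varphi$ isolated, then $\varphi_{\exp(ith)}=\varphi$ for small $t$ and all self-adjoint $h$, and differentiating $\varphi_{\exp(ith)}(bc)=\varphi(bc)$ at $t=0$ gives $\tau\big(h[\pi(c),\pi(b)]\big)=0$ for all self-adjoint $h$ and all $b\in A_1,c\in A_2$, hence $[M_1,M_2]=0$, a contradiction; so $\varphi$ is not isolated. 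If $M_1$ and $M_2$ commute: then $\mathcal Z(M_i)\subset\mathcal Z(M)=\mathbb C1$, so $M_1,M_2$ are matrix factors $\mathbb M_{k_i}(\mathbb C)$ with $k_i\ge2$, and since unital embeddings of matrix algebras are homogeneous $\mathrm{e}(M_i)=1/k_i$; thus $\mathrm{e}(M_1)+\mathrm{e}(M_2)\le1$ and $\dim(M_1)+\dim(M_2)\ge8$, so Theorem~\ref{perturbation} gives that $(M_1,M_2,M)$ is approximately factorial, whence $\varphi\in\mathrm{T}_{\mathrm{apf}}(A)\subset\overline{\partial_{\mathrm{e}}\mathrm{T}(A)\cap\mathrm{T}_{\infty}(A)}$ by Proposition~\ref{prop:property A traces can be approxiamted}; as $\varphi$ is finite dimensional, this again shows it is not isolated.

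So, under (1), we may assume $M_1=\mathbb C1$; then $M_2=M\cong\mathbb M_k(\mathbb C)$ is a factor, $\varphi_1:=\varphi_{|A_1}$ is an extreme $1$-dimensional trace and $\varphi_2:=\varphi_{|A_2}$ an extreme $k$-dimensional trace, and it remains to show both are isolated (this is exactly (2)). For $\varphi_2$: if $\psi_n\to\varphi_2$ in $\partial_{\mathrm{e}}\mathrm{T}(A_2)\setminus\{\varphi_2\}$ with GNS maps $\rho_n:A_2\to N_n$, the map $\pi_n:A\to N_n$ with $\pi_n(a_1)=\varphi_1(a_1)1$, $\pi_n(a_2)=\rho_n(a_2)$ has $\pi_n(A)''=N_n$ a factor and $\tau\circ\pi_n\to\varphi$ (a one-line computation on words), contradicting isolatedness of $\varphi$. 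For $\varphi_1$: given $\psi_n\to\varphi_1$ in $\partial_{\mathrm{e}}\mathrm{T}(A_1)\setminus\{\varphi_1\}$, Lemma~\ref{convergence} (applicable since $\varphi_1$ is von Neumann amenable) furnishes a II$_1$ factor $P$ and $*$-homomorphisms $\sigma,\sigma_n:A_1\to P$ with $\tau_P\circ\sigma=\varphi_1$, $\tau_P\circ\sigma_n=\psi_n$ and $\|\sigma_n(a_1)-\sigma(a_1)\|_2\to0$; as $\varphi_1$ is $1$-dimensional, $\sigma(a_1)=\varphi_1(a_1)1$. Taking $\mathcal P=P*\mathbb M_k(\mathbb C)$ (a II$_1$ factor), a GNS map $\rho:A_2\to\mathbb M_k(\mathbb C)$ of $\varphi_2$, and $\pi_n(a_1)=\sigma_n(a_1)$, $\pi_n(a_2)=\rho(a_2)$, one checks that $\pi_n$ converges in $\|\cdot\|_2$ to the representation built from $\sigma$ and $\rho$, which induces $\varphi$, so $\tau_{\mathcal P}\circ\pi_n\to\varphi$; and $\pi_n(A)''=\sigma_n(A_1)''*\mathbb M_k(\mathbb C)$ is a II$_1$ factor because $\sigma_n(A_1)''$ is a factor and its free product with $\mathbb M_k(\mathbb C)$ is a II$_1$ factor by \cite{Dy93} (and by \cite{IPP05} when $\sigma_n(A_1)''$ is of type II$_1$). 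Since $(\tau_{\mathcal P}\circ\pi_n)_{|A_1}=\psi_n\ne\varphi_1$, this again contradicts isolatedness of $\varphi$, so $\varphi_1$ is isolated, completing the proof.

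I expect the crux to be the first step of (1)$\Rightarrow$(2). The available obstructions (Subsection~\ref{ssec:nodiff}, Proposition~\ref{findim}) only preclude approximate factoriality when the algebras carry sufficiently large $1$-dimensional summands, and there genuinely exist configurations with $M$ a factor, $M_1,M_2\ne\mathbb C1$ and $\mathrm{e}(M_1)+\mathrm{e}(M_2)>1$ (e.g.\ $M_1\cong\mathbb C^2$, $M_2\cong\mathbb M_2(\mathbb C)$ embedded with multiplicity $3$ in $M\cong\mathbb M_6(\mathbb C)$), where Theorem~\ref{perturbation} simply does not apply; the commuting/non-commuting dichotomy above, together with the observation that the commuting case automatically forces $\mathrm{e}(M_1)+\mathrm{e}(M_2)\le1$, is what makes all such cases tractable.
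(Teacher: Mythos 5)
Your proof is correct and (2)$\Rightarrow$(1) together with Case~1 of (1)$\Rightarrow$(2) track the paper closely, but the heart of (1)$\Rightarrow$(2) — ruling out $M_1,M_2\neq\mathbb C1$ — is handled by a genuinely different and quite slick argument. The paper uses Lemma~\ref{diffuse_piece} (itself built on the liberation result, Proposition~\ref{twoprojections}) to produce, for small unitary perturbations, a central projection of uniformly bounded-below trace on which $M_1\vee vM_2v^*$ is diffuse; combining this with Corollary~\ref{strongly_isolated} then yields a direct summand that is simultaneously diffuse and finite dimensional. You instead exploit finite dimensionality of $M=\mathbb M_k(\mathbb C)$ head-on: upper semicontinuity of $v\mapsto\dim\ker L_v$ (here $L_v$ detects $M_1'\cap vM_2'v^*$) shows that $M_1\vee vM_2v^*=\mathbb M_k(\mathbb C)$ for $v$ near $1$, so $\varphi_v\in\partial_{\mathrm e}\mathrm T(A)$; isolatedness forces $\varphi_{\exp(ith)}\equiv\varphi$ near $t=0$, and differentiating plus faithfulness of $\tau$ yields $[M_1,M_2]=0$, while the commuting case forces $M_1,M_2$ to be matrix subfactors whose traces of minimal projections sum to at most $1$, at which point Theorem~\ref{perturbation} applies. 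This is more elementary in that it avoids the machinery of Proposition~\ref{twoprojections}/Lemma~\ref{diffuse_piece} altogether in this step, at the price of being tied to the matrix-algebra setting (which of course is exactly what is available here, since $\varphi$ is assumed finite dimensional). One small patch in the final paragraph: when showing $\varphi_1$ is isolated, the free product $\sigma_n(A_1)''\ast\mathbb M_k(\mathbb C)$ is a II$_1$ factor only when $\sigma_n(A_1)''\neq\mathbb C1$; if $\psi_n$ happens to be $1$-dimensional then $\sigma_n(A_1)''=\mathbb C1$ and the free product is just $\mathbb M_k(\mathbb C)$. It is still a factor, and since $(\tau_{\mathcal P}\circ\pi_n)|_{A_1}=\psi_n\neq\varphi_1$ the trace is still distinct from $\varphi$, so the contradiction survives — but the word ``II$_1$'' in the sentence should be weakened to ``tracial''.
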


To prove Lemma \ref{lem:ifd} we will need the following immediate consequence of Proposition \ref{twoprojections}:

\begin{lemma}\label{diffuse_piece}
    Let $(M,\tau)$ be a tracial von Neumann algebra and $M_1\neq\mathbb C1,M_2\neq\mathbb C1$ be  von Neumann subalgebras. Then there is $t\in (0,1]$ such that for every $\varepsilon>0$, we can find a II$_1$ factor $M_\varepsilon$ with $M\subset M_\varepsilon$, $v_\varepsilon\in\mathcal U(M_\varepsilon)$, and a projection $z_\varepsilon\in\mathcal Z(M_1\vee v_\varepsilon M_2v_\varepsilon^*)$  such that $\|v_\varepsilon-1\|_2<\varepsilon$, $\tau(z_\varepsilon)\geq t$, and $(M_1\vee v_\varepsilon M_2v_\varepsilon^*)z_\varepsilon$ is diffuse.
\end{lemma}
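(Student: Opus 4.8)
The plan is to reduce the claim to Proposition~\ref{twoprojections} by first shrinking $M_1$ and $M_2$ to $2$-dimensional subalgebras. Since $M_1 \neq \mathbb{C}1$, pick any projection $e_1 \in M_1$ with $0 < \tau(e_1) < 1$, and set $N_1 = \mathbb{C}e_1 \oplus \mathbb{C}(1-e_1) \subset M_1$; similarly pick $e_2 \in M_2$ with $0 < \tau(e_2) < 1$ and set $N_2 = \mathbb{C}e_2 \oplus \mathbb{C}(1-e_2) \subset M_2$. Then $\dim(N_1) = \dim(N_2) = 2$, so Proposition~\ref{twoprojections} applies to the pair $(N_1, N_2)$: for every $\varepsilon > 0$ we obtain a II$_1$ factor $M_\varepsilon \supset M$ and $v_\varepsilon \in \mathcal{U}(M_\varepsilon)$ with $\|v_\varepsilon - 1\|_2 < \varepsilon$ such that $N_1 \vee v_\varepsilon N_2 v_\varepsilon^* = \big(\bigoplus_{i,j} \mathbb{C}(e_i' \wedge v_\varepsilon e_j' v_\varepsilon^*)\big) \oplus B$ with $B$ diffuse, where $e_i'$ ranges over $\{e_1, 1-e_1\}$ and $e_j'$ over $\{e_2, 1-e_2\}$.

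The key point is that the ``small'' perturbation $v_\varepsilon$ supplied by Proposition~\ref{twoprojections} can be taken the same for all choices; more importantly, since $B = (N_1 \vee v_\varepsilon N_2 v_\varepsilon^*) r$ is diffuse with unit projection $r$, and $B \subset (M_1 \vee v_\varepsilon M_2 v_\varepsilon^*) r$, the algebra $(M_1 \vee v_\varepsilon M_2 v_\varepsilon^*) r$ is itself diffuse (a von Neumann algebra containing a diffuse subalgebra with the same unit has no minimal projections with that unit... wait — that is not automatic, since a larger algebra can have minimal projections even when a subalgebra does not). Let me instead argue at the level of the center. Actually the cleanest route: set $r = 1 - \sum_{i,j} e_i' \wedge v_\varepsilon e_j' v_\varepsilon^*$, which is a projection in $N_1 \vee v_\varepsilon N_2 v_\varepsilon^*$, hence in $M_1 \vee v_\varepsilon M_2 v_\varepsilon^*$. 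I claim $r$ lies in the center of neither algebra in general, so this needs care. The fix is to use condition (3) of Definition~\ref{condition C}: $\mathrm{E}_{\mathcal{Z}(B)}(e_i') \in \mathbb{C}r$ and likewise for $v_\varepsilon e_j' v_\varepsilon^*$; this is exactly what guarantees that $r$ is central in $N_1 \vee v_\varepsilon N_2 v_\varepsilon^*$ — indeed each $e_i' r$ has scalar conditional expectation onto $\mathcal{Z}(B)$, forcing $e_i' r$ central in $B$, so each $e_i' r \in \mathcal{Z}(B)$ and hence $r$ itself is central, and the generators $e_i'$, $v_\varepsilon e_j' v_\varepsilon^*$ all commute with $r$. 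This only controls $N_1 \vee v_\varepsilon N_2 v_\varepsilon^*$, though, not the larger algebra $M_1 \vee v_\varepsilon M_2 v_\varepsilon^*$.

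To handle the larger algebra, I would let $P = M_1 \vee v_\varepsilon M_2 v_\varepsilon^*$ and $z_\varepsilon = z_P(r)$ be the central support in $P$ of the projection $r$ defined above. Then $Pz_\varepsilon \supset BzP(r)$... hmm, actually the clean statement is: $rPr$ contains $rN_1r \vee rv_\varepsilon N_2 v_\varepsilon^* r = B$ which is diffuse, so $rPr$ is diffuse, hence $Pz_P(r)$ is diffuse (since $Pz_P(r)$ and $rPr$ have the same type decomposition — $r$ is a full projection in $Pz_P(r)$, and a II$_1$/diffuse corner forces the whole algebra to be diffuse, whereas any type I part of $Pz_P(r)$ would produce minimal projections in $rPr$ by compressing). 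So $z_\varepsilon = z_P(r) \in \mathcal{Z}(P)$, $(M_1 \vee v_\varepsilon M_2 v_\varepsilon^*) z_\varepsilon = Pz_\varepsilon$ is diffuse, and $\tau(z_\varepsilon) \geq \tau(r) = 1 - \sum_{i,j}(\text{traces})$. Taking $t$ to be the infimum over the (continuously varying, as $\varepsilon \to 0$) value $\tau(r)$ — or more simply, since $\|v_\varepsilon - 1\|_2 \to 0$, the quantities $\tau(e_i' \wedge v_\varepsilon e_j' v_\varepsilon^*)$ converge to $\tau(e_i' \wedge e_j')$, so $\tau(r)$ is bounded below by some fixed $t > 0$ for all small $\varepsilon$ (and we may shrink $t$ to cover all $\varepsilon$ or absorb large $\varepsilon$ trivially). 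The main obstacle I anticipate is precisely the bookkeeping that lets one pass from the controlled structure of $N_1 \vee v_\varepsilon N_2 v_\varepsilon^*$ to a central projection of the full $M_1 \vee v_\varepsilon M_2 v_\varepsilon^*$ with a diffuse corner — the central-support argument above should do it, using that a diffuse corner $rPr$ forces $Pz_P(r)$ to be diffuse.
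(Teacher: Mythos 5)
Your proposal follows essentially the same route as the paper's proof: reduce to $2$-dimensional subalgebras $N_1\subset M_1$, $N_2\subset M_2$, apply Proposition~\ref{twoprojections}, let $r$ be the unit of the diffuse summand $B$, observe $B\subset r(M_1\vee v_\varepsilon M_2 v_\varepsilon^*)r$ so this corner is diffuse, and take the central support $z_\varepsilon = z_P(r)$ of $r$ in $P=M_1\vee v_\varepsilon M_2 v_\varepsilon^*$. The passage from a diffuse corner $rPr$ to diffuseness of $Pz_P(r)$ is sound, but it should be phrased in terms of minimal projections rather than ``type I parts'' (a tracial type I algebra can perfectly well be diffuse): a minimal projection of $Pz_P(r)$ is equivalent, via the central support relation, to a nonzero subprojection of $r$, which would then be a minimal projection of $rPr$, contradicting diffuseness.

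The genuine gap is the lower bound on $\tau(r)$. You claim that $\tau(e_i'\wedge v_\varepsilon e_j' v_\varepsilon^*)\to\tau(e_i'\wedge e_j')$ as $\varepsilon\to 0$; this is false, because the meet $(p,q)\mapsto p\wedge q$ is not $\|\cdot\|_2$-continuous. Concretely, for $e_1'=e_2'=e$ with $\tau(e)=\frac{1}{2}$, one has $\tau(e\wedge e)=\frac{1}{2}$, whereas $\tau(e\wedge v_\varepsilon e v_\varepsilon^*)=0$ under general position, for every $\varepsilon>0$. In fact no limiting argument is needed: part~(1) of condition~(C), which Proposition~\ref{twoprojections} already supplies, asserts that $e_i'$ and $v_\varepsilon e_j' v_\varepsilon^*$ are in general position, so $\tau(e_i'\wedge v_\varepsilon e_j' v_\varepsilon^*)=\max(\tau(e_i')+\tau(e_j')-1,0)$ for every $\varepsilon$. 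Writing $a=\tau(e_1)$, $b=\tau(e_2)\in(0,1)$, one computes $\tau(r)=1-|a+b-1|-|a-b|=2\min(a,b,1-a,1-b)>0$, and this fixed number is the sought $t$. (Also, centrality of $r$ in $N_1\vee v_\varepsilon N_2 v_\varepsilon^*$ is immediate from condition~(2), as $r$ is the unit of a direct summand; invoking condition~(3) for this is unnecessary.)
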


\begin{proof}
For $i=1,2$, let $p_i\in M_i$ be a projection with $\tau(p_i)\in [\frac{1}{2},1)$ and put $A_i=\mathbb Cp_i\oplus\mathbb C(1-p_i)\subset M_i$. Put $t=2-\tau(p_1)-\tau(p_2)\in (0,1]$ and let $\varepsilon>0$.
By applying Proposition \ref{twoprojections} to $A_1,A_2\subset M$, we can find a II$_1$ factor $M_\varepsilon$ with $M\subset M_\varepsilon$ and $v_\varepsilon\in\mathcal U(M_\varepsilon)$ with $\|v_\varepsilon-1\|_2<\varepsilon$ such that, denoting $q_\varepsilon=1-p_1\wedge v_\varepsilon p_2v_\varepsilon^*$, we have  $\tau(q_\varepsilon)=t$, $q_\varepsilon\in \mathcal Z(A_1\vee v_\varepsilon A_2v_\varepsilon^*)$ and $(A_1\vee v_\varepsilon A_2v_\varepsilon^*)q_\varepsilon$ is diffuse.

Then $q_\varepsilon(M_1\vee v_\varepsilon M_2v_\varepsilon)q_\varepsilon$ is also diffuse. Thus, if $z_\varepsilon\in \mathcal Z(M_1\vee v_\varepsilon M_2v_\varepsilon^*)$ is the central support of $q_\varepsilon$, then $(M_1\vee v_\varepsilon M_2v_\varepsilon^*)z_\varepsilon$ is diffuse and $\tau(z_\varepsilon)\geq\tau(q_\varepsilon) = t$.
\end{proof}

\begin{proof}[Proof of Lemma \ref{lem:ifd}]
$(2)\Rightarrow (1)$. This is exactly Theorem~\ref{freeprod}.

$(1)\Rightarrow (2)$. Assume $\vphi\in \partial_{\mathrm{e}}\mathrm{T}(A)$ is finite dimensional and isolated. Let $\pi:A\to M$ be the tracial representation associated to $\vphi$. Denote $M_1=\pi(A_1)''$ and $M_2=\pi(A_2)''$. 

If 
$\vphi\in \mathrm{T}_{\mathrm{l}}(A)$, then by Theorem~\ref{perturbation} and Proposition~\ref{prop:property A traces can be approxiamted}, it follows that $\vphi\in\overline{\partial_{\mathrm{e}}\mathrm{T}(A)\cap\mathrm{T}_\infty(A)}$, contradicting that $\vphi$ is finite dimensional and isolated.
Hence  $\vphi\not\in \mathrm{T}_{\mathrm{l}}(A)$. 
We will prove the conclusion by treating two  cases:

\textbf{Case 1.} $M_1 = \C 1$ or $M_2 = \C 1$. Assume without loss of generality that $M_1=\C 1$. Then $M_2=M$, hence $\vphi_{|A_2}$ is an extreme finite dimensional trace. We will argue by contradiction that both $\vphi_1\coloneq\vphi_{|A_1}\in\partial_{\mathrm{e}}\mathrm{T}(A_1)$ and $\vphi_2\coloneq\vphi_{|A_2}\in\partial_{\mathrm{e}}\mathrm{T}(A_2)$ are isolated, i.e., condition (2) holds. 

First, assume that $\vphi_2$ is not isolated in $\partial_{\mathrm{e}}\mathrm{T}(A_2)$. Then we can find a sequence of pairwise distinct traces $\vphi_{n,2}\in \partial_{\mathrm{e}}\mathrm{T}(A_2)$, for $n\in\mathbb N$, such that $\vphi_{n,2}\to \vphi_2$. Denote by $\rho_{n,2}$ the corresponding tracial representations of $A_2$ and define representations $\rho_n$ of $A$ by $\rho_n(a_1) = \vphi(a_1)1$ for $a_1\in A_1$ and $\rho_n(a_2) = \rho_{n,2}(a_2)$ for $a_2\in A_2$. Then  $\rho_n(A)'' = \rho_{n,2}(A_2)''$ are tracial factors, say with traces $\tau_n$, and the traces $\vphi_n\coloneq \tau_n\circ \rho_n\in\partial_{\mathrm{e}}\text{T}(A)$ are distinct and converge to $\vphi$. This contradicts the fact that $\vphi$ is isolated in $\partial_{\mathrm{e}}\mathrm{T}(A)$.

Second, assume that $\vphi_1$ is not isolated in $\partial_{\mathrm{e}}\text{T}(A_1)$. If $M_2 = \C 1$, we can repeat the above argument to conclude, hence we can assume that $M_2\neq \C 1$. In particular, $\dim(M_2)\geq 2$ and $\mathrm{e}(M_2)<1$. Since $\vphi_1$ is not isolated, we can repeat the argument from Case 1 in the proof of Lemma~\ref{2} to conclude that there exist traces $\vphi_n\in\mathrm{T}_{\mathrm{l}}(A)$ such that $\vphi_n\to\vphi$. By Theorem~\ref{perturbation} and Proposition~\ref{prop:property A traces can be approxiamted}, it follows once more that $\vphi\in \overline{\partial_{\mathrm{e}}\mathrm{T}(A)\cap\mathrm{T}_\infty(A)}$, contradicting that $\vphi$ is finite dimensional and isolated.

\textbf{Case 2.} $M_1\neq\C 1$ and $M_2\neq \C 1$. By Lemma \ref{diffuse_piece}, there is $t\in (0,1]$ such that for every $n\in\mathbb N$, we can find a II$_1$ factor $P_n$ with $M\subset P_n$, $v_n\in\mathcal U(P_n)$, and a projection $z_n\in\mathcal Z(M_1\vee v_nM_2v_n^*)$ such that  $\|v_n-1\|_2<\frac{1}{n}$, $(M_1\vee v_nM_2v_n^*)z_n$ is diffuse and $\tau(z_n)\geq t$. Define representations $\rho_n:A\to P_n$ by $\rho_n(a_1) = \pi(a_1)$ for $a_1\in A_1$ and $\rho_n(a_2) = v_n\pi(a_2)v_n^*$ for $a_2\in A_2$. 

Then defining $\vphi_n\coloneq \tau\circ\rho_n\in\text{T}(A)$ we have $\vphi_n\to\vphi$. Since $\vphi$ is isolated,  by Corollary~\ref{strongly_isolated} we get that $\mu_n(\{\vphi\})\to 1$, where $\mu_n$ is the unique probability measure on $\partial_{\mathrm{e}}\mathrm{T}(A)$ whose barycenter is $\vphi_n$. For every $n\in\mathbb N$, we can thus find a central projection $w_n\in\rho_n(A)''=M_1\vee v_nM_2v_n^*$  such that $\tau(w_n)=\mu_n(\{\vphi\})\rightarrow 1$ and $(M_1\vee v_nM_2v_n^*)w_n$ is isomorphic to $\pi(A)''=M$, and hence is finite dimensional.
Since $\tau(z_n)\geq t>0$, for every $n\in\mathbb N$, we can find $n\in\mathbb N$ such that $z_nw_n\neq 0$. Since $(M_1\vee v_nM_2v_n^*)z_nw_n$ would then be both diffuse and finite dimensional, this is a contradiction.

Since  Cases 1 and 2 either imply condition (2) or yield a contradiction, this finishes the proof of the lemma. 
\end{proof}

\subsection{Proof of Theorem~\ref{Poulsen} and its Corollaries} We now have all the ingredients to complete the proof of Theorem~\ref{Poulsen}:

\begin{proof}[Proof of Theorem~\ref{Poulsen}] Let $A=A_1 \ast A_2$ be the 
free product of two unital, separable $C^*$-algebras $A_1$ and $A_2$, satisfying that $\mathrm{T}(A_i)$ is non-empty and does not consist of a single $1$-dimensional trace, for every $1\leq i\leq 2$.

First of all, we note that Lemma~\ref{lem:ifd} implies the equivalence of statements (2) and (3). We proceed by proving the equivalence of statements (1) and (2), and the moreover statement. 

Assume that condition (2) in Theorem~\ref{Poulsen} holds. Note that Theorem~\ref{perturbation} and Proposition~\ref{prop:property A traces can be approxiamted} imply that $\mathrm{T}_{\mathrm{l}}(A)\subset \mathrm{T}_{\mathrm{apf}}(A)\subset \overline{\partial_{\mathrm{e}}\mathrm{T}(A)\cap\mathrm{T}_{\infty}(A)}$. From this and Lemma~\ref{2}, it thus follows that
\[
     \mathrm{T}(A)\subset \overline{\mathrm{T}_{\text l}(A)} \subset   \overline{\mathrm{T}_{\mathrm{apf}}(A)} \subset  \overline{\partial_{\mathrm{e}}\mathrm{T}(A)\cap\mathrm{T}_\infty(A)}\subset \overline{\partial_{\mathrm{e}}\mathrm{T}(A)}.
\]
Hence, the moreover statement of the theorem holds and $\mathrm{T}(A)\subset\overline{\partial_{\mathrm{e}}\mathrm{T}(A)}$, meaning that 
$\mathrm{T}(A)$ is a Poulsen simplex.

Conversely, assume that condition (2) in Theorem~\ref{Poulsen} fails. Without loss of generality, assume that $A_1$ admits an isolated extreme $1$-dimensional trace $\vphi$ and $A_2$ admits an isolated extreme $k$-dimensional trace for some $k\in\N$. By assumption, $\vphi$ is not the only trace on $A_1$. In particular, $|\mathrm{T}(A_1)|\geq 2$ and therefore also $|\mathrm{T}(A)|\geq 2$. Hence Corollary~\ref{cor:obstruction} tells us that $\mathrm{T}(A)$ is not a Poulsen simplex. This completes the proof of Theorem~\ref{Poulsen}.
\end{proof}

\begin{proof}[Proof of Corollary \ref{fin_dim_algebras}]
Assume that $A_i$ has no $1$-dimensional direct summand, for every $1\leq i\leq 2$. Since $A_i$ is finite dimensional, we get that $\mathrm{T}(A_i)$ is non-empty and contains no $1$-dimensional trace, for every $1\leq i\leq 2$. Theorem \ref{Poulsen} thus implies that $\mathrm{T}(A)$ is Poulsen.

Conversely, assume that $\text{T}(A)$ is Poulsen. Since $A_i$
is finite dimensional, it admits an isolated extreme finite dimensional trace, for every $1\leq i\leq 2$. By Corollary \ref{cor:obstruction} it follows that $A_i$ does not admit a $1$-dimensional trace and thus has no $1$-dimensional direct summand, for every $1\leq i\leq 2$.
\end{proof}

Finally, we turn to the proof of Corollary \ref{group C*-algebras}. In order to do so, we first establish the following result which appears to be of independent interest.
For a countable discrete group $G$, we denote by $\mathrm{Tr}(G)$ and $\mathrm{Ch}(G)$ the sets of traces and characters of $G$, respectively, and we identify $\mathrm{Tr}(G)=\mathrm{T}(C^*G)$ and $\mathrm{Ch}(G)=\partial_{\mathrm{e}}\mathrm{T}(C^*G)$, in the usual way.
Recall that $G$ is said to have \textit{property (ET)} if the trivial character $1_G$ is isolated in $\mathrm{Ch}(G)$.

\begin{lemma}\label{isolated fd characters}
    Let $G$ be a countable discrete group. Assume that there exists a finite dimensional  character $\varphi\in\mathrm{Ch}(G)$ which is isolated in $\mathrm{Ch}(G)$. Then $G$ has property \emph{(ET)}.
\end{lemma}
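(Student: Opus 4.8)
The plan is to reduce the statement to the group-theoretic setting of Lemma~\ref{lem:ifd} (or rather its consequences) by exhibiting an auxiliary free product group and applying Theorem~\ref{freeprod}. Let $\varphi\in\mathrm{Ch}(G)$ be a $k$-dimensional character isolated in $\mathrm{Ch}(G)$, with associated finite dimensional representation $\rho:G\to\mathbb M_k(\mathbb C)$, $\varphi=\mathrm{tr}_k\circ\rho$. Suppose for contradiction that $G$ does not have property (ET), i.e. the trivial character $1_G$ is \emph{not} isolated in $\mathrm{Ch}(G)$. First I would form the free product $\widetilde G=\mathbb Z*G$, so that $C^*\widetilde G\cong C^*\mathbb Z*C^*G$. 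The algebra $A_1=C^*\mathbb Z\cong C(\mathbb T)$ is abelian with no isolated extreme traces (since $\mathbb T$ has no isolated points), so $\partial_{\mathrm e}\mathrm{T}(A_1)$ has no isolated point. Hence Theorem~\ref{freeprod} does not apply directly in the form we want; instead the point is that $G$ itself already contains an isolated finite-dimensional character, and I want to leverage the hypothesis that $1_G$ is not isolated to produce a contradiction internal to $G$.

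So the cleaner route: argue directly with $G$. The key observation is that if $\varphi$ is a $k$-dimensional isolated character and $1_G$ is not isolated, we can ``twist'' $\varphi$ by characters close to $1_G$ to produce a sequence of characters converging to $\varphi$ but distinct from it. Concretely, let $(\psi_n)_{n}\subset\mathrm{Ch}(G)\setminus\{1_G\}$ with $\psi_n\to 1_G$. If the $\psi_n$ can be taken finite dimensional with GNS representations $\sigma_n:G\to N_n$, consider the tensor product representations $\rho\otimes\sigma_n:G\to\mathbb M_k(\mathbb C)\overline\otimes N_n$; the associated traces are $\varphi\cdot\psi_n$, which converge pointwise to $\varphi\cdot 1_G=\varphi$. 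If $\rho\otimes\sigma_n$ generates a factor for infinitely many $n$ and these traces are genuinely distinct from $\varphi$, this contradicts isolation of $\varphi$ in $\mathrm{Ch}(G)$. The subtlety is ensuring the resulting traces are \emph{extreme} and \emph{distinct} from $\varphi$: for distinctness, note $\varphi\cdot\psi_n=\varphi$ would force $\psi_n\equiv 1$ on the support of $\varphi$, which for $n$ large and $\psi_n\neq 1_G$ close to $1_G$ can be excluded; for extremality, one may need to pass to extreme components of $\varphi\cdot\psi_n$, which still converge to $\varphi$ and lie in $\mathrm{Ch}(G)\setminus\{\varphi\}$.

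The main obstacle is handling the case where $1_G$ is approximated only by \emph{infinite} dimensional characters, or where the tensor-twisting trick fails to produce extreme traces; here I would instead invoke the amenability/approximation machinery. Observe that $\varphi$ being finite dimensional, the GNS algebra $M\cong\mathbb M_k(\mathbb C)$ is amenable, so Lemma~\ref{convergence} applies: any sequence of traces converging to $\varphi$ can be realized via representations into a fixed II$_1$ factor converging in $\|\cdot\|_2$. Combining this with a van Neumann-algebraic perturbation (twisting $\rho$ by a unitary path coming from the approximating characters of $1_G$, pushed into an ambient II$_1$ factor) one produces nearby traces whose GNS algebras are II$_1$ factors — again contradicting that $\varphi$ is isolated and finite dimensional. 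Either way, the contradiction shows $1_G$ must be isolated, i.e. $G$ has property (ET).

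Alternatively, and perhaps most efficiently, I would deduce this from Lemma~\ref{lem:ifd} applied to $A=C^*G=C^*(\mathbb Z/2\mathbb Z)* C^*H$ for a suitable free decomposition — but since $G$ need not decompose as a free product, the self-contained twisting argument above is the one I expect to carry out, with the realization-via-ultrapower technique of Lemma~\ref{convergence} as the tool that resolves the extremality issue. The heart of the matter, and the step I expect to be delicate, is verifying that the twisted traces $\varphi\cdot\psi_n$ (or their extreme components) are eventually distinct from $\varphi$; this reduces to showing that $\psi_n$ cannot agree with $1_G$ on the finite-index subgroup $\ker\rho$ (or more precisely on the subgroup generated by the support of $\varphi$) for all large $n$, which follows from $\psi_n\to 1_G$ together with $\psi_n\neq 1_G$ by a pigeonhole/compactness argument on the finitely many relevant group elements.
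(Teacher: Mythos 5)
Your opening move is the right one — form the traces $\varphi\psi_n$ and invoke the isolation of $\varphi$ (via Corollary~\ref{strongly_isolated}) — and in this sense you are on the paper's track. But the rest of your sketch goes in the wrong logical direction and is missing the two ingredients that make the argument go through.

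First, the distinctness step is backwards. You propose to show that $\varphi\psi_n\neq\varphi$ eventually (so as to contradict isolation), by arguing that $\psi_n$ ``cannot agree with $1_G$ on the finite-index subgroup $H=\langle\supp\varphi\rangle$'' for large $n$. This is false as stated, and in fact the paper proves the opposite: one first shows $\varphi\psi_n=\varphi$ for $n$ large, then concludes from this that $\psi_n|_H=1_H$, and finally deduces $\psi_n=1_G$ eventually because $[G:H]<\infty$ forces $\psi_n$ to factor through the finite group $G/H$, whose character space is discrete. There is no contradiction step; the argument directly verifies (ET). The ``pigeonhole/compactness'' claim you invoke does not exist — $H$ is generally an infinite subgroup, so knowing $\psi_n\to 1_G$ and $\psi_n\neq 1_G$ tells you nothing about $\psi_n|_H$ a priori.

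Second, and more seriously, you have no mechanism for getting from ``the barycenter measures $\mu_n$ of $\varphi\psi_n$ concentrate at $\varphi$'' to ``$\varphi\psi_n=\varphi$''. The paper's Claim~\ref{lower bound} (a Pimsner--Popa index bound) shows that any nonzero projection in the relative commutant $\pi_n(G)'\cap(\mathbb M_k(\mathbb C)\overline{\otimes}P_n)$ has trace $\geq 1/k^2$; combined with $\tau(z_n)\to 1$ this forces $z_n=1$ for large $n$. Without this quantitative lower bound you cannot exclude a small but nonzero non-$\varphi$ part of $\varphi\psi_n$. Your fallback to ``the amenability/approximation machinery'' and ``twisting $\rho$ by a unitary path pushed into an ambient II$_1$ factor'' does not fill this gap: there is no free product structure on $G$ to make the perturbation results of Section~\ref{sec:perturbation} applicable, and Lemma~\ref{convergence} only realizes convergent traces by representations; it does not produce nearby factorial traces on a general group. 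Finally, note that the $\psi_n$ need not be assumed finite dimensional (you hedge on this case); the paper's argument works with any tracial factor $P_n=\rho_n(G)''$.
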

\begin{proof}
   To prove that $G$ has property (ET), let $(\psi_n)_{n\in\mathbb N}\subset\mathrm{Ch}(G)$ be a sequence such that $\psi_n\rightarrow 1_G$. Define $\varphi_n\coloneq \varphi\psi_n\in\mathrm{Tr}(G)$. 
   Let $\pi:G\rightarrow\mathcal U(k)$ be a homomorphism, for some $k\in\mathbb N$, such that $\varphi=\mathrm{tr}_k\circ\pi$ and $\pi(G)''=\mathbb M_k(\mathbb C)$, where $\text{tr}_k:\mathbb M_k(\mathbb C)\rightarrow\mathbb C$ denotes the normalized trace.
   For $n\in\mathbb N$, let $P_n$ be a tracial factor and $\rho_n:G\rightarrow\mathcal U(P_n)$ 
   be a homomorphism such that $\psi_n=\tau\circ\rho_n$ and $\rho_n(G)''=P_n$.
    Define $\pi_n:G\rightarrow\mathcal U(\mathbb M_k(\mathbb C)\overline{\otimes}P_n)$ by letting $\pi_n(g)=\pi(g)\otimes\rho_n(g)$, for every $g\in G$. Then $\tau\circ\pi_n=\varphi\psi_n=\varphi_n.$
   
   Let $\mu_n$ be the unique Borel probability measure on $\mathrm{Ch}(G)$ whose barycenter is $\varphi_n$. Since $\varphi_n\rightarrow\varphi$
and $\varphi$ is isolated in $\mathrm{Ch}(G)$, Corollary \ref{strongly_isolated} implies that $\lambda_n\coloneq \mu_n(\{\varphi\})\rightarrow 1$.
Since $\tau\circ\pi_n=\varphi_n$,  there is a projection $z_n\in\mathcal Z(\pi_n(G)'')$ such that $\tau(z_n)=\lambda_n$ and $\tau(\pi_n(g)z_n)=\lambda_n\varphi(g)$, for every $g\in G$.

If $k=1$, then $\pi:G\rightarrow\mathcal U(1)=\mathbb T$ is a homomorphism. Thus, $\varphi=\pi$ and $\pi_n(g)=\varphi(g)\rho_n(g)$. This implies that for every $n\in\mathbb N$ and $g\in G$ we have that $\lambda_n\varphi(g)=\tau(\pi_n(g)z_n)=\varphi(g)\tau(\rho_n(g)z_n)$. Hence, for every $n\in\mathbb N$, we have $\tau(\rho_n(g)z_n)=\lambda_n=\tau(z_n)$ and thus $\rho_n(g)z_n=z_n$, for every $g\in G$. This implies that $z_n$ lies in the center of $\rho_n(G)''=P_n$. Since $P_n$ is a factor and $\tau(z_n)\rightarrow 1$, we get that $z_n=1$ for large $n\in\mathbb N$. Hence, $\psi_n=\tau\circ\rho_n=1_G$, for large $n\in\mathbb N$. This shows that $1_G$ is isolated in $\text{Ch}(G)$ and so $G$ has property (ET) if $k=1$. For $k\geq 2$, we will need the following claim:

\begin{claim}\label{lower bound}
If $p_n\in\pi_n(G)'\cap (\mathbb M_k(\mathbb C)\overline{\otimes}P_n)$ is a non-zero projection, for $n\in\mathbb N$, then $\tau(p_n)\geq \frac{1}{k^2}$.
\end{claim}

\begin{proof}[Proof of Claim]
 Let $g\in G$ and note that $\pi(g)\otimes 1$ commutes with $1\overline{\otimes}P_n$ and that $1\otimes\rho_n(g)$ and
$\pi_n(g)=\pi(g)\otimes\rho_n(g)$ normalize 
$1\overline{\otimes}P_n$. Using these facts, we get that $$\mathrm{Ad}(1\otimes\rho_n(g))(\mathrm{E}_{1\overline{\otimes}P_n}(p_n))=\mathrm{Ad}(\pi_n(g))(\mathrm{E}_{1\overline{\otimes}P_n}(p_n))=\mathrm{E}_{1\overline{\otimes}P_n}(\mathrm{Ad}(\pi_n(g))(p_n))=\mathrm{E}_{1\overline{\otimes}P_n}(p_n).$$
In other words, $\mathrm{E}_{1\overline{\otimes}P_n}(p_n)$ commutes with $1\otimes \rho_n(g)$, for every $g\in G$. Since $\rho_n(G)''=P_n$ and $P_n$ is a factor, we conclude that $\mathrm{E}_{1\overline{\otimes}P_n}(p_n)=\tau(p_n)1$.
On the other hand, for every tracial von Neumann algebra $(P,\tau)$ we have that
\begin{equation}\label{PP86}
    \text{$\mathrm{E}_{1\overline{\otimes}P}(x)\geq\frac{1}{k^2}x$,\;\; for every $x\in \mathbb M_k(\mathbb C)\overline{\otimes}P$ with $x\geq 0$.}
\end{equation}
This inequality follows from \cite[Proposition 2.1]{PP86} when $P$ is a II$_1$ factor, since in this case $1\overline{\otimes}P\subset\mathbb M_k(\mathbb C)\overline{\otimes}P$ is an inclusion of II$_1$ factors of index $k^2$. For general $P$, let $Q$ be a II$_1$ factor which contains $P$ and consider the natural inclusion $\mathbb M_k(\mathbb C)\overline{\otimes}P\subset\mathbb M_k(\mathbb C)\overline{\otimes}Q$.  If $x\in \mathbb M_k(\mathbb C)\overline{\otimes}P$ and $x\geq 0$, then $\mathrm{E}_{1\overline{\otimes}P}(x)=\mathrm{E}_{1\overline{\otimes}Q}(x)\geq\frac{1}{k^2}x$, which proves \eqref{PP86}.

By applying \eqref{PP86}, we get that $\tau(p_n)1=\mathrm{E}_{1\overline{\otimes}P_n}(p_n)\geq\frac{1}{k^2}p_n$ and hence $p_n\leq k^2\tau(p_n)1$. Since $p_n$ is a non-zero projection, this implies that $k^2\tau(p_n)\geq 1$ and thus $\tau(p_n)\geq\frac{1}{k^2}$, as claimed.
\end{proof}

Next, since $\tau(z_n)\rightarrow 1$, we can find $N\in\mathbb N$ such that $\tau(z_n)>1-\frac{1}{k^2}$, for every $n\geq N$. Since $z_n\in\pi_n(G)'\cap (\mathbb M_k(\mathbb C)\overline{\otimes}P_n)$, Claim \ref{lower bound} implies that $z_n=1$, for every $n\geq N$. Thus, $\tau\circ\pi_n=\varphi$ and  
\begin{equation}\label{psi=1}\text{$\varphi\psi_n=\varphi$, for every $n\geq N$.}
\end{equation}
For $n\in\mathbb N$, put $H_n=\{g\in G\mid\psi_n(g)=1\}=\{g\in G\mid\rho_n(g)=1\}$.
Put $S=\{g\in G\mid \varphi(g)\neq 0\}$ and let $H=\langle S\rangle<G$ be the subgroup generated by $S$. 
By \ref{psi=1}, we get that $S\subset H_n$ and since $H_n$ is a group, we deduce that
\begin{equation}\label{containment}
    \text{$H\subset H_n$, for every $n\geq N$.}
\end{equation}

We claim that $[G:H]<\infty$. Otherwise, we can find a sequence $(g_i)_{i=1}^\infty\subset G$ such that $g_j^{-1}g_i\not\in  H$, for every $i\neq j$. But then for every $i\neq j$, we have $g_j^{-1}g_i\not\in S$ and thus $0=\varphi(g_j^{-1}g_i)=\mathrm{tr}_k(\pi(g_j^{-1}g_i))$. This would imply that the unitaries $(\pi(g_i))_{i=1}^\infty\subset\mathbb M_k(\mathbb C)$ are pairwise orthogonal with respect to the scalar product $\langle \xi,\eta\rangle=\text{tr}_k(\eta^*\xi)$, contradicting that $\mathbb M_k(\mathbb C)$ is finite dimensional.

Note that $H<G$ is a normal subgroup and let $\zeta:G\rightarrow G/H$ be the quotient homomorphism.
If $n\geq N$, then \eqref{containment} gives that $H\subset H_n$
and thus ${\psi_n}_{|H}=1_H$. This implies that there exists $\chi_n\in\mathrm{Ch}(G/H)$ such that $\psi_n=\chi_n\circ\zeta$.
Since $\psi_n\rightarrow 1_G$, we get that $\chi_n\rightarrow 1_{G/H}$. Since $G/H$ is a finite group, $\mathrm{Ch}(G/H)$ is a finite set and thus $1_{G/H}$ is isolated in $\mathrm{Ch}(G/H)$. 
Hence, we can find $N'\geq N$ such that $\chi_n=1_{G/H}$ and hence $\psi_n=1_G$, for every $n\geq N'$. This proves that $1_G$ is isolated in $\mathrm{Ch}(G)$, which means that $G$ has property (ET).
\end{proof}

\begin{proof}[Proof of Corollary \ref{group C*-algebras}]
    The implications (1) $\Rightarrow$ (3) and (3) $\Rightarrow$ (2) follow from \cite[Proposition 7.9]{OSV23} and \cite[Proposition 7.4]{OSV23}, respectively. 
    
    To justify the implication (2) $\Rightarrow$ (1), assume that $G_{i_0}$ does not have property (ET), for some $i_0\in\{1,2\}$. 
    Since $G_{i_0}$ does not have property (ET), Lemma \ref{isolated fd characters} implies that $\mathrm{Ch}(G_{i_0})$ contains no isolated finite dimensional characters. Equivalently, $\partial_{\mathrm{e}}\mathrm{T}(C^*G_{i_0})$ contains no isolated extreme finite dimensional traces.
    Additionally, since $G_i$ is a non-trivial group, for $i\in\{1,2\}$, $1_{G_i}$ and $\delta_e$ are distinct traces on $G_i$, and therefore
     $|\mathrm{T}(C^*G_i)|=|\mathrm{Tr}(G_i)|\geq 2$, for $i\in\{1,2\}$. Thus, by applying Theorem \ref{Poulsen} to $C^*G=C^*G_1*C^*G_2$, we conclude that $\mathrm{Tr}(G)=\mathrm{T}(C^*G)$ is a Poulsen simplex.
\end{proof}

\begin{remark}\label{dihedral}

    Denoting by $\widehat G$ the space of irreducible representations of a group $G$ equipped with the Fell topology, we recall the well-known fact that if $G$ has property (T), i.e., the trivial representation is isolated in $\widehat G$, then automatically every finite dimensional irreducible representation of $G$ is isolated in $\widehat G$ (see, for instance, \cite[Theorem~1.2.5]{BdlHV08}). Nevertheless, we note that the analogous statement for property (ET) fails, i.e., property (ET) does not imply that every finite dimensional character is isolated in $\mathrm{Ch}(G)$. 
    In other words, the converse to Lemma \ref{isolated fd characters} fails.

    Indeed, consider the group\footnote{$G$ is isomorphic to the infinite dihedral group $D_\infty =\Z/2\Z \ltimes \Z$} $G=\Z/2\Z *\Z/2\Z$. Then $G$ has property (ET) by \cite[Proposition~7.4]{OSV23}, but it admits a continuum of $2$-dimensional characters that are not isolated in $\mathrm{Ch}(G)$: since $C^*G\cong\mathbb C^2*\mathbb C^2$ is the universal $C^*$-algebra generated by two projections, say $p$ and $q$, we can define the ($2$-dimensional) representations $\pi_t:C^*G\to\bM_2(\C)$, for $t\in (0,1)$, determined by 
    \[
    \pi_t(p) = \begin{pmatrix} 1 & 0 \\ 0 & 0\end{pmatrix} \quad\text{and}\quad \pi_t(q) = \begin{pmatrix} 1-t &\sqrt{t(1-t)} \\ \sqrt{t(1-t)}& t\end{pmatrix}.
    \]
    Then  $\|\pi_s(x)-\pi_{t}(x)\|_2\rightarrow 0$, as $s\rightarrow t$, for every $x\in C^*G$. Thus, denoting $\vphi_t\coloneq\mathrm{tr}_2\circ\pi_t\in\text{Ch}(G)$, we have that $\vphi_s\rightarrow\vphi_t$, as $s\rightarrow t$.
    Hence, $\vphi_t$ is  not isolated in $\text{Ch}(G)$, for every $t\in (0,1)$.
\end{remark}

\begin{remark}\label{trace space of C^2*C^2}
In the notation of Remark \ref{dihedral}, for $\varepsilon_1,\varepsilon_2\in\{0,1\}$, let $\vphi_{\varepsilon_1,\varepsilon_2}:C^*G\rightarrow\mathbb C$ be the representation determined by $\vphi_{\varepsilon_1,\varepsilon_2}(p)=\varepsilon_1$, 
    $\vphi_{\varepsilon_1,\varepsilon_2}(q)=\varepsilon_2$, and view $\vphi_{\varepsilon_1,\varepsilon_2}\in\text{Ch}(G)$. 
Then we have $\text{Ch}(G)=
\{\vphi_t\mid t\in (0,1)\}\cup\{\vphi_{\varepsilon_1,\varepsilon_2}\mid\varepsilon_1,\varepsilon_2\in\{0,1\}\}.$ Moreover,  $\{\vphi_{\varepsilon_1,\varepsilon_2}\mid\varepsilon_1,\varepsilon_2\in\{0,1\}\}$ are isolated points in $\text{Ch}(G)$ and we have
$$\lim_{t\rightarrow 0}\vphi_t=\psi_0\coloneq\frac{1}{2}(\vphi_{0,0}+\vphi_{1,1})\;\;\;\;\text{and}\;\;\;\;
\lim_{t\rightarrow 1}\vphi_t=\psi_1\coloneq\frac{1}{2}(\vphi_{1,0}+\vphi_{0,1}).$$
  This implies that $\text{Tr}(G)$ is neither a Poulsen nor a Bauer simplex.  Moreover, since we have that $\overline{\text{Ch}(G)}=\text{Ch}(G)\cup\{\psi_0,\psi_1\}$, it follows that $\text{Tr}(G)$ does not contain the Poulsen simplex as a closed face. Indeed, if $X\subset\text{Ch}(G)$ is a non-empty set satisfying $\text{conv}(X)\subset\overline{X}$, then $X$ must be a singleton.
\end{remark}

\section{The Poulsen simplex as a closed face}\label{sec:face} 

Let $A=A_1\ast A_2$ be the 
free product of two unital, separable $C^*$-algebras $A_1$ and $A_2$. In this section, we investigate what happens when the conditions from Theorem~\ref{Poulsen} fail, i.e., when $\mathrm{T}(A)$ is not a Poulsen simplex. We start with proving Theorem~\ref{Poulsenface}, which identifies the Poulsen simplex as a closed face of $\mathrm{T}(A)$, unless it is affinely homeomorphic to $\mathrm{T}(\C^2\ast\C^2)$. We finish by answering in the negative \cite[Question~1.11]{OSV23}.

\subsection{The Poulsen simplex as a closed face of the trace simplex of a free product} In this subsection, we prove Theorem \ref{Poulsenface}.
The proof uses the following observation made in \cite{MR19}: 
a unital surjective $*$-homomorphism $\gamma:A\to B$ between two unital, separable $C^*$-algebras, induces an affine continuous injection $\mathrm{T}(B)\ni\varphi\mapsto\varphi\circ\gamma\in\mathrm{T}(A)$ whose image is a closed face. Thus, if $\text{T}(B)$ is the Poulsen simplex or, more generally, admits the Poulsen simplex as a closed face, then $\text{T}(A)$ also contains the Poulsen simplex as a closed face.
In particular, we get the following:

\begin{corollary}\label{cor:matrices}   Let $A=A_1\ast A_2$ be the 
free product of two unital, separable $C^*$-algebras $A_1$ and $A_2$. Assume that $A_1$ admits an irreducible $n$-dimensional representation and $A_2$ admits an irreducible $k$-dimensional representation, for some  $n,k\geq 2$. Then $\mathrm{T}(A)$ has a closed face which is affinely homeomorphic to the Poulsen simplex. \end{corollary}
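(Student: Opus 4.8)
The plan is to exhibit a surjective unital $*$-homomorphism from $A$ onto a $C^*$-algebra whose trace simplex is known to be the Poulsen simplex, and then invoke the observation of \cite{MR19} recalled just above, namely that a unital surjection $\gamma : A \to B$ induces an affine continuous embedding $\mathrm{T}(B) \hookrightarrow \mathrm{T}(A)$ whose image is a closed face.

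First I would produce the target quotient. Since $A_1$ has an irreducible $n$-dimensional representation $\sigma_1 : A_1 \to \mathbb M_n(\mathbb C)$ and $A_2$ has an irreducible $k$-dimensional representation $\sigma_2 : A_2 \to \mathbb M_k(\mathbb C)$, both are surjective. Embedding $\mathbb M_n(\mathbb C)$ and $\mathbb M_k(\mathbb C)$ unitally into $\mathbb M_{nk}(\mathbb C)$ via $x \mapsto x \otimes 1_k$ and $y \mapsto 1_n \otimes y$ respectively, we obtain unital surjections $\tau_i : A_i \to B_i$, where $B_1 = \mathbb M_n(\mathbb C) \otimes 1_k \subset \mathbb M_{nk}(\mathbb C)$ and $B_2 = 1_n \otimes \mathbb M_k(\mathbb C) \subset \mathbb M_{nk}(\mathbb C)$, with $B_1 \cong \mathbb M_n(\mathbb C)$ and $B_2 \cong \mathbb M_k(\mathbb C)$. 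By the universal property of the full free product, the $\tau_i$ assemble into a unital $*$-homomorphism $\gamma : A = A_1 * A_2 \to B_1 * B_2 \cong \mathbb M_n(\mathbb C) * \mathbb M_k(\mathbb C)$, and since each $\tau_i$ is surjective and words in $B_1, B_2$ are dense in $B_1 * B_2$, $\gamma$ is surjective as well.

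Next I would identify $\mathrm{T}(B_1 * B_2) = \mathrm{T}(\mathbb M_n(\mathbb C) * \mathbb M_k(\mathbb C))$ with the Poulsen simplex. This is exactly Corollary~\ref{fin_dim_algebras}: the finite dimensional $C^*$-algebras $\mathbb M_n(\mathbb C)$ and $\mathbb M_k(\mathbb C)$ have no $1$-dimensional direct summands precisely because $n, k \geq 2$, so $\mathrm{T}(\mathbb M_n(\mathbb C) * \mathbb M_k(\mathbb C))$ is the Poulsen simplex. Finally, applying the \cite{MR19} observation to the surjection $\gamma$ gives an affine homeomorphism from $\mathrm{T}(\mathbb M_n(\mathbb C) * \mathbb M_k(\mathbb C))$ onto a closed face of $\mathrm{T}(A)$, which is therefore a closed face of $\mathrm{T}(A)$ affinely homeomorphic to the Poulsen simplex, as desired.

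There is no real obstacle here; the only point requiring a word of care is that the two copies $B_1, B_2$ must be placed as genuinely distinct unital subalgebras of some common matrix algebra so that their free product is the full free product $\mathbb M_n(\mathbb C) * \mathbb M_k(\mathbb C)$ rather than some quotient of it — but this is automatic since $B_1 * B_2$ is, by definition, the full free product of the abstract $C^*$-algebras $B_1 \cong \mathbb M_n(\mathbb C)$ and $B_2 \cong \mathbb M_k(\mathbb C)$, and the map $\gamma$ is built purely from the universal property, independently of any concrete realization. Thus the proof is a short assembly of Corollary~\ref{fin_dim_algebras} with the functoriality of $\mathrm{T}(\cdot)$ under quotients.
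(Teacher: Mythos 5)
Your proposal is correct and follows essentially the same route as the paper: produce a unital surjection $A \to \mathbb M_n(\mathbb C)\ast\mathbb M_k(\mathbb C)$ from the irreducible representations via the universal property, cite Corollary~\ref{fin_dim_algebras} to identify the target's trace simplex as the Poulsen simplex, and apply the \cite{MR19} observation about closed faces under quotients. The detour through $\mathbb M_{nk}(\mathbb C)$ is harmless but unnecessary, as you yourself note at the end.
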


\begin{proof} The assumptions imply the existence of a unital surjective $*$-homomorphism $A\to \bM_n(\C)\ast \bM_k(\C)$. Since $\mathrm{T}(\bM_n(\C)\ast\bM_k(\C))$ is the Poulsen simplex by Corollary~\ref{fin_dim_algebras}, the desired result follows from the  observation preceding the corollary.\end{proof}

In order to show that the trace simplexes of  
$\C^2\ast \C^n$, $n\geq 3$, and $\C^2\ast\bM_n(\C)$, $n\geq 2$, also admit the Poulsen simplex as a closed face,  we first prove that these $C^*$-algebras admit many quotients.

\begin{lemma}\label{quotients} Let $A$ be a unital $C^*$-algebra and $n\geq 2$ be an integer.
\begin{enumerate}
    \item \cite{Va98} 
    If $A$ is generated by $n-1$ self-adjoint elements, then there is a unital surjective $*$-homomorphism $\mathbb C^2*\mathbb C^n\rightarrow\mathbb M_n(\mathbb C)\otimes A$.
    \item If $A$ is generated by $n-1$ unitaries, then there is a unital surjective $*$-homomorphism $\mathbb C^2*\mathbb M_n(\mathbb C)\rightarrow\mathbb M_n(\mathbb C)\otimes A$. 
    \item If $A$ is generated by $2$ self-adjoint elements, then there is a unital surjective $*$-homomorphism $\mathbb C^2*\mathbb M_2(\mathbb C)\rightarrow \mathbb M_2(\mathbb C)\otimes A$.
\end{enumerate}
\end{lemma}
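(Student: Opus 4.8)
The plan is to produce, in each case, an explicit pair of generators (two projections, or a projection and a matrix unit system) of the free product algebra $\mathbb C^2\ast\mathbb C^n$ (resp. $\mathbb C^2\ast\mathbb M_n(\mathbb C)$, resp. $\mathbb C^2\ast\mathbb M_2(\mathbb C)$) and to send them to carefully chosen elements of $\mathbb M_n(\mathbb C)\otimes A$ (resp. $\mathbb M_2(\mathbb C)\otimes A$) so that the $n-1$ (resp. $2$) distinguished generators of $A$, together with the matrix units, lie in the image; since $\mathbb M_n(\mathbb C)\otimes A$ is generated by $\mathbb M_n(\mathbb C)\otimes 1$ and $1\otimes A$, this forces surjectivity. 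The universal property of the full free product then gives the desired $*$-homomorphism. Part (1) is exactly \cite{Va98}, so I would simply cite it. The content is in parts (2) and (3).

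For part (2): write $A = W^*(u_1,\dots,u_{n-1})$ with $u_i\in\mathcal U(A)$ (here I mean the $C^*$-algebra generated, not the von Neumann algebra, but the same notation), and recall $\mathbb C^2\ast\mathbb M_n(\mathbb C)$ is the universal $C^*$-algebra generated by a projection $p$ and a unital copy of $\mathbb M_n(\mathbb C)$, i.e. matrix units $(e_{ij})_{i,j=1}^n$. Map $e_{ij}\mapsto e_{ij}\otimes 1$, so the image already contains $\mathbb M_n(\mathbb C)\otimes 1$. It remains to choose the image of $p$ so that each $1\otimes u_k$ becomes expressible. The standard trick is to use a ``permutation-plus-diagonal'' unitary: let $s\in\mathbb M_n(\mathbb C)$ be the cyclic shift $e_{i+1,i}$ (indices mod $n$), which lies in $\mathbb M_n(\mathbb C)\otimes 1$, and let $d = \sum_{k=1}^{n-1} e_{kk}\otimes u_k + e_{nn}\otimes 1 \in \mathbb M_n(\mathbb C)\otimes A$, a unitary. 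Set $v = (s\otimes 1)d$; then conjugating the rank-one projection $e_{11}\otimes 1$ by powers of $v$ and taking appropriate products/differences recovers each $e_{kk}\otimes u_k$, hence $1\otimes u_k$ after multiplying by $e_{11}\otimes 1$-type elements and summing over the matrix units. Concretely, one checks that the $C^*$-algebra generated by $v$ and the matrix units $e_{ij}\otimes 1$ is all of $\mathbb M_n(\mathbb C)\otimes A$. The image of $p$ is then obtained by: $v$ need not be in the image, but we can instead directly set the image of $p$ to be a projection whose existence is forced — here it is cleaner to note that $\mathbb C^2\ast \mathbb M_n(\mathbb C) \cong C^*(\mathbb Z/2 \ast \mathbb Z/n)$ only up to the projection/unitary dictionary, so I would instead phrase the target generators as: matrix units $e_{ij}$ go to $e_{ij}\otimes 1$, and the free projection $p$ goes to $\tfrac12(1 + (v+v^*))$... — this is not a projection in general, so the right move is different. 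The correct approach: use that a unital embedding $\mathbb M_n(\mathbb C)\hookrightarrow \mathbb M_n(\mathbb C)\otimes A$ together with \emph{one} extra unitary $w\in\mathbb M_n(\mathbb C)\otimes A$ generating everything can be encoded, via the isomorphism $\mathbb C^2\ast\mathbb M_n(\mathbb C)$ being generated by $\mathbb M_n(\mathbb C)$ and one projection $p$, by sending $p\mapsto$ the spectral projection machinery...

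Let me restate the clean version I would actually write. For (2): since $\mathbb C^2\ast \mathbb M_n(\mathbb C)$ is the universal unital $C^*$-algebra generated by matrix units $(e_{ij})$ and a projection $p$, and since $\mathbb M_n(\mathbb C)\otimes A$ is generated by $1\otimes u_1,\dots,1\otimes u_{n-1}$ and $\mathbb M_n(\mathbb C)\otimes 1$, it suffices to exhibit a projection $p_0\in\mathbb M_n(\mathbb C)\otimes A$ such that $p_0$ together with $\mathbb M_n(\mathbb C)\otimes 1$ generates each $1\otimes u_k$. Take $p_0 = \sum_{k=1}^{n-1} \tfrac12\begin{pmatrix} 1 & u_k^* \\ u_k & 1\end{pmatrix}$ supported on the $\{k,k{+}1\}$... no — instead, the honest route is: consider $a_k := \tfrac12(e_{k,k+1}\otimes 1)( \text{something})$; I would construct $p_0$ as a block-diagonal-type projection of the form $p_0 = q\big(\sum_k e_{kk}\otimes u_k + e_{nn}\otimes 1\big) q^* $ for a fixed scalar projection $q$... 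The cleanest correct statement: let $w = \sum_{k=1}^{n-1}e_{k,k}\otimes u_k + e_{n,n}\otimes 1$; this is a unitary, and $C^*(w, \mathbb M_n(\mathbb C)\otimes 1) = \mathbb M_n(\mathbb C)\otimes A$ since $(e_{kk}\otimes 1)w(e_{kk}\otimes 1) = e_{kk}\otimes u_k$. Now apply (1)-style reasoning: $w$ is a unitary in $\mathbb M_n(\mathbb C)\otimes A$, and $\mathbb C^2\ast\mathbb M_n(\mathbb C)$ surjects onto $C^*(\text{unitary }w', \mathbb M_n(\mathbb C))$ whenever... hmm, actually $\mathbb C^2\ast\mathbb M_n(\mathbb C)$ contains the unitary $U := (2p-1)\cdot(\text{cyclic shift in }\mathbb M_n(\mathbb C))$, and a computation shows $C^*(U,\mathbb M_n(\mathbb C)) = \mathbb C^2\ast\mathbb M_n(\mathbb C)$; so mapping the matrix units to $e_{ij}\otimes 1$ and requiring $U\mapsto$ (cyclic shift)$\cdot w$ — but we need the image of $p$, i.e. $(2p-1) = $ (cyclic shift)$^{-1}\cdot$(cyclic shift $\cdot w$) $= w$, which is not $\pm1$-valued. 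This confirms that $p$ cannot map to something built naively from $w$; instead I would use a \emph{unitary} dilation: replace $u_k$ by the $2\times 2$ self-adjoint unitaries $\begin{pmatrix}0 & u_k\\ u_k^* & 0\end{pmatrix}$ and build $p$ from these via $p = \tfrac12(1+\,\cdot\,)$.

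\textbf{The main obstacle} is exactly this packaging: translating ``$A$ is generated by $n-1$ unitaries (resp. self-adjoints)'' into a single projection (or projection-plus-matrix-units system) inside $\mathbb M_n(\mathbb C)\otimes A$ whose $C^*$-algebra is everything. Once the right element is written down, verifying surjectivity is a routine matrix computation (each $e_{ii}\otimes u_k$ or $e_{ii}\otimes h_k$ is extracted by compression, then $1\otimes u_k = \sum_i (e_{1i}\otimes 1)(e_{ii}\otimes u_k)(e_{i1}\otimes 1)$ reconstituted), and invoking the universal property is immediate. For (3), $A = C^*(h_1,h_2)$ with $h_1=h_1^*$, $h_2=h_2^*$: send the free projection in $\mathbb C^2$ to a projection encoding $h_1$ via $\begin{pmatrix}\tfrac{1+\tanh h_1}{2} & \cdot \\ \cdot & \cdot\end{pmatrix}$-type formula and the copy of $\mathbb M_2(\mathbb C)$ to $\mathbb M_2(\mathbb C)\otimes 1$ twisted by $h_2$, arranging that the two spectral-data sets together recover $h_1\otimes 1$, $h_2\otimes 1$ and $\mathbb M_2(\mathbb C)\otimes 1$; the smallness $n=2$ is what makes the two-generator (rather than $n-1$) count work. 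I would model the explicit formulas on those in \cite{Va98} and on the $2$-projection description of $\mathbb C^2\ast\mathbb C^2$ used in Remark~\ref{dihedral}.
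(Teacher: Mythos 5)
Your proposal for parts (2) and (3) is incomplete, and the reason is that you committed to the wrong division of labour between the two free factors of $\C^2\ast\bM_n(\C)$. You fixed the image of $\bM_n(\C)$ to be the untwisted copy $\bM_n(\C)\ot 1$ and then tried to pack all of $u_1,\dots,u_{n-1}$ (or the self-adjoint generators in part (3)) into the single free projection $p$. You correctly noticed this cannot work: a general unitary $w$ is not of the form $2p-1$ for a projection $p$, and dilating to make it one costs you a factor of $2$ in matrix size that the statement does not allow. The fix, which you circled but did not take, is to reverse the roles. Having observed, correctly, that $\bM_n(\C)\ot A$ is generated by $\bM_n(\C)\ot 1$ together with the unitary $w=\sum_{k}e_{kk}\ot u_k + e_{nn}\ot 1$, you should absorb $w$ into the matrix-unit system rather than into $p$: define twisted matrix units $f_{11}=e_{11}\ot 1$, $f_{1j}=e_{1j}\ot u_j$ for $j\geq 2$, and $f_{ij}=f_{1i}^*f_{1j}$, giving a unital $*$-homomorphism $\beta:\bM_n(\C)\to\bM_n(\C)\ot A$ whose image contains the $e_{ii}\ot 1$ and the $e_{1j}\ot u_j$. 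Then $p$ can be mapped to a purely scalar projection $p_0\ot 1$ (with $p_0\in\bM_n(\C)$ the rank-one projection all of whose entries are $1/n$), chosen only so that $p_0$ together with the diagonals $e_{ii}$ generates $\bM_n(\C)$; this recovers all of $\bM_n(\C)\ot 1$ and hence each $1\ot u_j$. The free projection carries no information about $A$ at all, contrary to what you assumed.

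For part (3) the situation is analogous but the packaging is different again, and your sketch (spectral functions like $\tanh h_1$ for one generator, $\bM_2(\C)$ ``twisted by $h_2$'' for the other) does not lead to a projection, nor does it explain how both generators end up in the image. The clean route is to combine the two self-adjoint generators into one contraction $z$ (so $A=C^*(z)$ with $\|z\|\leq 1$), send $\bM_2(\C)$ trivially to $\bM_2(\C)\ot 1$, and send $p$ to the explicit $2\times 2$ projection with off-diagonal entries $z,z^*$ and diagonal entries $\tfrac{1\pm\sqrt{1-z^*z}}{2}$ (adjusted); compressing by $e_{12}\ot 1$ and $e_{21}\ot 1$ then recovers $1\ot z$. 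Your general instinct that these are ``routine matrix computations once the right element is written down'' is right, and your identification of the obstacle as ``the packaging'' is exactly the right diagnosis, but the proposal never finds the correct packaging for either case, so the argument does not close.

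Finally, note that the generator count in part (2) matters: the paper's construction uses $n-1$ unitaries $u_2,\dots,u_n$ to fill the off-diagonal row $f_{1j}$, $j=2,\dots,n$, which is precisely why $\C^2\ast\bM_n(\C)$ (and not some larger matrix factor) suffices. Your dilation idea would instead put you in $\bM_{2n}(\C)\ot A$ or similar, which is a strictly weaker conclusion.
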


\begin{proof}
(1) This is \cite[Corollary~4.5]{Va98}. Indeed, \cite[Corollary~4.5]{Va98} shows that $\mathbb M_n(\mathbb C)\otimes A$ is generated by projections $p,q_1,\ldots,q_n$ such that $\sum_{i=1}^nq_i=1$. 
Define unital $*$-homomorphisms $\alpha:\mathbb C^2\rightarrow \mathbb M_n(\mathbb C)\otimes A$ and $\beta:\mathbb C^n\rightarrow\mathbb M_n(\mathbb C)\otimes A$ given by $\alpha(s_1,s_2)=s_1p+s_2(1-p)$ and $\beta(t_1,\ldots,t_n)=\sum_{i=1}^nt_iq_i$.  Since the images of $\alpha$ and $\beta$ generate $\mathbb M_n(\mathbb C)\otimes A$, it follows that the unital $*$-homomorphism $\gamma:\mathbb C^2*\mathbb C^n\rightarrow\mathbb M_n(\mathbb C)\otimes A$ determined by $\gamma_{|\mathbb C^2}=\alpha$ and $\gamma_{|\mathbb C^n}=\beta$ is surjective.

(2)  Suppose that $A$ is generated by unitaries $u_2, \ldots, u_n$. The first part of the proof is the same as the first part of the proof of \cite[Proposition~3.7(ii)]{MR19}. For $1\leq i,j\leq n$, denote by $e_{ij}\in\mathbb M_n(\mathbb C)$ the  elementary matrix with a $1$ in the $(i,j)$-position and $0$ elsewhere.
Set $f_{11} \coloneq e_{11}\ot 1_A$ and $f_{1j} \coloneq e_{1j}\ot u_j$, for $2\leq j\leq n$. Then $f_{1j}f_{1j}^* = e_{11}\ot 1_\cA$ and $f_{1j}^*f_{1j} = e_{jj}\ot 1_\cA$, for every $1\leq j\leq n$. Setting $f_{ij}\coloneq f_{1i}^*f_{1j}$ for $1\leq i,j\leq n$, we get that $\{f_{ij}\mid 1\leq i,j\leq n\}$ is a set of matrix units in $\bM_n(\C)\ot A$. Hence, there is a unital $*$-homomorphism $\beta:\bM_n(\C)\to \bM_n(\C)\ot A$ determined by $\beta(e_{ij})=f_{ij}$ for $1\leq i,j\leq n$. We note that, by construction, $e_{ii}\ot 1_A\in\beta(\mathbb M_n(\mathbb C))$, for every $1\leq i\leq n$.  

Next, let $p\in\mathbb M_n(\mathbb C)$ be the projection matrix whose every entry is equal to $\frac{1}{n}$, i.e., $(p)_{i,j}=\frac{1}{n}$, for every $1\leq i,j\leq n$.
Then the subalgebra $V$ of $\bM_n(\C)$ generated by $\{p,e_{ii}\mid 1\leq i\leq n\}$ equals $\bM_n(\C)$. Indeed, given any $1\leq i,j\leq n$, we see that $e_{ij} = n^2e_{ii}pe_{jj} \in V$, which implies that $V=\bM_n(\C)$. Define a unital $*$-homomorphism $\alpha: \C^2\to\bM_n(\mathbb C) \ot A$ by letting $\alpha(s_1,s_2)=s_1 (p\otimes 1_A)+s_2((1-p)\otimes 1_A)$.

Finally, denote by $\gamma:\C^2\ast\bM_n(\C)\to \bM_n(\C)\ot A$ the unital $*$-homomorphism determined by $\gamma_{|\C^2}=\alpha$ and $\gamma_{|\mathbb M_n(\mathbb C)}=\beta$. Then by construction, $p\ot 1_A$ and $e_{ii}\ot 1_A$ are in the image of $\gamma$, for every $1\leq i\leq n$, and thus by the above, $\bM_n(\C)\ot 1_A\subset \gamma(\C^2\ast \bM_n(\C))$. Since also $f_{1j} = e_{1j}\ot u_j\in \gamma(\C^2\ast \bM_n(\C))$, we deduce that $1\ot u_j$ is contained in $\gamma(\C^2\ast \bM_n(\C))$, for every $2\leq j\leq n$. We conclude that $\gamma$ is surjective, finishing the proof of (2).

(3) Suppose that $A$ is generated by $2$ self-adjoint elements. Then $A$ is generated by a single element $z\in A$ with $\|z\|\leq 1$. Define, using continuous functional calculus,  $p\in \mathbb M_2(\mathbb C)\otimes A=\mathbb M_2(A)$ by letting 
$$p=\begin{pmatrix}\frac{1+\sqrt{1-z^*z}}{2}&z^*\\ z&\frac{1-\sqrt{1-zz^*}}{2} \end{pmatrix}.$$
Since $z(z^*z)^n=(zz^*)^nz$, for every $n\geq 0$, it follows that $z\sqrt{1-z^*z}=\sqrt{1-zz^*}z$, which implies that $p$ is a projection. Define unital $*$-homomorphisms $\alpha:\mathbb C^2\rightarrow\mathbb M_2(\mathbb C)\otimes A$ and $\beta:\mathbb M_2(\mathbb C)\rightarrow\mathbb M_2(\mathbb C)\otimes A$ by letting $\alpha(s_1,s_2)=s_1p+s_2(1-p)$ and $\beta(x)=x\otimes 1_A$. Denote by $\gamma:\C^2\ast\bM_2(\C)\to \bM_2(\C)\ot A$ the unital $*$-homomorphism determined by $\gamma_{|\C^2}=\alpha$ and $\gamma_{|\mathbb M_2(\mathbb C)}=\beta$.
Since $z$ generates $A$, it follows that $p$ and $\mathbb M_2(\mathbb C)\otimes 1_A$  generate $\mathbb M_2(\mathbb C)\otimes A$. This implies that $\gamma$ is surjective.
\end{proof}

\begin{lemma}\label{lem:C2}The trace simplexes $\mathrm{T}(\C^2\ast\C^3)$ and $\mathrm{T}(\C^2\ast\bM_n(\C))$, for $n\geq 2$, have a closed face which is affinely homeomorphic to the Poulsen simplex.
\end{lemma}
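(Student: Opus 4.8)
The strategy is to realize the Poulsen simplex as the trace simplex of a suitable quotient of each of these $C^*$-algebras, and then apply the observation of \cite{MR19} recalled before Corollary~\ref{cor:matrices}: a unital surjective $*$-homomorphism $\gamma\colon A\to B$ of unital separable $C^*$-algebras induces an affine continuous injection $\mathrm{T}(B)\ni\psi\mapsto\psi\circ\gamma\in\mathrm{T}(A)$ whose image is a closed face of $\mathrm{T}(A)$; since the domain is compact and the target Hausdorff, this face is affinely homeomorphic to $\mathrm{T}(B)$. The quotients will all be of the form $\bM_k(\C)\otimes A_0$ for a single, carefully chosen unital separable $C^*$-algebra $A_0$, and I will use the elementary fact that $\mathrm{T}(\bM_k(\C)\otimes A_0)$ is affinely homeomorphic to $\mathrm{T}(A_0)$: writing $\bM_k(\C)\otimes A_0=\bM_k(A_0)$, the trace property forces every tracial state to have the form $\mathrm{tr}_k\otimes\psi$ for a unique $\psi\in\mathrm{T}(A_0)$, and $\psi\mapsto\mathrm{tr}_k\otimes\psi$ is the inverse map.

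The crucial point is to choose $A_0$ with as few generators as possible. I would take $A_0=D\otimes\mathcal{Z}$, where $D$ is a unital, simple, separable AF $C^*$-algebra with $\mathrm{T}(D)$ the Poulsen simplex (which exists by \cite{Go77,Bl80}) and $\mathcal{Z}$ is the Jiang--Su algebra. Then $A_0$ is unital, separable and $\mathcal{Z}$-stable, and since $\mathcal{Z}$ is monotracial one has $\mathrm{T}(A_0)\cong\mathrm{T}(D)$, so $\mathrm{T}(A_0)$ is the Poulsen simplex. By the Thiel--Winter theorem that every unital separable $\mathcal{Z}$-stable $C^*$-algebra is singly generated, $A_0$ is generated by a single element, equivalently by two self-adjoint contractions $a_1,a_2$. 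Setting $u_j=a_j+i\sqrt{1-a_j^2}\in C^*(a_j)$, which is a unitary with $a_j=\tfrac12(u_j+u_j^*)\in C^*(u_j)$, one sees that $A_0$ is also generated by two unitaries, and hence by $n-1$ unitaries for every $n\geq 3$.

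With this $A_0$, I would conclude as follows. Since $A_0$ is generated by $2=3-1$ self-adjoint elements, Lemma~\ref{quotients}(1) provides a unital surjective $*$-homomorphism $\C^2\ast\C^3\to\bM_3(\C)\otimes A_0$. Since $A_0$ is generated by two self-adjoint elements, Lemma~\ref{quotients}(3) provides a unital surjective $*$-homomorphism $\C^2\ast\bM_2(\C)\to\bM_2(\C)\otimes A_0$. And since $A_0$ is generated by $n-1$ unitaries for every $n\geq 3$, Lemma~\ref{quotients}(2) provides a unital surjective $*$-homomorphism $\C^2\ast\bM_n(\C)\to\bM_n(\C)\otimes A_0$ for all $n\geq 3$. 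In each case the target has trace simplex the Poulsen simplex, so by the first paragraph $\mathrm{T}(\C^2\ast\C^3)$ and $\mathrm{T}(\C^2\ast\bM_n(\C))$, $n\geq 2$, each possess a closed face affinely homeomorphic to the Poulsen simplex.

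The main obstacle is the existence of $A_0$: the obvious $C^*$-algebras with Poulsen trace simplex, such as $C^*\F_2$ or $\bM_2(\C)\ast\bM_2(\C)$, are not manifestly singly generated, while Lemma~\ref{quotients}(1) and (3) genuinely require a $C^*$-algebra generated by two self-adjoint elements. Passing to the $\mathcal{Z}$-stabilization and invoking the generator theorem resolves this cleanly; alternatively, one could try to use directly that separable AF $C^*$-algebras are singly generated, but the $\mathcal{Z}$-stable formulation admits the most convenient reference.
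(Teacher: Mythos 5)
Your proof is correct, but it takes a genuinely different route from the paper's. The paper resolves the generator constraint by picking concrete free-product algebras that are \emph{manifestly} finitely generated and whose trace simplex is known to be Poulsen by the paper's own Theorem~\ref{Poulsen} (or its precursor in \cite{OSV23}): it takes $A = C([0,1])*C([0,1])$, which is visibly generated by two self-adjoints (one from each copy of $C([0,1])$), to handle $\C^2*\C^3$ and $\C^2*\bM_2(\C)$ via Lemma~\ref{quotients}(1) and (3); and it takes $C^*\F_{n-1}$, manifestly generated by $n-1$ unitaries, to handle $\C^2*\bM_n(\C)$ for $n\geq 3$ via Lemma~\ref{quotients}(2). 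You instead build a single algebra $A_0 = D\otimes\cZ$ that serves all cases at once, using the Blackadar--Goodearl realization theorem for the Poulsen simplex, $\cZ$-stability to preserve the trace simplex, and the Thiel--Winter generator theorem to get single generation (hence two self-adjoint, and two unitary, generators). Your argument is self-contained with respect to the present paper's main results — it makes no appeal to Theorem~\ref{Poulsen} or to \cite{OSV23} — and it is more uniform in that one $A_0$ covers every case; the price is importing the $\cZ$-stable generator theorem as external machinery, whereas the paper stays inside the circle of free products and the results already developed. Both routes are valid; a minor point worth making explicit in your write-up is why $\mathrm{T}(A\otimes\cZ)\cong\mathrm{T}(A)$: since $\cZ$ is simple, nuclear, and monotracial, for any trace $\rho$ on $A\otimes\cZ$ and any $a\in A_+$, the positive tracial functional $b\mapsto\rho(a\otimes b)$ on $\cZ$ must be a multiple of the unique trace, forcing $\rho=\varphi\otimes\tau_{\cZ}$ with $\varphi(a)=\rho(a\otimes 1)$; the same argument (with $\bM_k(\C)$ in place of $\cZ$) underlies your identification $\mathrm{T}(\bM_k(\C)\otimes A_0)\cong\mathrm{T}(A_0)$, which you invoke but do not justify.
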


\begin{proof} Let $A=C([0,1])*C([0,1])$. Since $A$ is generated by $2$ self-adjoint elements, parts (1) and (3) of Lemma \ref{quotients} provide unital surjective $*$-homomorphisms $\mathbb C^2*\mathbb C^3\rightarrow\mathbb M_3(\mathbb C)\otimes A$ and $\mathbb C^2*\mathbb M_2(\mathbb C)\rightarrow\mathbb M_2(\mathbb C)\otimes A$.
Since $\text{T}(A)$ is the Poulsen simplex by \cite[Theorem 1.5]{OSV23} (or Theorem~\ref{Poulsen}) and $\text{T}(\mathbb M_k(\mathbb C)\otimes A)\cong\text{T}(A)$, for $k\geq 2$, the conclusion for $\mathrm{T}(\C^2\ast\C^3)$ and $\mathrm{T}(\C^2\ast\bM_2(\C))$ follows. 

If $n\geq 3$, then since $C^*\mathbb F_{n-1}$ is generated by $n-1$ unitaries, part (2) of Lemma \ref{quotients} provides a unital surjective $*$-homomorphism $\mathbb C^2*\mathbb M_n(\mathbb C)\rightarrow\mathbb M_n(\mathbb C)\otimes C^*\mathbb F_{n-1}$. Since $\text{T}(C^*\mathbb F_{n-1})$ is the Poulsen simplex by \cite[Theorem 1.1]{OSV23} (or Theorem \ref{Poulsen}) and $\text{T}(\mathbb M_n(\mathbb C)\otimes C^*\mathbb F_{n-1})\cong\text{T}(C^*\mathbb F_{n-1})$, the conclusion also follows for $\text{T}(\mathbb C^2*\mathbb M_n(\mathbb C))$, for $n\geq 3$.
\end{proof}

\begin{lemma}\label{lem:infinite}
    Let $A=A_1\ast A_2$ be the full free product of two unital, separable $C^*$-algebras $A_1$ and $A_2$ such that $\mathrm{T}(A_i)$ is non-empty and does not consist of a single $1$-dimensional trace, for $i=1,2$. Assume that $\mathrm{T}(A_1)$ contains an infinite dimensional extreme trace. Then $\mathrm{T}(A)$ has a closed face which is affinely homeomorphic to the Poulsen simplex.
\end{lemma}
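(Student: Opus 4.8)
The idea is to reduce to a situation where we can apply Corollary~\ref{cor:matrices} or, failing that, exhibit a surjection onto a $C^*$-algebra whose trace simplex is already known to contain the Poulsen simplex as a closed face. Let $\varphi_1\in\partial_{\mathrm{e}}\mathrm{T}(A_1)$ be an infinite dimensional extreme trace, with GNS representation $\pi_1:A_1\to N_1$, where $N_1=\pi_1(A_1)''$ is a $\mathrm{II}_1$ factor. Since $\mathrm{T}(A_2)$ does not consist of a single $1$-dimensional trace, the tracial quotient $A_{2,\mathrm{tr}}$ is not isomorphic to $\C$; in particular $A_2$ admits a tracial representation $\pi_2:A_2\to N_2$ with $N_2=\pi_2(A_2)''\neq \C 1$. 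I would first dispose of the easy sub-case: if $A_2$ admits an irreducible representation of dimension $\geq 2$, then so does $A_1$ (it has an infinite dimensional extreme trace, whose GNS factor is either a matrix algebra of size $\geq 2$—impossible as it is infinite dimensional—or a $\mathrm{II}_1$ factor, which certainly surjects onto, or rather maps onto... ) — more carefully, I would argue that $A_1$ admits a $*$-homomorphism onto a $\mathrm{II}_1$ factor, hence onto $\mathrm{L}(\F_\infty)$, hence onto $\bM_2(\C)$; combined with an irreducible $\geq 2$-dimensional representation of $A_2$ this gives a surjection $A\to \bM_2(\C)\ast\bM_2(\C)$, and Corollary~\ref{cor:matrices} finishes the job.

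The remaining case is when $A_2$ has \emph{no} irreducible representation of dimension $\geq 2$, i.e., every factorial tracial representation of $A_2$ is $1$-dimensional. Here I would use that $\mathrm{T}(A_2)$ contains at least two distinct $1$-dimensional traces $\psi,\psi'$ (as in the proof of Proposition~\ref{infinite-free-product}); these give a unital surjective $*$-homomorphism $A_2\to\C^2$. Since $A_1$ maps onto a $\mathrm{II}_1$ factor (via $\pi_1$, whose image is the $\mathrm{II}_1$ factor $N_1$), and every $\mathrm{II}_1$ factor contains a copy of $C([0,1])$ and hence admits a unital surjection onto $C([0,1])\ast C([0,1])$—wait, a subalgebra does not give a surjection. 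Instead: $N_1$, being a $\mathrm{II}_1$ factor, has a unital $*$-homomorphism from $C^*\F_\infty$ with weakly dense image, but to produce a \emph{quotient} of $A_1$ I should pick a concrete quotient. The cleanest route: $\pi_1(A_1)=B$ is a weakly dense $C^*$-subalgebra of the $\mathrm{II}_1$ factor $N_1$; the quotient $A_1\to B$ composed with... this still does not obviously surject onto $\C^2$ in a useful way. Let me instead aim directly: I claim $A_1$ surjects onto $\C^2$. Indeed, $N_1$ is diffuse so contains a projection $p$ of trace $1/2$; pulling back via $\pi_1$ and then using that $A_1\to\pi_1(A_1)''$... again the issue is surjectivity onto a finite dimensional algebra. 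The honest statement is: $A_1$ admits a $*$-homomorphism onto the $C^*$-algebra $B=\pi_1(A_1)$; and $B$ being a simple? no, not simple in general.

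The correct approach, and the one I expect the authors use: combine the surjection $A_2\twoheadrightarrow\C^2$ with the surjection $A_1\twoheadrightarrow A_1/\ker$ where the quotient is chosen to be $C^*$-algebra generated by $n-1$ self-adjoint (or unitary) elements, then invoke Lemma~\ref{quotients} and Lemma~\ref{lem:C2}. Concretely: since $A_1$ has an infinite dimensional extreme trace, its image $B:=\pi_1(A_1)$ in the $\mathrm{II}_1$ factor $N_1$ is infinite dimensional; $B$ is separable, hence generated by countably many self-adjoint elements, but we need finitely many. Better: $N_1$ being a $\mathrm{II}_1$ factor, choose two self-adjoint elements $a,b\in N_1$ generating a weakly dense subalgebra with $C^*(a,b)$ mapping onto $C([0,1])\ast C([0,1])$—still not a quotient of $A_1$. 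I think the resolution is that we do \emph{not} need $B$ itself but rather: $A_1$ has a nonzero trace with diffuse GNS, so by the tracial quotient discussion $A_{1,\mathrm{tr}}$ has an infinite dimensional factor representation; then, using that $A=A_1\ast A_2$ and $\mathrm{T}(A)\cong\mathrm{T}(A_{1,\mathrm{tr}}\ast A_{2,\mathrm{tr}})$ by Lemma~\ref{lem:freeprodtracial}, we reduce to showing $\mathrm{T}(A_{1,\mathrm{tr}}\ast\C^2)$ contains the Poulsen simplex as a closed face when $A_{1,\mathrm{tr}}$ has an infinite dimensional factor trace; and then a direct perturbative construction, in the spirit of Lemma~\ref{2}, produces a surjection or a face.

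\textbf{The main obstacle.} The essential difficulty is precisely in the case where $A_2$ has only $1$-dimensional irreducible representations: then $A$ has no quotient of the form $\bM_n(\C)\ast\bM_k(\C)$ with $n,k\geq 2$, so Corollary~\ref{cor:matrices} does not apply, and one must instead identify a closed face of $\mathrm{T}(A)$ directly. I expect the argument to go: fix the $1$-dimensional trace quotient $A_2\twoheadrightarrow\C^2$, inducing a closed face $F\cong\mathrm{T}(A_1\ast\C^2)$ of $\mathrm{T}(A)$; then it suffices to show $\mathrm{T}(A_1\ast\C^2)$ contains the Poulsen simplex as a closed face, given that $A_1$ has an infinite dimensional extreme trace. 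For this last step, one shows $A_1\ast\C^2$ surjects onto $\bM_2(\C)\otimes D$ for some unital $D$ with $\mathrm{T}(D)$ the Poulsen simplex, using Lemma~\ref{quotients}(3): namely, the infinite dimensional extreme trace of $A_1$ lets us find inside (a quotient of) $A_1$ a self-adjoint element with no isolated spectral points whose functional calculus, together with $\C^2$, generates a copy of $\bM_2\otimes C(X)\ast(\cdots)$. The bookkeeping to make ``$A_1$ has a quotient generated by two self-adjoints with the right structure'' precise—rather than merely a weakly dense subalgebra of $N_1$—is the delicate point; one likely passes to $A_{1,\mathrm{tr}}$ and uses that it has a faithful trace with a diffuse summand, then builds the quotient explicitly from the GNS data. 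Once a surjection onto $\bM_2(\C)\otimes(C([0,1])\ast C([0,1]))$ (or a similar algebra covered by Lemma~\ref{lem:C2}) is in hand, the observation preceding Corollary~\ref{cor:matrices} gives the closed face and we are done.
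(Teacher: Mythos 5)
Your proposal never closes, and I think the approach itself is the problem. You are trying to produce \emph{quotient} $*$-homomorphisms $A\twoheadrightarrow B$ with $\mathrm{T}(B)$ containing the Poulsen simplex as a closed face, and you repeatedly run into the same wall: the GNS representation $\pi_1:A_1\to N_1$ gives a $*$-homomorphism with weakly dense image in a II$_1$ factor, but $\pi_1(A_1)$ is not $N_1$, and a II$_1$ factor is simple so it has no nontrivial quotients at all (in particular, your claim ``$A_1$ admits a $*$-homomorphism onto a II$_1$ factor, hence onto $\mathrm{L}(\F_\infty)$, hence onto $\bM_2(\C)$'' is false on its face). This is not just a bookkeeping issue: take $A_1=C_r^*(\F_2)$, which satisfies the hypotheses (its unique trace is extreme and infinite dimensional) but has \emph{no} finite dimensional representations whatsoever, being simple and infinite dimensional. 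So even your ``easy sub-case'' reduction to Corollary~\ref{cor:matrices} does not go through, and the ``remaining case'' that you correctly flag as the main obstacle is never resolved — you enumerate several candidate fixes and acknowledge that each fails or would need more work.

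The paper's proof sidesteps quotients entirely. Fix $\varphi_1\in\partial_{\mathrm{e}}\mathrm{T}(A_1)\cap\mathrm{T}_\infty(A_1)$ and set $X=\{\psi\in\mathrm{T}(A)\mid\psi_{|A_1}=\varphi_1\}$. Since $\varphi_1$ is extreme, $X$ is automatically a closed face of $\mathrm{T}(A)$ (this uses only linearity of restriction and extremality of $\varphi_1$). It contains at least two points, e.g.\ $\varphi_1\otimes\varphi_2$ and $\varphi_1*\varphi_2$ for any non-$1$-dimensional $\varphi_2\in\mathrm{T}(A_2)$. Finally, for $\psi\in X$ with GNS $\pi:A\to M$, the algebra $M_1=\pi(A_1)''$ is a II$_1$ factor because $\tau\circ\pi_{|A_1}=\varphi_1$; if $M=M_1$ we are done, and otherwise $M_2=\pi(A_2)''\neq\C 1$ and Corollary~\ref{one diffuse algebra} produces unitaries $u_n\to 1$ with $M_1\vee u_nM_2u_n^*$ a II$_1$ factor, giving extreme traces in $X$ converging to $\psi$. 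So the closed face is built directly from the trace, not from any quotient of $A$, and the density of extreme points comes from the perturbation machinery (Theorem~\ref{perturbation} / Corollary~\ref{one diffuse algebra}) rather than from Lemmas~\ref{quotients} and \ref{lem:C2}. This is the key idea your proposal is missing.
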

\begin{proof}
Fix $\vphi_1\in\partial_{\mathrm{e}}\mathrm{T}(A_1)\cap\mathrm{T}_\infty(A_1)$, and consider the set
\[
X \coloneq \{\psi\in \mathrm{T}(A)\mid \psi_{|A_1} = \vphi_1\}.
\]
We will prove that $X$ is a closed face of $\mathrm{T}(A)$ affinely homeomorphic to the Poulsen simplex, which will finish the proof.

Firstly, if $\phi,\chi\in X$, then it is immediate that for every $\lambda\in [0,1]$ we have
\[
\big(\lambda \phi + (1-\lambda)\chi\big)|_{A_1} = \lambda \phi_{|A_1} + (1-\lambda)\chi_{|A_1} = \vphi_1,
\]
hence $X$ is convex. Similarly, if $\psi_n\in X$ is a sequence converging to some $\psi\in\mathrm{T}(A)$, then $\psi_{|A_1} = \lim_n \psi_{n|A_1} = \vphi_1$, hence $X$ is closed. 

Next, assume that $\psi\in X$ satisfies $\psi = \lambda \phi + (1-\lambda) \chi$, for some $\phi,\chi\in\text{T}(A)$ and $\lambda\in [0,1]$. Then
\[
\vphi_1 = \psi_{|A_1} = \lambda \phi_{|A_1} + (1-\lambda) \chi_{|A_1}.
\]
Since $\phi_{|A_1},\chi_{|A_1}\in\mathrm{T}(A_1)$ and $\vphi_1\in\partial_{\mathrm{e}}\mathrm{T}(A_1)$, we conclude that $\phi_{|A_1} = \chi_{|A_1} = \vphi_1$, i.e., $\phi,\chi\in X$. We conclude that $X$ is a closed face of $\mathrm{T}(A)$. 

Note that $X$ contains at least two points. Indeed, if $\varphi_2\in\text{T}(A_2)$ is any trace which is not $1$-dimensional, then the traces $\varphi_1\otimes\varphi_2$ and $\varphi_1*\varphi_2$ are distinct and belong to $X$.

Finally, we show that the extreme points of $X$ are dense in $X$. Since we already know that $X$ is a closed face, hence a simplex, 
and that $X$ has at least two points,
this will imply that $X$ is affinely homeomorphic to the Poulsen simplex. 

Fix $\psi\in X$, and let $\pi:A\to M$ be the associated tracial representation. Denote by $M_i = \pi(A_i)''$. Note that by construction, $\tau\circ \pi_{|A_1} = \psi_{|A_1} = \vphi_1$, hence $\pi_{|A_1}: A_1\to M_1$ is a tracial representation associated to $\vphi_1$. Since $\vphi_1\in\partial_{\mathrm{e}}\mathrm{T}(A_1)\cap\mathrm{T}_\infty(A_1)$, it follows that $M_1$ is a II$_1$ factor. If $M = M_1$, then $\psi$ is an extreme point and we are done. Otherwise, we necessarily have $M_2\neq \C 1$, and thus applying Corollary~\ref{one diffuse algebra}, we get a von Neumann algebra $\widetilde{M}$ containing $M$, and unitaries $u_n\in\widetilde{M}$, with $\norm{u_n - 1}_2 \to 0$ as $n\to \infty$, such that $M_1\vee u_n M_2 u_n^*$ is a II$_1$ factor for every $n\in\mathbb N$. Define $\pi_n:A\to \widetilde{M}$ by $\pi_n(a_1) = \pi(a_1)$, for $a_1\in A_1$, and $\pi_n(a_2)=u_n\pi(a_2)u_n^*$, for $a_2\in A_2$. Then $\psi_n\coloneq \tau\circ \pi_n$ converges to $\psi$, as $n\rightarrow\infty$. Furthermore, we have by construction that $\pi_n(A)''$ is a II$_1$ factor, and $\psi_{n|A_1} = \tau\circ \pi_{n|A_1} = \tau\circ \pi_{|A_1} = \psi_{|A_1} = \vphi_1$, hence $\psi_n\in \partial_{\mathrm{e}} X$, for every $n\in\N$. We conclude that the extreme points of $X$ are dense in $X$. This finishes the proof of the lemma.
\end{proof}

\begin{proof}[Proof of Theorem \ref{Poulsenface}]
Recall that $A=A_1*A_2$, where $A_i$ is a unital, separable $C^*$-algebra such that $\text{T}(A_i)$ is non-empty and does not consist of a single $1$-dimensional trace, for every $i\in\{1,2\}$. Assume that condition (1) of Theorem \ref{Poulsenface} fails, i.e., ($\star$) $\text{T}(A)$ does not admit a closed face which is affinely homeomorphic to the Poulsen simplex.
We will prove that condition (2) holds.

First,  combining ($\star$) and Lemma~\ref{lem:infinite} implies that $A_i$ does not have an infinite dimensional extreme trace, for every $i\in\{1,2\}$. In other words, all extreme traces of $A_1$ and $A_2$ are finite dimensional. Second,  combining $(\star)$ and Corollary \ref{cor:matrices} implies that at least one of $A_1$ or $A_2$, say $A_1$, has only $1$-dimensional extreme traces. 
But then $A_1$ must have at least two $1$-dimensional extreme traces, so we have a surjective unital $*$-homomorphism $A_1\rightarrow\mathbb C^2$.

We claim that $A_2$ also only has $1$-dimensional extreme traces.
Otherwise, $A_2$ admits a $k$-dimensional extreme trace, for some $k\geq 2$. This would give a surjective unital $*$-homomorphism $A_2\rightarrow\mathbb \mathbb M_k(\mathbb C)$, and thus a surjective unital $*$-homomorphism $A\rightarrow\mathbb C^2*\mathbb M_k(\mathbb C)$. Since $\text{T}(\mathbb C^2*\mathbb M_k(\mathbb C))$ has a closed face affinely homeomorphic to the Poulsen simplex by Lemma \ref{lem:C2}, this would contradict ($\star$).

Hence, both $A_1$ and $A_2$ each have at least two extreme traces, all of which are $1$-dimensional. If  $A_1$ or $A_2$ admits at least three extreme $1$-dimensional traces, we would get a surjective unital $*$-homomorphism $A\rightarrow\mathbb C^2*\mathbb C^3$. However, by Lemma \ref{lem:C2}, $\text{T}(\mathbb C^2*\mathbb C^3)$ has a face affinely homeomorphic to the Poulsen simplex, which would again contradict $(\star)$.

Altogether, we conclude that $A_i$ admits exactly two extreme traces, both of which are $1$-dimensional, for every $i\in\{1,2\}$. This implies that $A_{i,\mathrm{tr}}\cong \C^2$, for $i=1,2$. It then follows from Lemma~\ref{lem:freeprodtracial} that $\mathrm{T}(A)$ is affinely homeomorphic to $\mathrm{T}(\C^2\ast\C^2)$, i.e., condition (2) holds.

Finally, since by Remark \ref{trace space of C^2*C^2}, $\text{T}(\mathbb C^2*\mathbb C^2)$ does not admit a closed face which is affinely homeomorphic to the Poulsen simplex,  conditions (1) and (2) are mutually exclusive. This finishes the proof.
\end{proof}

\subsection{The diffuse hull} 
In what follows, for a compact convex set $C$ and a subset $X\subset C$, we denote by $\overline{\mathrm{conv}}(X)\subset C$ the closed convex hull of $X$. 

For a free product of non-trivial finite groups, excluding $\Z/2\Z\ast\Z/2\Z$, it was asked in \cite[Question~1.11]{OSV23} whether the closed convex hull of all the infinite dimensional characters is a closed face affinely homeomorphic to the Poulsen simplex. One can ask the same question in the broader context of $C^*$-algebras.

\begin{question}\label{q:Poulsenface}
    Let $A=A_1\ast A_2$ be the 
    free product of two unital, separable $C^*$-algebras $A_1$ and $A_2$ such that $\mathrm{T}(A_i)$ is non-empty and does not consist of a single $1$-dimensional trace, for $i=1,2$. Assume further that $\mathrm{T}(A)$ is not affinely homeomorphic to $\mathrm{T}(\C^2\ast\C^2)$. Is $\overline{\mathrm{conv}}(\partial_{\mathrm{e}} \mathrm{T}(A)\cap\mathrm{T}_\infty(A))$ a Poulsen simplex?
\end{question}

When $A$ satisfies the equivalent conditions of Theorem~\ref{Poulsen}, then $\overline{\mathrm{conv}}(\partial_{\mathrm{e}} \mathrm{T}(A)\cap\mathrm{T}_\infty(A)) = \mathrm{T}(A)$ is the Poulsen simplex by Theorem~\ref{Poulsen}, and hence the answer is yes. However, we answer this question in the negative for every such $A$ that does not satisfy the equivalent conditions of Theorem~\ref{Poulsen}. This in particular answers \cite[Question~1.11]{OSV23} in the negative. To this end, we introduce the following notation.

\begin{definition}
    Let $A$ be a $C^*$-algebra. We denote by $\partial_{(\mathrm{fd})}\mathrm{T}(A)\subset \partial_{\mathrm{e}}\mathrm{T}(A)$ the set of extreme finite dimensional traces of $A$ that are isolated in $\partial_{\mathrm{e}}\mathrm{T}(A)$. We denote by $\mathrm{T}_{\mathrm{diff}}(A)\subset \mathrm{T}(A)$ the set of traces whose GNS von Neumann algebra is diffuse, and call its closure $\overline{\mathrm{T}_{\mathrm{diff}}(A)}$ the \emph{diffuse hull} of $\mathrm{T}(A)$.
\end{definition}

Note that $\mathrm{T}_{\mathrm{diff}}(A)$ is a convex set,  hence the diffuse hull $\overline{\mathrm{T}_{\mathrm{diff}}(A)}$ is a compact and convex set. 
It is also easy to see that $\mathrm{T}_{\mathrm{diff}}(A)$ is a face of $\text{T}(A)$. 

Following the strategy from Section~\ref{sec:Poulsen}, and in particular the proofs of Lemma~\ref{2} and Lemma~\ref{lem:ifd}, one can easily establish the following lemma which describes the diffuse hull of the trace simplex of a free product. 
We leave the details of the proof to the interested reader.

\begin{lemma}\label{lem:diff}
    Let $A=A_1\ast A_2$ be the 
    free product of two unital, separable $C^*$-algebras $A_1$ and $A_2$ such that $\mathrm{T}(A_i)$ is non-empty and does not consist of a single $1$-dimensional trace, for $i=1,2$. Assume further that $\mathrm{T}(A)$ is not affinely homeomorphic to $\mathrm{T}(\C^2\ast\C^2)$. Then $$\overline{\mathrm{T}_{\mathrm{diff}}(A)} = \overline{\mathrm{conv}}(\partial_{\mathrm{e}} \mathrm{T}(A)\cap\mathrm{T}_\infty(A)) = \overline{\mathrm{conv}}(\partial_{\mathrm{e}}\mathrm{T}(A)\setminus \partial_{(\mathrm{fd})}\mathrm{T}(A)) =  \overline{\partial_{\mathrm{e}}\mathrm{T}(A)\cap\mathrm{T}_\infty(A)}.$$
    In particular, the extreme points of the diffuse hull of $\mathrm{T}(A)$ are  dense. 
\end{lemma}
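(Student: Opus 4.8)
\textbf{Proof strategy for Lemma~\ref{lem:diff}.} The plan is to verify the chain of four equalities by establishing inclusions in both directions, reusing the machinery developed for Theorem~\ref{Poulsen}. Throughout, write $A = A_1 \ast A_2$ and recall $\mathrm{T}_\infty(A)$ denotes the traces with infinite-dimensional GNS algebra. The first observation is that $\partial_{\mathrm{e}}\mathrm{T}(A) \cap \mathrm{T}_\infty(A) \subset \mathrm{T}_{\mathrm{diff}}(A)$, since an infinite-dimensional factor (i.e.\ a II$_1$ factor) is diffuse; this immediately gives $\overline{\mathrm{conv}}(\partial_{\mathrm{e}}\mathrm{T}(A) \cap \mathrm{T}_\infty(A)) \subset \overline{\mathrm{T}_{\mathrm{diff}}(A)}$ and also $\overline{\partial_{\mathrm{e}}\mathrm{T}(A) \cap \mathrm{T}_\infty(A)} \subset \overline{\mathrm{conv}}(\partial_{\mathrm{e}}\mathrm{T}(A) \cap \mathrm{T}_\infty(A))$. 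Conversely, $\partial_{\mathrm{e}}\mathrm{T}(A) \setminus \partial_{(\mathrm{fd})}\mathrm{T}(A)$ consists of the extreme traces that are \emph{not} isolated extreme finite-dimensional traces; by Lemma~\ref{lem:ifd}, the isolated extreme finite-dimensional traces are precisely those arising (up to swapping indices) from an isolated $1$-dimensional trace of $A_1$ and an isolated finite-dimensional trace of $A_2$, and one checks such a trace is an isolated point of $\partial_{\mathrm{e}}\mathrm{T}(A)$, whence $\overline{\mathrm{conv}}(\partial_{\mathrm{e}}\mathrm{T}(A) \setminus \partial_{(\mathrm{fd})}\mathrm{T}(A))$ and $\overline{\mathrm{conv}}(\partial_{\mathrm{e}}\mathrm{T}(A) \cap \mathrm{T}_\infty(A))$ differ at most by finitely many isolated points, which moreover lie in the closed convex hull of the remaining extreme points (as in Remark~\ref{trace space of C^2*C^2}, an isolated extreme point that is not a singleton component is a limit of convex combinations) — here the hypothesis $\mathrm{T}(A) \not\cong \mathrm{T}(\C^2 \ast \C^2)$ rules out the degenerate case. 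This handles the second and third sets in the chain.

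The main work is the inclusion $\overline{\mathrm{T}_{\mathrm{diff}}(A)} \subset \overline{\partial_{\mathrm{e}}\mathrm{T}(A) \cap \mathrm{T}_\infty(A)}$, which closes the loop. For this I would fix $\varphi \in \mathrm{T}_{\mathrm{diff}}(A)$, let $\pi : A \to M$ be its GNS representation with $M$ diffuse, and set $M_i = \pi(A_i)''$. The goal is to approximate $\varphi$ in the weak$^*$ topology by traces whose GNS algebra is a II$_1$ factor. If one of $M_1, M_2$ equals $\C 1$, say $M_1 = \C 1$, then $M = M_2$ is diffuse and $\varphi_{|A_1}$ is $1$-dimensional; since $\mathrm{T}(A_1)$ is not a single $1$-dimensional trace and $M$ is diffuse, we can first perturb within the ambient algebra to arrange $\dim(M_1) + \dim(M_2) \geq 5$ and $\mathrm{e}(M_1) + \mathrm{e}(M_2) \leq 1$ exactly as in Case~1 of the proof of Lemma~\ref{2} (the diffuseness of $M$ makes $\mathrm{e}(M_2) = 0$, so this is even easier), landing in $\mathrm{T}_{\mathrm{l}}(A)$; then Theorem~\ref{perturbation} together with Proposition~\ref{prop:property A traces can be approxiamted} produces the desired approximants in $\partial_{\mathrm{e}}\mathrm{T}(A) \cap \mathrm{T}_\infty(A)$. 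If both $M_1 \neq \C 1$ and $M_2 \neq \C 1$, then again $\mathrm{e}(M_1) = \mathrm{e}(M_2) = 0$ since $M$ (hence each $M_i$) is diffuse, and if additionally $\dim(M_1) + \dim(M_2) \geq 5$ we have $\varphi \in \mathrm{T}_{\mathrm{l}}(A)$ directly; the only remaining subcase is $\dim(M_1) = \dim(M_2) = 2$ with both $M_i$ diffuse — impossible, since a $2$-dimensional von Neumann algebra is not diffuse — so in fact $M_i$ diffuse and $\neq \C 1$ forces $\dim(M_i) = \infty$ and we are done by Corollary~\ref{one diffuse algebra} applied to perturb $M_2$ off $M_1$, as in the proof of Lemma~\ref{lem:infinite}. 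In all cases $\varphi$ is a weak$^*$-limit of elements of $\partial_{\mathrm{e}}\mathrm{T}(A) \cap \mathrm{T}_\infty(A)$, so $\mathrm{T}_{\mathrm{diff}}(A) \subset \overline{\partial_{\mathrm{e}}\mathrm{T}(A) \cap \mathrm{T}_\infty(A)}$, and taking closures gives the last inclusion.

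Combining the inclusions yields the equality of all four sets. The final assertion — that the extreme points of the diffuse hull are dense — is then immediate: $\overline{\mathrm{T}_{\mathrm{diff}}(A)} = \overline{\partial_{\mathrm{e}}\mathrm{T}(A) \cap \mathrm{T}_\infty(A)}$, and every trace in $\partial_{\mathrm{e}}\mathrm{T}(A) \cap \mathrm{T}_\infty(A)$ is an extreme point of $\mathrm{T}(A)$, hence \emph{a fortiori} an extreme point of the smaller compact convex set $\overline{\mathrm{T}_{\mathrm{diff}}(A)}$ containing it; so a dense subset of $\overline{\mathrm{T}_{\mathrm{diff}}(A)}$ consists of extreme points of $\overline{\mathrm{T}_{\mathrm{diff}}(A)}$.

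\textbf{Expected main obstacle.} The delicate point is the bookkeeping in the inclusion $\overline{\mathrm{conv}}(\partial_{\mathrm{e}}\mathrm{T}(A) \setminus \partial_{(\mathrm{fd})}\mathrm{T}(A)) \subset \overline{\partial_{\mathrm{e}}\mathrm{T}(A) \cap \mathrm{T}_\infty(A)}$: one must argue that the finitely many isolated extreme finite-dimensional traces that are discarded when passing from $\partial_{\mathrm{e}}\mathrm{T}(A)\setminus\partial_{(\mathrm{fd})}\mathrm{T}(A)$ to $\partial_{\mathrm{e}}\mathrm{T}(A)\cap\mathrm{T}_\infty(A)$ are nonetheless recovered in the closed convex hull of the infinite-dimensional extreme traces. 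This is exactly where the exclusion of $\mathrm{T}(\C^2 \ast \C^2)$ enters, and it requires knowing (via Lemma~\ref{lem:ifd} and the structure theory of Section~\ref{sec:Poulsen}) that whenever such an isolated finite-dimensional extreme trace exists, it is a genuine limit/convex-combination limit of other extreme traces rather than an isolated connected component of $\mathrm{T}(A)$ — a subtlety entirely analogous to the phenomenon described in Remark~\ref{trace space of C^2*C^2}.
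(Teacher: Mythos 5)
Your overall plan---establishing all four equalities by chasing inclusions, driven by Theorem~\ref{perturbation}, Lemma~\ref{2}, and Lemma~\ref{lem:ifd}---is sensible, and the first chain of containments $\overline{\partial_{\mathrm e}\mathrm T(A)\cap\mathrm T_\infty(A)}\subset\overline{\mathrm{conv}}(\partial_{\mathrm e}\mathrm T(A)\cap\mathrm T_\infty(A))\subset\overline{\mathrm T_{\mathrm{diff}}(A)}$ is correct. However, the two remaining steps each contain a genuine error. The more serious one is in your treatment of $\overline{\mathrm T_{\mathrm{diff}}(A)}\subset\overline{\partial_{\mathrm e}\mathrm T(A)\cap\mathrm T_\infty(A)}$, where you assert that ``$M$ (hence each $M_i$) is diffuse.'' A subalgebra of a diffuse von Neumann algebra need not be diffuse: for example, if $A_1=A_2=\mathbb M_2(\C)$ and $\varphi$ has a $\mathrm{II}_1$-factor GNS algebra $M$, then $M_1\cong M_2\cong\mathbb M_2(\C)$ are finite-dimensional while $M$ is diffuse. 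So the conclusions $\mathrm e(M_i)=0$ and $\dim(M_i)=\infty$ do not follow, and the fallback to Corollary~\ref{one diffuse algebra}---which requires one of the $M_i$ to be diffuse---is unavailable. What diffuseness of $M$ actually gives is that the barycenter measure $\mu_\varphi$ assigns zero mass to each individual finite-dimensional extreme trace (an atom of $\mu_\varphi$ at such a trace would produce a finite-dimensional direct summand of $M$); the intended argument then re-runs the case analysis and perturbation machinery of Lemma~\ref{2} using this fact, without ever invoking diffuseness of the $M_i$.

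The second gap is the claim that $\overline{\mathrm{conv}}(\partial_{\mathrm e}\mathrm T(A)\setminus\partial_{(\mathrm{fd})}\mathrm T(A))$ and $\overline{\mathrm{conv}}(\partial_{\mathrm e}\mathrm T(A)\cap\mathrm T_\infty(A))$ ``differ at most by finitely many isolated points.'' This rests on a set-theoretic confusion. The set $(\partial_{\mathrm e}\mathrm T(A)\setminus\partial_{(\mathrm{fd})}\mathrm T(A))\setminus(\partial_{\mathrm e}\mathrm T(A)\cap\mathrm T_\infty(A))$ is precisely the collection of \emph{non-isolated} extreme finite-dimensional traces, not the isolated ones; this set can be a continuum (Remark~\ref{dihedral} exhibits a one-parameter family of non-isolated two-dimensional characters of $\Z/2\Z\ast\Z/2\Z$), and its members are by definition not isolated. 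The correct task is to show that each non-isolated extreme finite-dimensional trace $\psi$ lies in $\overline{\partial_{\mathrm e}\mathrm T(A)\cap\mathrm T_\infty(A)}$. Here Lemma~\ref{lem:ifd} is the right tool, but in the contrapositive direction: it tells you that, since $\psi$ is not isolated, the pair $(\psi_{|A_1},\psi_{|A_2})$ cannot be of the bad form (isolated $1$-dimensional)$\times$(isolated finite-dimensional), and that negation is exactly what feeds the relevant case of the perturbation argument in Lemma~\ref{2}. Both gaps require actually running that argument rather than appealing to finiteness or diffuseness of the $M_i$.
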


To a certain extent, Lemma \ref{lem:diff} suggests a positive answer to  Question~\ref{q:Poulsenface}.
However, a closer look show that,  unless the diffuse hull is the whole simplex $\mathrm{T}(A)$ (which exactly occurs when the conditions of Theorem A hold), the diffuse hull is neither a face nor a simplex.

\begin{proposition}\label{noface}
Let $A=A_1\ast A_2$ be the 
free product of two unital, separable $C^*$-algebras $A_1$ and $A_2$ such that $\mathrm{T}(A_i)$ is non-empty and does not consist of a single $1$-dimensional trace, for $i=1,2$. Assume that $A_1$ admits an isolated extreme $1$-dimensional trace $\varphi_1$ and $A_2$ admits an isolated extreme $k$-dimensional trace $\varphi_2$, for some $k\in\mathbb N$. Assume further that $\mathrm{T}(A)$ is not affinely homeomorphic to $\mathrm{T}(\C^2\ast\C^2)$.

Then the diffuse hull of $\mathrm{T}(A)$ 
is neither a face of $\emph{T}(A)$  nor a Choquet simplex. 
\end{proposition}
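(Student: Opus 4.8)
The goal is to show that, under the stated hypotheses, the diffuse hull $\overline{\mathrm{T}_{\mathrm{diff}}(A)}$ is neither a face nor a Choquet simplex. The key structural input is Theorem~\ref{freeprod}: the hypotheses give us an isolated extreme $k$-dimensional trace $\varphi\in\partial_{\mathrm{e}}\mathrm{T}(A)$ with $\varphi_{|A_1}=\varphi_1$, $\varphi_{|A_2}=\varphi_2$. The strategy is to exhibit, inside the diffuse hull, a point $\psi$ which decomposes nontrivially in $\mathrm{T}(A)$ using the isolated finite dimensional trace $\varphi$ as one of the summands, witnessing the failure of the face property; and then to observe that this same decomposition (together with the density of extreme points of the diffuse hull from Lemma~\ref{lem:diff}) forces a failure of uniqueness of barycentric representation, hence the failure of the simplex property.

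\textbf{Step 1: produce a diffuse trace realizing $\varphi_1$ on $A_1$.} Since $\mathrm{T}(A_1)$ is not a single $1$-dimensional trace, there is $\psi_1\in\partial_{\mathrm{e}}\mathrm{T}(A_1)\setminus\{\varphi_1\}$; since $\mathrm{T}(A_2)\ne\{\varphi_2\text{ as a $1$-dim trace}\}$ we can likewise pick a trace on $A_2$ which is not $1$-dimensional, or more simply use that $\mathrm T(A)$ is not $\mathrm T(\mathbb C^2\ast\mathbb C^2)$ so that (by Lemma~\ref{lem:freeprodtracial} and Theorem~\ref{Poulsenface}) $\mathrm{T}(A)$ admits a closed face affinely homeomorphic to the Poulsen simplex, hence admits diffuse traces; concretely one constructs, as in Case~1 of the proof of Lemma~\ref{2} (or directly via a free product with $\mathrm L(\mathbb Z)$), a trace $\eta\in\mathrm{T}_{\mathrm{diff}}(A)$. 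Now form $\psi=\tfrac12\varphi+\tfrac12\eta$. Then $\psi\in\overline{\mathrm{conv}}(\partial_{\mathrm e}\mathrm T(A)\cap\mathrm T_\infty(A))=\overline{\mathrm{T}_{\mathrm{diff}}(A)}$ by Lemma~\ref{lem:diff} — indeed $\varphi$ itself is a limit of extreme infinite dimensional traces? No: $\varphi$ is isolated and finite dimensional, so $\varphi\notin\overline{\partial_{\mathrm e}\mathrm T(A)\cap\mathrm T_\infty(A)}$. One must be more careful: instead take $\psi=\tfrac12\varphi+\tfrac12\eta$ and check directly that $\psi\in\overline{\mathrm T_{\mathrm{diff}}(A)}$ by perturbing: replace $\varphi$ by nearby extreme traces $\varphi^{(n)}$ whose GNS algebra has a small diffuse direct summand (using Lemma~\ref{diffuse_piece} applied to the triple for $\psi$, as in Case~2 of the proof of Lemma~\ref{lem:ifd}), obtaining $\psi_n\in\mathrm T_{\mathrm{diff}}(A)$ with $\psi_n\to\psi$. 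Thus $\psi\in\overline{\mathrm T_{\mathrm{diff}}(A)}$, while $\varphi\notin\overline{\mathrm T_{\mathrm{diff}}(A)}$ (since $\varphi$ is isolated in $\partial_{\mathrm e}\mathrm T(A)$ and not in $\mathrm T_\infty$, Corollary~\ref{strongly_isolated} forbids it from being a limit of diffuse traces). Hence the face property fails: $\psi\in\overline{\mathrm T_{\mathrm{diff}}(A)}$ but its summand $\varphi$ is not.

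\textbf{Step 2: failure of being a simplex.} Here the point is that $\overline{\mathrm T_{\mathrm{diff}}(A)}$, being a compact convex subset of $\mathrm T(A)$, is a simplex iff every point has a unique representing measure supported on its extreme points. We exhibit $\psi$ as above and produce two distinct representing measures: on one hand, by Lemma~\ref{lem:diff} the extreme points of the diffuse hull are dense in it, and $\psi$ (lying in the interior-like region) is the barycenter of a measure supported on $\overline{\partial_{\mathrm e}\mathrm T(A)\cap\mathrm T_\infty(A)}\cap\partial_{\mathrm e}\overline{\mathrm T_{\mathrm{diff}}(A)}$; on the other hand, the decomposition inherited from $\mathrm{T}(A)$, namely $\psi=\tfrac12\varphi+\tfrac12\eta$ followed by the (unique in $\mathrm T(A)$) barycentric decomposition of $\eta$, gives a representing measure of $\psi$ in $\mathrm T(A)$ which charges $\{\varphi\}$ with mass $\tfrac12$. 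Since $\varphi\notin\overline{\mathrm T_{\mathrm{diff}}(A)}$, this latter decomposition is \emph{not} carried by $\overline{\mathrm T_{\mathrm{diff}}(A)}$, so $\psi$ has no representing measure in $\mathrm T(A)$ equal to a measure on $\partial_{\mathrm e}\overline{\mathrm T_{\mathrm{diff}}(A)}$ obtained by ``pushing in'' — but more precisely: the extreme points of $\overline{\mathrm T_{\mathrm{diff}}(A)}$ need not all be extreme in $\mathrm T(A)$ (some are genuinely new, e.g.\ limits like $\tfrac12(\varphi_{(0,0)}+\varphi_{(1,1)})$ in the $\mathbb Z/2\ast\mathbb Z$ example), and one can use such a new extreme point $\rho\in\partial_{\mathrm e}\overline{\mathrm T_{\mathrm{diff}}(A)}\setminus\partial_{\mathrm e}\mathrm T(A)$ to build a second, genuinely different representing measure for a suitable $\psi$, contradicting uniqueness. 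The cleanest formulation: pick $\rho\in\overline{\mathrm T_{\mathrm{diff}}(A)}$ extreme in the diffuse hull but not extreme in $\mathrm T(A)$ (such $\rho$ exists exactly because $\overline{\mathrm T_{\mathrm{diff}}(A)}\subsetneq\mathrm T(A)$ fails to be a face — it ``cuts across'' the Choquet boundary), and then $\rho$ has two representing measures inside $\overline{\mathrm T_{\mathrm{diff}}(A)}$: the Dirac mass $\delta_\rho$ and the one coming from $\rho$'s $\mathrm T(A)$-barycentric decomposition intersected with / projected onto the hull — these differ, so uniqueness fails.

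\textbf{The main obstacle.} The delicate point is Step~2's production of a genuinely non-unique barycentric decomposition \emph{within} the diffuse hull: it is not enough to use the non-face property abstractly, because a compact convex set can be a simplex even if it sits inside a bigger simplex non-facially. The resolution is to nail down an explicit extreme point $\rho$ of $\overline{\mathrm T_{\mathrm{diff}}(A)}$ of the form $\rho=\tfrac12\varphi+\tfrac12\xi$ with $\varphi$ the isolated $k$-dimensional trace and $\xi$ an appropriate extreme infinite dimensional trace, show $\rho$ is extreme in the diffuse hull (because perturbations that split off $\varphi$ leave the hull), yet $\rho$ is visibly a nontrivial convex combination in $\mathrm T(A)$ whose parts are limits of points of $\mathrm T_{\mathrm{diff}}(A)$ — giving $\rho$ both the representation $\delta_\rho$ and a non-Dirac one supported on the hull's extreme points, contradicting the Choquet uniqueness that a simplex would enjoy. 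I expect establishing that $\rho$ is extreme in $\overline{\mathrm T_{\mathrm{diff}}(A)}$ while simultaneously admitting a competing in-hull representation to be the crux, and it is precisely analogous to the explicit analysis recorded in Remark~\ref{trace space of C^2*C^2} for $\mathbb Z/2\Z\ast\mathbb Z/2\Z$, which is why that case must be excluded by hypothesis.
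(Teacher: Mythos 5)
Your Step 1 is on the right track in spirit but has a gap, and your Step 2 is logically incoherent as formulated.

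\textbf{Step 1.} You correctly identify $\varphi\notin\overline{\mathrm{T}_{\mathrm{diff}}(A)}$ and want a mixture $\tfrac12\varphi+\tfrac12\eta$ that lands inside the hull. However, your proposed mechanism — ``perturb using Lemma~\ref{diffuse_piece} to get $\psi_n\in\mathrm{T}_{\mathrm{diff}}(A)$'' — does not work: Lemma~\ref{diffuse_piece} only produces a corner $(M_1\vee v_\eps M_2v_\eps^*)z_\eps$ which is diffuse, and says nothing about the complementary corner, so the perturbed trace need not be diffuse. The paper instead chooses $\psi_1,\psi_2$ carefully (distinguishing three cases depending on the dimensions and cardinalities available) precisely so that the mixture $\chi=\tfrac12(\varphi+\psi)$ has $\chi_{|A_i}$ with GNS algebras $M_i\neq\C1$, $\dim(M_1)+\dim(M_2)\geq 5$, and $\mathrm{e}(M_1)+\mathrm{e}(M_2)\leq 1$; this places $\chi\in\mathrm{T}_{\mathrm{l}}(A)$, and only then Theorem~\ref{perturbation} (via Proposition~\ref{prop:property A traces can be approxiamted}) gives $\chi\in C$. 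This case analysis is where the real work is, and your write-up elides it.

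\textbf{Step 2.} This is the serious flaw. If $\rho$ is an extreme point of $\overline{\mathrm{T}_{\mathrm{diff}}(A)}$, then the \emph{only} Borel probability measure on $\partial_{\mathrm{e}}\overline{\mathrm{T}_{\mathrm{diff}}(A)}$ with barycenter $\rho$ is $\delta_\rho$, full stop. Your ``second representing measure,'' obtained from $\rho$'s barycentric decomposition inside $\mathrm{T}(A)$, is supported on $\partial_{\mathrm{e}}\mathrm{T}(A)$ and charges $\varphi\notin\overline{\mathrm{T}_{\mathrm{diff}}(A)}$; it therefore is \emph{not} a measure on the hull at all and cannot be used to contradict Choquet uniqueness there. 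You also correctly remark that a compact convex subset can be a simplex without being a face of a larger simplex, so the non-face property alone is insufficient — but then you proceed to argue as though the non-face property is what does the work. The paper's actual argument is completely different and genuinely geometric: it produces a one-parameter family $\rho_\alpha\in\partial_{\mathrm{e}}\mathrm{T}(A)$ (via free products and Ueda's factoriality criterion) such that each $\beta_\alpha\varphi+(1-\beta_\alpha)\rho_\alpha\in C$; it then takes the $2$-simplex $\Delta=\mathrm{conv}(\varphi,\rho_\alpha,\rho_{\alpha'})$, which is a closed face of $\mathrm{T}(A)$, observes that $\Delta\cap C$ is a closed face of $C$ (so must be a simplex if $C$ is), and shows $\Delta\cap C$ contains a non-degenerate quadrilateral, which is not a simplex. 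That quadrilateral argument is the actual engine of the proof, and nothing resembling it appears in your proposal.
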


The proof will use free products of traces. For $i=1,2$, let $\varphi_i\in\text{T}(A_i)$. Let  $(M_i,\tau_i)$ be a tracial von Neumann algebra and $\pi_i:A_i\rightarrow M_i$ be a tracial representation with $\varphi_i=\tau_i\circ\pi_i$. Let $M=M_1*M_2$ endowed with the free product trace $\tau=\tau_1*\tau_2$. Define $\pi:A\rightarrow M$ by letting $\pi_{|A_i}=\pi_i$, for $i=1,2$. We call $\varphi=\tau\circ\pi\in\text{T}(A)$ the free product of $\varphi_1$ and $\varphi_2$, and write $\varphi=\varphi_1*\varphi_2$.

\begin{proof} We will prove that $C=\overline{\mathrm{conv}}(\partial_{\mathrm{e}} \mathrm{T}(A)\cap\mathrm{T}_\infty(A))$ is neither a face of $\text{T}(A)$ nor a Choquet simplex. Since by Lemma \ref{lem:diff}, $C$ is equal to the diffuse hull of $\text{T}(A)$, this will prove the conclusion.

As in the proof of Theorem~\ref{freeprod}, let $\varphi\in\partial_{(\mathrm{fd})}\mathrm{T}(A)$ be such that $\varphi_{|A_i}=\varphi_i$, for $i=1,2$. We begin by noticing that  
$\vphi\notin C$. Indeed, assume that there exist diffuse traces $(\zeta_n)_n$  converging to $\vphi$. By Corollary~\ref{strongly_isolated}, we have $\mu_n(\{\vphi\})\to 1$, where $\mu_n$ is the unique probability measure on $\partial_{\mathrm{e}}\mathrm{T}(A)$ whose barycenter is $\zeta_n$. As $\vphi$ is finite dimensional, this contradicts the diffuseness of $\zeta_n$. 

We next define $\psi_i\in\text{T}(A_i)$, for $i=1,2$, by considering three cases:

\begin{enumerate}
\item  Assume that $k=1$ and $|\partial_{\mathrm{e}}\text{T}(A_1)|=|\partial_{\mathrm{e}}\text{T}(A_2)|=2$.
Since $\partial_{\mathrm{e}}\text{T}(A_1)$ and $\partial_{\mathrm{e}}\text{T}(A_2)$ cannot both consist of two $1$-dimensional traces, after possibly swapping indices, we may assume that there is $\psi_1\in \partial_{\mathrm{e}}\text{T}(A_1)$ of dimension at least $2$. Let $\psi_2\in\partial_{\mathrm{e}}\text{T}(A_2)\setminus\{\varphi_2\}$. 

\item  Assume that $k=1$, and $|\partial_{\mathrm{e}}\text{T}(A_1)|\geq 3$ or  $|\partial_{\mathrm{e}}\text{T}(A_2)|\geq 3$. After possibly swapping indices, we may assume that $|\partial_{\mathrm{e}}\text{T}(A_1)|\geq 3$. Let $\gamma,\delta$ be distinct elements of $\partial_{\mathrm{e}}\text{T}(A_1)\setminus\{\varphi_1\}$ and put $\psi_1=\frac{1}{2}(\gamma+\delta)\in\text{T}(A_1)$.  Let $\psi_2\in\partial_{\mathrm{e}}\text{T}(A_2)\setminus\{\varphi_2\}$.

\item Assume that $k\geq 2$. Let $\psi_1\in\partial_{\mathrm{e}}\text{T}(A_1)\setminus\{\varphi_1\}$ and put $\psi_2=\varphi_2$.
\end{enumerate}

We first show that $C$ is not a face of $\text{T}(A)$. 
Let $\psi\in\mathrm{T}(A)$ be a trace satisfying $\psi_{|A_i}=\psi_i$, for $i=1,2$. For instance, we can take $\psi$ to be the free product trace, $\psi_1*\psi_2$. Define $\chi=\frac{1}{2}(\varphi+\psi)\in\mathrm{T}(A)$. 

We claim that $\chi\in C$. Since  $\varphi\not\in C$, this claim will imply that $C$ is not a face of $\text{T}(A)$.
To prove the claim, let $\pi:A\rightarrow M$ be the tracial representation associated to $\chi$ and put $M_i=\pi(A_i)''$, for $i=1,2$.
Since $\chi_{|A_i}=\frac{1}{2}(\varphi_i+\psi_i)$, for $i=1,2$, it is easy to check that in any of the cases (1)-(3) we have $M_1,M_2\not=\mathbb C1$, $\dim(M_1)+\dim(M_2)\geq 5$ and $\text{e}(M_1)+\text{e}(M_2)\leq 1$. 
Thus, $\chi\in\mathrm{T}_{\mathrm{l}}(A)$, and combining Theorem \ref{perturbation} and Proposition \ref{prop:property A traces can be approxiamted} gives that indeed $\chi\in C$.

To prove the stronger statement that $C$ is not a Choquet simplex, we will need a more involved construction of traces on $A$. To this end, for $\alpha\in [\frac{1}{3},\frac{1}{2}]$ we define $\rho_\alpha\in\text{T}(A)$
by letting

\begin{itemize}
\item[(a)] $\rho_\alpha=\psi_1*(\alpha\varphi_2+(1-\alpha)\psi_2)$ in case (1).
\item[(b)] $\rho_\alpha=(\frac{1}{3}\varphi_1+\frac{2}{3}\psi_1)*(\alpha\varphi_2+(1-\alpha)\psi_2)=(\frac{1}{3}\varphi_1+\frac{1}{3}\gamma+\frac{1}{3}\delta)*(\alpha\varphi_2+(1-\alpha)\psi_2)$ in case (2).
\item[(c)] $\rho_\alpha=(\alpha\varphi_1+(1-\alpha)\psi_1)*\varphi_2$ in case (3).
\end{itemize}
We claim that \begin{equation}\label{extreme}\text{$\rho_\alpha\in\partial_{\text{e}}\text{T}(A)$, for every $\alpha\in [\frac{1}{3},\frac{1}{2}]$}.\end{equation}
We will derive this claim as a consequence of the following particular case of \cite[Theorem 4.1]{Ue11}: $(\star)$ if $N_1,N_2\not=\mathbb C$ are tracial von Neumann algebras with $\text{dim}(N_1)+\text{dim}(N_2)\geq 5$, then their free product $N_1*N_2$ is a II$_1$ factor if and only if for any projections $z_1\in\mathcal Z(N_1)$ and $z_2\in\mathcal Z(N_2)$  such that $N_1z_1\cong \mathbb M_{k_1}(\mathbb C)$ and $N_2z_2\cong\mathbb M_{k_2}(\mathbb C)$, for some $k_1,k_2\geq 1$, we have that $\frac{\tau(z_1)}{k_1^2}+\frac{\tau(z_2)}{k_2^2}\leq 1$.

Let $\alpha\in [\frac{1}{3},\frac{1}{2}]$ and denote by $N$ the GNS von Neumann algebra of $\rho_\alpha$. In case (1), we have that $N=N_1*N_2$, where $N_1\not=\mathbb C$ is a tracial factor and $N_2$ is a tracial von Neumann algebra whose minimal central projections have traces $\alpha,1-\alpha$. Since $\frac{1}{4}+\alpha\leq 1$ and $\frac{1}{4}+(1-\alpha)\leq 1$, $(\star)$ implies that $N$ is a II$_1$ factor.  Similarly,  in case (3), using that $\varphi_2\in\partial_{\text{e}}\text{T}(A_2)$ is $k$-dimensional for some $k\geq 2$, it follows that $N$ is a II$_1$ factor.
Finally, in case (2), $N=N_1*N_2$, where $N_1$ and $N_2$ are tracial von Neumann algebra whose minimal central projections have traces equal to $\frac{1}{3}$ and $\alpha,1-\alpha$, respectively. Since $\frac{1}{3}+\alpha\leq 1$ and $\frac{1}{3}+(1-\alpha)\leq 1$, $(\star)$ again implies that $N$ is a II$_1$ factor. Altogether, this proves that $N$ is a II$_1$ factor in all cases (1)-(3), and thus that $\rho_\alpha\in\partial_{\text{e}}\text{T}(A)$.

Next, for $\alpha\in [\frac{1}{3},\frac{1}{2}]$, we put $\beta_\alpha=\frac{\frac{1}{2}-\alpha}{1-\alpha}\in [0,\frac{1}{4}]$. We claim that
\begin{equation}\label{convexcomb}
\text{$\beta_\alpha \varphi+(1-\beta_\alpha)\rho_\alpha\in C$, for every $\alpha\in [\frac{1}{3},\frac{1}{2}]$.}
\end{equation}
To prove this claim we note that $\beta_\alpha+(1-\beta_\alpha)\alpha=\frac{1}{2}$ and therefore
\begin{itemize}
\item $(\beta_\alpha\varphi+(1-\beta_\alpha)\rho_\alpha)_{|A_1}=\beta_\alpha\varphi_1+(1-\beta_\alpha)\psi_1$  and \\ $(\beta_\alpha\varphi+(1-\beta_\alpha)\rho_\alpha)_{|A_2}=\frac{1}{2}\varphi_2+\frac{1}{2}\psi_2$ in case (1).
\item $(\beta_\alpha\varphi+(1-\beta_\alpha)\rho_\alpha)_{|A_1}=\frac{2\beta_\alpha+1}{3}\varphi_1+\frac{1-\beta_\alpha}{3}\gamma+\frac{1-\beta_\alpha}{3}\delta$ and \\ $(\beta_\alpha\varphi+(1-\beta_\alpha)\rho_\alpha)_{|A_2}=\frac{1}{2}\varphi_2+\frac{1}{2}\psi_2$  in case (2).
\item $(\beta_\alpha\varphi+(1-\beta_\alpha)\rho_\alpha)_{|A_1}=\frac{1}{2}\varphi_1+\frac{1}{2}\psi_1$  and $(\beta_\alpha\varphi+(1-\beta_\alpha)\rho_\alpha)_{|A_2}=\varphi_2$ in case (3).
\end{itemize}

Since $\psi_1\in\partial_{\text{e}}\text{T}(A_1)$ has dimension at least $2$ in case (1), $\frac{2\beta_\alpha+1}{3}\leq\frac{1}{2}$ and 
$\varphi_2\in\partial_{\text{e}}\text{T}(A_2)$ has dimension at least $2$ in case (3), it follows that in each of the cases (1)-(3) we have that $\beta_\alpha \varphi+(1-\beta_\alpha)\rho_\alpha\in \text{T}_{\text{l}}(A)$. Combining Theorem \ref{perturbation} and Proposition \ref{prop:property A traces can be approxiamted} gives that $\beta_\alpha \varphi+(1-\beta_\alpha)\rho_\alpha\in  \overline{\partial_{\mathrm{e}}\mathrm{T}(A)\cap\mathrm{T}_\infty(A)}\subset C$.

To show that $C$ is not a Choquet simplex, assume by contradiction that $C$ is a  Choquet simplex. Choose $\alpha\not=\alpha'\in [\frac{1}{3},\frac{1}{2})$. By \eqref{extreme}, $\varphi,\rho_\alpha,\rho_{\alpha'}\in\partial_{\text{e}}\text{T}(A)$ are distinct extreme traces. Thus,
\[  
    \Delta  = \mathrm{conv}(\varphi,\rho_\alpha,\rho_{\alpha'})
\]
is a closed face of $\mathrm{T}(A)$ which is isomorphic to the $2$-dimensional simplex. 
Since $C$ is assumed to be a simplex, the convex set $\Delta\cap C$ is also a simplex.

Put $I=\text{conv}(\varphi,\rho_\alpha)$. 
By \eqref{extreme}, we get that $\rho_\alpha\in C$, hence $\rho_\alpha\in I\cap C$.
Since $\alpha<\frac{1}{2}$, we get that $\beta_\alpha\not=0$ and hence $\beta_\alpha\varphi+(1-\beta_\alpha)\rho_\alpha\not=\rho_\alpha$. 
Hence, \eqref{convexcomb} implies that $\beta_\alpha\varphi+(1-\beta_\alpha)\rho_\alpha\in (I\cap C)\setminus\{\rho_\alpha\}$.
Since $\varphi\not\in C$, we conclude that
$I\cap C$ is a non-degenerate interval which does not contain $\varphi$. Similarly, if $I'=\text{conv}(\varphi,\rho_{\alpha'})$, then $I'\cap C$  is a non-degenerate interval which does not contain $\varphi$. On the other hand, $ \Delta \cap C$ contains the quadrilateral prescribed by the endpoints of $ I\cap C$ and $ I'\cap C$. However, any proper subset of $\Delta$ which contains such a quadrilateral cannot be a simplex. This gives the desired contradiction and finishes the proof. 
\end{proof}

\end{document}